\documentclass[12pt, a4paper, reqno]{amsart}

\usepackage{mathtools}
\usepackage{amsthm}
\usepackage{amssymb}
\usepackage{mathrsfs}
\usepackage{braket}

\usepackage{enumitem}
\setlist[enumerate]{
  topsep=2pt,
  parsep=0pt,
  partopsep=0pt,
  itemsep=3pt,
  leftmargin=3em,
  label=\rm{(\roman*)}
}

\usepackage{graphicx}
\usepackage[centering, heightrounded]{geometry}
\usepackage{secdot}
\usepackage[small,bf]{caption}
\usepackage[subrefformat=parens]{subcaption}
\usepackage[svgnames]{xcolor}
\usepackage{hyperref}

\numberwithin{equation}{section}

\newtheorem{theorem}{Theorem}[section]
\newtheorem{proposition}[theorem]{Proposition}
\newtheorem{lemma}[theorem]{Lemma}

\theoremstyle{definition}
\newtheorem{definition}[theorem]{Definition}
\newtheorem*{definition*}{Definition}

\theoremstyle{remark}
\newtheorem{remark}[theorem]{Remark}

\newtheorem*{theorem*}{Theorem}
\newtheorem*{proposition*}{Proposition}
\newtheorem*{lemma*}{Lemma}
\newtheorem*{remark*}{Remark}
\newtheorem*{assumption*}{Assumption}

\newcommand{\Z}{\mathbb Z}

\newcommand{\R}{\mathbb R}

\newcommand{\eps}{\varepsilon}
\renewcommand{\Re}{\operatorname{Re}}

\newcommand{\scX}{\mathscr X}

\newcommand{\cF}{\mathcal F}

\def\tbra[#1,#2]{{\left\langle #1,#2\right\rangle}}
\def\rbra[#1,#2]{{\left( #1,#2\right)}}
\def\norm[#1]{{\left\Vert #1 \right\Vert}}
\def\abs[#1]{{\left\vert #1 \right\vert}}

\newcommand{\rad}{\mathrm{rad}}

\newcommand{\pt}{\partial}

\newcommand{\ce}{\mathrel{\mathop:}=}

\DeclareFontFamily{U}{mathx}{\hyphenchar\font45}
\DeclareFontShape{U}{mathx}{m}{n}{
  <5> <6> <7> <8> <9> <10>
  <10.95> <12> <14.4> <17.28> <20.74> <24.88>
  mathx10
}{}
\DeclareSymbolFont{mathx}{U}{mathx}{m}{n}
\DeclareMathAccent{\widebar}{0}{mathx}{"73}

\definecolor{Tuileries}{rgb}{0,0.65,0.84}
\hypersetup{
  colorlinks=true,
  linkcolor=black,
  citecolor=black,
  urlcolor=black,%
  pdftitle={Instability theory of stationary solutions for NLS}
}

\title[]{Instability theory of stationary solutions for nonlinear Schr\"odinger equations}

\author[N. Fukaya]{Noriyoshi Fukaya}
\address{Regional ICT Research Center for Human, Industry and Future, 
The University of Shiga Prefecture,
Shiga 522-8533, Japan
\newline\indent
Osaka Central Advanced Mathematical Institute, 
Osaka Metropolitan University, 
Osaka 558-8585, Japan
}%
\email{fukaya.n@e.usp.ac.jp}

\author[M. Hayashi]{Masayuki Hayashi}
\address{Graduate School of Human and Environmental Studies,
Kyoto University, Kyoto 606-8501, Japan
\newline\indent
Waseda Research Institute for Science and Engineering, Waseda University, Tokyo 169-8555, Japan
}%
 \email{hayashi.masayuki.3m@kyoto-u.ac.jp}

\date{\today}

\begin{document}
\begin{abstract}
We study the orbital instability of algebraically decaying stationary
solutions of nonlinear Schr\"odinger equations in dimensions $d\ge2$,
taking competing double-power nonlinearities as a canonical model.
At zero frequency, the essential spectrum of the linearized operator
touches zero, and the usual $H^1$ coercivity is lost.
We construct the zero-frequency limit $\eta_0$ of the frequency
derivatives $\pt_\omega\phi_\omega$ of standing-wave profiles
and determine its sharp asymptotic behavior.
For a stationary ground state $\phi_0\in H^1(\R^d)$, we prove instability
if $\braket{x}\phi_0\notin L^2(\R^d)$ or, when
$\braket{x}\phi_0\in L^2(\R^d)$, if the duality pairing
$\tbra[\eta_0,\phi_0]$ is negative.
This criterion is a zero-frequency counterpart of the negative slope
condition in the Grillakis--Shatah--Strauss theory.
The proof combines variation of parameters for the linearized ODE
with a shooting analysis that determines the global behavior of
the homogeneous solution regular at the origin.
We then localize $\eta_0$ to construct an unstable direction and
establish instability through a Lyapunov functional argument
based on the variational characterization of ground states.
The results extend with essentially the same proofs to a class of
multiple-power nonlinearities.
\end{abstract}

\maketitle
\tableofcontents

\section{Introduction}

\subsection{Setting of the problem}
We study stationary solutions of the nonlinear Schr\"odinger equation with
competing double-power nonlinearities
\begin{align}
\label{eq}
i\pt_t u+\Delta u-|u|^{p-1}u+|u|^{q-1}u=0,
\qquad (t,x)\in\R\times\R^d,
\end{align}
where $d\ge1$ and $1<p<q<(d+2)/(d-2)_+$. Here the upper bound on $q$ is
understood to be $\infty$ when $d=1,\,2$. 
The Cauchy problem
for \eqref{eq} is locally well-posed in
$H^1(\R^d)$; see \cite[Chapter~4]{C03}.
Moreover, the energy
\begin{align*}
  E(u(t))
  \ce\frac12\|\nabla u(t)\|_{L^2}^2
  +\frac{1}{p+1}\|u(t)\|_{L^{p+1}}^{p+1}
  -\frac{1}{q+1}\|u(t)\|_{L^{q+1}}^{q+1}
\end{align*}
and the mass
\begin{align*}
M(u(t))\ce\frac12\|u(t)\|_{L^2}^2
\end{align*}
are conserved along the flow. 
We denote the mass-critical exponent by
\[
  p_{\mathrm c}(d)\ce1+\frac4d.
\]
If $1<p<q<p_{\mathrm c}(d)$, which is the range of particular interest
here, the conservation laws and the Gagliardo--Nirenberg inequality
imply global existence for every initial datum in $H^1(\R^d)$.
We refer to \cite{TVZ07} for basic results on the global dynamics of
nonlinear Schr\"odinger equations with double-power nonlinearities.

Our interest in the double-power setting stems from the fact that
\eqref{eq} serves as a canonical model in which an algebraically decaying
stationary solution arises as the zero-frequency limit of a family of
exponentially decaying standing waves.
A similar phenomenon occurs in various physically relevant dispersive
models, where algebraic solitons emerge as parameter limits of families
of exponentially decaying solitons.
Examples include the derivative nonlinear Schr\"odinger equation
describing plasma waves \cite{KN78}, the massive Thirring model and
related coupled-mode equations in nonlinear optics \cite{BPZ98,HHP24},
and the Gardner equation describing oceanic internal waves
\cite{GPT99,HMHS11}. The competing double-power nonlinearity in \eqref{eq} is also of physical interest. In \cite{ZMMZ25}, the authors study a version of \eqref{eq} with an additional periodic potential for $d=1,2$ and $(p,q)=(3,5)$ in the context of nonlinear optics, referring to the nonlinearity as the inverted cubic-quintic nonlinearity.

Equation~\eqref{eq} admits standing waves $e^{i\omega t}\phi_\omega(x)$
for $\omega\ge0$, where $\phi_\omega$ is the unique positive, radial,
decreasing ground state of
\begin{equation}\label{eq:spomega}
  -\Delta\phi+\omega\phi+|\phi|^{p-1}\phi-|\phi|^{q-1}\phi=0,
  \qquad x\in\R^d.
\end{equation}
We collect the basic elliptic properties of these profiles in Section~\ref{sec:ellip}.
For $\omega>0$, the profile $\phi_\omega$ belongs to $H^1(\R^d)$ and decays exponentially at infinity. At $\omega=0$, the profile $\phi_0$ belongs to $\dot H^1(\R^d)\cap L^{p+1}(\R^d)$, and $u(t,x)=\phi_0(x)$ is a stationary solution of \eqref{eq}. The algebraic decay of $\phi_0$ exhibits three distinct regimes:
\begin{align}
\label{decay}
  \phi_0(r)\sim
  \begin{cases}
    r^{-2/(p-1)}, & p<p^*,\\[2pt]
    r^{-(d-2)}(\log r)^{-(d-2)/2}, & p=p^*,\\[2pt]
    r^{-(d-2)}, & p>p^*,
  \end{cases}
  \qquad r\to\infty,
\end{align}
where $p^*=p^*(d)\ce d/(d-2)_+$.
These sharp asymptotics yield the following criterion for $L^2$-integrability:
\begin{align}
\label{l2}
  \phi_0\in L^2(\R^d)
  \iff
  \begin{cases}
    1<p<p_{\mathrm c}(d), & 1\le d\le3,\\
    1<p\le p_{\mathrm c}(4), & d=4,\\
    1<p<\dfrac{d+2}{d-2}, & d\ge5.
  \end{cases}
\end{align}

\subsection{Stability and instability}
We now introduce the notion of stability modulo phase rotations and spatial translations, commonly referred to as orbital stability.
\begin{definition*}
When $\omega=0$, we assume that $\phi_0\in L^2(\R^d)$.
The standing wave $e^{i\omega t}\phi_\omega$ of \eqref{eq}
is \emph{stable} if, for every $\eps>0$, there exists $\delta>0$
such that whenever $u_0\in H^1(\R^d)$ satisfies
$\|u_0-\phi_\omega\|_{H^1}<\delta$,
the $H^1$-solution $u(t)$ of \eqref{eq} with $u(0)=u_0$
exists globally in time and satisfies
\[
   \sup_{t\in\R}\inf_{(\theta,y)\in\R\times\R^d}
 \|u(t)-e^{i\theta}\phi_\omega(\cdot-y)\|_{H^1}<\eps.
\]
Otherwise, the standing wave is \emph{unstable}.
\end{definition*}

Define $M(\omega)\ce\frac12\|\phi_\omega\|_{L^2}^2$ for
$\omega>0$. The general theory of Grillakis, Shatah, and Strauss
\cite{GSS87} relates stability to the slope
\begin{align}
\label{Mprime}
  M'(\omega)
  =\rbra[\pt_\omega\phi_\omega,\phi_\omega]_{L^2},
  \qquad\omega>0.
\end{align}
More precisely, $e^{i\omega t}\phi_\omega$ is stable if
$M'(\omega)>0$ and unstable if $M'(\omega)<0$. 
However, determining the sign of $M'(\omega)$ for double-power nonlinearities is a delicate problem, especially in higher dimensions, because the equation lacks the scaling symmetry of the pure-power case.

When $q\ge p_{\mathrm c}(d)$, the blow-up argument of \cite{BC81} can be adapted to the double-power setting. Indeed, it was proved in \cite{FH21} that $e^{i\omega t}\phi_\omega$ is strongly unstable for every \mbox{$\omega\ge0$}.\footnote{Here strong instability means that there are initial data arbitrarily close to the profile whose solutions blow up in finite time. If $\phi_0\notin L^2(\R^d)$, proximity of the initial data to $\phi_0$ is measured in the topology of $\dot H^1(\R^d)\cap L^{p+1}(\R^d)$.}
This strong instability implies instability in the above
sense whenever the profile belongs to $H^1(\R^d)$.
By contrast, when $q<p_{\mathrm c}(d)$, the instability problem becomes more delicate, not only because finite time blow-up is ruled out in this regime but also because stability can depend on the frequency.
Indeed, the standing wave is stable for sufficiently large $\omega>0$ by a perturbation argument \cite{F03}, but it can be unstable for sufficiently small $\omega>0$. Such a frequency-dependent change in stability was first demonstrated by Ohta \cite{O95ko} in the one-dimensional double-power setting.
We note that scaling invariance precludes this phenomenon for pure-power nonlinearities. In dimension one, the stability of standing waves with positive frequencies has now been completely classified for double-power nonlinearities; see \cite{M08,KLT22}.

In higher dimensions, recent advances have been made in the study of mass-constrained variational problems. We refer to \cite{JL22cv} for the general result and to \cite{BFG25} for the recent result on \eqref{eq}.
We note that, once the mass-constrained minimization problem is solved, the argument of \cite{CL82} yields stability of the corresponding set of minimizers.
Unlike the result in \cite{F03}, these variational results can also address stability at intermediate frequencies. However, since they are formulated in terms of prescribed mass, they do not directly yield a stability classification for each fixed frequency $\omega>0$.

At $\omega=0$, the general theory of stability and instability in \cite{GSS87} does not apply in our setting. The fundamental obstruction is the loss of $H^1$ coercivity of the linearized operator. To explain this, define
\begin{align}
\label{eq:lin}
  V_\omega\ce p\phi_\omega^{p-1}-q\phi_\omega^{q-1},
  \qquad L_\omega\ce-\Delta+\omega+V_\omega,
\end{align}
where $L_\omega$ is the real linearized operator about $\phi_\omega$.
Since $V_\omega$ tends to zero at infinity, the essential spectrum of $L_\omega$ is
\begin{equation}
\label{ess-spc}
  \sigma_{\mathrm{ess}}(L_\omega)=[\omega,\infty).
\end{equation}
In particular, $\sigma_{\mathrm{ess}}(L_0)=[0,\infty)$, so there is no spectral gap above zero. Consequently, the $H^1$ coercivity estimate required by \cite{GSS87} fails, even after imposing the usual orthogonality conditions. A further difficulty at zero frequency concerns the slope $M'(\omega)$: neither the existence of $\lim_{\omega\downarrow0}M'(\omega)$ nor a formula for this limit is automatic.

Nevertheless, in \cite{FH21}, the authors first proved instability at zero frequency in the mass-subcritical regime under the condition
\begin{equation}\label{eq:gamma}
  q>\gamma_d(p),
  \qquad
  \gamma_d(p)\ce
  \frac{16+d^2+6d-pd(d+2)}{d\bigl(d+2-(d-2)p\bigr)};
\end{equation}
see Figures~\ref{fig:1} and \ref{fig:sharp}.
This condition has the following characterization:
\begin{align}
{\eqref{eq:gamma}}\iff 
 \left.\frac{d^2}{d\lambda^2}E(\phi_0^\lambda)\right|_{\lambda=1}<0
\quad
(\phi_0^\lambda(x)\ce\lambda^{d/2}\phi_0(\lambda x) ),
\label{eq:scaling}
\end{align}
which is well suited to a virial argument combined with variational methods (see \cite{O95}). However, this condition is not expected to be sharp for instability in general.
\begin{figure}[t]
\centering
\begin{minipage}[b]{0.45\linewidth}
  \centering
\includegraphics[width=0.9\linewidth]{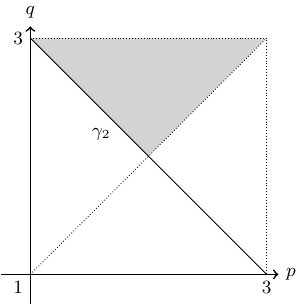}
  \subcaption{$d=2$}
  \label{fig:1a}
\end{minipage}
\begin{minipage}[b]{0.45\linewidth}
  \centering
\includegraphics[width=0.9\linewidth]{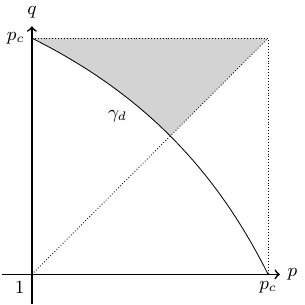}
  \subcaption{$d\ge3$}
  \label{fig:1b}
\end{minipage}
\caption{The shaded regions represent $q>\gamma_d(p)$ in the range
$1<p<q<p_{\mathrm c}(d)$. On $[1,p_{\mathrm c}(d)]$, the graph of $\gamma_d$ is a straight line for $d=2$ and strictly concave for $d\ge3$.
}%
\label{fig:1}
\end{figure}
In the subsequent work \cite{FH25}, the authors proved, for $d=1$
and $1<p<q<5$, that $\phi_0$ is unstable under the condition
\begin{align}
\label{om0nega}
  M'(0)\ce\lim_{\omega\downarrow0}M'(\omega)\in[-\infty,0).
\end{align}
In dimension one, the existence of this limit has been established,
and
\[
  \eqref{om0nega}\quad\Longleftrightarrow\quad 2p+q>7;
\]
see \cite{H21,KLT22}. 
Condition~\eqref{om0nega} is the natural counterpart of the negative slope criterion $M'(\omega)<0$ for positive frequencies. This correspondence suggests that \eqref{om0nega} should also be sharp for instability at zero frequency.\footnote{The sharpness of the positive-frequency criterion is understood away from the degenerate case $M'(\omega)=0$.}

\begin{figure}[t]
\centering
\includegraphics[width=0.4\linewidth]{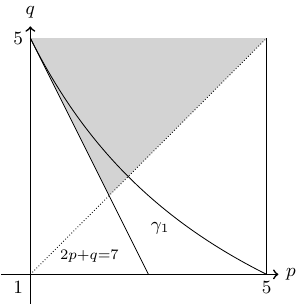}
\caption{Comparison of $q>\gamma_1(p)$ with the condition
$M'(0)<0$, equivalently $2p+q>7$, in $d=1$. The shaded region
represents the latter condition in $1<p<q<5$.}
\label{fig:sharp}
\end{figure}

\subsection{Main result}
The purpose of this paper is to develop a higher-dimensional
analogue of the zero-frequency instability result in \cite{FH25}.
We construct the canonical limit $\eta_0$ of the frequency
derivatives $\pt_\omega\phi_\omega$ and formulate the instability
criterion through its pairing with $\phi_0$. This pairing is the
natural zero-frequency counterpart of \eqref{Mprime}. 

The appropriate formulation involves a weighted $L^2$ condition.
Set
\[
  p_{\mathrm w}(d)\ce1+\frac{4}{d+2},
  \qquad \braket{x}\ce(1+|x|^2)^{1/2}.
\]
It follows from \eqref{decay} that
\begin{align}
\label{wl2}
  \braket{x}\phi_0\in L^2(\R^d)
  \iff
  \begin{cases}
    1<p<p_{\mathrm w}(d), & 1\le d\le5,\\
    1<p\le p_{\mathrm w}(6), & d=6,\\
    1<p<\dfrac{d+2}{d-2}, & d\ge7;
  \end{cases}
\end{align}
see also Table~\ref{t-p}.
\begin{table}[t]
\centering
\begin{tabular}{c|ccc}
  & $p_{\mathrm w}(d)$ & $p_{\mathrm c}(d)$ & $p^*(d)$\\ \hline
  $d=2$ & $2$ & $3$ & $\infty$\\[1pt]
  $d=3$ & $\frac95$ & $\frac73$ & $3$\\[3pt]
  $d=4$ & $\frac53$ & $2$ & $2$\\[3pt]
  $d=5$ & $\frac{11}{7}$ & $\frac95$ & $\frac53$\\[3pt]
  $d=6$ & $\frac32$ & $\frac53$ & $\frac32$
\end{tabular}
\vspace{10pt}
\caption{Comparison of the critical exponents for $2\le d\le6$.
For $d\ge7$, one has $p^*(d)<p_{\mathrm w}(d)<p_{\mathrm c}(d)$.}
\label{t-p}
\end{table}
For $s\in\R$, define
\[
  \cF(H^s)
  \ce\{f\in L^2_{\mathrm{loc}}(\R^d):\braket{x}^sf\in L^2(\R^d)\},
  \qquad
  \|f\|_{\cF(H^s)}\ce\|\braket{x}^sf\|_{L^2}.
\]
Thus \eqref{wl2} is equivalent to $\phi_0\in\cF(H^1)$.
For real-valued measurable functions $f$ and $g$,
we define
\[
  \tbra[f,g]\ce\int_{\R^d}f(x)g(x)\,dx,
\]
where the integral is understood in the Lebesgue sense.
For $f\in\cF(H^{-1})$ and $g\in\cF(H^1)$, this integral is
absolutely convergent and coincides with the usual duality pairing
between $\cF(H^{-1})$ and $\cF(H^1)$.

Our main result is as follows.

\begin{theorem}\label{thm:ode}
Let $d\ge2$. Assume that $\phi_0\in L^2(\R^d)$, equivalently, that the parameters
satisfy \eqref{l2}. Then there exists a unique function
$\eta_0\in C_{\rad}^2(\R^d)$ solving $L_0\eta_0=-\phi_0$ and having
the following behavior as $r\to\infty$:
\begin{alignat*}{2}
  \eta_0(r)&\sim-r^2\phi_0(r) &\qquad &\text{if}~(d,p)\ne(4,2),\\
  \eta_0(r)&\sim-1 &&\text{if}~(d,p)=(4,2).
\end{alignat*}
Moreover,
\[
  \pt_\omega\phi_\omega\to\eta_0
  \quad\text{in }C_{\mathrm{loc}}^2([0,\infty))
  \quad\text{as }\omega\downarrow0.
\]
This characterization of $\eta_0$ yields the following
instability criteria for $\phi_0$.
\begin{enumerate}
\item If $\phi_0\notin\cF(H^1)$, then
\[
  \tbra[\eta_0,\phi_0]=-\infty,
\]
and $\phi_0$ is unstable.

\item If $\phi_0\in\cF(H^1)$, then
$\eta_0\in\cF(H^{-1})$ and the duality pairing $\tbra[\eta_0,\phi_0]$
is finite. If, in addition, $\tbra[\eta_0,\phi_0]<0$, then $\phi_0$ is unstable.
\end{enumerate}
\end{theorem}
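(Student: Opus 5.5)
The plan has three stages: construct $\eta_0$ by ODE analysis, pass to the limit $\omega\downarrow0$, and then prove instability by a localized Lyapunov functional argument.

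\emph{Construction of $\eta_0$.} We work with the radial ODE $L_0\eta=-\eta''-\frac{d-1}{r}\eta'+V_0\eta=-\phi_0$. Since $L_0$ is not invertible in any good space, we build $\eta_0$ by variation of parameters. The homogeneous solution regular at the origin, $\psi$ with $\psi(0)=1$, $\psi'(0)=0$, is studied by a shooting argument. We use that $\Lambda\phi_0=\frac{2}{p-1}\phi_0+r\phi_0'$ is not the right object here, since $\phi_0$ lacks scaling. Instead we compare with the regular solution through Sturm arguments, using that $\phi_0$ is a nondegenerate ground state, so $\ker L_0|_{\rad}$ is trivial in the decaying class. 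The goal is to show that $\psi(r)$ grows at infinity like the non-decaying solution of $-\Delta+V_0$ at infinity, namely like $1$ or like $r^{\alpha}$ with $V_0\sim p\phi_0^{p-1}$. A second solution $\chi$, decaying at infinity, is then obtained by reduction of order with Wronskian $r^{1-d}$.

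We set $\eta_0=c_1\psi+\psi\int_r^\infty\chi\phi_0 s^{d-1}\,ds+\chi\int_0^r\psi\phi_0 s^{d-1}\,ds$, choosing signs so that $L_0\eta_0=-\phi_0$ and $c_1$ so as to kill the growing mode. Uniqueness holds because any two such solutions differ by a multiple of $\psi$, which violates the prescribed asymptotics. The sharp asymptotics $\eta_0\sim-r^2\phi_0$ follow by inserting the three decay regimes \eqref{decay} into the integrals. Heuristically, at infinity $-\Delta\eta\approx-\phi_0$ up to the potential term, which gives $r^2\phi_0$ up to a constant that has to be computed in each regime. In the case $(d,p)=(4,2)$ a resonance produces the constant $-1$.

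\emph{Convergence $\pt_\omega\phi_\omega\to\eta_0$.} Differentiating \eqref{eq:spomega} gives $L_\omega\pt_\omega\phi_\omega=-\phi_\omega$, with $\pt_\omega\phi_\omega$ exponentially decaying. Using uniform-in-$\omega$ bounds on $\phi_\omega\to\phi_0$ from Section~\ref{sec:ellip}, we write $\pt_\omega\phi_\omega$ by the same variation-of-parameters formula with the $\omega$-dependent fundamental pair. Local uniform bounds on $[0,R]$ then come from continuous dependence on $\omega$ of the regular solution $\psi_\omega$, together with an identification of the coefficient of $\psi_\omega$ via the decay condition. The limit solves $L_0\eta=-\phi_0$ with the right growth, so it equals $\eta_0$ by uniqueness.

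For (i), when $\phi_0\notin\cF(H^1)$ the integrand $\eta_0\phi_0\sim-r^2\phi_0^2$ is eventually negative and non-integrable by \eqref{wl2}, so the pairing is $-\infty$. For (ii), $\braket{x}^{-1}\eta_0\sim r\phi_0\in L^2$, so $\eta_0\in\cF(H^{-1})$ and the pairing is finite.

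\emph{Instability.} We follow \cite{FH25}. Take a cutoff $\chi_R$ and set $\eta_R=\chi_R\eta_0\in H^1$. Then $\tbra[L_0\eta_R,\eta_R]=-\tbra[\phi_0,\eta_R]+O(\text{error})$. The commutator errors are of size $\int_{R<|x|<2R}(R^{-2}\eta_0^2+\ldots)$, and these tend to $0$, or at least are dominated by the main term, which tends to $\tbra[\eta_0,\phi_0]<0$ or diverges to $-\infty$. In either case we obtain a direction with $\tbra[L_0\eta_R,\eta_R]<0$ and $\tbra[\phi_0,\eta_R]$ of controlled sign. We then use the variational characterization of $\phi_0$ as a minimizer of the action on the Nehari manifold to show that $\phi_0$ strictly maximizes the energy along a curve $\phi_0+s\eta_R$ projected to fixed mass. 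Next we define the Lyapunov functional $A(u)=\tbra[\operatorname{Im}\bar u,\eta_R\text{-generator}]$, a localized virial-type quantity whose time derivative equals $P(u)$, where $P$ is the mass-preserving derivative of $E$ in the $\eta_R$ direction. On the set of data with $E<E(\phi_0)$, $M=M(\phi_0)$, and orbital closeness, $P(u)$ keeps a sign bounded away from zero, so $A$ grows linearly while staying bounded near the orbit, which is a contradiction.

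\emph{Main obstacle.} I expect the hardest step to be the shooting analysis of the regular homogeneous solution $\psi$, namely showing that it has the correct growth at infinity with a nonvanishing coefficient, uniformly enough to pass $\omega\downarrow0$. The second difficulty is controlling the cutoff errors in the borderline cases $p=p^*$ and $(d,p)=(4,2)$.
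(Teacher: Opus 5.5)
Your construction of $\eta_0$ by variation of parameters and your Nehari/Lyapunov strategy are in line with the paper. The gap is in the unstable direction you feed into the Lyapunov argument, which has the wrong sign. From $L_0\eta_0=-\phi_0$ you correctly get $\langle L_0\eta_R,\eta_R\rangle=-\langle\phi_0,\eta_R\rangle+(\text{cutoff errors})$, but then you drop the minus sign. Under $\langle\eta_0,\phi_0\rangle<0$ the main term tends to $-\langle\eta_0,\phi_0\rangle>0$, and in case (i) it tends to $+\infty$. So $\eta_R=\chi_R\eta_0$ is a \emph{positive} direction of $L_0$, just as $\langle L_\omega\eta_\omega,\eta_\omega\rangle=-M'(\omega)>0$ in the unstable regime of Grillakis--Shatah--Strauss. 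Moreover, Proposition~\ref{prop:inst} requires exact orthogonality $(\psi,\phi_0)_{L^2}=0$, which is what gives $a(\phi_0)=\psi$ for the phase-modulated functional $A$; a sign condition on $\langle\phi_0,\eta_R\rangle$ is not enough.

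The missing idea is that the negativity comes from $\phi_0$ itself, via $\langle L_0\phi_0,\phi_0\rangle=-(q-1)\|\nabla\phi_0\|_{L^2}^2-(q-p)\|\phi_0\|_{L^{p+1}}^{p+1}<0$, while the localized $\eta_0$ only restores orthogonality. Set $\beta_R=-\|\phi_0\|_{L^2}^2/\langle\eta_R,\phi_0\rangle$ and $\psi_R=\phi_0+\beta_R\eta_R$. Then $\langle L_0\eta_R,\phi_0\rangle\to-\|\phi_0\|_{L^2}^2$, and in case (ii) $\langle L_0\psi_R,\psi_R\rangle\to\langle L_0\phi_0,\phi_0\rangle+\|\phi_0\|_{L^2}^4/\langle\eta_0,\phi_0\rangle<0$. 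Your curve ``$\phi_0+s\eta_R$ projected to fixed mass'' has tangent proportional to $\psi_R$, so that idea can be rescued, but only through this computation.

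In case (i) the mechanism is $\beta_R\to0$, and you must show $\beta_R^2\langle L_0\eta_R,\eta_R\rangle\to0$. When $2\alpha<d+2$ the cutoff errors are only $O(R^{d+2-2\alpha})$, the same order as the main term. For $(d,p)=(4,2)$ they are of order $R^2$, against a main term of order $R^2/\log R$. So ``dominated by the main term'' is not available. What is needed is the quantitative bound $|\beta_R|\lesssim h(R)$, with $h(R)=R^{2\alpha-d-2}$ or $(\log R)^{-1}$, and its logarithmic analogues for $p=p^*$, $d=4,5$.

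A second, smaller gap is the identification of $\lim_{\omega\downarrow0}\pt_\omega\phi_\omega$. Continuous dependence controls only the regular solution. It does not control the solution $w_\omega^-$ selected by decay at infinity, nor the tail integral $\int_r^\infty w_\omega^-\phi_\omega s^{d-1}\,ds$. Local $C^2$ compactness only yields limits of the form $\eta_0+cv_0^+$, about which you know nothing at infinity, so ``with the right growth'' cannot be used to force $c=0$. The paper closes this in four steps:
\begin{enumerate}
\item it normalizes $\widetilde w_\omega^-$ at a fixed $R_0$;
\item it proves $0<\widetilde w_\omega^-\le C\phi_\omega\lesssim\phi_0$ on $[R_0,\infty)$ uniformly in $\omega$, using the maximum principle and the uniform decay bounds for $\phi_\omega$ from \cite{FH21};
\item it deduces $\widetilde w_\omega^-\to\widetilde w_0^-$ locally and $\eta_\omega(0)\to\eta_0(0)$ by dominated convergence, which is where $\phi_0\in L^2$ enters;
\item it concludes with Gronwall.
\end{enumerate}

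Finally, a correction to your heuristics. The regular solution grows like $r^{\lambda_+}$, $1$, or $(\log r)^{d/2}$ for $p<p^*$, $p>p^*$, $p=p^*$ respectively, not like $r^\alpha$. The sharp asymptotics of $\eta_0$ rest on $\lambda_-<\alpha-d<\lambda_+$, a consequence of $\phi_0\in L^2$. This makes both terms of the representation of exact order $r^{2-\alpha}$ with the same negative sign, so they cannot cancel.
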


\begin{remark*}
The exceptional nature of the case $(d,p)=(4,2)$ stems from the coincidence of the two critical exponents: $p_{\mathrm c}(4)=p^*(4)=2$. This case also marks an endpoint of the parameter range in which our ODE representation of $\eta_0$ remains valid; see Section~\ref{sec:4} for details.
\end{remark*}

We briefly describe the main difficulties and outline the proof.
The central step is an ODE construction of $\eta_0$, together with
sharp estimates of its decay/growth at infinity. For
$\omega>0$, define the function $\eta_\omega$ by 
\begin{align*}
\eta_\omega\ce\pt_\omega\phi_\omega.
\end{align*}
This function satisfies $L_\omega\eta_\omega=-\phi_\omega$, whose radial form is
\begin{equation}\label{inhom-ode}
  \eta''+\frac{d-1}{r}\eta'
  -\bigl(\omega+V_\omega(r)\bigr)\eta=\phi_\omega,
  \qquad r>0.
\end{equation}
In dimension one, a quadrature formula for $\phi_\omega$ is available (see \cite{IK93}), and the construction in \cite{FH25} makes direct use of it. In dimensions $d\ge2$, however, the radial term $(d-1)r^{-1}\phi'$ in the profile equation prevents the same quadrature-based approach. This is not merely a technical obstruction: the term itself introduces genuine analytical difficulties, as seen in the more delicate uniqueness problem for ground states and the three distinct decay regimes in \eqref{decay}. 

We construct $\eta_0$ as a solution of \eqref{inhom-ode}
with $\omega=0$ by variation of parameters, using appropriate
fundamental solutions of the associated homogeneous equation.
A key step in this construction is to obtain an ODE representation
of $\eta_\omega$ for $\omega>0$ and then pass to the limit as
$\omega\downarrow0$.
This procedure selects the solution $\eta_0$ of \eqref{inhom-ode}
with $\omega=0$, which is precisely the one needed in the
instability criterion for $\phi_0$.
When $p\le p^*$, constructing the fundamental solutions
from infinity is rather delicate: it requires the precise
leading coefficient and remainder estimates for $\phi_0$
in Proposition~\ref{prop:V}, beyond the sharp decay rates in \eqref{decay}.
The asymptotic analysis of $\eta_0$ also requires global information
about the homogeneous solution regular at the origin.
To obtain this information, we use the shooting analysis
underlying the uniqueness argument in \cite{M93}.\footnote{
As discussed below, our approach extends beyond double-power
nonlinearities, but this shooting analysis imposes the main
structural restrictions on such extensions.}
The resulting asymptotic behavior of $\eta_0$ is also essential
for uniqueness: the inhomogeneous equation \eqref{inhom-ode}
alone does not determine a unique solution,
since one may add a solution of the homogeneous equation.
The prescribed behavior at infinity, however, singles out $\eta_0$.
The uniqueness argument for $\eta_0$ is also related to that
for radial solutions of inhomogeneous linear elliptic equations
in \cite{FMR06}.

The function $\eta_0$ need not belong to $L^2(\R^d)$. Under either
instability hypothesis in Theorem~\ref{thm:ode}, we localize it and
use the resulting functions to construct a real-valued
unstable direction $\psi\in H^1_{\rad}(\R^d)$ satisfying
\[
  \rbra[\psi,\phi_0]_{L^2}=0,
  \qquad \tbra[L_0\psi,\psi]<0.
\]
The sharp asymptotics of $\eta_0$ are needed both to compute this
quadratic form and to control the errors introduced by
localization. Once such a direction has been constructed,
instability follows from a Lyapunov functional argument. The
variational characterization of $\phi_0$ on the Nehari manifold
provides the energy comparison needed in this argument, thereby
replacing the unavailable $H^1$ coercivity of the linearized
operator.

\subsection{Further results}

In view of \eqref{Mprime} and Theorem~\ref{thm:ode}, it is natural to expect the identity
\begin{equation}
\label{M0-eta0}
  M'(0)=\tbra[\eta_0,\phi_0]\in[-\infty,\infty).
\end{equation}
Establishing this identity, however, is more delicate than it may appear:
even the existence of the limit defining $M'(0)$ is nontrivial.
For $d=1$, \eqref{M0-eta0} was established in
\cite[Theorem~3.1]{FH25}.
For $d\ge2$, the following theorem establishes \eqref{M0-eta0}
with both sides equal to $-\infty$ when $\phi_0\notin\cF(H^1)$,
and identifies the finite slope under additional assumptions when
$\phi_0\in\cF(H^1)$.
\begin{theorem}\label{thm:res}
Let $d\ge2$ and assume that $\phi_0\in L^2(\R^d)$.
\begin{enumerate}[beginpenalty=10000]
\item If $\phi_0\notin\cF(H^1)$, then
\[
  M'(0)=\lim_{\omega\downarrow0}M'(\omega)
  =\tbra[\eta_0,\phi_0]=-\infty.
\]

\item If $\phi_0\in\cF(H^1)$, $d\ge6$, and
\begin{equation}\label{eq:high-d}
  p=p^* \quad\text{if }d=6,
  \quad
  p\ge p^* \quad\text{if }d\ge7,
\end{equation}
then
\begin{equation}\label{eq:limM}
  M'(0)=\lim_{\omega\downarrow0}M'(\omega)
  =\tbra[\eta_0,\phi_0]\in\R.
\end{equation}
\end{enumerate}
\end{theorem}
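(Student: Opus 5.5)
The plan is to start from $M'(\omega)=\tbra[\eta_\omega,\phi_\omega]$ for $\omega>0$, where $\eta_\omega=\pt_\omega\phi_\omega$, and pass to the limit $\omega\downarrow0$. Theorem~\ref{thm:ode} gives $\eta_\omega\to\eta_0$ in $C^2_{\mathrm{loc}}$ and $\phi_\omega\to\phi_0$ locally uniformly. So on any fixed ball $B_R$ we have $\int_{B_R}\eta_\omega\phi_\omega\to\int_{B_R}\eta_0\phi_0$. The whole problem is therefore to control the tails $\int_{|x|>R}\eta_\omega\phi_\omega$ uniformly in small $\omega$. For this I would use the ODE representation of $\eta_\omega$ by variation of parameters from the construction of $\eta_0$. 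The pointwise comparison I aim for is
\[
\eta_\omega(r)\le0 \quad\text{and}\quad |\eta_\omega(r)|\lesssim r^2\phi_\omega(r)\ \ (r\ge R_0).
\]
Here $R_0$ and the implicit constant are independent of $\omega\in(0,\omega_0)$. At $\omega=0$ this estimate is exactly the asymptotics $\eta_0\sim-r^2\phi_0$.

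\emph{Case (i).} Here only a one-sided bound is needed, which makes it the easier case. By Theorem~\ref{thm:ode}(i), $\tbra[\eta_0,\phi_0]=-\infty$. Since $\eta_0\sim-r^2\phi_0$, the integrand $\eta_0\phi_0$ is negative for large $r$ and $r^2\phi_0^2\notin L^1$. The step is to show that $\eta_\omega\le0$ on $[R_0,\infty)$ for all small $\omega$. I would argue from the representation: the homogeneous solution regular at the origin keeps its sign beyond its last zero, by the shooting analysis, uniformly in $\omega$; and the source $\phi_\omega>0$ then forces negativity. Given this, for each $R$,
\[
M'(\omega)\le\int_{B_{R_0}}\eta_\omega\phi_\omega+\int_{R_0<|x|<R}\eta_\omega\phi_\omega .
\]
The right-hand side converges to $\int_{B_R}\eta_0\phi_0+O(1)$, and this tends to $-\infty$ as $R\to\infty$. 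Hence $\limsup_{\omega\downarrow0}M'(\omega)=-\infty$, so the limit exists and equals $-\infty$.

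\emph{Case (ii).} Under $d\ge6$ and \eqref{eq:high-d}, the profile satisfies $\phi_0\sim r^{-(d-2)}$, possibly with a logarithmic factor when $p=p^*$. In addition $\phi_\omega\lesssim r^{-(d-2)}e^{-c\sqrt\omega r}$, and more usefully $\phi_\omega\le C\phi_0$ uniformly, which follows from comparison since $p\ge p^*$ makes $\phi_0$ decay like the Newtonian fundamental solution. Together with the uniform bound $|\eta_\omega|\lesssim r^2\phi_\omega$, we get $|\eta_\omega\phi_\omega|\lesssim r^2\phi_0^2$. This majorant is integrable precisely because $\phi_0\in\cF(H^1)$. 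Dominated convergence then gives \eqref{eq:limM}. The assumption \eqref{eq:high-d} is used exactly here: in this regime the potential $V_\omega$ is a short-range perturbation, and the fundamental solutions of $L_\omega$ can be compared with the free modified Bessel functions uniformly in $\omega$.

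\emph{Main obstacle.} The hard part is the uniform-in-$\omega$ version of the pointwise bound on $\eta_\omega$, meaning control of the $\omega$-dependent fundamental solutions near $r\sim\omega^{-1/2}$, where the decay transitions from algebraic to exponential. I would first try to write $\eta_\omega$ explicitly through the Wronskian formula with the decaying solution $\sim r^{-(d-2)/2}K_{(d-2)/2}(\sqrt\omega r)$. I would then estimate the Green kernel by the $\omega=0$ kernel $\lesssim r^{2-d}$, valid for $s<r$, plus the exponentially damped contribution for $s>r$. This is why the restriction to $d\ge6$, $p\ge p^*$ should enter.
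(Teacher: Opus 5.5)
\textbf{Part (i).} This follows the paper's argument (Theorem~\ref{thm:massdiv}): a sign for $\eta_\omega$ on an exterior region, uniform in small $\omega$, followed by a Fatou-type truncation. Your justification of the sign is incomplete, however, for two reasons.
\begin{enumerate}
\item The shooting analysis is carried out at fixed $\omega$, so uniform negativity of $v_\omega^+$ beyond a fixed radius needs a separate argument.
\item The term $w_\omega^-(r)\int_0^r v_\omega^+\phi_\omega s^{d-1}\,ds$ in the representation becomes negative only once the integral has turned negative.
\end{enumerate}
The paper's route is simpler. Choose $R\ge R_0$ with $\eta_0(R)<0$; then $\eta_\omega(R)<0$ for small $\omega$. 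On $\{|x|>R\}$ one has $L_\omega\eta_\omega=-\phi_\omega<0$ and $\omega+V_\omega>0$ uniformly in $\omega$ (Lemma~\ref{lem:wbound}), and $\eta_\omega\to0$ at infinity. The maximum principle then gives $\eta_\omega\le0$ there.

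\textbf{Part (ii): the gap.} Your dominated-convergence argument rests entirely on the bound $|\eta_\omega(r)|\lesssim r^2\phi_\omega(r)$ for $r\ge R_0$, uniformly in small $\omega$. You do not prove this bound, and the mechanism you propose for it fails in part of the stated range.
\begin{itemize}
\item For $p=p^*$ (the only admissible case when $d=6$), $V_0(r)=\frac{d(d-2)}{2r^2\log r}+\cdots$, so $\int^\infty r|V_0(r)|\,dr=\infty$. Thus $V_0$ is not short range in the ODE sense, and indeed $v_0^+\sim c_+(\log r)^{d/2}$ is unbounded (Proposition~\ref{prop:vstar}).
\item Since $v_\omega^+\to v_0^+$ locally uniformly, while the normalized regular free solution $\Gamma(\frac d2)(\frac{2}{\sqrt\omega r})^{(d-2)/2}I_{(d-2)/2}(\sqrt\omega r)$ tends to $1$, no bound of $v_\omega^+$ by free Bessel functions with $\omega$-independent constants can hold. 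You would need log-corrected comparison functions valid uniformly across $r\sim\omega^{-1/2}$, and nothing in your sketch supplies them.
\item Even for $p>p^*$, uniform control of the growing solution $v_\omega^+$ through that transition region is a real matched-asymptotics problem. The paper's ODE analysis provides only the one-sided bound $\widetilde w_\omega^-\le C\phi_\omega$.
\end{itemize}
Note also that, once the pointwise bound is granted, your majorant $r^2\phi_0^2$ uses only $\phi_0\in\cF(H^1)$ and $\phi_\omega\lesssim\phi_0$, both available for all $p$. So the analogous bound for $p<p^*$ would settle the case the paper explicitly leaves open, which shows how much is hidden in that step.

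\textbf{What the paper does instead.} It avoids pointwise control of $\eta_\omega$ at infinity altogether. It writes $M'(\omega)=-\langle(I+B_\omega)^{-1}h_\omega,h_\omega\rangle$, with $h_\omega=(-\Delta+\omega)^{-1/2}\phi_\omega$ and $B_\omega=(-\Delta+\omega)^{-1/2}V_\omega(-\Delta+\omega)^{-1/2}$.
\begin{itemize}
\item Given $\phi_0\in\cF(H^1)$, hypothesis \eqref{eq:high-d} is equivalent to $V_0\in L^{d/2}$, which, unlike $rV_0\in L^1$, holds when $p=p^*$.
\item By Hardy--Littlewood--Sobolev, this makes $B_0$ compact on $L^2_{\rad}$, and gives $B_\omega\to B_0$ in operator norm and $h_\omega\to h_0$ in $L^2$.
\item The absence of a zero eigenvalue or resonance, from the shooting analysis, makes $I+B_0$ invertible.
\item Finally, $\eta_0\in\dot H^1_{\rad}$ identifies the limit with $\langle\eta_0,\phi_0\rangle$.
\end{itemize}
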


The divergence statement in Theorem~\ref{thm:res}\,\textup{(i)} is closely related to a sharp threshold previously obtained in dimension one. Indeed, the second author proved in \cite{H21} that
\[
  M'(0)=-\infty\quad\Longleftrightarrow\quad p\ge\frac73,
\]
and we note that the threshold $7/3$ is precisely $p_{\mathrm w}(1)$. Our result
reveals the weighted integrability condition behind this
threshold and extends the implication
$\phi_0\notin\cF(H^1)\Rightarrow M'(0)=-\infty$ to higher
dimensions. The same weight is natural in the duality pairing
$\tbra[\eta_0,\phi_0]$: the leading behavior
$\eta_0\sim-r^2\phi_0$, together with the exceptional asymptotics
at $(d,p)=(4,2)$, determines precisely when this pairing is
absolutely convergent.

We prove part~\textup{(ii)} by following the resolvent argument in \cite[Section~4.5]{LN20}.
We note that the condition \eqref{eq:high-d} is equivalent to
\[
  \phi_0\in\cF(H^1)
  \quad\text{and}\quad
  V_0\in L^{d/2}(\R^d).
\]
When $V_0\in L^{d/2}(\R^d)$, the Hardy--Littlewood--Sobolev inequality
allows us to treat the potential as a perturbation of the Laplacian.
The identity \eqref{eq:limM} should hold under the sole assumption
$\phi_0\in\cF(H^1)$.
For $p<p^*$, however, $V_0(r)\sim r^{-2}$ as $r\to\infty$,
a decay rate known to be critical in the theory of linear
Schr\"odinger equations; see \cite{BPST04,GVV06}.
Our perturbative argument here does not cover this critical case.

The next theorem relates the instability criterion in
Theorem~\ref{thm:ode} to the sufficient condition in \cite{FH21},
expressed through the $L^2$-invariant scaling in \eqref{eq:scaling}.

\begin{theorem}\label{thm:scale}
Let $d\ge2$. Assume that $\phi_0\in \cF(H^1)$ and
\[
  \left.\frac{d^2}{d\lambda^2}E(\phi_0^\lambda)\right|_{\lambda=1}
  \le 0.
\]
Then $\tbra[\eta_0,\phi_0]<0$.
\end{theorem}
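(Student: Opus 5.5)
The plan is to argue by contradiction: assume $\tbra[\eta_0,\phi_0]\ge0$, and produce a two-dimensional subspace of radial directions on which the quadratic form $\tbra[L_0\cdot,\cdot]$ is nonpositive (after localization, nearly so). This contradicts the fact that $L_0$ restricted to radial functions has exactly one negative direction and trivial kernel. The two directions are the $L^2$-scaling generator and $\eta_0$.

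\textbf{Step 1: the scaling generator.} Set $\Lambda\phi_0\ce\frac{d}{d\lambda}\phi_0^\lambda|_{\lambda=1}=\frac d2\phi_0+x\cdot\nabla\phi_0$. By the decay \eqref{decay} and the derivative bounds of Section~\ref{sec:ellip}, $\Lambda\phi_0$ decays like $\phi_0$, so $\Lambda\phi_0\in\cF(H^1)\cap H^1$. Since $M(\phi_0^\lambda)$ is constant, $\rbra[\Lambda\phi_0,\phi_0]_{L^2}=0$.

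Differentiating $E(\phi_0^\lambda)$ twice gives
\[
\frac{d^2}{d\lambda^2}E(\phi_0^\lambda)\Big|_{\lambda=1}=\tbra[L_0\Lambda\phi_0,\Lambda\phi_0]+\tbra[E'(\phi_0),\Lambda^2\phi_0],
\]
and the last term vanishes because $E'(\phi_0)=0$, as $\phi_0$ solves \eqref{eq:spomega} with $\omega=0$. Hence the hypothesis reads $\tbra[L_0\Lambda\phi_0,\Lambda\phi_0]\le0$. A direct computation also gives $L_0(x\cdot\nabla\phi_0)=-2\Delta\phi_0$, so $L_0\Lambda\phi_0\ne0$ and $\Lambda\phi_0\notin\ker L_0$.

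\textbf{Step 2: cross term and the form on $\eta_0$.} Using $L_0\eta_0=-\phi_0$ and integrating by parts on $B_R$, I would show
\[
\tbra[L_0\eta_0,\Lambda\phi_0]=\tbra[\eta_0,L_0\Lambda\phi_0]=-\rbra[\phi_0,\Lambda\phi_0]_{L^2}=0,
\qquad
\tbra[L_0\eta_0,\eta_0]=-\tbra[\eta_0,\phi_0].
\]
The boundary terms $R^{d-1}(\eta_0'\Lambda\phi_0-\eta_0(\Lambda\phi_0)')(R)$ must tend to $0$. This follows from the sharp asymptotics in Theorem~\ref{thm:ode}, namely $\eta_0\sim-r^2\phi_0$ (or $\eta_0\sim-1$ when $(d,p)=(4,2)$), together with the corresponding derivative bounds and $\phi_0\in\cF(H^1)$. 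For the pairings, $\eta_0\in\cF(H^{-1})$ and $\Lambda\phi_0\in\cF(H^1)$.

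Thus on $\mathrm{span}\{\Lambda\phi_0,\eta_0\}$ the form is diagonal, with entries $\tbra[L_0\Lambda\phi_0,\Lambda\phi_0]\le0$ and $-\tbra[\eta_0,\phi_0]\le0$ under the contradiction hypothesis.

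\textbf{Step 3: localization and the contradiction.} Since $\eta_0$ need not lie in $L^2$, replace it by $\chi_R\eta_0$ with a smooth radial cutoff. Using the same asymptotics as in Step 2, show that $\tbra[L_0(\chi_R\eta_0),\chi_R\eta_0]\to-\tbra[\eta_0,\phi_0]$ and that the cross term tends to $0$; the commutator terms $|\nabla\chi_R|^2\eta_0^2$ are controlled by $R^{-2}\int_{R<|x|<2R}\eta_0^2\lesssim\int_{|x|>R}|x|^2\phi_0^2\to0$. On the other hand, $L_0$ has exactly one negative radial eigenvalue, witnessed by $\tbra[L_0\phi_0,\phi_0]=(p-1)\|\phi_0\|_{L^{p+1}}^{p+1}-(q-1)\|\phi_0\|_{L^{q+1}}^{q+1}<0$ via the Nehari/Pohozaev identities, and the Morse index is one by the Nehari variational characterization. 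Its radial kernel is trivial by the shooting analysis, since the regular homogeneous solution is not decaying. Together these give the quantitative statement that $L_0$ is nonnegative on the $\tbra[L_0\cdot,\cdot]$-orthogonal complement of the negative direction.

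If both diagonal entries are $\le0$, the two-dimensional subspace spanned by $\Lambda\phi_0$ and $\chi_R\eta_0$ is, in the limit, nonpositive. The equality cases must then be ruled out. If $\tbra[L_0\Lambda\phi_0,\Lambda\phi_0]=0$ and $\Lambda\phi_0$ is orthogonal to the negative direction, minimality forces $L_0\Lambda\phi_0=a\phi_0+b\,L_0\eta_0$-type relations, hence $\Lambda\phi_0=-a\eta_0+\ker L_0$. This is impossible: $\eta_0\sim-r^2\phi_0$ grows relative to $\Lambda\phi_0\sim\phi_0$, forcing $a=0$, and then $\Lambda\phi_0\in\ker L_0$ contradicts Step 1. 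The strict case gives a genuine two-dimensional negative subspace, contradicting Morse index one.

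\textbf{Main obstacle.} I expect the hardest part to be Step 3. One must make the min–max argument rigorous with no spectral gap ($\sigma_{\mathrm{ess}}(L_0)=[0,\infty)$) and with the non-$L^2$ function $\eta_0$. The equality case $\tbra[\eta_0,\phi_0]=0$ is the delicate one. I would first try to handle it by a small perturbation $\Lambda\phi_0+\eps\chi_R\eta_0$, choosing $\eps$ and $R$ in a coupled way so that the $O(1)$ localization errors are dominated, and then use the exact asymptotic comparison between $\eta_0$ and $\Lambda\phi_0$ to exclude the degenerate alignment.
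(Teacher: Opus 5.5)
Your Steps 1 and 2 are correct and coincide with the computations in the paper: $\tbra[\phi_0,\Lambda\phi_0]=0$, the cross term vanishes, and $\tbra[L_0\eta_0,\eta_0]=-\tbra[\eta_0,\phi_0]$. The gap is in Step 3, and it sits exactly at the \emph{strictness} of the conclusion.

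Write $\chi$ for the negative radial eigenfunction of $L_0$, $\varphi_R$ for your cutoff (to avoid a clash with $\chi_R$), and $D\ce\left.\frac{d^2}{d\lambda^2}E(\phi_0^\lambda)\right|_{\lambda=1}$.

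\textbf{The equality case you treat cannot occur.} You consider ``$D=0$ and $\Lambda\phi_0\perp\chi$''. But since $\Lambda\phi_0\in H^1_{\rad}$, orthogonality to $\chi$ together with $D\le0$ would give form value $0$ on $\chi^\perp$. Cauchy--Schwarz on $\chi^\perp$ would then give $L_0\Lambda\phi_0=0$, which you already excluded. So the hypothesis forces $\tbra[\Lambda\phi_0,\chi]\ne0$.

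\textbf{What your ingredients actually give is non-strict.} Set $v_R\ce\varphi_R\eta_0+\beta_R\Lambda\phi_0$ with $\tbra[v_R,\chi]=0$. Morse index one gives
\[
0\le\tbra[L_0v_R,v_R]\to-\tbra[\eta_0,\phi_0]+\beta^2D,
\]
that is, only $\tbra[\eta_0,\phi_0]\le\beta^2D\le0$. The possibility $\tbra[\eta_0,\phi_0]=0$ is not excluded whenever $\beta^2D=0$. This includes the boundary case $D=0$, which the theorem is designed to cover.

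\textbf{Your proposed perturbation cannot close this.} Using $\Lambda\phi_0+\eps\varphi_R\eta_0$ with coupled $\eps$ and $R$ does not help. When the limiting $2\times2$ Gram matrix is only semidefinite, the $o(1)$ localization errors have no sign. No coupling of $\eps$ and $R$ then produces a negative-definite plane in $H^1$.

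\textbf{The missing idea is a rigidity statement.} You need: a nonzero $v$ orthogonal to $\chi$ on which the form vanishes must solve $L_0v=0$, and no such $v$ exists in a class containing $\eta_0$. Note that $\eta_0\notin L^2$ in general, so triviality of the $H^1$ kernel is not the right input.

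The paper proves this in the weighted space $\scX=\{v\in\dot H^1_{\rad}:\phi_0^{(p-1)/2}v\in L^2\}$, which contains $\eta_0$ because $\phi_0\in\cF(H^1)$.
\begin{itemize}
\item Nonnegativity on $\{\tbra[v,\chi]=0\}$ extends from $H^1_{\rad}$ by density.
\item If the form vanishes at $v$, Cauchy--Schwarz on $\chi^\perp$ together with $\tbra[L_0v,\chi]=e_0\tbra[v,\chi]=0$ gives $L_0v=0$ weakly.
\item Hence $v$ is a multiple of the regular solution $v_0^+$, which is not in $\scX$ by the shooting analysis.
\end{itemize}
Apply this to $v=\eta_0+\beta\Lambda\phi_0$. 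It is nonzero, since $\eta_0\sim-r^2\phi_0$ while $\Lambda\phi_0=O(\phi_0)$. The expansion then gives $\tbra[\phi_0,\eta_0]<\beta^2D\le0$ directly, without any contradiction argument.

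You could also run the rigidity step with your cutoffs. For radial $w\in C_c^\infty$, set $c_w\ce\tbra[w,\chi]/\norm[\chi]_{L^2}^2$. Then $\tbra[L_0v,w]=\lim_R\tbra[L_0v_R,w-c_w\chi]$, and this is bounded by $\tbra[L_0v_R,v_R]^{1/2}\tbra[L_0(w-c_w\chi),w-c_w\chi]^{1/2}\to0$ in the degenerate case. Afterwards conclude $v=cv_0^+$, and $c=0$ from $v=o(w_0^+)$ and $v_0^+\sim-w_0^+$.

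Your relation ``$\Lambda\phi_0=-a\eta_0+\ker L_0$'' gestures at this final independence step. Without the rigidity step, however, nothing produces an equation $L_0v=0$ in the first place.
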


The same computation underlying \eqref{eq:scaling} shows
that the scaling condition in Theorem~\ref{thm:scale} is equivalent to $q\ge\gamma_d(p)$.
Thus Theorem~\ref{thm:scale} provides an explicit condition on $(p,q,d)$ under which the instability criterion in Theorem~\ref{thm:ode}\,\textup{(ii)} is satisfied. Combined with Theorem~\ref{thm:ode}\,\textup{(i)}, this yields instability of $\phi_0$ whenever $\phi_0\in L^2(\R^d)$ and $q\ge\gamma_d(p)$: part~\textup{(i)} applies if $\phi_0\notin\cF(H^1)$, while part~\textup{(ii)} and Theorem~\ref{thm:scale} apply if $\phi_0\in\cF(H^1)$. In particular, the boundary case $q=\gamma_d(p)$ is now covered, whereas the sufficient condition in \cite{FH21} requires $q>\gamma_d(p)$. This makes explicit how Theorem~\ref{thm:ode} improves upon the instability result of \cite{FH21}.
Moreover, on the parameter region where $\phi_0\in\cF(H^1)$,
the pairing $\tbra[\eta_0,\phi_0]$ is naturally expected to depend continuously
on $(p,q)$.\footnote{Establishing this continuity requires a separate
argument, which we do not pursue here.}
Its strict negativity on $q=\gamma_d(p)$ therefore suggests that
instability persists in a neighborhood of each point of this curve
where the weighted condition holds, including on the side
$q<\gamma_d(p)$.
This is consistent with the one-dimensional picture in
Figure~\ref{fig:sharp}.

Finally, we remark that all three main theorems remain valid with essentially the same proofs if the double-power nonlinearity in \eqref{eq} is replaced by
\begin{align}
\label{multip}
\begin{aligned}
g(u)&=\sum_{k=1}^N a_k|u|^{p_k-1}u,
\qquad N\ge2,
\\
1<p_1<&\cdots<p_N<\frac{d+2}{(d-2)_+},
\quad a_k<0~(1\le k<N),
\quad a_N>0.
\end{aligned}
\end{align}
In this extension, $p_1$ takes the place of $p$ in the asymptotic and integrability conditions, while $E$ and $L_\omega$ are defined using the corresponding nonlinearity. Nonlinearities of this form are given in \cite{KZ91,M93} as examples for which uniqueness of ground states holds. The sign configuration
in \eqref{multip} ensures the one-sign-change property of the
$I$-function (see Lemma~\ref{lem:Isign}), which plays a key role
in the shooting analysis.

\subsection{Organization of the paper}

The remainder of the paper, in which we assume $d\ge2$,
is organized as follows.

Section~\ref{sec:ellip} states the basic elliptic properties of the
ground states and records their sharp decay and integrability estimates.
Section~\ref{sec:3} constructs fundamental systems for the homogeneous
linearized equation at the origin and at infinity.
Using these systems, Section~\ref{sec:4} constructs $\eta_0$,
determines its asymptotic behavior, and establishes the convergence
$\eta_\omega\to\eta_0$ locally in $C^2$ as $\omega\downarrow0$.
It also proves that the mass slope diverges to $-\infty$ as
$\omega\downarrow0$ when the weighted $L^2$ condition fails.

Section~\ref{sec:dir} constructs an unstable direction orthogonal to
$\phi_0$ by localizing $\eta_0$.
The sharp asymptotic behavior of $\eta_0$ is essential for
establishing negativity of the quadratic form and controlling
the localization errors.
Section~\ref{sec:inst} then completes the proof of
Theorem~\ref{thm:ode} using a Lyapunov functional argument
based on the Nehari characterization of ground states.

Section~\ref{sec:res} proves Theorem~\ref{thm:res}\,\textup{(ii)} by a resolvent
argument, identifying the zero-frequency limit of $M'(\omega)$
with $\tbra[\eta_0,\phi_0]$ under additional assumptions.
Section~\ref{sec:scale} proves Theorem~\ref{thm:scale}
by establishing an $L^2$-scaling criterion for the negativity
of $\tbra[\eta_0,\phi_0]$.

Finally, Appendix~\ref{app:shoot} presents a systematic account of
the shooting method, providing a unified proof of uniqueness and
radial nondegeneracy of the ground states and determining the global
behavior of the homogeneous solution regular at the origin.

\subsection*{Notation}

For real-valued functions $f$ and $g$, we write $f(r)\sim g(r)$
as $r\to r_0$, where $r_0\in\{0,\infty\}$, if
\[
  \lim_{r\to r_0}\frac{f(r)}{g(r)} =C
\]
for some constant $C>0$. Thus, $\sim$ denotes asymptotic equivalence
up to a positive multiplicative constant. For a function $f$ on $\R^d$ and $\lambda>0$, define its
$L^2$-invariant scaling by
\[
  f^\lambda(x)=\lambda^{d/2}f(\lambda x).
\]

Generally, we use $\tbra[\cdot,\cdot]$ for duality pairings,
with the underlying spaces understood from context. Variational derivatives
and duality pairings on complex-valued function spaces are understood over
$\R$. We use the real $L^2$ inner product
\[
  \rbra[f,g]_{L^2}\ce\Re\int_{\R^d}f(x)\overline{g(x)}\,dx.
\]
For real-valued $L^2$ functions, this agrees with $\tbra[f,g]$.

The subscript $\rad$ denotes restriction to radial functions. In particular, $(L_\omega)_{\rad}$ is regarded as an operator on the appropriate radial function spaces.


\section{Stationary solutions}\label{sec:ellip}

This section collects the elliptic properties of the ground states
and the estimates needed in the ODE analysis. We state their Nehari
characterization, uniqueness, and radial nondegeneracy, introduce the
frequency derivative and its equation, and record the sharp decay and
weighted integrability properties of $\phi_0$. 

\subsection{Elliptic properties}
\label{sec:2.1}

Define
\[  X=X_\omega\ce
    \begin{cases}
    \hfil H^1(\R^d), & \omega>0,\\
    \dot H^1(\R^d)\cap L^{p+1}(\R^d), & \omega=0.
    \end{cases}  \]
The action functional associated with \eqref{eq:spomega} is given by
\[  S_\omega(v)
    \ce E(v)+\frac{\omega}{2}\|v\|_{L^2}^2.  \]
A function $\phi\in X_\omega$ solves \eqref{eq:spomega} if and only if $S_\omega'(\phi)=0$.
We define the set of all nontrivial solutions by
\[  \mathcal{A}_\omega
    \ce \{\phi\in X_\omega\setminus\{0\}:
    S_\omega'(\phi)=0\},  \]
and the set of all ground states of \eqref{eq:spomega} by
\[  \mathcal{G}_\omega
    \ce \{\phi\in\mathcal{A}_\omega:
    S_\omega(\phi)\le S_\omega(v) \text{ for all } v\in\mathcal{A}_\omega\}.  \] 
\begin{proposition} \label{prop:uniq}
For $\omega\ge 0$, there exists a unique positive, radial, decreasing function $\phi_\omega\in X_\omega$ such that
\[  \mathcal{G}_\omega
    =\{e^{i\theta}\phi_\omega(\cdot-y):
    \theta\in\R,\: y\in\R^d\}.  \]
\end{proposition}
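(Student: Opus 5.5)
The plan has three stages: existence of a ground state via the Nehari manifold, reduction of every ground state to a positive radial decreasing profile up to symmetries, and uniqueness of that profile by an ODE shooting argument.

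\emph{Existence.} Let
\[
  K_\omega(v)\ce\tbra[S_\omega'(v),v]
  =\|\nabla v\|_{L^2}^2+\omega\|v\|_{L^2}^2+\|v\|_{L^{p+1}}^{p+1}-\|v\|_{L^{q+1}}^{q+1},
\]
and minimize $S_\omega$ over the Nehari manifold $\{v\in X_\omega\setminus\{0\}:K_\omega(v)=0\}$. On this manifold $S_\omega$ equals
\[
  \Bigl(\tfrac12-\tfrac1{q+1}\Bigr)\bigl(\|\nabla v\|_{L^2}^2+\omega\|v\|_{L^2}^2\bigr)+\Bigl(\tfrac1{p+1}-\tfrac1{q+1}\Bigr)\|v\|_{L^{p+1}}^{p+1}.
\]
This is positive and coercive on $X_\omega$. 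For $\omega=0$ this uses the interpolation $\|v\|_{L^{q+1}}\lesssim \|v\|_{L^{p+1}}^{1-\theta}\|\nabla v\|_{L^2}^{\theta}$, valid since $p<q<2^*-1$, which shows that the infimum is positive.

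Replacing $v$ by $|v|^*$ (Schwarz symmetrization) and rescaling back onto the Nehari manifold by the unique multiplier $t>0$, one reduces to radial nonincreasing minimizing sequences. Compactness then comes from Strauss-type radial decay. For $\omega=0$, $\dot H^1_{\rad}\cap L^{p+1}_{\rad}\hookrightarrow L^{q+1}$ compactly, because $p+1<q+1<2^*$ interpolates strictly between the two endpoints. A minimizer is a critical point of $S_\omega$ by the standard Lagrange multiplier argument: the multiplier vanishes since $\tbra[K_\omega'(v),v]<0$ on the manifold. Hence $\mathcal G_\omega\ne\emptyset$, and $\mathcal G_\omega$ coincides with the set of Nehari minimizers.

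\emph{Reduction to a radial profile.} Take any $\phi\in\mathcal G_\omega$. Then $|\phi|$ is also a minimizer, so $\nabla|\phi|=|\nabla\phi|$ a.e., and the strong maximum principle gives $\phi=e^{i\theta}|\phi|$ with $|\phi|>0$. Since $|\phi|^*$ is again a minimizer, equality holds in the Pólya--Szegő inequality. Elliptic regularity together with the Brothers--Ziemer criterion, whose degenerate set is empty because $\phi$ solves an analytic elliptic equation, then yields $|\phi|=|\phi|^*(\cdot-y)$. Alternatively, one can apply the moving-plane method of Gidas--Ni--Nirenberg (Li--Ni for algebraically decaying solutions at $\omega=0$) directly to the positive solution. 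Thus every element of $\mathcal G_\omega$ is a phase and a translate of a positive radial decreasing solution.

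\emph{Uniqueness (main obstacle).} It remains to show that there is only one positive radial decaying solution $\phi(r)$ of
\[
  \phi''+\frac{d-1}r\phi'=f(\phi),\qquad f(s)=\omega s+s^p-s^q.
\]
I would invoke the uniqueness theorem of McLeod \cite{M93}, or Kwong--Zhang \cite{KZ91}. Their hypothesis is that the function $I(s)=\bigl(sf'(s)/f(s)\bigr)'$, or an equivalent McLeod-type quantity, changes sign at most once on the region where $f>0$. For $g(s)=-f(s)=-\omega s-s^p+s^q$ this one-sign-change property is an explicit computation that holds for the sign configuration of \eqref{multip}. This is the step I would verify carefully, and the hardest case is $\omega=0$. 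There $f$ has no linear term, $\phi$ decays only algebraically, and the shooting comparison between the regular solution $u(r;\alpha)$ and its $\alpha$-derivative must be carried out with the slower asymptotics \eqref{decay} in hand. I would redo the shooting argument in the form presented in Appendix~\ref{app:shoot}: show that the variation $w=\partial_\alpha u$ has exactly one zero, and that at a ground state $w$ is unbounded. That precludes two distinct ground-state initial values $\alpha$, because between two such values a continuity argument would force an intermediate solution with $w$ decaying, which contradicts the one-sign-change property. This establishes both uniqueness and radial nondegeneracy.
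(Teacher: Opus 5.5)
Your three-step structure is the same as the paper's, and the paper's own proof is just citations: Berestycki--Lions for existence, Gidas--Ni--Nirenberg and Li--Ni for symmetry, and Serrin--Tang ($d\ge3$) / Pucci--Serrin ($d=2$) for uniqueness, which already cover $\omega=0$. Appendix~\ref{app:shoot} supplies the self-contained shooting proof you propose to redo.

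Your Nehari-minimization existence argument and the P\'olya--Szeg\H{o}/Brothers--Ziemer symmetry argument are valid alternatives. The former also gives Proposition~\ref{prop:nehari} at the same time. The trouble is in your sketch of the shooting step, precisely in the zero-frequency case you flag as hardest.

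First, the claim that $w$ is unbounded at a ground state is false when $\omega=0$ and $p>p^*$, a nonempty range for every $d\ge3$. There $c_p=0$ and $\lambda_+=0$, and Proposition~\ref{prop:vstar} gives $v_{y^*}(r)\to c_+<0$. What actually holds is $c_+\ne0$. Consequently, the argument that parameters slightly above $y^*$ lie in $\mathcal Y_-$ cannot use $v_{y^*}\to-\infty$. The paper treats this case separately with the tail condition $v_*(R)+\frac{R}{d-2}\bigl(v_*'(R)\bigr)_+<0$, derived from $rv_*'(r)\to0$ (Lemma~\ref{lem:right}).

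Second, you cannot compare two ground states directly. Exactly one zero of $v_y$ is known a priori only at the first transition value $y^*$, since Lemma~\ref{lem:hzero} uses $[\beta,y^*)\subset\mathcal Y_+$. It is not known at an arbitrary ground state. The argument closes on $\mathcal Y_-$ instead:
\begin{enumerate}
\item whenever $v_y$ has one zero in $(0,R_y)$, the one-sign-change property of $I_\lambda$ forces $v_y(R_y)<0$ (Lemma~\ref{lem:end});
\item hence the one-zero property propagates along $(y^*,\bar y)\subset\mathcal Y_-$;
\item hence $R_y'=-v_y(R_y)/u_y'(R_y)<0$, which is incompatible with $R_y\to\infty$ as $y$ approaches a second point $\bar y\in\mathcal Y_0$.
\end{enumerate}
So the degenerate situation to exclude is a parameter with $v_y(R_y)=0$, not an intermediate ground state with decaying $w$.

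Relatedly, the structural property actually used is that $sg'(s)/g(s)$ is strictly decreasing on $(\beta,\infty)$, where $g>0$ (Lemma~\ref{lem:F}). It is not a sign condition on the region where your $f$ is positive.
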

\begin{proof}
The existence of ground states was established in \cite{BL83}. The symmetry of ground states follows from the results of \cite{GNN81, LN93}. The uniqueness of positive radial solutions follows from \cite{ST00} for $d\geq 3$ and \cite{PS98} for $d=2$. 
\end{proof}

We introduce the Nehari functional
\begin{align*}
    K_\omega(v)
    \ce {}&\langle S_\omega'(v), v\rangle
\\  ={}&\|\nabla v\|_{L^2}^2
    +\omega\|v\|_{L^2}^2
    +\|v\|_{L^{p+1}}^{p+1}
    -\|v\|_{L^{q+1}}^{q+1},
\end{align*}
and the corresponding Nehari manifold
\[  \mathcal{K}_\omega
    \ce \{\phi\in X_\omega\setminus\{0\}:
    K_\omega(\phi)=0\}.  \]

\begin{proposition}\label{prop:nehari}
For $\omega\ge 0$, the set of ground states of \eqref{eq:spomega} is characterized by
\[  \mathcal{G}_\omega
    =\{\phi\in\mathcal{K}_\omega:
    S_\omega(\phi)\le S_\omega(v) \text{ for all } v\in\mathcal{K}_\omega\}.  \]
\end{proposition}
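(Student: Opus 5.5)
We prove Proposition~\ref{prop:nehari} by the standard Nehari-manifold argument, using Proposition~\ref{prop:uniq} to identify minimizers with ground states. Set
\[
  d_\omega\ce\inf\{S_\omega(v):v\in\mathcal{K}_\omega\}.
\]
Since $\mathcal{A}_\omega\subset\mathcal{K}_\omega$ (every critical point satisfies $K_\omega(\phi)=\langle S_\omega'(\phi),\phi\rangle=0$), it suffices to prove two things: $d_\omega$ is attained, and every minimizer on $\mathcal{K}_\omega$ is a critical point of $S_\omega$. Then the minimizers on $\mathcal{K}_\omega$ are exactly the elements of $\mathcal{A}_\omega$ of least action, namely $\mathcal{G}_\omega$. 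One inclusion is immediate: if $\phi\in\mathcal{G}_\omega$ and some $v\in\mathcal{K}_\omega$ had $S_\omega(v)<S_\omega(\phi)$, a minimizer (which is a critical point) would have action below that of $\phi$, contradicting the definition of $\mathcal{G}_\omega$.

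\textbf{Step 1: critical points.} Let $\phi$ minimize $S_\omega$ on $\mathcal{K}_\omega$. Compute
\[
  \langle K_\omega'(\phi),\phi\rangle
  =2\|\nabla\phi\|^2+2\omega\|\phi\|^2+(p+1)\|\phi\|_{p+1}^{p+1}-(q+1)\|\phi\|_{q+1}^{q+1}.
\]
Using $K_\omega(\phi)=0$, this equals $(p-1)\|\phi\|_{p+1}^{p+1}-(q-1)\|\phi\|_{q+1}^{q+1}$, and substituting $\|\phi\|_{q+1}^{q+1}=\|\nabla\phi\|^2+\omega\|\phi\|^2+\|\phi\|_{p+1}^{p+1}$ gives
\[
  \langle K_\omega'(\phi),\phi\rangle
  =-(q-1)\bigl(\|\nabla\phi\|^2+\omega\|\phi\|^2\bigr)-(q-p)\|\phi\|_{p+1}^{p+1}<0.
\]
Hence $K_\omega'(\phi)\ne0$, and the Lagrange multiplier rule gives $S_\omega'(\phi)=\mu K_\omega'(\phi)$. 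Pairing with $\phi$ yields $0=\mu\langle K_\omega'(\phi),\phi\rangle$, so $\mu=0$. For $\omega=0$ this requires $K_0$ to be $C^1$ on $X_0=\dot H^1\cap L^{p+1}$, which holds because $L^{q+1}$ interpolates between $L^{p+1}$ and $L^{2^*}$ (for $d=2$, between $L^{p+1}$ and the local Sobolev control of the gradient).

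\textbf{Step 2: existence of a minimizer.} This is the main obstacle, especially for $\omega=0$, where only $\dot H^1\cap L^{p+1}$ compactness is available. On $\mathcal{K}_\omega$ one has
\[
  S_\omega=S_\omega-\tfrac{1}{q+1}K_\omega
  =\Bigl(\tfrac12-\tfrac1{q+1}\Bigr)\bigl(\|\nabla v\|^2+\omega\|v\|^2\bigr)+\Bigl(\tfrac1{p+1}-\tfrac1{q+1}\Bigr)\|v\|_{p+1}^{p+1},
\]
which is coercive in $X_\omega$. Moreover $d_\omega>0$: the Gagliardo--Nirenberg-type bound $\|v\|_{q+1}^{q+1}\lesssim\|v\|_{p+1}^{a}\|\nabla v\|^{b}$ with $a+b>2$ keeps $\mathcal{K}_\omega$ away from $0$.

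First reduce to radial functions by Schwarz symmetrization. Symmetrization does not increase $\|\nabla v\|$ and preserves the $L^r$ norms, so $K_\omega(v^*)\le0$. Rescaling $v^*$ by a factor $t\in(0,1]$ brings it back onto $\mathcal{K}_\omega$, and along this ray the reduced expression above is increasing in $t$, so the action does not increase. Then apply the Strauss radial compactness $X_{\omega,\rad}\hookrightarrow L^{q+1}$, which is compact because $p+1<q+1<2^*$; for $\omega=0$ this is compact as an interpolation between $L^{p+1}$ and $L^{2^*}$ for radial functions. A weak limit $\phi$ of a minimizing sequence satisfies $K_\omega(\phi)\le0$, by weak lower semicontinuity together with strong convergence in $L^{q+1}$. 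It is nonzero because $\|v_n\|_{q+1}$ stays bounded below. Rescaling $\phi$ onto $\mathcal{K}_\omega$ and using lower semicontinuity of the reduced functional gives $S_\omega\le d_\omega$, so $\phi$ is a minimizer.

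Combining Steps 1--2 with Proposition~\ref{prop:uniq} yields the claimed characterization. Alternatively, one may cite \cite{BL83} together with these standard arguments.
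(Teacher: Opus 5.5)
Your proof is correct. The paper gives no argument of its own here and simply cites \cite[Theorem~2.1]{FH21}, so you are supplying a self-contained version of the standard Nehari-manifold proof. The steps are as follows. The identity $\langle K_\omega'(\phi),\phi\rangle=-(q-1)(\|\nabla\phi\|_{L^2}^2+\omega\|\phi\|_{L^2}^2)-(q-p)\|\phi\|_{L^{p+1}}^{p+1}<0$ forces the Lagrange multiplier to vanish. The reduced functional $S_\omega-\frac{1}{q+1}K_\omega$, together with Schwarz symmetrization and a rescaling by $t\in(0,1]$, gives a symmetric decreasing minimizing sequence. Compactness then yields a minimizer, and both inclusions follow as you describe; Proposition~\ref{prop:uniq} is not actually needed for the set identity. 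The citation keeps the exposition short, while your version shows directly that the argument runs unchanged at $\omega=0$ in $X_0=\dot H^1\cap L^{p+1}$.

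Three details should be tightened.

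\emph{The bound $d_\omega>0$.} The relevant condition is $\frac{a}{p+1}+\frac{b}{2}>1$, not $a+b>2$. On $\mathcal K_\omega$ one has $A\ce\|\nabla v\|_{L^2}^2+\|v\|_{L^{p+1}}^{p+1}\le\|v\|_{L^{q+1}}^{q+1}\lesssim A^{a/(p+1)+b/2}$. The condition holds because homogeneity forces $a+b=q+1>p+1$.

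\emph{Compactness at $\omega=0$.} Interpolation only gives the continuous embedding $X_0\hookrightarrow L^{q+1}$. Compactness comes from Rellich's theorem on balls plus uniform decay at infinity. Since your sequence is symmetric decreasing, that decay follows in every dimension from $|v(x)|^{p+1}|B_{|x|}|\le\|v\|_{L^{p+1}}^{p+1}$, where $B_r$ is the ball of radius $r$ centered at the origin. This is what you need for $d=2$, where there is no $L^{2^*}$. There, continuity of $K_0$ on $X_0$ rests instead on the two-dimensional Gagliardo--Nirenberg bound $\|v\|_{L^{q+1}}^{q+1}\lesssim\|v\|_{L^{p+1}}^{p+1}\|\nabla v\|_{L^2}^{q-p}$.

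\emph{The minimizer.} What you produce is $t_0\phi$, not $\phi$. In fact $d_\omega\le S_\omega(t_0\phi)\le(S_\omega-\frac{1}{q+1}K_\omega)(\phi)\le d_\omega$. Combined with the strict monotonicity of $t\mapsto(S_\omega-\frac{1}{q+1}K_\omega)(t\phi)$, this forces $t_0=1$.
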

\begin{proof}
See \cite[Theorem~2.1]{FH21}.
\end{proof}

We also have the following result on the nondegeneracy of the ground states.

\begin{proposition}\label{prop:nondeg}
For $\omega\ge 0$, $\phi_\omega$ is radially nondegenerate in the sense that
\[  \ker (L_\omega)_{\rad}
    \ce \{w\in (X_\omega)_{\rad}:
    L_\omega w=0\}=\{0\}.   \]
\end{proposition}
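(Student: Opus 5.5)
We argue by contradiction, using the radial ODE. Suppose $w\in(X_\omega)_{\rad}$ satisfies $L_\omega w=0$ and $w\not\equiv0$. Elliptic regularity makes $w$ a $C^2$ radial solution of
\[
  w''+\frac{d-1}{r}w'-\bigl(\omega+V_\omega(r)\bigr)w=0,
\]
regular at the origin. Since $w\in X_\omega$, it tends to zero at infinity; no rate is needed, only membership in $X_\omega$. By uniqueness for this ODE with $w'(0)=0$, we may normalize $w(0)=1$. The conclusion then amounts to the following claim: the solution $v$ of the linearized ODE with $v(0)=1$, $v'(0)=0$ does not lie in $X_\omega$.

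The plan is the shooting argument of \cite{M93} (which the paper develops in Appendix~\ref{app:shoot}). Let $u(r,\alpha)$ be the solution of the profile ODE
\[
  u''+\tfrac{d-1}{r}u'-\omega u-u^p+u^q=0,\qquad u(0)=\alpha,\ u'(0)=0,
\]
so that $\phi_\omega=u(\cdot,\alpha_0)$, and set $v=\partial_\alpha u(\cdot,\alpha_0)$. Then $v$ is exactly the regular solution of the linearized equation above.

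\textbf{Step 1 (one zero of $v$).} Since $\langle L_\omega\phi_\omega,\phi_\omega\rangle<0$, Sturm comparison with $\phi_\omega$ shows that $v$ has at least one zero. Next consider the auxiliary function $r\phi_\omega'+\beta\phi_\omega$, which satisfies $L_\omega(\cdot)=$ an explicit combination of powers of $\phi_\omega$. Using the one-sign-change property of the $I$-function (Lemma~\ref{lem:Isign}) as in \cite{KZ91,M93}, choose $\beta$ so that this function separates zeros of $v$. This yields that $v$ has exactly one zero $r_1$.

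\textbf{Step 2 (behavior at infinity).} Show that after $r_1$ the function $v$ stays negative and satisfies $v(r)\to-\infty$ when $\omega>0$ (exponential growth). When $\omega=0$, show that $v$ tends to a nonzero constant or grows in a way incompatible with $X_0$. This uses the Wronskian identity between $v$ and the separator $r\phi_\omega'+\beta\phi_\omega$ on $(r_1,\infty)$: if $v$ decayed, the Wronskian would have limit zero while being strictly monotone and nonzero, which is a contradiction. Hence $v\notin X_\omega$, so $w=0$.

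The main obstacle is the case $\omega=0$. There the essential spectrum touches zero, so decaying solutions of the homogeneous equation can decay only algebraically (like $r^{-(d-2)}$). Excluding such decay requires care with the boundary terms in the Wronskian, using the decay rates \eqref{decay} of $\phi_0$ and of $r\phi_0'$. For these cases I would first verify that the boundary terms $r^{d-1}(v\,\psi'-v'\psi)$ vanish at infinity for any $v\in \dot H^1\cap L^{p+1}$ radial, using the radial bound $|v(r)|\lesssim r^{-(d-2)/2}\|\nabla v\|_{L^2}$.
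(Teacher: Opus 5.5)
Your reduction (a nonzero kernel element is a multiple of the regular solution $v$, so it suffices to show $v\notin X_\omega$) matches the paper's endgame in Proposition~\ref{prop:vstar} and Theorem~\ref{thm:shoot}. So does Step~2, a Wronskian of $v$ against $f=u+c\,ru'$ that is strictly monotone from $0$ at the origin and would have to return to $0$ if $v$ decayed.

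\textbf{The gap.} Step~1 asserts that $v$ has \emph{exactly one} zero without proof, and Step~2 depends on this entirely, since it needs $vI_\lambda(u)$ to have a fixed sign on $(0,\infty)$. The separation mechanism you describe cannot give it. Suppose $v>0$ on $(0,r_1)$, $v<0$ on $(r_1,r_2)$, $v(r_2)=0$, and $I_\lambda(u(r_1))=0$. Then $W=r^{d-1}(vf'-fv')$ is strictly decreasing on $(0,r_2]$ with $W(0)=0$. Evaluating at $r_1$ and $r_2$ only yields $f(r_1)<0<f(r_2)$, i.e.\ $f$ vanishes somewhere in $(r_1,r_2)$. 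Zeros of $v$ force zeros of $f$, not conversely, and nothing controls the number of zeros of $u+c\,ru'$.

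\textbf{How the paper gets it.} Following \cite{M93}, the one-zero property is global in the shooting parameter. The function $v_{y^*}$ is the limit of $(u_{y^*}-u_y)/(y^*-y)$ as $y\uparrow y^*$. A continuation argument over $y\in[\beta,y^*)$, using $(0,y^*)\subset\mathcal Y_+$ and simplicity of zeros, shows that $u_{y^*}-u_y$ has exactly one zero (Lemmas~\ref{lem:hzero} and~\ref{lem:vstar}). Applying this to $\phi_\omega$ requires $\phi_\omega=u_{y^*}$, i.e.\ the uniqueness half of Theorem~\ref{thm:shoot}.

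\textbf{A variational repair.} By Proposition~\ref{prop:nehari}, the form $\langle L_\omega w,w\rangle$ is nonnegative on the hyperplane $\{\langle L_\omega\phi_\omega,w\rangle=0\}$. Two zeros $r_1<r_2$ would give truncations $v\mathbf{1}_{(0,r_1)}$ and $v\mathbf{1}_{(r_1,r_2)}$ spanning a plane on which the form vanishes. A nonzero combination $w$ lying in the hyperplane then minimizes the form there, so $L_\omega w=\mu L_\omega\phi_\omega$ weakly. Since $w=0$ for $r>r_2$ and $L_\omega\phi_\omega>0$ for large $r$, you get $\mu=0$, and ODE uniqueness forces $w\equiv0$, a contradiction.

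\textbf{Further points to fix.}

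First, Lemma~\ref{lem:Isign} supplies $\lambda$ only when $u(r_1)>\beta$. You must prove this, as the paper does in \eqref{eq:ubeta} by integrating $(r^{2d-2}u'v')'=-r^{2d-2}(g(u)v)'$ over $(r_*,\infty)$ under the hypothesis that $v$ decays.

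Second, for the boundary terms at infinity, the radial Sobolev bound is unavailable for $d=2$ and gives no pointwise control of $v'$. Instead, a solution lying in $X_\omega$ has no $w^+$ component, so it is a multiple of $w^-$ with the asymptotics of Proposition~\ref{prop:wasym} and Remark~\ref{rem:wprime}.

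Third, ``Sturm comparison with $\phi_\omega$'' is not literally available, because $L_\omega\phi_\omega=(p-1)\phi_\omega^p-(q-1)\phi_\omega^q$ changes sign. What works is Picone's identity: a positive $v$ would make the form nonnegative on $C_c^\infty$, contradicting $\langle L_\omega\phi_\omega,\phi_\omega\rangle<0$ by density. The paper instead compares with $u'$ in Lemma~\ref{lem:vzero}.
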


We give a unified proof of the uniqueness and radial nondegeneracy of the ground states for all $\omega\ge0$ in Appendix~\ref{app:shoot}.
As a consequence of Proposition~\ref{prop:nondeg}, we obtain the following result.
\begin{proposition} \label{prop:reg}
The map $(0,\infty)\ni\omega\mapsto\phi_\omega\in H_{\rad}^1(\R^d)$ determined by Proposition~\ref{prop:uniq} is of class $C^1$.
\end{proposition}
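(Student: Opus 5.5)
The plan is to apply the implicit function theorem to the map
\[
  F:(0,\infty)\times H^1_{\rad}(\R^d)\to H^{-1}_{\rad}(\R^d),
  \qquad
  F(\omega,\phi)\ce-\Delta\phi+\omega\phi+|\phi|^{p-1}\phi-|\phi|^{q-1}\phi,
\]
at a point $(\omega_0,\phi_{\omega_0})$ with $\omega_0>0$. The local branch it produces will then be identified with the ground-state branch $\omega\mapsto\phi_\omega$ from Proposition~\ref{prop:uniq}.

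\emph{Regularity of $F$.} Since $1<p<q<(d+2)/(d-2)_+$, the maps $\phi\mapsto|\phi|^{r-1}\phi$ for $r\in\{p,q\}$ are $C^1$ from $H^1$ into $L^{(r+1)/r}\subset H^{-1}$. This follows from Sobolev embedding, because $|\phi|^{r-1}\phi$ is $C^1$ in $\phi$ with derivative $r|\phi|^{r-1}$, which is continuous for $r>1$. Thus $F$ is $C^1$. Its partial derivative in $\phi$ at $\phi_{\omega_0}$ is $L_{\omega_0}$, and its partial derivative in $\omega$ is $\phi$.

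\emph{Invertibility of $L_{\omega_0}$.} We show $L_{\omega_0}:H^1_{\rad}\to H^{-1}_{\rad}$ is an isomorphism. Write $L_{\omega_0}=(-\Delta+\omega_0)+V_{\omega_0}$. Because $\omega_0>0$, the first term is an isomorphism $H^1\to H^{-1}$. The potential $V_{\omega_0}$ is bounded and decays exponentially, so multiplication by it is compact from $H^1_{\rad}$ to $H^{-1}_{\rad}$ (by Rellich on balls together with the decay at infinity). Hence $L_{\omega_0}$ is Fredholm of index zero, and it is invertible exactly when its kernel is trivial. Proposition~\ref{prop:nondeg} gives $\ker(L_{\omega_0})_{\rad}=\{0\}$ in $X_{\omega_0}=H^1$, so $L_{\omega_0}$ is invertible.

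\emph{Local $C^1$ branch.} The implicit function theorem now gives $\delta>0$, a neighborhood $U$ of $\phi_{\omega_0}$ in $H^1_{\rad}$, and a $C^1$ map $\omega\mapsto\tilde\phi_\omega$ on $(\omega_0-\delta,\omega_0+\delta)$. The zeros of $F$ in this interval times $U$ are exactly the points of its graph.

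\emph{Identification with the ground states.} This is the main obstacle: we must show $\tilde\phi_\omega=\phi_\omega$ for $\omega$ near $\omega_0$. The intended route is continuity of $\omega\mapsto\phi_\omega$ in $H^1$, since then $\phi_\omega\in U$ for $\omega$ close to $\omega_0$, and local uniqueness forces $\phi_\omega=\tilde\phi_\omega$.

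To get that continuity, take $\omega_n\to\omega_0$.
\begin{enumerate}
\item Use the Nehari characterization of Proposition~\ref{prop:nehari} to bound $S_{\omega_n}(\phi_{\omega_n})$ uniformly, by comparing with $\phi_{\omega_0}$ rescaled onto $\mathcal K_{\omega_n}$. Together with $K_{\omega_n}=0$, this gives a uniform $H^1$ bound on $\phi_{\omega_n}$.
\item Since the functions are radial, Strauss compactness in $L^{p+1}\cap L^{q+1}$ yields a subsequence converging weakly in $H^1$ to a radial solution $\phi$ of \eqref{eq:spomega} with $\omega=\omega_0$.
\item Show $\phi\ne0$. The Nehari identity gives $\|\nabla v\|_{L^2}^2+\omega\|v\|_{L^2}^2\le\|v\|_{L^{q+1}}^{q+1}$, and Sobolev then yields a uniform lower bound $\|\phi_{\omega_n}\|_{L^{q+1}}\ge c>0$, which passes to the limit.
\item Comparing action levels via the semicontinuity of the minimal action, $\phi$ is a nonnegative ground state. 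By Proposition~\ref{prop:uniq}, $\phi=\phi_{\omega_0}$.
\item Convergence of norms, again from the Nehari identity, upgrades weak to strong $H^1$ convergence. Since every subsequence has a further subsequence with the same limit, the whole family converges.
\end{enumerate}

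Finally, $C^1$ is a local property, and $\omega_0>0$ was arbitrary, so the map $\omega\mapsto\phi_\omega$ is $C^1$ on all of $(0,\infty)$.
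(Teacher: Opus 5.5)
Your proposal is correct and follows the same route as the paper, which applies the implicit function theorem using the radial nondegeneracy of Proposition~\ref{prop:nondeg} and the uniqueness of Proposition~\ref{prop:uniq}, and defers the details to Shatah--Strauss. What you add is an explicit Fredholm argument for the invertibility of $L_{\omega_0}$ on $H^1_{\rad}$, and a compactness argument for the $H^1$-continuity of $\omega\mapsto\phi_\omega$; that continuity is what identifies the implicit-function branch with the ground states.
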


\begin{proof}
In view of Propositions~\ref{prop:uniq} and~\ref{prop:nondeg},
we apply the implicit function theorem as in the proof of
\cite[Theorem~18]{SS85} to conclude that
\[
  (0,\infty)\ni\omega\mapsto\phi_\omega\in H_{\rad}^1(\R^d)
\]
is a $C^1$ map.
\end{proof}

By Proposition~\ref{prop:reg}, the frequency derivative is well defined
in $H_{\rad}^1(\R^d)$ for every $\omega>0$:
\begin{equation}
 \label{eq:eta-def}
    \eta_\omega
    = \pt_\omega\phi_\omega\in H_{\rad}^1(\R^d),\qquad
    \omega>0.
\end{equation}
Differentiating the profile equation \eqref{eq:spomega} with respect
to $\omega$ gives
\begin{equation}\label{eq:Lometa}
  L_\omega\eta_\omega=-\phi_\omega.
\end{equation}
The radial profile of $\eta_\omega$ therefore satisfies the linear inhomogeneous ODE
\begin{equation}
\label{eq:etaode}
    \eta''
    +\frac{d-1}{r}\eta'
    -\bigl(\omega+V_\omega(r)\bigr)\eta
    =\phi_\omega,\qquad
    r>0,
\end{equation}
with the Neumann boundary condition $\eta_\omega'(0)=0$. In radial notation,
\begin{equation*}
    V_\omega(r)
    =p\phi_\omega^{p-1}(r)-q\phi_\omega^{q-1}(r).
\end{equation*}

\subsection{Sharp decay estimates}

We recall several ODE properties of the positive radial ground state $\phi_\omega$. By elliptic regularity, $\phi_\omega\in C^\infty(\R^d)$. Moreover, its radial profile $r\mapsto\phi_\omega(r)$ satisfies the ordinary differential equation
\begin{equation*}
    -\phi''
    -\frac{d-1}{r}\phi'
    +\omega\phi
    +\phi^p
    -\phi^q
    =0,\qquad r>0,
\end{equation*}
with the boundary conditions
\begin{align*}
   &\phi'(0)=0,\qquad \lim_{r\to\infty}\phi(r)=0.
\end{align*}
It is well known that, for each fixed $\omega>0$,
\begin{equation}\label{eq:phi-exp}
  \phi_\omega(r)
  \sim r^{-\frac{d-1}{2}}e^{-\sqrt{\omega}\,r}
  \qquad\text{as }r\to\infty,
\end{equation}
whereas $\phi_0$ exhibits algebraic decay. 
Recall that 
\begin{equation*}
p^*=p^*(d)=\frac{d}{(d-2)_+},
\end{equation*}
and for $p\ne p^*$ set
\begin{equation}\label{eq:alpha}
  \alpha=\alpha(d,p)
  \ce \begin{cases}
    \dfrac{2}{p-1}, & p<p^*,\\[2pt]
    d-2, & p>p^*.
  \end{cases}
\end{equation}
Then the sharp decay of $\phi_0$ can be written as
\begin{align}
\label{eq:phidec}
 \phi_0(r)
    \sim\begin{cases}
      r^{-\alpha}, & p\ne p^*,\\[2pt]
      r^{-(d-2)}(\log r)^{-(d-2)/2}, & p=p^*
    \end{cases}
\end{align}
as $r\to\infty$.
When $p\le p^*$, we use the following sharper asymptotics, including the leading coefficient and an estimate for the remainder.

\begin{proposition}[{\cite{V81}}]
\label{prop:V}
There exists $\varepsilon>0$ such that
\begin{align}
\label{eq:sharp}
    \phi_0(r)=
    l_p\begin{cases}
    r^{-\frac{2}{p-1}}(1+O(r^{-\varepsilon})), & p<p^*,\\[2pt]
    r^{-(d-2)}(\log r)^{-\frac{d-2}{2}}(1+O((\log r)^{-\varepsilon})), & p=p^*
    \end{cases}
\end{align}
as $r\to\infty$, where
\[  l_p
    =\left\{\begin{alignedat}{2}
   &\biggl(\frac{2}{p-1}\Bigl(\frac{2p}{p-1}-d\Bigr)\biggr)^{1/(p-1)},&\quad
   &p<p^*,\\
   &\Bigl(\frac{d-2}{\sqrt{2}}\Bigr)^{d-2},&\quad
   &p=p^*.
    \end{alignedat}\right.
\]
\end{proposition}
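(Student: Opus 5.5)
This is the asymptotic statement of Proposition~\ref{prop:V}, due to Véron \cite{V81}. I would prove it by a dynamical-systems analysis of the radial ODE at $\omega=0$, written in Emden--Fowler variables.

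\emph{Case $p<p^*$.} Set $r=e^t$ and $\phi_0(r)=r^{-2/(p-1)}w(t)$. Since $2/(p-1)>d-2$ in this regime, substituting into $\phi''+\frac{d-1}{r}\phi'=\phi^p-\phi^q$ gives the autonomous equation
\[
  w''+\Bigl(d-2-\tfrac{4}{p-1}\Bigr)w'+\tfrac{2}{p-1}\Bigl(\tfrac{2}{p-1}-(d-2)\Bigr)w
  = w^p - e^{-2(q-p)t/(p-1)}w^q .
\]
Here the damping coefficient is negative, i.e.\ there is antidamping in forward time, and the last term is an exponentially small perturbation.

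The first step is to use the known algebraic decay \eqref{eq:phidec}, $\phi_0\sim r^{-\alpha}$ with $\alpha=2/(p-1)$. This gives $0<c\le w\le C$ for large $t$. Standard derivative bounds from the ODE then give $w'$ bounded.

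The second step is to show that $w(t)\to l_p$. The constant $l_p$ is exactly the positive equilibrium, since $l_p^{p-1}=\frac{2}{p-1}(\frac{2}{p-1}-d+2)$ matches the stated formula $\frac{2}{p-1}(\frac{2p}{p-1}-d)$. I would use the Lyapunov function
\[
  \tfrac12 w'^2 - F(w),\qquad F(w)=\tfrac{l_p^{p-1}}{2}w^2-\tfrac{1}{p+1}w^{p+1}.
\]
Its derivative is monotone up to the integrable error coming from the exponentially small term. Hence the $\omega$-limit set is invariant and $w'\equiv0$ on it. It therefore consists of equilibria in $[c,C]$, which forces $w\to l_p$.

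The third step is to linearize at $l_p$. The linearization is $z''-\beta z'-(p-1)l_p^{p-1}z=0$ with $\beta=\frac{4}{p-1}-(d-2)>0$. Its roots have product negative, so the equilibrium is a hyperbolic saddle. A bounded solution converging to $l_p$ must lie on the stable manifold, and therefore $w-l_p=O(e^{-\mu t})$ for some $\mu>0$. Together with the $O(e^{-2(q-p)t/(p-1)})$ forcing, this yields $w=l_p(1+O(r^{-\eps}))$.

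\emph{Case $p=p^*$.} Here $\beta=0$ in the above, the equilibrium degenerates, and $l_p$ as defined vanishes. I would instead write $\phi_0=r^{-(d-2)}v(t)$, which gives
\[
  v''-(d-2)v'=e^{-(p-1)(d-2)t}\,\ldots\, .
\]
With $p-1=2/(d-2)$ the weight is $e^{-2t}$, and this reduces to a first-order slowly varying system in which $v'\approx -\frac{1}{d-2}v^{p}$ to leading order. Integrating gives $v\sim((d-2)(p-1)t/(d-2))^{-1/(p-1)}$, that is, $v\sim (\text{const}\cdot t)^{-(d-2)/2}$. I would compute the constant carefully so that it matches $l_p=((d-2)/\sqrt2)^{d-2}$. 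The remainder $O(t^{-\eps})$ then follows from a Gronwall estimate on $v^{1-p}-ct$.

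The \textbf{main obstacle} is this critical case, specifically the precise leading constant and the logarithmic remainder, because the slow-manifold reduction has to be justified quantitatively rather than just by leading-order balance.
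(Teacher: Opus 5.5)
The paper gives no proof of this proposition; it is quoted from \cite{V81}, so your argument has to stand on its own. For $p<p^*$ it essentially does. The Emden--Fowler substitution, convergence to the positive equilibrium, and the saddle structure at $l_p$ give the $O(r^{-\varepsilon})$ remainder; the saddle step is easiest if you adjoin $s=e^{-2(q-p)t/(p-1)}$ as a third variable so that $(l_p,0,0)$ becomes a hyperbolic equilibrium. You should fix the sign of your Lyapunov function, though. Since $w''=\beta w'-F'(w)-e^{-2(q-p)t/(p-1)}w^q$, the monotone quantity is $\frac12w'^2+F(w)$, whose derivative is $\beta w'^2$ plus an integrable error; $\frac12 w'^2-F(w)$ is not monotone. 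Also, with the paper's meaning of $\sim$, \eqref{eq:phidec} already gives $w(t)\to C>0$, so your second step only has to identify $C=l_p$.

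The genuine gap is the critical case, and the computations you wrote there are also off.
\begin{itemize}
\item At $p=p^*$ one has $\frac{2}{p-1}=d-2$, so $\beta=d-2\neq0$. What vanishes is the restoring coefficient: $l_p$ merges with $0$, and the linearization there has eigenvalues $0$ and $d-2$.
\item Since $p^*(d-2)=d$, the equation for $v(t)=r^{d-2}\phi_0(r)$ is
\[
  v''-(d-2)v'=v^{p}-e^{-\kappa t}v^{q},\qquad \kappa=(q-p)(d-2),
\]
with no weight on $v^p$. With the weight $e^{-2t}$ you wrote, $v$ would tend to a positive constant and no logarithm would appear.
\item Integrating $v'=-\frac{1}{d-2}v^p$ gives $v\approx\bigl(\frac{(p-1)t}{d-2}\bigr)^{-1/(p-1)}$, not $((p-1)t)^{-1/(p-1)}$. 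Only the former produces $l_{p^*}$.
\item Most importantly, the first-order reduction itself, which you flag as the main obstacle, is never carried out. A Gronwall estimate on $v^{1-p}-ct$ cannot replace it, because that estimate presupposes the reduction with a quantified error.
\end{itemize}

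The missing idea is to remove the unstable mode $e^{(d-2)t}$ explicitly. By \eqref{eq:phidec} and Lemma~\ref{lem:dphi0}, $v$ is comparable to $t^{-(d-2)/2}$ and $v'$ is bounded. Hence integrating $(e^{-(d-2)t}v')'=e^{-(d-2)t}(v^p-e^{-\kappa t}v^q)$ over $(t,\infty)$ gives
\[
  v'(t)=-\int_t^\infty e^{-(d-2)(s-t)}\bigl(v(s)^p-e^{-\kappa s}v(s)^q\bigr)\,ds.
\]
This first yields $v'=O(t^{-d/2})$, hence $(v^p)'=pv^{p-1}v'=O(t^{-1-d/2})$. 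Freezing $v(s)^p$ at $s=t$ then gives $v'=-\frac{1}{d-2}v^p+O(t^{-1-d/2})$. Since $v^{-p}\lesssim t^{d/2}$, it follows that $(v^{1-p})'=\frac{p-1}{d-2}+O(t^{-1})$. Therefore $v^{1-p}=\frac{p-1}{d-2}t+O(\log t)$ and
\[
  v(t)=\Bigl(\frac{(d-2)^2}{2}\Bigr)^{\frac{d-2}{2}}t^{-\frac{d-2}{2}}\bigl(1+O(t^{-1}\log t)\bigr).
\]
With $t=\log r$, this is the claim for any $\varepsilon\in(0,1)$.
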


We also record the integrability conditions for $\phi_0$.

\begin{proposition}
The profile $\phi_0$ satisfies\label{prop:phi-int}
\begin{align*}
  \phi_0\in L^2(\R^d)
  &\iff
  \begin{cases}
    1<p<p_{\mathrm{c}}(d), & 1\le d\le3,\\
    1<p\le p_{\mathrm{c}}(4), & d=4,\\
    1<p<\dfrac{d+2}{d-2}, & d\ge5,
  \end{cases}
  \\[4pt]
  \braket{x}\phi_0\in L^2(\R^d)
  &\iff
  \begin{cases}
    1<p<p_{\mathrm{w}}(d), & 1\le d\le5,\\
    1<p\le p_{\mathrm{w}}(6), & d=6,\\
    1<p<\dfrac{d+2}{d-2}, & d\ge7.
  \end{cases}
\end{align*}
\end{proposition}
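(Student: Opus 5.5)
The plan is to reduce both equivalences to radial integrals and use the sharp decay \eqref{eq:phidec}. Since $\phi_0$ is continuous, radial and positive, local integrability is automatic. Hence for $s\in\{0,1\}$ we have
\[
\braket{x}^s\phi_0\in L^2(\R^d)\iff\int_1^\infty r^{2s+d-1}\phi_0(r)^2\,dr<\infty .
\]
By \eqref{eq:phidec}, $\phi_0$ is comparable at infinity to an explicit model function, so the question becomes the convergence of $\int^\infty r^{2s+d-1-2\alpha}\,dr$ when $p\ne p^*$, and of $\int^\infty r^{2s+d-1-2(d-2)}(\log r)^{-(d-2)}\,dr$ when $p=p^*$.

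For $p\ne p^*$, the integral converges iff $2\alpha>d+2s$. I split into two cases according to the value of $\alpha$.
\begin{itemize}
\item If $p<p^*$, then $\alpha=2/(p-1)$ and the condition reads $p<1+4/(d+2s)$. This is $p<p_{\mathrm c}(d)$ for $s=0$ and $p<p_{\mathrm w}(d)$ for $s=1$.
\item If $p>p^*$ (only possible for $d\ge3$), then $\alpha=d-2$ and the condition reads $d>4+2s$. For $s=0$ this is $d\ge5$; for $s=1$ it is $d\ge7$.
\end{itemize}
At $p=p^*$ the power of $r$ is $r^{2s+3-d}$. The integral converges if $2s+3-d<-1$, diverges if $2s+3-d>-1$, and in the borderline case $d=4+2s$ it becomes $\int^\infty r^{-1}(\log r)^{-(d-2)}\,dr$. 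This converges because $d-2>1$.

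The remaining work is to merge these cases using the ordering of the exponents, which is a purely arithmetic check using Table~\ref{t-p} and the footnote for $d\ge7$.
\begin{itemize}
\item \emph{Mass case ($s=0$).} For $d\le3$, we have $p_{\mathrm c}<p^*$ (with $p^*=\infty$ for $d=2$), so only the first regime is relevant. For $d=4$, $p^*=p_{\mathrm c}=2$; the endpoint $p=p^*$ is the borderline log case, which is integrable, and $p>p^*$ fails. This gives $p\le p_{\mathrm c}(4)$. For $d\ge5$, $p^*<p_{\mathrm c}$, so every $p<p^*$ works, $p=p^*$ works ($2s+3-d<-1$), and $p>p^*$ works. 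This gives the full range $p<(d+2)/(d-2)$.
\item \emph{Weighted case ($s=1$).} For $d\le5$ (note $p_{\mathrm w}(5)=11/7<p^*(5)=5/3$), we have $p_{\mathrm w}<p^*$, so only $p<p_{\mathrm w}$ survives and $p\ge p^*$ fails. For $d=6$, $p_{\mathrm w}=p^*=3/2$ and the borderline log case gives $p\le p_{\mathrm w}(6)$. For $d\ge7$, $p^*<p_{\mathrm w}$ and all three regimes converge, giving the full range.
\end{itemize}

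I expect the only mildly delicate point to be the borderline $p=p^*$ with $d=4+2s$, where the logarithmic correction decides integrability. This is exactly why the exact decay including the $(\log r)^{-(d-2)/2}$ factor in \eqref{eq:phidec} is needed. The case $d=2$ needs a separate remark: here $p^*=\infty$, so only the first regime occurs, and it gives $p<3$ and $p<2$ respectively.
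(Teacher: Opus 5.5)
Your argument is correct and is essentially the paper's: the paper reduces $\braket{x}\phi_0\in L^2(\R^d)$ to $\int_2^\infty r^{d+1}\phi_0(r)^2\,dr<\infty$ and reads off $2\alpha>d+2$ for $p\ne p^*$ and $\int_2^\infty r^{5-d}(\log r)^{-(d-2)}\,dr<\infty\iff d\ge6$ for $p=p^*$, exactly as you do with $s=1$. The only difference is that the paper cites \cite[Corollary~1.4]{FH21} for the unweighted equivalence instead of running your $s=0$ computation. Your $s=0$ computation is equally valid given \eqref{eq:phidec}; for $d=1$ it again has $p^*=\infty$ and the single regime $\alpha=2/(p-1)$, as in your $d=2$ remark.
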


\begin{proof}
The first equivalence is proved in \cite[Corollary~1.4]{FH21}.
Since $\phi_0$ is smooth near the origin, the condition
$\braket{x}\phi_0\in L^2(\R^d)$ is equivalent to
\[
  \int_2^\infty r^{d+1}\phi_0(r)^2\,dr<\infty.
\]
For $p\ne p^*$, \eqref{eq:phidec} shows that this holds if and only
if $2\alpha>d+2$. For $p=p^*$, the same estimate reduces the
condition to
\[
  \int_2^\infty r^{5-d}(\log r)^{-(d-2)}\,dr<\infty,
  \quad\text{which holds if and only if }d\ge6.
\]
Combining these conditions with \eqref{eq:alpha} gives the second
equivalence.
\end{proof}
From \eqref{eq:phidec}, one can derive a sharp decay estimate for the first derivative of $\phi_0$.
\begin{lemma}
\label{lem:dphi0}
The profile $\phi_0$ satisfies
\[
  \phi_0'(r)=
  \begin{cases}
    O(r^{-\alpha-1}), & p\ne p^*,\\[2pt]
    O\bigl(r^{-(d-1)}(\log r)^{-(d-2)/2}\bigr), & p=p^*
  \end{cases}
\]
as $r\to\infty$.
\end{lemma}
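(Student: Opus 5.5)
The plan is to integrate the radial equation once and use monotonicity. At $\omega=0$ the profile equation in divergence form reads
\[
  \bigl(r^{d-1}\phi_0'(r)\bigr)'=r^{d-1}\bigl(\phi_0^p-\phi_0^q\bigr)(r).
\]
Since $\phi_0$ is decreasing and tends to $0$, there is $R_0>0$ such that $\phi_0^p-\phi_0^q=\phi_0^p(1-\phi_0^{q-p})>0$ for $r\ge R_0$. Hence $r^{d-1}\phi_0'$ is increasing on $[R_0,\infty)$, and it is $\le0$ there because $\phi_0'\le0$. So $r^{d-1}\phi_0'(r)$ has a limit $-c\le0$ as $r\to\infty$. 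Integrating the equation from $r$ to $\infty$ gives
\[
  -r^{d-1}\phi_0'(r)=c+\int_r^\infty s^{d-1}\bigl(\phi_0^p-\phi_0^q\bigr)(s)\,ds,
  \qquad r\ge R_0,
\]
which is a sum of two nonnegative terms. The estimate then reduces to bounding $c$ and the tail integral, using \eqref{eq:phidec} and the bound $0\le\phi_0^p-\phi_0^q\le\phi_0^p$.

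For the tail integral: when $p<p^*$, we have $\alpha p=\alpha+2$, so the integrand is $O(s^{d-1-\alpha-2})$. Since $\alpha<d-2$ in this range, the exponent satisfies $d-3-\alpha>-1$ and the integral diverges. Integrating from $r$ is therefore not allowed.

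In that case I instead integrate from $R_0$ to $r$:
\[
  -r^{d-1}\phi_0'(r)=-R_0^{d-1}\phi_0'(R_0)-\int_{R_0}^r s^{d-1}\bigl(\phi_0^p-\phi_0^q\bigr)\,ds .
\]
The right-hand side is bounded above by a constant, which is useless because $-\phi_0'\ge0$. So a one-sided bound from the equation alone is not enough, and I would switch to a mean-value or convexity argument.

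For $r\ge R_0$, the monotonicity of $r^{d-1}\phi_0'$ gives $s^{d-1}\phi_0'(s)\ge r^{d-1}\phi_0'(r)$ for $s\in[r,2r]$. Therefore
\[
  \phi_0(2r)-\phi_0(r)=\int_r^{2r}\phi_0'(s)\,ds\ge r^{d-1}\phi_0'(r)\int_r^{2r}s^{1-d}\,ds\ge C r^{d-1}\phi_0'(r)\,r^{2-d}.
\]
This yields
\[
  0\le-\phi_0'(r)\le C r^{-1}\bigl(\phi_0(r)-\phi_0(2r)\bigr)\le C r^{-1}\phi_0(r),
\]
with constants depending only on $d$. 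Combined with \eqref{eq:phidec}, this gives $O(r^{-\alpha-1})$ for $p\ne p^*$. For $p=p^*$ it gives $O\bigl(r^{-(d-1)}(\log r)^{-(d-2)/2}\bigr)$, since $\alpha=d-2$ in the logarithmic form there.

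This argument covers all cases uniformly, so the first approach via the tail integral is unnecessary. The main point to check is the existence of $R_0$ and the sign information, that is, $\phi_0'\le0$ and $\phi_0\to0$. Both follow from Proposition~\ref{prop:uniq} and the boundary condition $\phi_0(r)\to0$. For $d=2$ the weight $s^{1-d}=s^{-1}$ integrates to $\log 2$, so the same bound holds. Since only $r\to\infty$ matters, no estimate near the origin is needed.
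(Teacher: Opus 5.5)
The key step has its inequality pointing the wrong way. The monotonicity of $r^{d-1}\phi_0'$ on $[R_0,\infty)$ correctly gives $\phi_0'(s)\ge r^{d-1}\phi_0'(r)s^{1-d}$ for $s\in[r,2r]$. Integrating yields
\[
  \phi_0(2r)-\phi_0(r)\ge c_d\,r\,\phi_0'(r),
  \qquad c_d=\int_1^2t^{1-d}\,dt>0.
\]
Rearranged, this reads $-\phi_0'(r)\ge c_d^{-1}r^{-1}\bigl(\phi_0(r)-\phi_0(2r)\bigr)$. That is a \emph{lower} bound on $|\phi_0'(r)|$, not the upper bound you wrote. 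The problem is structural, not a matter of constants. Your monotonicity says $s^{d-1}|\phi_0'(s)|$ is nonincreasing on $[R_0,\infty)$, so $|\phi_0'(r)|\ge (s/r)^{d-1}|\phi_0'(s)|$ for $s\ge r$. Averaging $|\phi_0'|$ over $[r,2r]$ can therefore only bound $|\phi_0'(r)|$ from below, and as written the proposal proves no upper bound at all.

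The repair is to average on the other side of $r$. For $r\ge 2R_0$ and $s\in[r/2,r]$, the same monotonicity gives $|\phi_0'(s)|\ge (r/s)^{d-1}|\phi_0'(r)|\ge|\phi_0'(r)|$. Hence
\[
  \phi_0(r/2)-\phi_0(r)=\int_{r/2}^{r}|\phi_0'(s)|\,ds\ge\frac r2\,|\phi_0'(r)|,
\]
so $|\phi_0'(r)|\le 2r^{-1}\phi_0(r/2)$. Applying \eqref{eq:phidec} at $r/2$, with $\log(r/2)\sim\log r$, then gives both cases of the lemma. With this change your argument is correct and differs from the paper's. The paper rescales to $[1/2,2]$, bounds $u_R$ and the forcing $f_R$ uniformly (using $p\alpha\ge\alpha+2$), and propagates a mean-value-theorem derivative bound through the divergence form. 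Your route needs only the sign of $\phi_0^p-\phi_0^q$ on a tail and $\phi_0'\le0$; the paper's needs no sign information.

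You also discarded the tail-integral route because of a slip. For $p<p^*$ one has $\alpha=2/(p-1)>d-2$, not $<d-2$, so $d-3-\alpha<-1$ and the tail integral converges. That route works too, provided you check $c=0$ for $p\le p^*$. If $c>0$, then $|\phi_0'(s)|\ge cs^{1-d}$ on the tail. For $d\ge3$ this forces $\phi_0(r)\ge\frac{c}{d-2}r^{2-d}$, contradicting \eqref{eq:phidec}; for $d=2$ it is incompatible with $\phi_0\to0$. For $p>p^*$ a nonzero $c$ is harmless, since then $r^{1-d}=r^{-\alpha-1}$.
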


\begin{proof}
We use a rescaling argument on the fixed interval $1/2\le s\le2$ (see, e.g., \cite[Proposition 2.3]{L93}).
Suppose first that $p\ne p^*$. For sufficiently large $R$, define
\[
  u_R(s)\ce R^\alpha\phi_0(Rs),
  \qquad \frac12\le s\le2.
\]
The decay estimate \eqref{eq:phidec} gives
\[
  \norm[u_R]_{C([1/2,2])}\lesssim1
\]
uniformly in $R$. Since
\[
  \phi_0''+\frac{d-1}{r}\phi_0'
  =\phi_0^p-\phi_0^q,
\]
the rescaled function satisfies
\[
  u_R''+\frac{d-1}{s}u_R'=f_R,
  \qquad
  f_R(s)\ce R^{\alpha+2}
  \bigl(\phi_0(Rs)^p-\phi_0(Rs)^q\bigr).
\]
If $p<p^*$, then $p\alpha=\alpha+2$, whereas if $p>p^*$,
then $\alpha=d-2$ and $p\alpha>d=\alpha+2$. Since $q>p$, in both
cases
\[
  \norm[f_R]_{C([1/2,2])}\lesssim R^{\alpha+2-p\alpha}\lesssim1
\]
uniformly in $R$.

Suppose next that $p=p^*$. In this case, set
\[
  u_R(s)
  \ce R^{d-2}(\log R)^{(d-2)/2}\phi_0(Rs),
  \qquad \frac12\le s\le2.
\]
Since $\log(Rs)\sim\log R$ uniformly for $1/2\le s\le2$, the critical
estimate in \eqref{eq:phidec} yields
\[
  \norm[u_R]_{C([1/2,2])}\lesssim1.
\]
The equation for $u_R$ is again
\[
  u_R''+\frac{d-1}{s}u_R'=f_R,
\]
where
\[
  f_R(s)
  \ce R^d(\log R)^{(d-2)/2}
  \bigl(\phi_0(Rs)^{p^*}-\phi_0(Rs)^q\bigr).
\]
Here $p^*(d-2)=d$, and hence the $p^*$-term is
$O((\log R)^{-1})$, while the $q$-term is $o((\log R)^{-1})$. Thus
\[
  \norm[f_R]_{C([1/2,2])}\lesssim1
\]
in the critical case as well.

It remains to obtain a uniform estimate for $u_R'$. By the mean value
theorem, there exists $s_R\in(1/2,2)$ such that
\[
  u_R'(s_R)
  =\frac{u_R(2)-u_R(1/2)}{2-1/2},
  \qquad
  \abs[u_R'(s_R)]\lesssim1.
\]
Writing the rescaled equation in divergence form gives
\[
  \bigl(s^{d-1}u_R'(s)\bigr)'=s^{d-1}f_R(s).
\]
Therefore, for every $s\in[1/2,2]$,
\[
  s^{d-1}u_R'(s)
  =s_R^{d-1}u_R'(s_R)
   +\int_{s_R}^s t^{d-1}f_R(t)\,dt.
\]
Since $s$ and $s_R$ remain in $[1/2,2]$, the preceding uniform bounds
imply
\[
  \norm[u_R']_{C([1/2,2])}\lesssim1.
\]
For $p\ne p^*$, we have
$u_R'(1)=R^{\alpha+1}\phi_0'(R)$, whereas for $p=p^*$,
\[
  u_R'(1)
  =R^{d-1}(\log R)^{(d-2)/2}\phi_0'(R).
\]
Evaluating at $s=1$ proves the desired estimates.
\end{proof}



\section{Construction of fundamental systems of solutions}
\label{sec:3}

In this section, we consider the homogeneous equation associated with \eqref{eq:etaode}:
\begin{equation} \label{eq:pdeom}
    u''
    +\frac{d-1}{r}u'
    -\bigl(\omega+V_\omega(r)\bigr)u
    =0,\qquad r>0,
\end{equation}
and construct two fundamental systems of solutions, $\{v_\omega^+,v_\omega^-\}$ and $\{w_\omega^+,w_\omega^-\}$, with prescribed asymptotic behavior at the origin and at infinity, respectively.

For $p\ne p^*$, we put
\begin{equation} \label{eq:cp}
    c_p
    \ce \begin{cases}
    pl_p^{p-1}
    =\dfrac{2p}{p-1}\Bigl(\dfrac{2p}{p-1}-d\Bigr), &
    p<p^*,
\\  \hfil 0, &
    p>p^*,
    \end{cases}
\end{equation}
and set
\begin{equation} \label{eq:lambdapm}
    \lambda_\pm
    \ce \frac{-(d-2)\pm\sqrt{(d-2)^2+4c_p}}{2}.
\end{equation}
Note that when $p>p^*$,
\begin{align*}
   c_p=0,\quad
   \lambda_+=0,
   \quad
   \lambda_-=-(d-2).
\end{align*}
The precise statements are given in the following two propositions.

\begin{proposition} \label{prop:vom0}
For $\omega\ge 0$, there exists a fundamental system of solutions $\{v_{\omega}^+, v_{\omega}^-\}$ to \eqref{eq:pdeom} such that
\begin{align*}
    v_{\omega}^+(r)
   &=1+O(r^2),
\\  v_{\omega}^-(r)
   &=\begin{cases}
    \log r^{-1}\bigl(1+O(r^2)\bigr), & d=2,
\\  r^{-(d-2)}\bigl(1+O(r^2)\bigr), & d=3 \text{ or } d\ge 5,
\\  r^{-2}\bigl(1+O(r^2\log r^{-1})\bigr), & d=4
    \end{cases}
\end{align*}
as $r\downarrow 0$.
\end{proposition}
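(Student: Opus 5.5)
The homogeneous equation \eqref{eq:pdeom} has a regular singular point at $r=0$ whose indicial roots are $0$ and $-(d-2)$. The coefficient $\omega+V_\omega(r)$ is smooth and bounded near $r=0$, since $\phi_\omega$ is smooth and positive with $\phi_\omega'(0)=0$. I would therefore construct both solutions by Volterra integral equations on a small interval $(0,r_0]$, and then extend them to $(0,\infty)$ by standard linear ODE theory, because the coefficients are continuous on $(0,\infty)$. Set $W(r)\ce\omega+V_\omega(r)$, so that $|W(r)|\le C$ on $[0,r_0]$. Write the equation in divergence form $(r^{d-1}u')'=r^{d-1}Wu$.

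\textbf{The regular solution $v_\omega^+$.} I solve
\[
  v(r)=1+\int_0^r s^{1-d}\int_0^s t^{d-1}W(t)v(t)\,dt\,ds .
\]
The kernel satisfies $\int_0^r s^{1-d}\int_0^s t^{d-1}\,dt\,ds=r^2/(2d)$. Hence Picard iteration (or a contraction argument in $C([0,r_0])$ for small $r_0$) gives a unique continuous solution with $|v-1|\le Cr^2$. Differentiating the integral equation shows that $v$ solves \eqref{eq:pdeom} and that $v'(r)=O(r)$.

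\textbf{The singular solution $v_\omega^-$.} Rather than a second Volterra equation, I would use reduction of order:
\[
  v_\omega^-(r)\ce c\,v_\omega^+(r)\int_r^{r_0}\frac{ds}{s^{d-1}v_\omega^+(s)^2}.
\]
This is a genuine second solution on $(0,r_0]$, where $v_\omega^+>0$, and it extends to $(0,\infty)$ as a solution. Linear independence follows from the Wronskian $r^{d-1}W(v^+,v^-)=-c\ne0$. For the asymptotics, write $v_\omega^+(s)^{-2}=1+O(s^2)$. Then:
\begin{itemize}
\item For $d=2$, the integral equals $\log r^{-1}+O(1)$, and multiplying by $v^+=1+O(r^2)$ gives $\log r^{-1}+O(1)$.
\item For $d\ge3$, the integral equals $\frac{r^{2-d}}{d-2}+\int_r^{r_0}O(s^{3-d})\,ds+O(1)$. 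With $c=d-2$ the leading term is $r^{-(d-2)}$. The correction is $O(r^{4-d})$ for $d\ge5$, it is $O(\log r^{-1})$ for $d=4$, and it is $O(1)$ for $d=3$. In each case it is $r^{-(d-2)}$ times $O(r^2)$, $O(r^2\log r^{-1})$ or $O(r)$ respectively. Multiplying by $v^+=1+O(r^2)$ preserves these bounds.
\end{itemize}

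\textbf{The main obstacle.} The delicate point is matching the remainder forms exactly as stated.

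For $d=3$ I only obtain $1+O(r)$, because the constant term $\int_0^{r_0}(\dots)$ of the correction contributes $r\cdot$const. I would remove it by subtracting a suitable multiple of $v_\omega^+$. Since $r^{-1}\cdot C r = C$ is a multiple of $v^+$ up to $O(r^2)$, the remainder becomes $r^{-1}\cdot O(r^2)$ after this adjustment, as claimed.

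For $d=2$ the same device works: subtract $C\,v_\omega^+$ to kill the $O(1)$ term. The remainder is then $O(r^2\log r^{-1})$, which is $\log r^{-1}\cdot O(r^2)$ as stated.

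The analogous subtraction is harmless for $d\ge4$. When $d=4$ the logarithm is genuinely present and matches the stated form.

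Finally, the construction is uniform in $\omega\ge0$, since $W$ is bounded near $0$ for each fixed $\omega$. This uniformity is useful later but not needed for the proposition itself.
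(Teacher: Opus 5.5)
Your proof is correct, but it takes a different route from the paper's.

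\textbf{The paper's route.} The paper proves one perturbation lemma, Proposition~\ref{prop:ode}. Given an explicit solution $U$ of a model equation, it produces $u=U(1+h)$ by a contraction for $(r^{d-1}U^2h')'=\rho(1+h)$. The inner integral is taken either from the endpoint $0$ (variant \textup{(i)}) or from an interior point (variant \textup{(ii)}). At the origin it takes $U^+=1$ and $U^-=\log r^{-1}$ or $r^{-(d-2)}$. It uses \textup{(i)} for $v_\omega^+$ and for $v_\omega^-$ when $d=2,3$, where $\rho^-$ is integrable at $0$, and \textup{(ii)} when $d\ge4$. The remainders $O(r^2)$ and $O(r^2\log r^{-1})$ then come straight from the bound on $h$. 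Moreover, integrating from $0$ in \textup{(i)} automatically selects the normalization with no constant term, so no multiple of $v_\omega^+$ ever has to be subtracted.

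\textbf{Your route.} You build only $v_\omega^+$ by a Volterra equation and obtain $v_\omega^-$ from the reduction-of-order formula. This gives linear independence for free through the explicit Wronskian $-c$. The price is that $v_\omega^-$ is determined only modulo multiples of $v_\omega^+$. That multiple is visible at the stated precision exactly when $d=2,3$, which is why you need the normalization step there. For that step it is not enough that the correction is $O(1)$: you need it to be a constant plus $O(r^{4-d})$. This does hold, because $s^{1-d}\bigl(v_\omega^+(s)^{-2}-1\bigr)=O(s^{3-d})$ is integrable at $0$ for $d=2,3$. You use this implicitly and should state it.

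\textbf{What each approach buys.} Your argument is more elementary and self-contained at the origin, and it shows transparently where the logarithm for $d=4$ comes from. The paper's approach buys uniformity. The same lemma, with the same endpoint-versus-interior bookkeeping, is reused for the solutions at infinity: for $\omega>0$, for $\omega=0$ with $p\ne p^*$, and for the critical case $p=p^*$ with comparison functions $(\log r)^{d/2}$ and $r^{-(d-2)}(\log r)^{-d/2}$. In those cases, reduction of order would first require constructing one solution by some separate argument.
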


\begin{proposition} \label{prop:wasym}
There exists a fundamental system of solutions $\{w_{\omega}^+, w_{\omega}^-\}$ to \eqref{eq:pdeom} such that
\begin{align}
    w_{\omega}^+(r)
   &=\begin{cases}
    r^{-\frac{d-1}{2}}e^{\sqrt{\omega}\,r}\bigl(1+O(r^{-1})\bigr), & \omega>0,\\
    r^{\lambda_+}\bigl(1+O(r^{-\varepsilon})\bigr), & \omega=0,\ p\ne p^*,\\
    (\log r)^{d/2}\bigl(1+O((\log r)^{-\varepsilon})\bigr), & \omega=0,\ p=p^*,
    \end{cases}
    \notag\\
    w_{\omega}^-(r)
   &=\begin{cases}
    r^{-\frac{d-1}{2}}e^{-\sqrt{\omega}\,r}\bigl(1+O(r^{-1})\bigr), & \omega>0,\\
    r^{\lambda_-}\bigl(1+O(r^{-\varepsilon})\bigr), & \omega=0,\ p\ne p^*,\\
    r^{-(d-2)}(\log r)^{-d/2}\bigl(1+O((\log r)^{-\varepsilon})\bigr), & \omega=0,\ p=p^*
   \end{cases}
   \label{eq:wasym}
\end{align}
as $r\to\infty$ for some $\varepsilon>0$.
\end{proposition}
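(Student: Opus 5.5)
We construct $w_\omega^\pm$ as solutions of Volterra integral equations posed at infinity, perturbing around the explicit solutions of a model equation that captures the leading behaviour of $\omega+V_\omega$. The construction splits into three cases.

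\emph{Case $\omega>0$.} Here $V_\omega(r)=O(e^{-(p-1)\sqrt\omega r})$ is integrable at infinity. The substitution $u=r^{-(d-1)/2}y$ turns \eqref{eq:pdeom} into
\[
y''-\omega y=Q(r)y,\qquad Q(r)=V_\omega(r)+\frac{(d-1)(d-3)}{4r^2}.
\]
Since $Q=O(r^{-2})$, the standard Levinson-type variation of parameters against $e^{\pm\sqrt\omega r}$ applies. For $w^-_\omega$ we solve
\[
y(r)=e^{-\sqrt\omega r}-\int_r^\infty \frac{\sinh(\sqrt\omega(r-s))}{\sqrt\omega}Q(s)y(s)\,ds
\]
by iteration. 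The kernel contributes at most $\int_r^\infty |Q|\lesssim r^{-1}$, which gives the $1+O(r^{-1})$ remainder. We obtain $w^+_\omega$ either from the analogous equation with the integral split in the usual way, or by reduction of order, $w^+=w^-\int^r s^{1-d}(w^-)^{-2}\,ds$. A direct Laplace-type evaluation of this integral gives the stated asymptotics with an $O(r^{-1})$ error.

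\emph{Case $\omega=0$, $p\neq p^*$.} The model equation is the Euler equation $u''+\frac{d-1}{r}u'-\frac{c_p}{r^2}u=0$, whose solutions are $r^{\lambda_\pm}$. Proposition~\ref{prop:V} gives $V_0(r)=c_p r^{-2}(1+O(r^{-\varepsilon}))$ when $p<p^*$, since $pl_p^{p-1}=c_p$ and the $q$-term decays faster. When $p>p^*$, \eqref{eq:phidec} gives $V_0=O(r^{-(p-1)(d-2)})=O(r^{-2-\varepsilon})$ because $(p-1)(d-2)>2$. In both cases the perturbation satisfies $W\ce V_0-c_pr^{-2}=O(r^{-2-\varepsilon})$. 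Writing $u=r^{\lambda}(1+h)$ and using the Euler Green's function with Wronskian $\propto r^{1-d}$, we set up
\[
h(r)=\int_r^\infty G(r,s)W(s)\bigl(1+h(s)\bigr)\,ds,
\]
where $G$ is built from $r^{\lambda_+}$ and $r^{\lambda_-}$ normalized relative to $r^{\lambda}$. We need $\lambda_+\neq\lambda_-$, which holds because $(d-2)^2+4c_p>0$ (note $c_p>0$ for $p<p^*$). For $w^-$ the kernel satisfies $|G(r,s)|\lesssim s$ for $s>r$, so the contraction maps with $\int_r^\infty s|W|\lesssim r^{-\varepsilon}$. For $w^+$ we integrate the growing part from a fixed $R_0$ rather than from infinity, or again use reduction of order, and we get the same $O(r^{-\varepsilon})$ remainder. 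One small check is needed when $\lambda_+=0$ (the case $p>p^*$): the integrals still converge because $W=O(r^{-2-\varepsilon})$.

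\emph{Case $\omega=0$, $p=p^*$.} This is the main obstacle. Here $V_0(r)\sim p^*l_p^{p-1}r^{-2}(\log r)^{-1}$, which is not integrable against the Euler kernel with a power gain, only with a logarithmic one. By Proposition~\ref{prop:V}, $l_p^{p-1}=(d-2)^2/2$, so $V_0=\frac{d(d-2)}{2}r^{-2}(\log r)^{-1}(1+O((\log r)^{-\varepsilon}))$. We change variables to $t=\log r$ and $u=r^{-(d-2)/2}z$, which yields
\[
z''=\Bigl(\tfrac{(d-2)^2}{4}+\tfrac{d(d-2)}{2t}\bigl(1+O(t^{-\varepsilon})\bigr)\Bigr)z.
\]
This is a Coulomb-type equation with constant $k=(d-2)/2$ and perturbation $b/t$, where $b=d(d-2)/2$. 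Its WKB (Liouville--Green) solutions are
\[
z_\pm\approx \exp\Bigl(\pm\bigl(kt+\tfrac{b}{2k}\log t\bigr)\Bigr)=e^{\pm kt}t^{\pm d/2},
\]
which translate back to $(\log r)^{d/2}$ and $r^{-(d-2)}(\log r)^{-d/2}$, exactly the claimed forms. We then run the same Volterra scheme in $t$, now around the approximate solutions $e^{\pm kt}t^{\pm d/2}$. The residual error is $O(t^{-1-\varepsilon})$, coming from the remainder in Proposition~\ref{prop:V} and from the second-order WKB correction of size $O(t^{-2})$. This error is integrable in $t$ and produces the $1+O(t^{-\varepsilon})=1+O((\log r)^{-\varepsilon})$ remainder.

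The delicate point is bookkeeping the exponent $b/(2k)=d/2$ exactly. This requires the precise leading constant $l_p$ from Proposition~\ref{prop:V}, which is why the sharp asymptotics are needed. Finally, linear independence of $w^\pm_\omega$ follows in every case from their distinct growth rates.
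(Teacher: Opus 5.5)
Your proposal is correct and follows essentially the same route as the paper. The paper also writes $w_\omega^\pm=U^\pm(1+h)$ around exactly your comparison functions: $r^{-(d-1)/2}e^{\pm\sqrt\omega\,r}$, $r^{\lambda_\pm}$, and $(\log r)^{d/2}$, $r^{-(d-2)}(\log r)^{-d/2}$. It then solves the Volterra equation $(r^{d-1}U^2h')'=r^{d-1}(\omega+V_\omega-Q)U^2(1+h)$ by contraction (Proposition~\ref{prop:ode}). In the critical case the paper simply writes down the exact model potentials $Q^\pm$ in \eqref{eq:Qcrit}; their $(\log r)^{-2}$ terms are precisely your second-order WKB correction, so your Liouville/WKB derivation leads to the same $O\bigl(r^{-2}(\log r)^{-1-\varepsilon}\bigr)$ perturbation and the same $1+O((\log r)^{-\varepsilon})$ remainder.
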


\begin{remark}\label{rem:wprime}
The proofs of Propositions~\ref{prop:vom0} and~\ref{prop:wasym} also yield the following asymptotics for $(v_\omega^\pm)'$ and $(w_\omega^\pm)'$:
\begin{align*}
    (v_\omega^+)'(r)
   &=O(r),
\\  (v_\omega^-)'(r)
   &=\begin{cases}
    -r^{-1}\bigl(1+O(r^2\log r^{-1})\bigr), & d=2,
\\  -(d-2)r^{-(d-1)}\bigl(1+O(r^{2})\bigr), & d=3 \text{ or } d\ge 5,
\\  -2r^{-3}\bigl(1+O(r^2\log r^{-1})\bigr), & d=4
   \end{cases}
\end{align*}
as $r\downarrow 0$ and
\begin{align*}
    (w_{\omega}^+)'(r)
   &=\begin{cases}
    \sqrt{\omega}r^{-\frac{d-1}{2}}e^{\sqrt{\omega}\,r}\bigl(1+O(r^{-1})\bigr), & \omega>0,
\\  \lambda_+r^{\lambda_+-1}\bigl(1+O(r^{-\varepsilon})\bigr), & \omega=0,\ p<p^*,
\\  O(r^{-1-\varepsilon}), & \omega=0,\ p>p^*,
\\  \frac{d}{2}r^{-1}(\log r)^{d/2-1}\bigl(1+O((\log r)^{-\varepsilon})\bigr), & \omega=0,\ p=p^*,
    \end{cases}
\\  (w_{\omega}^-)'(r)
   &=\begin{cases}
    -\sqrt{\omega}r^{-\frac{d-1}{2}}e^{-\sqrt{\omega}\,r}\bigl(1+O(r^{-1})\bigr), & \omega>0,
\\  \lambda_-r^{\lambda_--1}\bigl(1+O(r^{-\varepsilon})\bigr), & \omega=0,\ p\ne p^*,
\\  -(d-2)r^{-(d-1)}(\log r)^{-d/2}\bigl(1+O((\log r)^{-\varepsilon})\bigr), & \omega=0,\ p=p^*
   \end{cases}
\end{align*}
as $r\to\infty$.
\end{remark}


Related results on the asymptotic behavior of fundamental solutions
to radial Schr\"odinger equations can be found in \cite{IKO17},
which apply to the case $\omega=0$ and $p\ne p^*$ in our setting.
Here we give a unified proof that also covers the case $\omega=0$ with $p=p^*$, as well as the case $\omega>0$.
To this end, we first establish the following general result.
\begin{proposition}\label{prop:ode}
Let $\omega\ge0$ and $a>0$. Set $I=(0,a]$ in \textup{(i)} and
\textup{(ii)}, and $I=[a,\infty)$ in \textup{(iii)} and
\textup{(iv)} below. Suppose that $Q\in C(I)$ and $U\in C^2(I)$ satisfy
\begin{equation}\label{eq:gweq}
  U''+\frac{d-1}{r}U'-Q(r)U=0,\qquad r\in I,
\end{equation}
and
\[
  U(r)\ne0,\qquad r\in I.
\]
Set
\[
  \rho(\tau)
  \ce \tau^{d-1}
    \bigl(\omega+V_\omega(\tau)-Q(\tau)\bigr)U(\tau)^2,
  \qquad \tau\in I.
\]
In each case below, \eqref{eq:pdeom} admits a solution
$u\in C^2((0,\infty))$ of the form $u=U(1+h)$
on $(0,\delta]$ for some $0<\delta<a$ in \textup{(i)} and
\textup{(ii)}, or on $[R,\infty)$ for some $R>a$ in
\textup{(iii)} and \textup{(iv)}.
The correction $h$ is of class
$C^2$ on this interval and satisfies the following estimate.

\begin{enumerate}
\item
If $\rho$ and the function
\[
  s\mapsto
  s^{1-d}|U(s)|^{-2}\int_0^s|\rho(\tau)|\,d\tau
\]
are integrable near the origin, then
\[
  |h(r)|
  \le
  2\int_0^r s^{1-d}|U(s)|^{-2}
    \int_0^s|\rho(\tau)|\,d\tau\,ds,
  \qquad 0<r\le\delta.
\]

\item
If the function
\[
  s\mapsto
  s^{1-d}|U(s)|^{-2}\int_s^a|\rho(\tau)|\,d\tau
\]
is integrable near the origin, then
\[
  |h(r)|
  \le
  2\int_0^r s^{1-d}|U(s)|^{-2}
    \int_s^a|\rho(\tau)|\,d\tau\,ds,
  \qquad 0<r\le\delta.
\]

\item
If the function
\[
  s\mapsto
  s^{1-d}|U(s)|^{-2}\int_a^s|\rho(\tau)|\,d\tau
\]
is integrable at infinity, then
\[
  |h(r)|
  \le
  2\int_r^\infty s^{1-d}|U(s)|^{-2}
    \int_a^s|\rho(\tau)|\,d\tau\,ds,
  \qquad r\ge R.
\]

\item
If $\rho$ and the function
\[
  s\mapsto
  s^{1-d}|U(s)|^{-2}\int_s^\infty|\rho(\tau)|\,d\tau
\]
are integrable at infinity, then
\[
  |h(r)|
  \le
  2\int_r^\infty s^{1-d}|U(s)|^{-2}
    \int_s^\infty|\rho(\tau)|\,d\tau\,ds,
  \qquad r\ge R.
\]
\end{enumerate}
In particular, $h(r)\to0$ as $r\downarrow0$ in \textup{(i)}
and \textup{(ii)}, and as $r\to\infty$ in \textup{(iii)}
and \textup{(iv)}.
\end{proposition}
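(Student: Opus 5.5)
The natural approach is reduction of order followed by a contraction-mapping argument for the correction $h$. Substituting $u=U(1+h)$ into \eqref{eq:pdeom} and using \eqref{eq:gweq} for $U$, the zeroth-order terms cancel up to the discrepancy $\omega+V_\omega-Q$:
\[
  U h''+\Bigl(2U'+\frac{d-1}{r}U\Bigr)h'
  =(\omega+V_\omega-Q)\,U(1+h).
\]
Multiplying by $r^{d-1}U$ puts this in divergence form,
\[
  \bigl(r^{d-1}U^2h'\bigr)'=\rho(r)\,(1+h(r)).
\]
Integrating twice converts it into a Volterra-type integral equation $h=T h$. The choice of base points for the two integrations depends on the case:
\begin{enumerate}
\item Integrate from $0$ in both steps:
\[
  h(r)=\int_0^r s^{1-d}U(s)^{-2}\int_0^s\rho(\tau)(1+h(\tau))\,d\tau\,ds.
\]
\item Use $-\int_s^a$ in the inner integral and $\int_0^r$ in the outer one.
\item Use $\int_a^s$ in the inner integral and $-\int_r^\infty$ in the outer one.
\item Use $\int_s^\infty$ inside and $\int_r^\infty$ outside, with appropriate signs.
\end{enumerate}
In each case the hypotheses say exactly that the kernel
\[
  k(r)\ce\int_0^r s^{1-d}|U(s)|^{-2}\int_0^s|\rho|\,d\tau\,ds
\]
(or its analogue) is finite and tends to $0$ at the endpoint.

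Next, choose $\delta$ (respectively $R$) so that this kernel bound is at most $1/2$ on the interval. Working in $C(I_\delta)$ with the sup norm, a bound $|h|\le1$ gives $|Th|\le 2k\le1$ and $|Th_1-Th_2|\le k\,\|h_1-h_2\|\le\tfrac12\|h_1-h_2\|$. The map $T$ is therefore a contraction on the closed unit ball, and its fixed point satisfies $|h|\le k(1+\|h\|)\le 2k$, which is the stated estimate. One point needs care in cases (ii) and (iii): the inner integral runs over $[s,a]$ or $[a,s]$, which is not contained in the small interval $(0,\delta]$ or $[R,\infty)$. I would therefore set up the fixed point with $h$ extended on $I$, or bound $|\rho(\tau)(1+h(\tau))|\le 2|\rho(\tau)|$ over the whole range after fixing the ball. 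The cleaner route is to do the contraction on all of $I$ in a weighted norm; simplest is to keep the ball $\|h\|\le1$ on $I$, with the contraction constant controlled by the tail portion only.

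That last issue is the main obstacle. In (ii), for $\tau\in[\delta,a]$ the factor $\int_s^a|\rho|$ is not small, so smallness must come solely from the outer integral $\int_0^r$ being short. I would argue that the full double integral $\int_0^r s^{1-d}U^{-2}\int_s^a|\rho|\,d\tau\,ds$ tends to $0$ as $r\downarrow0$ by integrability, so a contraction on $C((0,\delta])$ still works once $h$ is defined on all of $I$ by the same formula. Equivalently, solve the linear Volterra equation by Picard iteration, whose series converges for every Volterra kernel, and then read off the bound on $(0,\delta]$. Case (iii) is handled symmetrically.

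Finally, regularity is read off from the integral formula. Since $h$ is continuous, differentiating gives $h'=r^{1-d}U^{-2}\int\rho(1+h)$, which is $C^1$ because $\rho$ and $U$ are continuous. Hence $h\in C^2$, and $u=U(1+h)$ solves \eqref{eq:pdeom} on the small interval by reversing the computation. Standard linear ODE theory then extends $u$ uniquely to a $C^2$ solution on $(0,\infty)$, since the coefficients of \eqref{eq:pdeom} are continuous there. The limit $h\to0$ at the endpoint follows from $|h|\le2k$ and $k\to0$.
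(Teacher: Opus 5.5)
Your overall scheme matches the paper's: you reduce to $(r^{d-1}U^2h')'=\rho(1+h)$, you choose $\delta$ or $R$ so that the double integral is at most $1/2$, you run a contraction on the closed unit ball in the sup norm, and you finish with the regularity/extension step. Cases \textup{(i)} and \textup{(iv)} go through as you describe.

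The gap is in cases \textup{(ii)} and \textup{(iii)}, exactly at the point you flag, and the remedies you settle on do not work.
\begin{itemize}
\item With the inner integral over $[s,a]$ (resp.\ $[a,s]$), the value $h(r)$ depends on $h$ over all of $(0,a]$ (resp.\ $[a,\infty)$), because the inner range always contains $[r,a]$ (resp.\ $[a,R]$). So the equation is not of Volterra type, and the appeal to convergence of Picard iteration for Volterra kernels is unfounded.
\item Running the contraction on all of $I$ fails as well. There is no smallness on $[\delta,a]$ (or $[a,R]$), so nothing keeps the unit ball invariant or makes the map contractive there. In fact the problem posed on all of $I$ is the two-point problem $h(0^+)=0$, $r^{d-1}U^2h'=0$ at $r=a$. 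Its solvability is a Fredholm-alternative question that your hypotheses do not settle.
\item Defining $h$ on $[\delta,a]$ ``by the same formula'' is circular.
\end{itemize}

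The missing idea is to move the base point of the inner integral into the small interval. In \textup{(ii)}, take
\[
  \Phi(h)(r)=-\int_0^r s^{1-d}U(s)^{-2}\int_s^\delta\rho(\tau)\bigl(1+h(\tau)\bigr)\,d\tau\,ds
\]
on $\{h\in C([0,\delta]):h(0)=0,\ \|h\|_{C([0,\delta])}\le1\}$. In \textup{(iii)}, use $-\int_r^\infty s^{1-d}U(s)^{-2}\int_R^s\rho(1+h)\,d\tau\,ds$ on the unit ball of $C([R,\infty))$.

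Changing the base point only shifts $r^{d-1}U^2h'$ by an additive constant, so a fixed point still solves $(r^{d-1}U^2h')'=\rho(1+h)$. The inequalities $\int_s^\delta|\rho|\le\int_s^a|\rho|$ and $\int_R^s|\rho|\le\int_a^s|\rho|$ transfer the hypotheses. This gives the self-map property, the contraction constant $1/2$, and the bound $|h|\le 2\int\cdots$ exactly as in \textup{(i)} and \textup{(iv)}; this is what the paper does.

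Your passing idea of bounding the integrand by $2|\rho|$ can also be rescued. Extend $h$ to $[\delta,a]$ (resp.\ $[a,R]$) by a fixed rule such as $h\equiv0$; this again only adds a constant to $r^{d-1}U^2h'$, and the contraction estimate then involves only the part of the inner integral inside the small interval.
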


\begin{proof}
On $I$, the function
$u=U(1+h)$ solves \eqref{eq:pdeom} if and only if $h$ solves
\begin{equation}\label{eq:wFVW}
  \bigl(r^{d-1}U^2h'\bigr)'=\rho(1+h).
\end{equation}
Indeed,
\[
  u'=U'(1+h)+Uh',
  \qquad
  u''=U''(1+h)+2U'h'+Uh''.
\]
Substituting these expressions into \eqref{eq:pdeom} and using
\eqref{eq:gweq}, we obtain
\[
  Uh''+\left(2U'+\frac{d-1}{r}U\right)h'
  =\bigl(\omega+V_\omega-Q\bigr)U(1+h).
\]
Multiplication by $r^{d-1}U$ gives \eqref{eq:wFVW}.
The calculation is reversible because $U\ne0$ on the interval
under consideration.

For \textup{(i)}, choose $\delta\in(0,a)$ such that
\[
  \int_0^\delta s^{1-d}|U(s)|^{-2}
    \int_0^s|\rho(\tau)|\,d\tau\,ds
  \le\frac12.
\]
On the complete metric space
\[
  Y\ce \left\{
    h\in C([0,\delta]):
    h(0)=0,\ \|h\|_{C([0,\delta])}\le1
  \right\},
\]
equipped with the metric induced by the supremum norm, define
\[
  \Phi(h)(r)
  \ce
  \int_0^r s^{1-d}U(s)^{-2}
    \int_0^s\rho(\tau)(1+h(\tau))\,d\tau\,ds,
  \qquad 0\le r\le\delta.
\]
The integrability assumptions imply that $\Phi(h)$ is continuous
on $[0,\delta]$ and vanishes at $0$. Moreover,
\[
  |\Phi(h)(r)|
  \le
  2\int_0^r s^{1-d}|U(s)|^{-2}
    \int_0^s|\rho(\tau)|\,d\tau\,ds
  \le1,
\]
and
\[
  \|\Phi(h_1)-\Phi(h_2)\|_{C([0,\delta])}
  \le\frac12\|h_1-h_2\|_{C([0,\delta])}.
\]
Thus $\Phi$ maps $Y$ into itself and is a contraction.
The Banach fixed point theorem yields a fixed point $h\in Y$,
and the preceding estimate gives the asserted bound.

For \textup{(ii)}, choose $\delta\in(0,a)$ such that
\[
  \int_0^\delta s^{1-d}|U(s)|^{-2}
    \int_s^a|\rho(\tau)|\,d\tau\,ds
  \le\frac12.
\]
On the same space $Y$, use instead
\[
  \Phi(h)(r)
  \ce
  -\int_0^r s^{1-d}U(s)^{-2}
    \int_s^\delta\rho(\tau)(1+h(\tau))\,d\tau\,ds,
  \qquad 0\le r\le\delta.
\]
Since
\[
  \int_s^\delta|\rho(\tau)|\,d\tau
  \le\int_s^a|\rho(\tau)|\,d\tau,
  \qquad 0<s\le\delta,
\]
the same estimates show that $\Phi$ is a contraction on $Y$.
Its fixed point satisfies the bound in \textup{(ii)}.

For \textup{(iii)}, choose $R>a$ such that
\[
  \int_R^\infty s^{1-d}|U(s)|^{-2}
    \int_a^s|\rho(\tau)|\,d\tau\,ds
  \le\frac12.
\]
Consider
\[
  Y\ce \left\{
    h\in C([R,\infty)):
    \sup_{r\ge R}|h(r)|\le1
  \right\},
\]
again equipped with the metric induced by the supremum norm, and define
\[
  \Phi(h)(r)
  \ce
  -\int_r^\infty s^{1-d}U(s)^{-2}
    \int_R^s\rho(\tau)(1+h(\tau))\,d\tau\,ds,
  \qquad r\ge R.
\]
Using
\[
  \int_R^s|\rho(\tau)|\,d\tau
  \le\int_a^s|\rho(\tau)|\,d\tau,
  \qquad s\ge R,
\]
we obtain the same contraction estimates and hence a fixed point
satisfying the bound in \textup{(iii)}.

For \textup{(iv)}, choose $R>a$ such that
\[
  \int_R^\infty s^{1-d}|U(s)|^{-2}
    \int_s^\infty|\rho(\tau)|\,d\tau\,ds
  \le\frac12.
\]
The required fixed point is obtained on the same space $Y$ from
\[
  \Phi(h)(r)
  \ce
  \int_r^\infty s^{1-d}U(s)^{-2}
    \int_s^\infty\rho(\tau)(1+h(\tau))\,d\tau\,ds,
  \qquad r\ge R.
\]
The contraction estimates and the bound in \textup{(iv)} follow as above.

In each case, differentiating the fixed point identity shows that
$h$ is of class $C^2$ on the interval adjacent to the relevant endpoint
and satisfies
\eqref{eq:wFVW}. The stated bounds also imply that $h$ tends to zero
at the relevant endpoint. Thus $u=U(1+h)$ solves \eqref{eq:pdeom}
on $(0,\delta]$ or $[R,\infty)$.
Since the coefficients of \eqref{eq:pdeom} are continuous on
$(0,\infty)$, the standard extension theorem for linear ODEs
extends $u$ uniquely to a solution in $C^2((0,\infty))$.
\end{proof}

\subsection{Fundamental solutions near the origin}

We now construct a fundamental system of solutions
$\{v_\omega^+,v_\omega^-\}$ to \eqref{eq:pdeom} with the prescribed
asymptotics at the origin for each $\omega\ge0$.

\begin{proof}[Proof of Proposition~\ref{prop:vom0}]
Set
\begin{align*}
    U^+(r)
   &\ce 1,\quad
    U^-(r)
   \ce  \begin{cases}
    \log r^{-1}, & d=2,
\\  r^{-(d-2)}, & d\ge 3.
  \end{cases}
\end{align*}
Then each $U^\pm$ solves
\begin{align*}
 U''+\frac{d-1}{r}U'=0,\qquad r>0.
\end{align*}
We put
\[
  \rho^{\pm}(\tau)
    \ce \tau^{d-1}\bigl(\omega+V_\omega(\tau)\bigr)U^{\pm}(\tau)^2.
\]
We apply Proposition~\ref{prop:ode} with $I=(0,1/2]$, where
\[
  U^\pm(r)>0,\qquad 0<r\le1/2.
\]
All estimates below hold for $0<s\le1/2$.

First, to construct $v_\omega^+$, we verify the assumptions of Proposition~\ref{prop:ode}\,\textup{(i)}. The function $\rho^+$ is integrable near the origin, and
\begin{align*}
    s^{1-d}|U^+(s)|^{-2}\int_0^s |\rho^+(\tau)|\,d\tau
    \lesssim s^{1-d}\int_0^s\tau^{d-1}\,d\tau
    \lesssim s.
\end{align*}

Next, to construct $v_\omega^-$ when $d=2$ or $d=3$, we verify the assumptions of Proposition~\ref{prop:ode}\,\textup{(i)}. In these cases, $\rho^-$ is integrable near the origin. If $d=2$, then
\begin{align*}
   &s^{-1}|U^-(s)|^{-2}\int_0^s |\rho^-(\tau)|\,d\tau
    \lesssim s^{-1}(\log s^{-1})^{-2}\int_0^s \tau(\log \tau^{-1})^2\,d\tau
    \lesssim s,
\end{align*}
where we used
\[
  \int_0^s \tau(\log \tau^{-1})^2\,d\tau
    =\frac{s^2}{2}(\log s^{-1})^2+\frac{s^2}{2}\log s^{-1}+\frac{s^2}{4}
    \lesssim s^2(\log s^{-1})^2.
\]
If $d=3$, then
\begin{align*}
   &s^{-2}|U^-(s)|^{-2}\int_0^s |\rho^-(\tau)|\,d\tau
    \lesssim \int_0^s 1\,d\tau
    = s.
\end{align*}

Finally, to construct $v_\omega^-$ when $d\ge 4$, we verify the assumptions of Proposition~\ref{prop:ode}\,\textup{(ii)}. If $d=4$, then
\begin{align*}
   &s^{-3}|U^-(s)|^{-2}\int_s^{1/2} |\rho^-(\tau)|\,d\tau
    \lesssim s\int_s^{1/2}\tau^{-1}\,d\tau
    \le s\log s^{-1}.
\end{align*}
If $d\ge 5$, then
\begin{align*}
   &s^{1-d}|U^-(s)|^{-2}\int_s^{1/2} |\rho^-(\tau)|\,d\tau
    \lesssim s.
\end{align*}
Therefore, by Proposition~\ref{prop:ode}, we obtain the desired solutions of the form $v_\omega^\pm=U^{\pm}(1+h)$.
\end{proof}

\subsection{Fundamental solutions at infinity for \texorpdfstring{$\omega>0$}{omega > 0}}

We first construct a fundamental system of solutions
$\{w_\omega^+,w_\omega^-\}$ to \eqref{eq:pdeom} for $\omega>0$,
with the asymptotic behavior stated in Proposition~\ref{prop:wasym}.

For $r>0$, set
\begin{align*}
    U^+(r)
   \ce r^{-\frac{d-1}{2}}e^{\sqrt{\omega}\,r},
   \qquad
    U^-(r)
   \ce r^{-\frac{d-1}{2}}e^{-\sqrt{\omega}\,r}.
\end{align*}
Each $U^\pm$ solves
\begin{align*}
    U''
   +\frac{d-1}{r}U'
   -\Bigl(\omega+\frac{b}{r^2}\Bigr)U
   =0,\qquad r>0,
\end{align*}
where $b\ce -(d-1)(d-3)/4$.

\begin{proof}[Proof of Proposition~\ref{prop:wasym} \textup{(}case $\omega>0$\textup{)}]
We apply Proposition~\ref{prop:ode} with $I=[2,\infty)$, where
$U^\pm(r)>0$ for $r\ge2$. Set
\begin{align*}
    \rho^\pm(\tau)
   &\ce \tau^{d-1}\Bigl(V_\omega(\tau)-\frac{b}{\tau^2}\Bigr)U^\pm(\tau)^2
\\ &=\tau^{d-1}\Bigl(p\phi_\omega(\tau)^{p-1}-q\phi_\omega(\tau)^{q-1}-\frac{b}{\tau^2}\Bigr)U^\pm(\tau)^2.
\end{align*}
Since $\phi_\omega$ decays exponentially,
$|\rho^\pm(\tau)|\lesssim \tau^{d-3}U^\pm(\tau)^2$ for $\tau\ge2$.

To construct $w_\omega^+$, we verify the assumptions of
Proposition~\ref{prop:ode}\,\textup{(iii)}. For $s\ge4$, we have
\begin{align*}
    s^{1-d}U^+(s)^{-2}\int_2^s |\rho^+(\tau)|\,d\tau
   &\lesssim e^{-2\sqrt{\omega}s}
     \Bigl(\int_2^{s/2}+\int_{s/2}^s\Bigr)\tau^{-2}e^{2\sqrt{\omega}\tau}\,d\tau
\\ &\le e^{-\sqrt{\omega}s}
    +\Bigl(\frac{s}{2}\Bigr)^{-2}e^{-2\sqrt{\omega}s}
      \int_{s/2}^s e^{2\sqrt{\omega}\tau}\,d\tau
\\ &\le e^{-\sqrt{\omega}s}
    +\frac{1}{2\sqrt{\omega}}\Bigl(\frac{s}{2}\Bigr)^{-2}
    \lesssim s^{-2}.
\end{align*}
Next, to construct $w_\omega^-$, we verify the assumptions of Proposition~\ref{prop:ode}\,\textup{(iv)}. The function $\rho^-$ is integrable at infinity, and, for $s\ge2$,
\begin{align*}
    s^{1-d}U^-(s)^{-2}\int_s^\infty |\rho^-(\tau)|\,d\tau
   &\lesssim e^{2\sqrt{\omega}\,s}\int_s^\infty \tau^{-2}e^{-2\sqrt{\omega}\tau}\,d\tau
\\ &\le e^{2\sqrt{\omega}\,s}s^{-2}\int_s^\infty e^{-2\sqrt{\omega}\tau}\,d\tau
    \lesssim s^{-2}.
\end{align*}
Thus, by Proposition~\ref{prop:ode}, we obtain the desired solutions $w_\omega^+$ and $w_\omega^-$.
\end{proof}

\subsection{Fundamental solutions at infinity for \texorpdfstring{$\omega=0$}{omega = 0}}

We next construct a fundamental system of solutions
$\{w_0^+,w_0^-\}$ to \eqref{eq:pdeom} for $\omega=0$,
with the asymptotic behavior stated in Proposition~\ref{prop:wasym}.
Before giving the proof, we collect the comparison equations
and functions for $p\ne p^*$ and $p=p^*$.

When $p\ne p^*$, \eqref{eq:sharp} and \eqref{decay} give
\begin{align} \label{eq:Vnc}
    V_0(r)
    =\frac{c_p}{r^2}+O(r^{-2-\varepsilon})
\end{align}
as $r\to\infty$, where $c_p$ is defined by \eqref{eq:cp}.
We therefore compare \eqref{eq:pdeom} with the Euler equation
\begin{equation} \label{eq:Euler}
    U''
    +\frac{d-1}{r}U'
    -\frac{c_p}{r^2}U
    =0,\qquad r>0.
\end{equation}
A fundamental system of solutions to \eqref{eq:Euler} is given by
\begin{align*}
    U^+(r)&\ce r^{\lambda_+},
    \qquad
    U^-(r)\ce r^{\lambda_-},
\end{align*}
where $\lambda_\pm$ are the roots of the characteristic equation
\begin{align}
\label{eq-lam}
   \lambda^2+(d-2)\lambda-c_p=0
\end{align}
and are given explicitly by \eqref{eq:lambdapm}.

When $p=p^*$, \eqref{eq:sharp} gives
\begin{align}
    \notag
    V_0(r)
   &=p^*\phi_0(r)^{p^*-1}-q\phi_0(r)^{q-1}
\\  \label{eq:Vcrit}
   &=\frac{(d-2)d}{2r^{2}\log r}
    +O\Bigl(\frac{1}{r^{2}(\log r)^{1+\varepsilon}}\Bigr)
\end{align}
as $r\to\infty$ for some $\varepsilon>0$.
In this case, for $r>1$, set
\begin{align*}
    U^+(r)&\ce (\log r)^{d/2},
    \qquad
    U^-(r)\ce r^{-(d-2)}(\log r)^{-d/2}.
\end{align*}
Then each $U^\pm$ solves
\begin{equation*}
    (U^\pm)''
    +\frac{d-1}{r}(U^\pm)'
    -Q^\pm(r)U^\pm
    =0,\qquad r>1,
\end{equation*}
where
\begin{align} \label{eq:Qcrit}
    Q^+(r)
   &\ce \frac{(d-2)d}{2r^2\log r}+\frac{\tfrac{d}{2}(\tfrac{d}{2}-1)}{r^2(\log r)^2},&
    Q^-(r)
   &\ce \frac{(d-2)d}{2r^2\log r}+\frac{\tfrac{d}{2}(\tfrac{d}{2}+1)}{r^2(\log r)^2}.
\end{align}

\begin{proof}[Proof of Proposition~\ref{prop:wasym} \textup{(}case $\omega=0$\textup{)}]
We distinguish two cases, using the corresponding comparison
functions $U^\pm$ defined above. In both cases, we apply
Proposition~\ref{prop:ode} with $I=[2,\infty)$, where
$U^\pm(r)>0$ for $r\ge2$.

\smallskip
\noindent\textit{Case 1: $p\ne p^*$.} We put
\[
  \rho^\pm(\tau)
    \ce \tau^{d-1}\Bigl(V_0(\tau)-\frac{c_p}{\tau^2}\Bigr)U^\pm(\tau)^2.
\]
By \eqref{eq:Vnc},
$|\rho^\pm(\tau)|\lesssim \tau^{d-3-\varepsilon}U^\pm(\tau)^2$
for $\tau\ge2$.

First, to construct $w_0^+$, we verify the assumptions of
Proposition~\ref{prop:ode}\,\textup{(iii)}. Reducing $\varepsilon>0$
if necessary so that $\sqrt{(d-2)^2+4c_p}-\varepsilon>0$, we have,
for $s\ge2$,
\begin{align*}
   &s^{1-d}U^+(s)^{-2}\int_2^s |\rho^+(\tau)|\,d\tau
\\ &\lesssim s^{-1-\sqrt{(d-2)^2+4c_p}}
        \int_2^s \tau^{-1+\sqrt{(d-2)^2+4c_p}-\varepsilon}\,d\tau
    \lesssim s^{-1-\varepsilon}.
\end{align*}

Next, to construct $w_0^-$, we verify the assumptions of Proposition~\ref{prop:ode}\,\textup{(iv)}. The estimate $|\rho^-(\tau)|\lesssim \tau^{-1-\sqrt{(d-2)^2+4c_p}-\varepsilon}$ for $\tau\ge2$ shows that $\rho^-$ is integrable at infinity, and, for $s\ge2$,
\begin{align*}
   &s^{1-d}U^-(s)^{-2}\int_s^\infty |\rho^-(\tau)|\,d\tau
\\ &\lesssim s^{-1+\sqrt{(d-2)^2+4c_p}}
        \int_s^\infty \tau^{-1-\sqrt{(d-2)^2+4c_p}-\varepsilon}\,d\tau
    \lesssim s^{-1-\varepsilon}.
\end{align*}
Thus, by Proposition~\ref{prop:ode}, we obtain the desired solutions $w_0^+$ and $w_0^-$.

\smallskip
\noindent\textit{Case 2: $p=p^*$.} We put
\[
  \rho^\pm(\tau)
    \ce \tau^{d-1}(V_0(\tau)-Q^\pm(\tau))U^\pm(\tau)^2.
\]
After reducing $\varepsilon$ so that $0<\varepsilon\le1$,
\eqref{eq:Vcrit} and \eqref{eq:Qcrit} give
\[
  |\rho^\pm(\tau)|
  \lesssim \tau^{d-3}(\log \tau)^{-1-\varepsilon}U^\pm(\tau)^2,
  \qquad \tau\ge2.
\]

First, to construct $w_0^+$, we verify the assumptions of
Proposition~\ref{prop:ode}\,\textup{(iii)}. For $s\ge2$, we have
\begin{align*}
   &s^{1-d}U^+(s)^{-2}\int_2^s |\rho^+(\tau)|\,d\tau
\\ &\lesssim s^{1-d}(\log s)^{-d}
    \int_2^s \tau^{d-3}(\log \tau)^{d-1-\varepsilon}\,d\tau
    \lesssim s^{-1}(\log s)^{-1-\varepsilon}.
\end{align*}
Here we used, for $s\ge2$,
\begin{align*}
    \int_2^s \tau^{d-3}(\log \tau)^{d-1-\varepsilon}\,d\tau
    &\le (\log s)^{d-1-\varepsilon}\int_2^s \tau^{d-3}\,d\tau
\\  &\lesssim s^{d-2}(\log s)^{d-1-\varepsilon}.
\end{align*}
Next, to construct $w_0^-$, we verify the assumptions of Proposition~\ref{prop:ode}\,\textup{(iv)}. The function $\rho^-$ is integrable at infinity, and, for $s\ge2$,
\begin{align*}
   &s^{1-d}U^-(s)^{-2}\int_s^\infty |\rho^-(\tau)|\,d\tau
\\ &\lesssim s^{d-3}(\log s)^d\int_s^\infty \tau^{-d+1}(\log \tau)^{-d-1-\varepsilon}\,d\tau
    \lesssim s^{-1}(\log s)^{-1-\varepsilon}.
\end{align*}
Therefore, by Proposition~\ref{prop:ode}, since $\int_r^\infty s^{-1}(\log s)^{-1-\varepsilon}\,ds\lesssim (\log r)^{-\varepsilon}$, we obtain the desired solutions $w_0^+$ and $w_0^-$.
\end{proof}

\section{Construction of \texorpdfstring{$\eta_0$}{eta0} and the zero-frequency limit}
\label{sec:4}

In this section, we first derive an integral representation
of $\eta_\omega$ for $\omega>0$ and then construct $\eta_0$ from the corresponding zero-frequency representation.
Next, we determine sharp asymptotics of $\eta_0$,
prove uniqueness under the asymptotic condition, and establish local
$C^2$ convergence $\eta_\omega\to\eta_0$ as $\omega\downarrow0$. We also prove the divergence
of $M'(\omega)$ to $-\infty$ when the weighted $L^2$ condition fails,
completing the ODE analysis required for the main theorems.

\subsection{Construction of \texorpdfstring{$\eta_0$}{eta0}}

In this subsection, we construct a regular solution $\eta_0$ of
\eqref{eq:Lometa} with $\omega=0$ using the fundamental systems
for the homogeneous equation \eqref{eq:pdeom}.
\begin{lemma}\label{lem:vw}
Let $\omega\ge0$. Then
\[
  v_\omega^+(r)\sim-w_\omega^+(r)
  \quad\text{as }r\to\infty.
\]
In particular, $v_\omega^+$ and $w_\omega^-$ are linearly independent.
Moreover,
\begin{align}
\label{w-asym0}
  |w_\omega^-(r)|\sim
  \begin{cases}
    \log r^{-1}, & d=2,\\[2pt]
    r^{-(d-2)}, & d\ge3
  \end{cases}
\end{align}
as $r\downarrow0$.
\end{lemma}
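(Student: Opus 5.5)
We expand $v_\omega^+$ in the fundamental system at infinity from Proposition~\ref{prop:wasym}, writing $v_\omega^+=a\,w_\omega^++b\,w_\omega^-$ on $(0,\infty)$ for constants $a,b\in\R$. Since $w_\omega^+$ dominates $w_\omega^-$ at infinity (exponentially when $\omega>0$; by the factor $r^{\lambda_+-\lambda_-}$ or $r^{d-2}(\log r)^{d}$ when $\omega=0$), the claim $v_\omega^+\sim-w_\omega^+$ is equivalent to $a<0$. We split this into two steps: $a\neq0$, and then the sign of $a$.

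\textbf{Step 1: $a\ne0$.} We argue by contradiction using radial nondegeneracy. Suppose $a=0$. Then $v_\omega^+=b\,w_\omega^-$ with $b\ne0$, since $v_\omega^+(0)=1$. Near the origin, $v_\omega^+$ is regular by Proposition~\ref{prop:vom0}, with $(v_\omega^+)'=O(r)$ by Remark~\ref{rem:wprime}, so $v_\omega^+$ defines a $C^2$ radial function solving $L_\omega v_\omega^+=0$ on $\R^d$. At infinity it behaves like $w_\omega^-$. We then check that $w_\omega^-$ lies in $X_\omega$ near infinity, using Proposition~\ref{prop:wasym} and Remark~\ref{rem:wprime}.
\begin{itemize}
\item For $\omega>0$, $w_\omega^-$ decays exponentially, so it belongs to $H^1$ near infinity.
\item For $\omega=0$ and $p\ne p^*$, we have $\lambda_-\le-(d-2)$, with $\lambda_-<0$ also when $d=2$ because then $c_p>0$. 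Hence $|\nabla w_0^-|\lesssim r^{\lambda_--1}$ is square integrable at infinity, since $2(1-\lambda_-)>d$. Moreover $|w_0^-|^{p+1}$ is integrable: for $p>p^*$ this follows from $(d-2)(p+1)>d$, and for $p<p^*$ from $\lambda_-<-(d-2)$.
\item For $\omega=0$ and $p=p^*$, the extra factor $(\log r)^{-d/2}$ only helps.
\end{itemize}
Thus $v_\omega^+\in\ker(L_\omega)_{\rad}\setminus\{0\}$, which contradicts Proposition~\ref{prop:nondeg}. The same step already gives linear independence of $v_\omega^+$ and $w_\omega^-$: if they were dependent, we would have $a=0$.

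\textbf{Step 2: $a<0$.} This is where I expect the main difficulty, since nondegeneracy alone gives no sign. I would invoke the global behavior of the regular homogeneous solution obtained in Appendix~\ref{app:shoot} through the shooting analysis underlying \cite{M93}. That analysis identifies $v_\omega^+$, up to a positive factor, with the derivative $\pt_\alpha u(\cdot;\alpha)$ of the shooting solution with respect to the initial height $\alpha$ at $\alpha=\phi_\omega(0)$. The one-sign-change property of the $I$-function (Lemma~\ref{lem:Isign}) then shows that $v_\omega^+$ has exactly one zero and is negative beyond it. Since $w_\omega^+>0$ for large $r$ and dominates, $v_\omega^+<0$ for large $r$ forces $a<0$, and hence $v_\omega^+/w_\omega^+\to a<0$. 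If the appendix only provides a weaker statement, for instance that $v_\omega^+$ is eventually of one sign, I would fix the sign by a Wronskian comparison with $\phi_\omega$. Indeed, $(r^{d-1}(\phi_\omega' v-\phi_\omega v'))'=r^{d-1}(\phi_\omega^p-\phi_\omega^q-p\phi_\omega^p+q\phi_\omega^q)\,v$ up to sign, and this can be evaluated at the origin and at infinity to show that $v_\omega^+$ must change sign.

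\textbf{Behavior of $w_\omega^-$ at the origin.} We write $w_\omega^-=c\,v_\omega^++e\,v_\omega^-$. If $e=0$, then $w_\omega^-$ would be a multiple of $v_\omega^+$, contradicting the linear independence established in Step~1. Hence $e\ne0$, and since $v_\omega^+$ stays bounded at $0$, we get $|w_\omega^-(r)|\sim|e|\,|v_\omega^-(r)|$ as $r\downarrow0$. Proposition~\ref{prop:vom0} then gives \eqref{w-asym0}.
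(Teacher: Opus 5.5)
Your proof is correct and follows the paper's route: identify $v_\omega^+$ with $v_{y^*}$, expand it in $\{w_\omega^+,w_\omega^-\}$ and take the sign of the leading coefficient from the shooting analysis of Appendix~\ref{app:shoot}, then expand $w_\omega^-$ in $\{v_\omega^+,v_\omega^-\}$ to get the behavior at the origin. The one difference is that your Step~1 is redundant, because Proposition~\ref{prop:vstar} already gives $c_+<0$ directly and Proposition~\ref{prop:nondeg} is itself deduced from it; you can therefore drop the $X_\omega$-membership check, whose $L^{p+1}$ justification for $p<p^*$ in $d=2,3$ does not in fact follow from $\lambda_-<-(d-2)$ alone.
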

\begin{proof}
Fix $\omega\ge0$. By Theorem~\ref{thm:shoot},
$\phi_\omega=u_{y^*}$. The regular solutions $v_\omega^+$ and
$v_{y^*}$ satisfy the same equation and the same initial conditions
$v(0)=1$, $v'(0)=0$, so they coincide. Since
$\{w_\omega^+,w_\omega^-\}$ is a fundamental system, we can write
\[
  v_\omega^+=c_1w_\omega^++c_2w_\omega^-
\]
for some $c_1,c_2\in\R$. Proposition~\ref{prop:vstar} gives
$c_1<0$; in particular, $c_1\ne0$. Since
$w_\omega^-/w_\omega^+\to0$ as $r\to\infty$ by
Proposition~\ref{prop:wasym}, we obtain
\[
  \frac{v_\omega^+(r)}{w_\omega^+(r)}
  \to c_1<0.
\]
This proves the first assertion and also shows that
$v_\omega^+$ and $w_\omega^-$ are linearly independent.

To determine the behavior of $w_\omega^-$ near the origin, we write
\begin{align}
\label{vom-rep}
  w_\omega^-
  =a_1v_\omega^+ +a_2v_\omega^-.
\end{align}
If $a_2=0$, then $w_\omega^-$ and $v_\omega^+$ would be linearly
dependent. Hence $a_2\ne0$.
Since $v_\omega^+$ is regular at the origin whereas
\[
  v_\omega^-(r)\sim
  \begin{cases}
    \log r^{-1}, & d=2,\\
    r^{-(d-2)}, & d\ge3
  \end{cases}
\]
as $r\downarrow0$, the singular term in \eqref{vom-rep} dominates.
Hence \eqref{w-asym0} follows from $a_2\ne0$.
\end{proof}

For fixed $\omega\ge0$, let $W(r)$ be the Wronskian of $v_\omega^+$ and $w_\omega^-$:
\[  W(r)
    \ce \begin{vmatrix}
    v_\omega^+(r) & w_\omega^-(r)
\\  (v_\omega^+)'(r) & (w_\omega^-)'(r)
    \end{vmatrix},\qquad
    r>0.    \]

\begin{lemma}\label{lem:Wom}
$W(r)=W_\omega r^{-(d-1)}$ for some constant $W_\omega>0$.
\end{lemma}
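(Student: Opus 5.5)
The proof has two parts: Abel's identity gives the form $W(r)=W_\omega r^{-(d-1)}$, and the sign of $W_\omega$ is read off near the origin. The sign step relies on the asymptotics of $w_\omega^-$ at the origin, which carry the real information.

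For the form, both $v_\omega^+$ and $w_\omega^-$ solve \eqref{eq:pdeom}, which has no $u'$ term other than $\frac{d-1}{r}u'$. Differentiating,
\[
W'=v_\omega^+(w_\omega^-)''-(v_\omega^+)''w_\omega^- .
\]
Substituting the equation, the potential terms cancel, leaving $W'=-\frac{d-1}{r}W$. Hence $(r^{d-1}W)'=0$, so $r^{d-1}W(r)\equiv W_\omega$ is constant on $(0,\infty)$. By Lemma~\ref{lem:vw}, $v_\omega^+$ and $w_\omega^-$ are linearly independent, so $W_\omega\neq0$.

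For the sign, I evaluate $W_\omega$ as $r\downarrow0$ using the representation \eqref{vom-rep}, $w_\omega^-=a_1v_\omega^++a_2v_\omega^-$ with $a_2\neq0$. Bilinearity gives $W=a_2\,W[v_\omega^+,v_\omega^-]$. By Proposition~\ref{prop:vom0} and Remark~\ref{rem:wprime}, we have $v_\omega^+=1+O(r^2)$ and $(v_\omega^+)'=O(r)$, and $(v_\omega^-)'\sim-(d-2)r^{-(d-1)}$ (or $-r^{-1}$ when $d=2$). Therefore
\[
r^{d-1}W[v_\omega^+,v_\omega^-]\to-(d-2)\quad(\text{or }-1\text{ when }d=2),
\]
since the cross term $r^{d-1}(v_\omega^+)'v_\omega^-$ is $O(r^2)$, or $O(r^2\log r^{-1})$ when $d=2$. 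This gives $W_\omega=-(d-2)a_2$, respectively $-a_2$. So $W_\omega>0$ is equivalent to $a_2<0$, that is, to $w_\omega^-(r)\to-\infty$ as $r\downarrow0$.

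The main obstacle is determining this sign, since Lemma~\ref{lem:vw} only gives $|w_\omega^-|$. The sign is fixed by the normalization $w_\omega^->0$ near infinity and by the global behavior from the shooting analysis. I would first use Proposition~\ref{prop:vstar} (the appendix result behind $c_1<0$ in Lemma~\ref{lem:vw}). It should say that $v_\omega^+$ has exactly one zero, being positive near $0$ and tending to $-\infty$ relative to $w_\omega^+$ at infinity. Then compute $W_\omega$ at infinity instead of at the origin. Write $v_\omega^+=c_1w_\omega^++c_2w_\omega^-$, so that
\[
W=c_1W[w_\omega^+,w_\omega^-].
\]
By Proposition~\ref{prop:wasym} and Remark~\ref{rem:wprime}, $r^{d-1}W[w_\omega^+,w_\omega^-]$ tends to $-2\sqrt\omega$ for $\omega>0$. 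For $\omega=0$ it tends to $\lambda_--\lambda_+=-\sqrt{(d-2)^2+4c_p}$ when $p\neq p^*$, and to $-(d-2)$ when $p=p^*$, where one checks that the logarithmic cross terms combine to $-(d-2)$. In every case the limit is negative. Combined with $c_1<0$, this gives $W_\omega=c_1\cdot(\text{negative})>0$. This route avoids needing the sign of $a_2$ directly and only uses the stated asymptotics.
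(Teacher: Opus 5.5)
Your proof is correct and, in its final form, is the paper's argument: Abel's identity gives $(r^{d-1}W)'=0$, and the sign follows from writing $v_\omega^+=c_1w_\omega^++c_2w_\omega^-$ with $c_1<0$ (Proposition~\ref{prop:vstar}) and noting that $r^{d-1}\bigl(w_\omega^+(w_\omega^-)'-(w_\omega^+)'w_\omega^-\bigr)$ has a strictly negative limit at infinity in each case. The expansion at the origin is harmless but unnecessary, since, as you observe, it only reduces the question to the undetermined sign of $a_2$.
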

\begin{proof}
The Wronskian identity (see \eqref{eq:W}) gives
\[
  \frac{d}{dr}\bigl(r^{d-1}W(r)\bigr)=0,\qquad r>0.
\]
Hence $W(r)=W_\omega r^{-(d-1)}$ for some $W_\omega\in\R$.
By the proof of Lemma~\ref{lem:vw}, we can write
\[
  v_\omega^+=c_1w_\omega^++c_2w_\omega^-,\qquad c_1<0.
\]
The terms involving $c_2$ cancel in the Wronskian, so
\[
  W(r)=c_1\bigl(w_\omega^+(r)(w_\omega^-)'(r)
                 -(w_\omega^+)'(r)w_\omega^-(r)\bigr).
\]
Proposition~\ref{prop:wasym} and Remark~\ref{rem:wprime} yield
\[
  \lim_{r\to\infty}r^{d-1}
    \bigl(w_\omega^+(r)(w_\omega^-)'(r)
          -(w_\omega^+)'(r)w_\omega^-(r)\bigr)
  =\begin{cases}
    -2\sqrt{\omega}, & \omega>0,\\[2pt]
    \lambda_--\lambda_+, & \omega=0,\ p\ne p^*,\\[2pt]
    -(d-2), & \omega=0,\ p=p^*.
  \end{cases}
\]
Each limit is strictly negative. Since $c_1<0$, we conclude that
$W_\omega>0$.
\end{proof}

\begin{lemma}\label{lem:int}
Assume that $\phi_0\in L^2(\R^d)$. Then, for every $\omega\ge0$,
$w_\omega^-\phi_\omega r^{d-1}\in L^1((0,\infty))$.
\end{lemma}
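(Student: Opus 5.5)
We split the integral at $r=1$ and handle the origin and infinity separately, using the asymptotics of $w_\omega^-$ and $\phi_\omega$ at both ends.

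\emph{Near the origin.} By Lemma~\ref{lem:vw}, \eqref{w-asym0} gives $|w_\omega^-(r)|\lesssim \log r^{-1}$ for $d=2$ and $|w_\omega^-(r)|\lesssim r^{-(d-2)}$ for $d\ge3$ on $(0,1]$. Since $\phi_\omega$ is bounded near $0$, the integrand on $(0,1]$ is bounded by $r\log r^{-1}$ in the first case and by $r$ in the second. Both are integrable, so
\[
  \int_0^1 |w_\omega^-|\phi_\omega r^{d-1}\,dr<\infty
\]
for every $\omega\ge0$. On compact subintervals of $(0,\infty)$ everything is continuous, so only $r\to\infty$ remains.

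\emph{Near infinity, $\omega>0$.} Proposition~\ref{prop:wasym} gives $w_\omega^-(r)\sim r^{-(d-1)/2}e^{-\sqrt\omega r}$, and \eqref{eq:phi-exp} gives $\phi_\omega(r)\sim r^{-(d-1)/2}e^{-\sqrt\omega r}$. Hence the integrand behaves like $e^{-2\sqrt\omega r}$, which is integrable. This case uses no assumption on $\phi_0$.

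\emph{Near infinity, $\omega=0$.} Here the hypothesis $\phi_0\in L^2$ enters, and this is the only step needing care, since it involves the exponents. We split according to $p$.
\begin{itemize}
\item If $p>p^*$, then $\lambda_-=-(d-2)$ and $\alpha=d-2$. The integrand is $\sim r^{d-1-2(d-2)}=r^{3-d}$, which is integrable for $d\ge5$. This is the range where $\phi_0\in L^2$ allows $p>p^*$, because \eqref{l2} with $d\le4$ forces $p\le p^*$ apart from the endpoint, and the case $d=4$, $p=2$ equals $p^*$.
\item If $p=p^*$, the integrand is $\sim r^{d-1}\cdot r^{-2(d-2)}(\log r)^{-(d-1)}=r^{3-d}(\log r)^{1-d}$. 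This is integrable for $d\ge5$, and also for $d=4$, since $r^{-1}(\log r)^{-3}$ is integrable.
\item If $p<p^*$, the integrand is $\sim r^{d-1+\lambda_--\alpha}$ with $\alpha=2/(p-1)$, so we need $\lambda_-+d<\alpha$. Set $D=\sqrt{(d-2)^2+4c_p}$, so that $\lambda_-=-(d-2+D)/2$. Compute
\[
  (d-2)^2+4c_p=(d-2)^2+4(\alpha+2)(\alpha+2-d)=(2\alpha+2-d)^2 .
\]
Since $\alpha>d-2$ when $p<p^*$, we have $2\alpha+2-d>0$, so $D=2\alpha+2-d$ and $\lambda_-=-\alpha$. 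The integrand is therefore $\sim r^{d-1-2\alpha}$. It is integrable iff $2\alpha>d$, i.e. $p<p_{\mathrm c}(d)$. This is exactly what $\phi_0\in L^2$ gives via Proposition~\ref{prop:phi-int} in the range $p<p^*$.
\end{itemize}

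In every case the tail integral is finite, so $w_\omega^-\phi_\omega r^{d-1}\in L^1((0,\infty))$ for all $\omega\ge0$.
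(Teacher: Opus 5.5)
Your treatment of the origin and of $\omega>0$ is fine. However, the case $\omega=0$, $p<p^*$ rests on a false identity. With $c_p=(\alpha+2)(\alpha+2-d)$ one actually has
\[
  (d-2)^2+4c_p=(2\alpha+2-d)^2+8(\alpha+2-d),
\]
and $\alpha+2-d>0$ when $p<p^*$. For example, for $d=2$, $p=2$ the left side is $32$ while $(2\alpha+2-d)^2=16$, so $\lambda_-=-2\sqrt2\ne-\alpha$.

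The identity you wrote is the one with $l_p^{p-1}=\alpha(\alpha+2-d)$ in place of $c_p=p\,l_p^{p-1}$. It expresses that $r^{-\alpha}$ solves the linear part of the profile equation, not the linearized equation. Consequently the integrand is not $\sim r^{d-1-2\alpha}$, and your ``iff $2\alpha>d$'' is false. For instance, for $d=2$, $p=3$ the tail exponent is $-\sqrt3$, which is integrable, although $\phi_0\notin L^2$.

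The repair follows directly from the corrected identity. It gives $\sqrt{(d-2)^2+4c_p}>2\alpha+2-d$, i.e.\ $\lambda_-<-\alpha$ (equivalently $P(-\alpha)=-2(\alpha+2-d)<0$). Hence the integrand is $\lesssim r^{d-1+\lambda_--\alpha}\le r^{d-1-2\alpha}$ for large $r$, which is integrable since $2\alpha>d$.

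Two smaller points. For $d=2$ the bound $|w_\omega^-|\lesssim\log r^{-1}$ cannot hold up to $r=1$; use $1+\log r^{-1}$ or a shorter interval. In the case $p=p^*$ you should note that $d=3$ is excluded by $\phi_0\in L^2$, since there the integrand $\sim(\log r)^{-2}$ would not be integrable.

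Once corrected, your argument is a valid alternative, but it differs from the paper's at both ends.

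At infinity for $\omega=0$, the paper computes no exponents. It picks $R_0$ with $V_0>0$ and $L_0\phi_0=(p-1)\phi_0^p-(q-1)\phi_0^q\ge0$ on $[R_0,\infty)$. It then applies the maximum principle to $C\phi_0-w_0^-$ to get $0<w_0^-\le C\phi_0$, and bounds the tail by $C\int\phi_0^2r^{d-1}\,dr$.

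Near the origin, the paper integrates $(w_\omega^-/v_\omega^+)'=W_\omega r^{1-d}/(v_\omega^+)^2$ to obtain the explicit bound \eqref{eq:wnear}.

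These devices need no case split in $p$ or $d$, so the kind of slip above cannot occur. More importantly, their constants involve only quantities that are controlled uniformly in $\omega$ after normalization. That is how they are reused in Lemma~\ref{lem:wbound} and Step~2 of Theorem~\ref{thm:etalim}. Your fixed-$\omega$ asymptotics prove the lemma but would not supply those uniform bounds.
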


\begin{proof}
We first verify integrability near the origin.
Fix $\omega\ge0$ and choose $\delta_\omega\in(0,1)$
such that
\[
  \frac12\le v_\omega^+(r)\le2,\qquad 0\le r\le \delta_\omega.
\]
The Wronskian identity gives
\[
  \left(\frac{w_\omega^-}{v_\omega^+}\right)'
  =\frac{W_\omega r^{1-d}}{(v_\omega^+)^2},
\]
and hence, for $0<r\le \delta_\omega$,
\[
  \frac{w_\omega^-(r)}{v_\omega^+(r)}
  =\frac{w_\omega^-(\delta_\omega)}{v_\omega^+(\delta_\omega)}
   -W_\omega\int_r^{\delta_\omega}\frac{s^{1-d}}{v_\omega^+(s)^2}\,ds.
\]
Consequently, for $0<r\le\delta_\omega$,
\begin{equation}\label{eq:wnear}
  |w_\omega^-(r)|
  \le C\bigl(|w_\omega^-(\delta_\omega)|+W_\omega\bigr)
  \begin{cases}
    1+\log r^{-1}, & d=2,\\
    r^{-(d-2)}, & d\ge3.
  \end{cases}
\end{equation}
Here $C$ depends only on $d$. Since $\phi_\omega$ is bounded near
the origin, for the same range of $r$ we obtain
\[
  |w_\omega^-(r)\phi_\omega(r)r^{d-1}|
  \lesssim
  \begin{cases}
    r(1+\log r^{-1}), & d=2,\\
    r, & d\ge3.
  \end{cases}
\]
Both bounds are integrable near the origin.

We next verify integrability at infinity. For $\omega>0$, both
factors decay exponentially. Suppose that $\omega=0$. Choose $R_0>1$
so that $w_0^->0$ on $[R_0,\infty)$ and
\[
  V_0(r)>0,\quad
  L_0\phi_0(r)
  =(p-1)\phi_0(r)^p-(q-1)\phi_0(r)^q\ge0,
  \quad r\ge R_0.
\]
This choice is possible by Proposition~\ref{prop:wasym}
and the fact that $\phi_0(r)\to0$. Choose $C>0$ such that
$C\phi_0(R_0)\ge w_0^-(R_0)$, and set $h\ce C\phi_0-w_0^-$.
Then
\[
  L_0h\ge0\quad\text{on }\{r>R_0\},\quad
  h(R_0)\ge0,\quad \lim_{r\to\infty}h(r)=0.
\]
Since $V_0>0$ on this exterior domain, the maximum principle gives
\[
  0<w_0^-(r)\le C\phi_0(r),\qquad r\ge R_0.
\]
Consequently,
\[
  \int_{R_0}^\infty w_0^-(r)\phi_0(r)r^{d-1}\,dr
  \le C\int_{R_0}^\infty\phi_0(r)^2r^{d-1}\,dr<\infty
\]
by $\phi_0\in L^2(\R^d)$. Together with the estimate near the
origin and continuity on compact subintervals of $(0,\infty)$,
this proves the lemma.
\end{proof}

Recall that $\eta_\omega=\pt_\omega\phi_\omega$ for $\omega>0$,
defined in \eqref{eq:eta-def}, satisfies \eqref{eq:Lometa} and hence
\eqref{eq:etaode}. We first represent $\eta_\omega$ in terms of
$v_\omega^+$ and $w_\omega^-$, and then use this representation to
define the corresponding function $\eta_0$.
\begin{proposition}\label{prop:eta}
For $\omega>0$, the function $\eta_\omega$ defined by
\eqref{eq:eta-def} has the representation
\begin{align} \label{eq:etaom}
    \eta_\omega(r)
    =\begin{multlined}[t]
    \frac{v_\omega^+(r)}{W_\omega}
    \int_r^\infty w_\omega^-(s)\phi_\omega(s)s^{d-1}\,ds
\\  +\frac{w_\omega^-(r)}{W_\omega}
    \int_0^r v_\omega^+(s)\phi_\omega(s)s^{d-1}\,ds,
    \qquad r>0.
    \end{multlined}
\end{align}
\end{proposition}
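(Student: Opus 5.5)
Denote by $\tilde\eta$ the right-hand side of \eqref{eq:etaom}. The plan is to show that $\tilde\eta$ is a $C^2$ solution of \eqref{eq:etaode} which is regular at the origin and lies in $H^1_{\rad}$. Then the difference $\eta_\omega-\tilde\eta$ is a radial element of $\ker(L_\omega)_{\rad}$ in $H^1$, and Proposition~\ref{prop:nondeg} forces $\eta_\omega=\tilde\eta$.

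\emph{Step 1: $\tilde\eta$ is well defined and solves the ODE.} The integral $\int_r^\infty w_\omega^-\phi_\omega s^{d-1}\,ds$ converges by Lemma~\ref{lem:int}; in fact for $\omega>0$ both factors decay exponentially. The integral $\int_0^r v_\omega^+\phi_\omega s^{d-1}\,ds$ is finite because $v_\omega^+$ is bounded near $0$. Differentiating, the boundary terms cancel:
\[
\frac{1}{W_\omega}\Bigl(-v_\omega^+ w_\omega^-\phi_\omega r^{d-1}+w_\omega^- v_\omega^+\phi_\omega r^{d-1}\Bigr)=0 .
\]
Hence $\tilde\eta'$ is given by the same formula with $v_\omega^+,w_\omega^-$ replaced by their derivatives. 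A second differentiation yields the extra term
\[
\frac{r^{d-1}\phi_\omega}{W_\omega}\bigl(-(v_\omega^+)'w_\omega^-+(w_\omega^-)'v_\omega^+\bigr)=\frac{r^{d-1}\phi_\omega\,W(r)}{W_\omega}=\phi_\omega
\]
by Lemma~\ref{lem:Wom}. Since $v_\omega^+$ and $w_\omega^-$ solve \eqref{eq:pdeom}, $\tilde\eta$ therefore satisfies \eqref{eq:etaode}. This is the standard variation of parameters computation.

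\emph{Step 2: behavior at the origin.} The first term is smooth near $0$ because $v_\omega^+$ is regular and the integral tends to a finite constant. For the second term, \eqref{eq:wnear} gives $|w_\omega^-(r)|\lesssim r^{-(d-2)}$ (or $1+\log r^{-1}$ when $d=2$), while $\int_0^r v_\omega^+\phi_\omega s^{d-1}\,ds=O(r^d)$. So the second term is $O(r^2)$, or $O(r^2\log r^{-1})$ when $d=2$. For the derivative, we combine Remark~\ref{rem:wprime} with the relation $w_\omega^-=a_1v_\omega^++a_2v_\omega^-$ to get $(w_\omega^-)'=O(r^{1-d})$, which makes that contribution $O(r)$. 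Thus $\tilde\eta$ extends continuously to $0$ with $\tilde\eta'(0)=0$, and $\tilde\eta\in H^1$ near the origin.

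\emph{Step 3: behavior at infinity.} By Lemma~\ref{lem:vw} and Proposition~\ref{prop:wasym}, $v_\omega^+\lesssim r^{-(d-1)/2}e^{\sqrt\omega r}$ and $w_\omega^-\sim r^{-(d-1)/2}e^{-\sqrt\omega r}$, and \eqref{eq:phi-exp} gives the decay of $\phi_\omega$. The first term is then bounded by
\[
r^{-(d-1)/2}e^{\sqrt\omega r}\int_r^\infty s^{-(d-1)/2}e^{-2\sqrt\omega s}\,ds\lesssim r^{-(d-1)/2}e^{-\sqrt\omega r}.
\]
The integral in the second term converges to a constant, so that term is $O(r^{-(d-1)/2}e^{-\sqrt\omega r})$. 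The same estimates apply to $\tilde\eta'$ via Remark~\ref{rem:wprime}. Consequently $\tilde\eta\in H^1_{\rad}(\R^d)$. By Proposition~\ref{prop:reg}, $\eta_\omega\in H^1_{\rad}$, and $\eta_\omega$ is a smooth radial solution of \eqref{eq:Lometa} (elliptic regularity).

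\emph{Step 4: conclusion.} The difference $\eta_\omega-\tilde\eta$ is an $H^1_{\rad}$ solution of $L_\omega w=0$, so Proposition~\ref{prop:nondeg} gives $\eta_\omega=\tilde\eta$.

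The main point requiring care is Step 2: the singular solution $w_\omega^-$ must be exactly compensated by the $O(r^d)$ integral, so that no multiple of $v_\omega^-$ survives. This uses the bound \eqref{eq:wnear} and the derivative asymptotics.
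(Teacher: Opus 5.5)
Your proof is correct and is essentially the paper's argument in repackaged form. The paper writes $\eta_\omega=C_1v_\omega^++C_2w_\omega^-+\tilde\eta$ and then eliminates the constants directly: $C_2=0$ because $\eta_\omega$ is $C^2$ at the origin while $w_\omega^-$ is singular, and $C_1=0$ because $v_\omega^+\to-\infty$ while the other terms tend to zero. This is exactly the content of the radial nondegeneracy you invoke, and it needs only pointwise limits of $\tilde\eta$ rather than your $H^1$ and derivative bounds.

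One correction in Step~3. The integrand $v_\omega^+(s)\phi_\omega(s)s^{d-1}$ tends to a nonzero (negative) constant, so $\int_0^r v_\omega^+\phi_\omega s^{d-1}\,ds$ grows like $r$ rather than converging. The second term is therefore only $O(r^{-(d-3)/2}e^{-\sqrt\omega r})$. Likewise, the integrand in your first display should be $\sim e^{-2\sqrt\omega s}$, without the factor $s^{-(d-1)/2}$. Both terms are still exponentially decaying, so $\tilde\eta\in H^1_{\rad}(\R^d)$ and your conclusion stand.
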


\begin{proof}
Fix $\omega>0$. By variation of parameters, the general solution of
\eqref{eq:etaode} can be written as
\begin{align} \label{eq:eta-gen}
    \eta(r)
    =\begin{multlined}[t]
    C_1v_\omega^+(r)+C_2w_\omega^-(r)
    +\frac{v_\omega^+(r)}{W_\omega}
    \int_r^\infty w_\omega^-(s)\phi_\omega(s)s^{d-1}\,ds
\\  +\frac{w_\omega^-(r)}{W_\omega}
    \int_0^r v_\omega^+(s)\phi_\omega(s)s^{d-1}\,ds
    \end{multlined}
\end{align}
for some $C_1,C_2\in\R$. The first integral converges by exponential decay.

For $\eta=\eta_\omega$, we show that $C_1=C_2=0$.
First, we show that $C_1=0$. By Lemma~\ref{lem:vw} and
Proposition~\ref{prop:wasym}, fix a sufficiently large $R_1>0$ so that
\[
  v_\omega^+(r)<0,\quad w_\omega^-(r)>0,\quad r\ge R_1.
\]
Since $\phi_\omega>0$, the integrand
$v_\omega^+(s)\phi_\omega(s)s^{d-1}$ is negative on
$[R_1,\infty)$. 
Moreover, \eqref{eq:phi-exp} and \eqref{eq:wasym}
imply that
$\phi_\omega(r)\sim w_\omega^-(r)$ as $r\to\infty$.
Hence, we have
\begin{align*}
  v_\omega^+(r)
  \int_r^\infty w_\omega^-(s)\phi_\omega(s)s^{d-1}\,ds
  &\sim -r^{-\frac{d-1}{2}}e^{\sqrt{\omega}r}
       \int_r^\infty e^{-2\sqrt{\omega}s}\,ds
\\&\sim -r^{-\frac{d-1}{2}}e^{-\sqrt{\omega}r}.
\end{align*}
For the second integral term, we split
\[
  \int_0^r v_\omega^+(s)\phi_\omega(s)s^{d-1}\,ds
  =\int_0^{R_1}v_\omega^+(s)\phi_\omega(s)s^{d-1}\,ds
   +\int_{R_1}^r v_\omega^+(s)\phi_\omega(s)s^{d-1}\,ds.
\]
The first term on the right-hand side is constant, whereas
\[
  v_\omega^+(s)\phi_\omega(s)s^{d-1}\sim-1
  \quad\text{as }s\to\infty.
\]
Since this integrand is negative on $[R_1,\infty)$, it follows that
\[
  \int_{R_1}^r v_\omega^+(s)\phi_\omega(s)s^{d-1}\,ds\sim-r,
\]
and hence
\begin{align*}
  w_\omega^-(r)
  \int_0^r v_\omega^+(s)\phi_\omega(s)s^{d-1}\,ds
  &\sim-rw_\omega^-(r)
   \sim-r^{-\frac{d-3}{2}}e^{-\sqrt{\omega}r}.
\end{align*}
Thus both integral terms in \eqref{eq:eta-gen} tend to zero as
$r\to\infty$. Since $w_\omega^-(r)\to0$ and
$\eta_\omega\in H_{\rad}^1(\R^d)$ implies $\eta_\omega(r)\to0$,
whereas $v_\omega^+(r)\to-\infty$, it follows that $C_1=0$.

Next, we show that $C_2=0$. By Lemma~\ref{lem:vw},
$|w_\omega^-(r)|\to\infty$ as $r\downarrow0$. The same asymptotics and
the boundedness of $v_\omega^+$ and $\phi_\omega$ near the origin
show that the first integral term in \eqref{eq:eta-gen} remains
bounded there, while
\[
  |w_\omega^-(r)|
  \int_0^r |v_\omega^+(s)|\phi_\omega(s)s^{d-1}\,ds
  \lesssim
  \begin{cases}
    r^2(1+\log r^{-1}), & d=2,\\
    r^2, & d\ge3
  \end{cases}
\]
as $r\downarrow0$. In particular, this term tends to zero.
Since $\eta_\omega\in H^1(\R^d)$ satisfies \eqref{eq:Lometa},
elliptic regularity gives $\eta_\omega\in C^2(\R^d)$. Since $w_\omega^-$ is singular at the origin, we must have $C_2=0$.
This proves the representation.
\end{proof}

Assume now that $\phi_0\in L^2(\R^d)$. Motivated by
\eqref{eq:etaom}, we define
\begin{align}\label{eq:eta0}
    \eta_0(r)
    \ce \begin{multlined}[t]
    \frac{v_0^+(r)}{W_0}
    \int_r^\infty w_0^-(s)\phi_0(s)s^{d-1}\,ds
\\  +\frac{w_0^-(r)}{W_0}
    \int_0^r v_0^+(s)\phi_0(s)s^{d-1}\,ds,
    \qquad r>0,
    \end{multlined}
\end{align}
and set
\begin{align*}
 \eta_0(0)&\ce
  \frac1{W_0}\int_0^\infty w_0^-(s)\phi_0(s)s^{d-1}\,ds.
\end{align*}
Lemma~\ref{lem:int} and the asymptotics near the origin in
Lemma~\ref{lem:vw} show that this defines a continuous function on
$[0,\infty)$.
We will also prove in Theorem~\ref{thm:etalim} that
\[
  \eta_0(x)=\lim_{\omega\downarrow0}\eta_\omega(x)
  \quad\text{for all }x\in\R^d.
\]

The following proposition establishes that $\eta_0$ is a regular
radial solution of \eqref{eq:Lometa} with $\omega=0$
and determines its asymptotic behavior.
For $p\ne p^*$, the proof uses the polynomial
\[
  P(\lambda)\ce\lambda^2+(d-2)\lambda-c_p,
  \qquad c_p=(\alpha+2)(\alpha+2-d).
\]
By \eqref{eq-lam}, $P$ has roots $\lambda_-<\lambda_+$,
and hence
\[
  P(\lambda)<0
  \quad\Longleftrightarrow\quad
  \lambda\in(\lambda_-,\lambda_+).
\]
\begin{proposition}\label{prop:eta0}
Assume that $\phi_0\in L^2(\R^d)$. The function $\eta_0$ defined by
\eqref{eq:eta0} belongs to $C_{\rad}^2(\R^d)$ and solves \eqref{eq:Lometa} with $\omega=0$. If $(d,p)\ne(4,2)$, then
\begin{equation}\label{eq:eta-dec}
    \eta_0(r)\sim-r^2\phi_0(r)
    \quad\text{as }r\to\infty.
\end{equation}
In the case $(d,p)=(4,2)$, we have
\[
    \eta_0(r)\sim-1
    \quad\text{as }r\to\infty.
\]
The solution is unique among $C_{\rad}^2(\R^d)$ solutions with this
behavior at infinity.
\end{proposition}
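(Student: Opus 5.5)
Regularity and the equation come first. On $(0,\infty)$, differentiating the representation \eqref{eq:eta0} twice and using $W=W_0r^{1-d}$ from Lemma~\ref{lem:Wom} shows that $\eta_0$ solves \eqref{eq:etaode} with $\omega=0$. This is the standard variation-of-parameters computation: the boundary terms from differentiating the integrals cancel in $\eta_0'$, and they produce exactly $\phi_0$ in $\eta_0''$.

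Near the origin, the second term is $O(r^2)$ or $O(r^2\log r^{-1})$, by the bound \eqref{eq:wnear} and the boundedness of $v_0^+\phi_0$. The first term tends to $v_0^+(0)\eta_0(0)$. For the derivative, the term $w_0^{-\prime}\int_0^r v_0^+\phi_0 s^{d-1}\,ds$ is $O(r^{1-d}\cdot r^d)=O(r)$, so $\eta_0'(0)=0$. Hence $\eta_0$ is a bounded radial solution of $L_0\eta_0=-\phi_0$ on $\R^d\setminus\{0\}$ with $\eta_0'(r)\to0$. A radial solution bounded near $0$ contains no $v_0^-$ component, so elliptic regularity (or the ODE with a regular singular point) gives $\eta_0\in C^2_{\rad}(\R^d)$.

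For the asymptotics with $p\ne p^*$, I substitute $v_0^+\sim -r^{\lambda_+}$ (Lemma~\ref{lem:vw}), $w_0^-\sim r^{\lambda_-}$ and $\phi_0\sim r^{-\alpha}$. Since $\phi_0\in L^2$, the first integral converges. I expect $\int_r^\infty s^{\lambda_-+d-1-\alpha}\,ds\sim r^{\lambda_-+d-\alpha}$ whenever $\lambda_-+d-\alpha<0$, so the first term is $\sim -r^{\lambda_++\lambda_-+d-\alpha}=-r^{2-\alpha}$, using $\lambda_++\lambda_-=-(d-2)$. The second term is $-r^{\lambda_-}\int_0^r s^{\lambda_++d-1-\alpha}\,ds$. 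If $\lambda_++d-\alpha>0$, this is $\sim -r^{2-\alpha}$, with the same sign. So the sum is $\sim -r^2\phi_0$, and the constant is $\frac1{W_0}\bigl(\frac{1}{\alpha-\lambda_--d}+\frac{1}{\lambda_++d-\alpha}\bigr)$ times the leading coefficients. Both fractions are positive exactly when $\lambda_-<\alpha+2-d<\lambda_+$, i.e.\ when $P(\alpha+2-d)<0$. The structure also suggests combining them: the sum equals $\frac{\lambda_+-\lambda_-}{-P(\alpha+2-d)}$ up to sign. I will verify $P(\alpha+2-d)<0$ from $c_p=(\alpha+2)(\alpha+2-d)$. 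Directly, $P(\alpha+2-d)=(\alpha+2-d)(\alpha+2-d+d-2)-c_p=(\alpha+2-d)\alpha-(\alpha+2)(\alpha+2-d)=-2(\alpha+2-d)$, which is negative iff $\alpha>d-2$. This holds for $p<p^*$.

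For $p>p^*$, $\alpha=d-2$ gives $P=0$, a borderline case. Here $\lambda_+=0$ and $\lambda_-=2-d$, so the second integral is $\int^r s\,ds\sim r^2$ for $d\ge5$ with $\phi_0\in L^2$. The first integral $\int_r^\infty s^{3-d}\,ds$ also gives $r^{4-d}$, and both terms are $\sim -r^{4-d}=-r^2\phi_0$. The endpoint $(d,p)=(4,2)$ is where $s^{\lambda_-+d-1-\alpha}=s^{-1}$ produces a logarithm in one integral. There I will track $\log$ factors using the sharp asymptotics of Proposition~\ref{prop:V} and show the leading terms combine to a negative constant. The case $p=p^*$ (with $d\ge5$) is handled the same way with $(\log r)$ powers: $w_0^-\phi_0 s^{d-1}\sim s^{3-d}(\log s)^{-d+1}$ and $v_0^+\phi_0s^{d-1}\sim -s(\log s)^{1}$, and each term is $\sim -r^{4-d}(\log r)^{-(d-2)/2}$. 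The key steps are keeping the remainders $O(r^{-\varepsilon})$ integrable and using the sign $v_0^+<0$ for large $r$.

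Uniqueness: the difference of two such solutions is a $C^2$ radial homogeneous solution, hence $cv_0^+$. Since $v_0^+\sim -w_0^+\sim -r^{\lambda_+}$ (or $-(\log r)^{d/2}$), I must check that $r^{\lambda_+}$ is not $O(r^2\phi_0)$, i.e.\ $\lambda_+>2-\alpha$. This is exactly $P(2-\alpha)<0$, and a similar check handles the other cases. Then $\eta_0+cv_0^+\sim -r^2\phi_0$ forces $c=0$. The main obstacle is the borderline cases $p\ge p^*$ and $(4,2)$, where $\lambda_+$ and $2-\alpha$ nearly coincide and log corrections decide both the asymptotics and uniqueness.
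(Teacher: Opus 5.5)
Your proposal follows essentially the paper's route: the variation-of-parameters formula \eqref{eq:eta0}, termwise asymptotics of the two pieces with signs fixed by $v_0^+\sim-w_0^+$ and $W_0>0$, logarithmic bookkeeping at $p=p^*$ (with $(d,p)=(4,2)$ the only genuinely degenerate case), and uniqueness because $v_0^+$ grows strictly faster than $r^2\phi_0$; at the origin you remove a bounded singularity, which is a harmless variant of the paper's divergence-form computation of $\eta_0''(0)$. There is one algebraic slip to fix: positivity of $\alpha-d-\lambda_-$ and $\lambda_++d-\alpha$ means $\lambda_-<\alpha-d<\lambda_+$, i.e.\ $P(\alpha-d)=P(2-\alpha)=-2(3\alpha+2-2d)<0$, not $P(\alpha+2-d)<0$ (your check of $P(\alpha+2-d)<0$ gives only the upper inequality, and you get the lower one only implicitly from convergence of the first integral via Lemma~\ref{lem:int}); the correct inequality follows from $\alpha>d/2$ and $\alpha\ge d-2$, gives $\lambda_+>2-\alpha$ for uniqueness via the reflection $\lambda_-+\lambda_+=2-d$, and shows that $p>p^*$ is not borderline but is covered by the same argument as $p<p^*$.
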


\begin{proof}
A direct differentiation of \eqref{eq:eta0} shows that $\eta_0$ solves
\eqref{eq:etaode} with $\omega=0$ on $(0,\infty)$.
By its definition and Lemmas~\ref{lem:int} and~\ref{lem:vw},
$\eta_0(r)\to\eta_0(0)$ as $r\downarrow0$.
When we differentiate \eqref{eq:eta0}, the terms from the upper and
lower limits cancel, and we obtain
\begin{align}
\label{eta0prim}
  \eta_0'(r)
  ={}
\begin{multlined}[t]  
  \frac{(v_0^+)'(r)}{W_0}
       \int_r^\infty w_0^-(s)\phi_0(s)s^{d-1}\,ds
    \\  +\frac{(w_0^-)'(r)}{W_0}
       \int_0^r v_0^+(s)\phi_0(s)s^{d-1}\,ds,\qquad r>0.
 \end{multlined}      
\end{align}
The first integral is bounded as $r\downarrow0$ by
Lemma~\ref{lem:int}, and $(v_0^+)'(r)=O(r)$ by
Remark~\ref{rem:wprime}. The second integral is $O(r^d)$, since
$v_0^+$ and $\phi_0$ are bounded near the origin. Moreover,
\eqref{vom-rep} and Remark~\ref{rem:wprime} give
$(w_0^-)'(r)=O(r^{1-d})$. Thus both terms
are $O(r)$, and
\[
  |\eta_0'(r)|\lesssim r,\qquad 0<r\ll1.
\]
This implies that $\eta_0\in C^1([0,\infty))$ and $\eta_0'(0)=0$.

Set $F_0\ce V_0\eta_0+\phi_0\in C([0,\infty))$. Writing the ODE in
divergence form and using
$\lim_{r\downarrow0}r^{d-1}\eta_0'(r)=0$, we obtain
\[
  r^{d-1}\eta_0'(r)=\int_0^r s^{d-1}F_0(s)\,ds.
\]
Dividing by $r^d$ and substituting $s=rt$ yields
\begin{equation}\label{eq:etaratio}
  \frac{\eta_0'(r)}r
  =\int_0^1 t^{d-1}F_0(rt)\,dt,\qquad r>0.
\end{equation}
By continuity of $F_0$, the right-hand side converges to $F_0(0)/d$
as $r\downarrow0$. Since $\eta_0'(0)=0$, this proves that
$\eta_0''(0)=F_0(0)/d$. The ODE also gives
\[
  \eta_0''(r)
  =F_0(r)-(d-1)\frac{\eta_0'(r)}r
 \xrightarrow[r\downarrow0]{}  \frac{F_0(0)}d.
\]
Thus $\eta_0\in C^2([0,\infty))$.
Since $\eta_0'(0)=0$ and $\eta_0'(r)/r\to\eta_0''(0)$ as
$r\downarrow0$, the radial extension $x\mapsto\eta_0(|x|)$ also
belongs to $C^2(\R^d)$.

By Lemma~\ref{lem:vw} and Proposition~\ref{prop:wasym}, fix a
sufficiently large $R_2>0$ so that
\[
  v_0^+(r)<0,\quad w_0^-(r)>0,\quad r\ge R_2.
\]
Since $\phi_0>0$, the integrand
$v_0^+(s)\phi_0(s)s^{d-1}$ is negative on $[R_2,\infty)$.
Suppose first that $p\ne p^*$.
Since $\phi_0\in L^2(\R^d)$, we have $\alpha>d/2$.
Moreover, $\alpha\ge d-2$, with equality when $p>p^*$.
Hence
\begin{align*}
  P(\alpha-d)
  &=(\alpha-d)(\alpha-2)
    -(\alpha+2)(\alpha+2-d)\\
  &=-2(3\alpha+2-2d)<0.
\end{align*}
Since $P$ has roots $\lambda_-<\lambda_+$, we obtain
\begin{equation}\label{eq:lam-alpha}
  \lambda_-<\alpha-d<\lambda_+.
\end{equation}
Since $\lambda_-+\lambda_+=2-d$,
\eqref{eq:lam-alpha} also gives
\begin{equation}\label{eq:lam-alpha2}
  \lambda_-<2-\alpha<\lambda_+.
\end{equation}
Thus
\[
  w_0^-(r)\int_0^{R_2}v_0^+(s)\phi_0(s)s^{d-1}\,ds
  =O(r^{\lambda_-})=o(r^{2-\alpha}).
\]
Using these inequalities and $\phi_0(r)\sim r^{-\alpha}$, we obtain,
as $r\to\infty$,
\begin{align*}
  \frac{v_0^+(r)}{W_0}
  \int_r^\infty w_0^-(s)\phi_0(s)s^{d-1}\,ds
  &\sim-r^{\lambda_+}
       \int_r^\infty s^{\lambda_--\alpha+d-1}\,ds
\\&\sim-r^{\lambda_-+\lambda_+-\alpha+d}
       \sim-r^{2-\alpha}\sim-r^2\phi_0(r),
\\
  \frac{w_0^-(r)}{W_0}
  \int_0^r v_0^+(s)\phi_0(s)s^{d-1}\,ds
  &\sim-r^{\lambda_-}
       \int_{R_2}^r s^{\lambda_+-\alpha+d-1}\,ds
\\&\sim-r^{\lambda_-+\lambda_+-\alpha+d}
       \sim-r^{2-\alpha}\sim-r^2\phi_0(r).
\end{align*}
Here we used $W_0>0$ from Lemma~\ref{lem:Wom}. Both terms have
negative leading coefficients, so \eqref{eq:eta-dec} follows.

Suppose next that $p=p^*$. Then $d\ge4$ by
$\phi_0\in L^2(\R^d)$. As $r\to\infty$,
\[
  \int_r^\infty s^{3-d}(\log s)^{-(d-1)}\,ds
  \sim
  \begin{cases}
    (\log r)^{-2}, & d=4,\\[2pt]
    r^{4-d}(\log r)^{-(d-1)}, & d\ge5.
  \end{cases}
\]
Combining this with \eqref{eq:phidec}, \eqref{eq:wasym}, and
Lemma~\ref{lem:vw}, we obtain
\begin{align*}
  \frac{v_0^+(r)}{W_0}
  \int_r^\infty w_0^-(s)\phi_0(s)s^{d-1}\,ds
  &\sim
  \begin{cases}
    -1, & d=4,\\[2pt]
    -r^2\phi_0(r), & d\ge5,
  \end{cases}
\\
  \frac{w_0^-(r)}{W_0}
  \int_0^r v_0^+(s)\phi_0(s)s^{d-1}\,ds
  &\sim-r^{-(d-2)}(\log r)^{-d/2}
       \int_{R_2}^r s\log s\,ds
\\&\sim-r^{4-d}(\log r)^{-(d-2)/2}
       \sim-r^2\phi_0(r).
\end{align*}
In the second estimate, the integral over $(0,R_2)$ is a finite
constant independent of $r$. After multiplication by $w_0^-(r)/W_0$,
its contribution satisfies
\[
  \frac{w_0^-(r)}{W_0}
  \int_0^{R_2}v_0^+(s)\phi_0(s)s^{d-1}\,ds
  =O(w_0^-(r))=o(r^2\phi_0(r)),
\]
since $w_0^-(r)/(r^2\phi_0(r))=O(r^{-2}(\log r)^{-1})$ as $r\to\infty$.
Thus this part of the integral does not affect the leading term.
For $d\ge5$, both terms have negative leading coefficients and the
same order, so \eqref{eq:eta-dec} follows. For $d=4$, the second term
is $O((\log r)^{-1})$, and hence $\eta_0(r)\sim-1$.

Finally, to prove uniqueness, we first show that
\begin{align}
\label{eta0o}
  \eta_0(r)=o(w_0^+(r))\quad\text{as }r\to\infty.
\end{align}
By \eqref{eq:wasym}, $w_0^+(r)$ tends either to $1$ or to $+\infty$,
so any $o(1)$ bound for $\eta_0$ implies this comparison.
If $d\ge5$, \eqref{eq:phidec} gives $\phi_0(r)=O(r^{-(d-2)})$,
and hence $\eta_0(r)=O(r^{4-d})=o(1)$, independently of $p$.
If $d=4$, for $p<2$ we have
$\alpha>2$ and $\eta_0(r)=O(r^{2-\alpha})=o(1)$.
For $p=2$, the function $\eta_0$ is bounded, whereas
$w_0^+(r)\sim(\log r)^2\to\infty$.
For $d=2,3$, \eqref{l2} implies $p<p_{\mathrm{c}}(d)<p^*$ and
$\alpha=2/(p-1)>d/2$. If $p<2$, then $\alpha>2$ and again
$\eta_0(r)=o(1)$. 
If $2\le p<p_{\mathrm c}(d)$, then
\eqref{eq:lam-alpha2} and \eqref{eq:wasym} give
\[
  \frac{\eta_0(r)}{w_0^+(r)}
  =O(r^{2-\alpha-\lambda_+})\to0\quad\text{as }r\to\infty.
\]
This proves \eqref{eta0o} in every case.

The general regular radial solution of \eqref{eq:Lometa} with
$\omega=0$ has the form
\[
  \eta=c v_0^++\eta_0,\qquad c\in\R,
\]
because $\eta-\eta_0$ solves the homogeneous equation and is regular
at the origin. If $\eta$ has the stated behavior at infinity, the
same estimates give $\eta=o(w_0^+)$. Since
$v_0^+\sim-w_0^+$ by Lemma~\ref{lem:vw}, the relation
$c v_0^+=\eta-\eta_0=o(w_0^+)$ forces $c=0$.
This proves uniqueness.
\end{proof}

Using the representation in \eqref{eta0prim}, we obtain the following derivative estimate.

\begin{lemma}
\label{lem:deta0}
Assume that $\phi_0\in L^2(\R^d)$. Then,
\begin{equation}\label{eq:deta0}
  \eta_0'(r)=
  \begin{cases}
    O(r^{-\alpha+1}), & p\ne p^*,\\[2pt]
    O\bigl(r^{-d+3}(\log r)^{-(d-2)/2}\bigr), & p=p^*,
  \end{cases}
\end{equation}
as $r\to\infty$.
\end{lemma}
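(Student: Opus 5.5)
We will estimate the two terms of the representation \eqref{eta0prim} separately as $r\to\infty$, in exact parallel with the proof of the asymptotics \eqref{eq:eta-dec} in Proposition~\ref{prop:eta0}. The only change is that $v_0^+$ and $w_0^-$ are replaced by their derivatives. We fix $R_2$ as in that proof. The asymptotics $v_0^+\sim -w_0^+$ from Lemma~\ref{lem:vw} should carry over to derivatives in the form
\[
(v_0^+)'=c_1(w_0^+)'+c_2(w_0^-)'.
\]
Combined with Remark~\ref{rem:wprime}, this gives the following bounds. If $p<p^*$, then $(v_0^+)'(r)=O(r^{\lambda_+-1})$. If $p>p^*$, then $(v_0^+)'(r)=O(r^{-1-\eps})+O(r^{\lambda_--1})=O(r^{-1-\eps})$, which is $O(r^{\lambda_+-1-\eps})$ since $\lambda_+=0$. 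If $p=p^*$, then $(v_0^+)'(r)=O(r^{-1}(\log r)^{d/2-1})$. In every case we also have $(w_0^-)'(r)=O(r^{\lambda_--1})$ or its critical analogue.

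\emph{Case $p\ne p^*$.} The two integrals were already estimated in the proof of Proposition~\ref{prop:eta0}:
\[
\int_r^\infty w_0^-\phi_0 s^{d-1}\,ds=O(r^{\lambda_--\alpha+d}),
\qquad
\int_0^r v_0^+\phi_0 s^{d-1}\,ds=O(r^{\lambda_+-\alpha+d})+O(1).
\]
The first exponent is valid because $\lambda_--\alpha+d<0$ by \eqref{eq:lam-alpha}. The second is valid because $\lambda_+-\alpha+d>0$; for $p>p^*$ the bound $|v_0^+|\lesssim1$ still works, since $\lambda_+=0$ and $d-\alpha=2>0$. Multiplying by the derivative bounds gives two contributions:
\[
O(r^{\lambda_+-1+\lambda_--\alpha+d})=O(r^{1-\alpha}),
\qquad
O(r^{\lambda_--1+\lambda_+-\alpha+d})+O(r^{\lambda_--1})=O(r^{1-\alpha}).
\]
Here we use $\lambda_++\lambda_-=2-d$, and for the constant term we use $\lambda_-<2-\alpha$ from \eqref{eq:lam-alpha2}. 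For $p>p^*$ the first product carries the extra factor $r^{-\eps}$, which only helps.

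\emph{Case $p=p^*$.} Here $d\ge4$, and we reuse the integral asymptotics from the proof of Proposition~\ref{prop:eta0}. For $d\ge5$,
\[
\int_r^\infty w_0^-\phi_0 s^{d-1}\,ds\sim r^{4-d}(\log r)^{-(d-1)}.
\]
Multiplying by $(v_0^+)'=O(r^{-1}(\log r)^{d/2-1})$ gives $O(r^{3-d}(\log r)^{-d/2})$, which is better than the target. For the other term, $\int_0^r v_0^+\phi_0 s^{d-1}\,ds=O(r^2\log r)$, and multiplying by $(w_0^-)'=O(r^{1-d}(\log r)^{-d/2})$ gives $O(r^{3-d}(\log r)^{1-d/2})$, which is exactly the claimed bound. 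For $d=4$ the first integral is $O((\log r)^{-2})$, so the first term is $O(r^{-1}(\log r)^{-1})$, matching the target $r^{-1}(\log r)^{-1}$; the second term is handled as before.

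The main obstacle is the behaviour of $(v_0^+)'$ at infinity when $p>p^*$. Since $\lambda_+=0$, the naive bound $r^{\lambda_+-1}$ only gives $O(r^{-1})$. Combined with the first integral, of size $r^{\lambda_--\alpha+d}=r^{-\alpha+2}$, this still yields $O(r^{1-\alpha})$, so it suffices; the gain $r^{-\eps}$ is not needed. The remaining care is in checking that the decomposition of $v_0^+$ into $w_0^\pm$ passes to derivatives. This holds because the decomposition is an identity between solutions, so it differentiates termwise.
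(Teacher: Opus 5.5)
Your proposal is correct and follows the paper's proof essentially step for step. You differentiate the connection formula $v_0^+=c_1w_0^++c_2w_0^-$, bound $(v_0^+)'$ and $(w_0^-)'$ via Remark~\ref{rem:wprime}, bound the two integrals in \eqref{eta0prim} using \eqref{eq:lam-alpha} (or the logarithmic integrals when $p=p^*$), and multiply. Your extra remarks, namely the absorbed $O(1)$ piece of the second integral and the unused $r^{-\varepsilon}$ gain for $p>p^*$, are harmless.
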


\begin{proof}
We estimate the two terms in \eqref{eta0prim}.
Suppose first that $p\ne p^*$. The connection formula
$v_0^+=c_1w_0^++c_2w_0^-$ in the proof of Lemma~\ref{lem:vw},
together with Remark~\ref{rem:wprime}, gives
\[
  |(v_0^+)'(r)|\lesssim r^{\lambda_+-1},
  \qquad
  |(w_0^-)'(r)|\lesssim r^{\lambda_--1}.
\]
By \eqref{eq:lam-alpha}, we have $\lambda_--\alpha+d<0<\lambda_+-\alpha+d$.
Hence
\begin{align*}
  \int_r^\infty |w_0^-(s)|\phi_0(s)s^{d-1}\,ds
  &\lesssim r^{\lambda_--\alpha+d},\\
  \int_0^r |v_0^+(s)|\phi_0(s)s^{d-1}\,ds
  &\lesssim r^{\lambda_+-\alpha+d}.
\end{align*}
By \eqref{eta0prim} and $\lambda_-+\lambda_+=-(d-2)$, we obtain
\[
  |\eta_0'(r)|
  \lesssim r^{\lambda_-+\lambda_+-\alpha+d-1}
  =r^{1-\alpha}.
\]

Suppose next that $p=p^*$, so that $d\ge4$.
The same connection formula and Remark~\ref{rem:wprime} give
\[
  |(v_0^+)'(r)|\lesssim r^{-1}(\log r)^{d/2-1},
  \qquad
  |(w_0^-)'(r)|\lesssim r^{1-d}(\log r)^{-d/2}.
\]
Using the integral estimates in the proof of
Proposition~\ref{prop:eta0}, the first term in \eqref{eta0prim}
satisfies
\begin{align*}
  \left|
    \frac{(v_0^+)'(r)}{W_0}
    \int_r^\infty w_0^-(s)\phi_0(s)s^{d-1}\,ds
  \right|
  &\lesssim r^{-1}(\log r)^{d/2-1}
    \int_r^\infty s^{3-d}(\log s)^{-(d-1)}\,ds\\
  &\lesssim
  \begin{cases}
    r^{-1}(\log r)^{-1}, & d=4,\\[2pt]
    r^{3-d}(\log r)^{-d/2}, & d\ge5.
  \end{cases}
\end{align*}
For the second term, the bound
\[
  \int_0^r |v_0^+(s)|\phi_0(s)s^{d-1}\,ds
  \lesssim r^2\log r
\]
gives
\[
  \left|
    \frac{(w_0^-)'(r)}{W_0}
    \int_0^r v_0^+(s)\phi_0(s)s^{d-1}\,ds
  \right|
  \lesssim r^{3-d}(\log r)^{-(d-2)/2}.
\]
Combining the two estimates proves \eqref{eq:deta0}, including $(d,p)=(4,2)$.
\end{proof}

\subsection{Convergence as \texorpdfstring{$\omega\downarrow0$}{omega decreases to zero}}

We first establish a uniform comparison estimate for the decaying
fundamental solutions.
\begin{lemma}
\label{lem:wbound}
There exist $R_0>1$ and $C>0$ such that $w_\omega^-$ is positive and
strictly decreasing on $[R_0,\infty)$ for every $0<\omega<1$.
Upon defining
\[
  \widetilde w_\omega^-(r)
  \ce \frac{w_\omega^-(r)}{w_\omega^-(R_0)},
  \qquad r>0,
\]
we have
\begin{equation}\label{eq:w-unif}
  0<\widetilde w_\omega^-(r)\le1,
  \quad \widetilde w_\omega^-(r)\le C\phi_\omega(r)
  \quad (r\ge R_0,\enspace 0<\omega<1).
\end{equation}
In particular, for $r\ge R_0$,
\begin{equation}\label{eq:wdec}
  \sup_{0<\omega<1}\widetilde w_\omega^-(r)
  \lesssim
  \begin{cases}
    r^{-\alpha}, & p\ne p^*,\\[2pt]
    r^{-(d-2)}(\log r)^{-(d-2)/2}, & p=p^*.
  \end{cases}
\end{equation}
Here $\alpha$ is defined in \eqref{eq:alpha}.
\end{lemma}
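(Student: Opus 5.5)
The plan is to mimic the maximum-principle comparison from the proof of Lemma~\ref{lem:int}, but with constants uniform in $\omega\in(0,1)$. The first step is to choose $R_0$ uniformly. The profiles $\phi_\omega$ are radially decreasing and converge to $\phi_0$ locally uniformly as $\omega\downarrow0$; the monotone dependence of $\phi_\omega$ on $\omega$ in the shooting picture, or continuity of the branch, gives this. Since $\phi_\omega\le\phi_0$ (or at least $\phi_\omega(r)\le C\phi_0(r)$ for $r\ge1$, uniformly in $\omega$), I fix $R_0>1$ so large that for every $0<\omega<1$ and $r\ge R_0$ the following hold:
\[
  V_\omega(r)=p\phi_\omega^{p-1}-q\phi_\omega^{q-1}>0,
  \qquad
  L_\omega\phi_\omega=(p-1)\phi_\omega^p-(q-1)\phi_\omega^q\ge0 .
\]
Both conditions only require $\phi_\omega(r)$ to be small, namely $\phi_\omega^{q-p}<\min\{p/q,(p-1)/(q-1)\}$, so they follow from $\phi_\omega(R_0)\le\phi_0(R_0)$ or from uniform local convergence.

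The second step is positivity and monotonicity of $w_\omega^-$ on $[R_0,\infty)$. Since $\omega+V_\omega>0$ there, the equation $(r^{d-1}u')'=r^{d-1}(\omega+V_\omega)u$ shows that $u$ cannot have a positive local maximum or a negative local minimum. The function $w_\omega^-$ is positive near infinity and tends to $0$, by Proposition~\ref{prop:wasym}. If it vanished somewhere in $[R_0,\infty)$, then at its last zero it would start positive and decay to $0$, which forces an interior positive maximum, a contradiction. So $w_\omega^->0$ on $[R_0,\infty)$. Then $r^{d-1}(w_\omega^-)'$ is increasing. It tends to $0$ at infinity: the explicit asymptotics in Remark~\ref{rem:wprime} give $r^{d-1}(w_\omega^-)'\to0$ for $\omega>0$. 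Hence $(w_\omega^-)'<0$ on $[R_0,\infty)$, so $w_\omega^-$ is strictly decreasing. This yields $0<\widetilde w_\omega^-\le1$ on $[R_0,\infty)$.

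The third step is the comparison. I set $h_\omega\ce C\phi_\omega-\widetilde w_\omega^-$ with $C\ce1/\phi_\omega(R_0)$; by uniform convergence this is bounded by $2/\phi_0(R_0)$ for small $\omega$, and by continuity and positivity in $\omega$ for the remaining range. Then $h_\omega(R_0)=0$, $L_\omega h_\omega\ge0$ on $r>R_0$, and $h_\omega\to0$ at infinity. Since $\omega+V_\omega>0$, the maximum principle on the exterior domain (no negative interior minimum) gives $h_\omega\ge0$, which is \eqref{eq:w-unif}. Finally, \eqref{eq:wdec} follows from $\phi_\omega\le\phi_0$ together with \eqref{eq:phidec}. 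Without that pointwise inequality I would instead need a uniform bound $\sup_\omega\phi_\omega(r)\lesssim\phi_0(r)$.

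The main obstacle is exactly these uniformities in $\omega$: a uniform $R_0$, a uniform bound on $1/\phi_\omega(R_0)$, and the comparison $\phi_\omega\lesssim\phi_0$ at large $r$. I would first try to prove the ordering $\phi_\omega<\phi_0$ for $\omega>0$. One route is a comparison/Sturm argument using that $\phi_0$ is a supersolution of the $\omega$-equation. Another is the known monotonicity $\partial_\omega\phi_\omega<0$ near infinity. Failing both, I would use a barrier: for $r\ge R_0$, where $\phi_\omega$ is small, $\phi_\omega$ is a subsolution of $-\Delta u+V u\le0$ with a comparison function built from $\phi_0$. The uniform convergence $\phi_\omega\to\phi_0$ on compacts I would take from the continuity of the shooting parameter $y^*(\omega)$ in Appendix~\ref{app:shoot}.
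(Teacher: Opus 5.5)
Your Steps 2 and 3 are sound and match the paper. The exterior maximum principle with $\omega+V_\omega>0$ gives positivity. The strict increase of $r^{d-1}(w_\omega^-)'$ to the limit $0$ is a clean replacement for the paper's ``largest zero of $(w_\omega^-)'$'' argument. The comparison of $\widetilde w_\omega^-$ with $C\phi_\omega$ is exactly the paper's.

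The genuine gap is the one you flag yourself, and it is the real content of the lemma. You need, uniformly in $0<\omega<1$:
\begin{enumerate}
\item[(a)] smallness of $\phi_\omega$ on a common tail $[R_0,\infty)$;
\item[(b)] $\inf_{0<\omega<1}\phi_\omega(R_0)>0$;
\item[(c)] the sharp tail bound $\sup_{0<\omega<1}\phi_\omega(r)\lesssim r^{-\alpha}$ (resp.\ $r^{-(d-2)}(\log r)^{-(d-2)/2}$), without which \eqref{eq:w-unif} does not yield \eqref{eq:wdec}.
\end{enumerate}
None of your proposed routes delivers these.
\begin{itemize}
\item The global ordering $\phi_\omega\le\phi_0$ is unproved and should not be expected. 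Already for $d=1$, the first integral at $r=0$ gives $\omega/2=y^{q-1}/(q+1)-y^{p-1}/(p+1)$ with $y=\phi_\omega(0)$. The right-hand side is negative for $y<\phi_0(0)$ and increasing for $y\ge\phi_0(0)$, so $\phi_\omega(0)>\phi_0(0)$.
\item The sign of $\pt_\omega\phi_\omega$ near infinity is, in the paper, derived from this very lemma (proof of Theorem~\ref{thm:massdiv}), so invoking it is circular.
\item Appendix~\ref{app:shoot} works at a fixed frequency and contains no continuity of $y^*$ in $\omega$. It therefore cannot supply the local convergence $\phi_\omega\to\phi_0$ on which (b) and your choice of $C$ rely.
\end{itemize}

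The paper gets (a) and (c) from \cite[Propositions~3.5 and~3.7]{FH21}, i.e.\ the bounds \eqref{eq:phiu} and \eqref{eq:phicrit}. It gets (b) from \cite[Theorem~1.6]{FH21} together with Proposition~\ref{prop:reg}. If you grant the local convergence, your barrier idea can replace the first citation, but only as a tail comparison. Choose $R$ with $2\phi_0(R)$ below the range where $f(s)\ce s^p-s^q$ is increasing, and take $\omega$ so small that $\phi_\omega(R)\le2\phi_0(R)$. By radial monotonicity, both $\phi_\omega$ and $2\phi_0$ then stay in that range on $[R,\infty)$. Since
\[
  -\Delta(2\phi_0)+\omega(2\phi_0)+f(2\phi_0)
  =2\omega\phi_0+(2^p-2)\phi_0^p-(2^q-2)\phi_0^q>0
  \quad\text{on }\{r\ge R\},
\]
the exterior maximum principle applied to $\phi_\omega-2\phi_0$ gives $\phi_\omega\le2\phi_0$ on $[R,\infty)$. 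For $\omega$ bounded away from $0$, an exponential barrier $Ae^{-\sqrt{\omega_0}\,r}$ does the job. Uniform smallness there comes from the $H^1$-continuity of $\omega\mapsto\phi_\omega$ and the radial Strauss bound. As written, however, (a)--(c) are left open, so neither the uniform choice of $R_0$ nor \eqref{eq:wdec} is justified.
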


\begin{proof}
By \cite[Proposition~3.5]{FH21}, we have
\begin{equation}\label{eq:phiu}
  \sup_{0<\omega<1}\phi_\omega(r)
  \lesssim\min\{r^{-2/(p-1)},r^{-(d-2)}\},
  \qquad r\gg1.
\end{equation}
When $p=p^*$, \cite[Proposition~3.7]{FH21} gives the sharper estimate
\begin{equation}\label{eq:phicrit}
  \sup_{0<\omega<1}\phi_\omega(r)
  \lesssim r^{-(d-2)}(\log r)^{-(d-2)/2},
  \qquad r\gg1.
\end{equation}
Thus $\phi_\omega(r)\to0$ as $r\to\infty$, uniformly for
$0<\omega<1$. We may
choose $R_0>1$ so large that, for every $r\ge R_0$ and
$0<\omega<1$,
\begin{align*}
  V_\omega(r)&>0,\\
  L_\omega\phi_\omega(r)
    &=(p-1)\phi_\omega(r)^p-(q-1)\phi_\omega(r)^q\ge0.
\end{align*}
For each fixed $\omega$, the asymptotics in
Proposition~\ref{prop:wasym} and Remark~\ref{rem:wprime} give
$w_\omega^->0$ and $(w_\omega^-)'<0$ for sufficiently large $r$.
Choose $R>R_0$ so that these properties hold for $r\ge R$.
Suppose that $(w_\omega^-)'$ has a zero in $[R_0,R)$,
and let $r_*$ be its largest zero.
Then $(w_\omega^-)'<0$ on $(r_*,R]$, so
\[
  w_\omega^-(r_*)>w_\omega^-(R)>0,
  \qquad
  (w_\omega^-)''(r_*)\le0.
\]
On the other hand, \eqref{eq:pdeom} gives
\[
  (w_\omega^-)''(r_*)
  =(\omega+V_\omega(r_*))w_\omega^-(r_*)>0,
\]
a contradiction.
Therefore $(w_\omega^-)'<0$ on $[R_0,\infty)$,
and hence
\[
  0<\widetilde w_\omega^-(r)\le1,
  \qquad r\ge R_0.
\]

By Proposition~\ref{prop:reg}, \cite[Theorem~1.6]{FH21}, and local
elliptic regularity, $\omega\mapsto\phi_\omega(R_0)$ is continuous
on $(0,1]$, and its limit as $\omega\downarrow0$ is $\phi_0(R_0)>0$. Hence
\[
  \inf_{0<\omega<1}\phi_\omega(R_0)>0.
\]
Choose $C>0$, independently of $\omega$, so that
$C\phi_\omega(R_0)\ge1$. Applying the maximum principle to
$C\phi_\omega-\widetilde w_\omega^-$ as in the proof of
Lemma~\ref{lem:int}, we obtain
\[
  \widetilde w_\omega^-(r)\le C\phi_\omega(r),
  \quad r\ge R_0,\quad 0<\omega<1.
\]
Together with $0<\widetilde w_\omega^-\le1$, this proves
\eqref{eq:w-unif}. Combining this comparison with \eqref{eq:phiu}
and \eqref{eq:phicrit} proves \eqref{eq:wdec}.
\end{proof}

We now use Lemma~\ref{lem:wbound} to establish the local $C^2$ convergence of $\eta_\omega$ to $\eta_0$ as $\omega\downarrow0$.

\begin{theorem}\label{thm:etalim}
Assume that $\phi_0\in L^2(\R^d)$. Let $\eta_0$ be the function
defined by \eqref{eq:eta0}. Then
\[
  \eta_\omega\to\eta_0
  \quad\text{in}~C_{\mathrm{loc}}^2([0,\infty))
  \quad\text{as }\omega\downarrow0.
\]
\end{theorem}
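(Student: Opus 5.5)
We pass to the limit in the representation \eqref{eq:etaom}, working with the rescaled decaying solution $\widetilde w_\omega^-$ from Lemma~\ref{lem:wbound}. The ratio $\widetilde w_\omega^-/\widetilde W_\omega$, where $\widetilde W_\omega\ce W_\omega/w_\omega^-(R_0)$, is unchanged by this rescaling. So \eqref{eq:etaom} holds verbatim with $(w_\omega^-,W_\omega)$ replaced by $(\widetilde w_\omega^-,\widetilde W_\omega)$, and likewise for \eqref{eq:eta0} at $\omega=0$ with $\widetilde w_0^-\ce w_0^-/w_0^-(R_0)$.

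\textbf{Step 1 (convergence of the homogeneous data).} Since $V_\omega\to V_0$ locally uniformly on $[0,\infty)$ (by $\phi_\omega\to\phi_0$ locally uniformly, \cite[Theorem~1.6]{FH21}), continuous dependence for the regular initial value problem gives $v_\omega^+\to v_0^+$ in $C^2_{\mathrm{loc}}([0,\infty))$. For $\widetilde w_\omega^-$, the family is normalized at $R_0$, decreasing, and satisfies $0<\widetilde w_\omega^-\le1$ on $[R_0,\infty)$. It also has a uniform derivative bound at $R_0$: we get it by integrating the ODE outward from $R_0$, using monotonicity and positivity. Along any sequence $\omega_n\downarrow0$, Arzel\`a--Ascoli and the ODE then give a subsequence converging in $C^2_{\mathrm{loc}}((0,\infty))$ to a solution $w$ of \eqref{eq:pdeom} with $\omega=0$. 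By \eqref{eq:w-unif}, $0< w\le C\phi_0$ on $[R_0,\infty)$ and $w(R_0)=1$.

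Any solution of \eqref{eq:pdeom} at $\omega=0$ is a combination $aw_0^++bw_0^-$. The bound $w\le C\phi_0$, together with the growth $\phi_0=o(w_0^+)$ (which follows from \eqref{eq:lam-alpha}, or from $\log$ growth when $p=p^*$), forces $a=0$. Hence $w=\widetilde w_0^-$, and the whole family converges without passing to subsequences. Consequently $\widetilde W_\omega\to\widetilde W_0$, since the Wronskian can be evaluated at a fixed point $r=R_0$.

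\textbf{Step 2 (convergence of the integrals).} Fix $r>0$. The integral $\int_0^r v_\omega^+\phi_\omega s^{d-1}\,ds$ converges by local uniform convergence. For $\int_r^\infty\widetilde w_\omega^-\phi_\omega s^{d-1}\,ds$ we split at $R\ge R_0$. On $[r,R]$ we use local convergence. For the tail, \eqref{eq:w-unif} gives the bound $\int_R^\infty\widetilde w_\omega^-\phi_\omega s^{d-1}\le C\int_R^\infty\sup_{\omega}\phi_\omega^2 s^{d-1}\,ds$. By \eqref{eq:phiu} and \eqref{eq:phicrit}, this dominating function is $\lesssim$ the square of the sharp decay of $\phi_0$ in \eqref{eq:phidec}, which is integrable against $s^{d-1}$ precisely because $\phi_0\in L^2$. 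Dominated convergence then finishes the job.

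One care point: \eqref{eq:phiu} gives only $\min\{r^{-2/(p-1)},r^{-(d-2)}\}$, which coincides with $r^{-\alpha}$ up to constants; for $p=p^*$ we use \eqref{eq:phicrit}. So the majorant is exactly the $\phi_0$-decay rate. This tail control is the main obstacle, and it is handled by Lemma~\ref{lem:wbound}.

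\textbf{Step 3 (upgrading to $C^2$ and handling the origin).} Steps 1 and 2 give $\eta_\omega\to\eta_0$ and, via \eqref{eta0prim} and its $\omega$-analogue, $\eta_\omega'\to\eta_0'$ locally uniformly on $(0,\infty)$. To reach $r=0$, we use the divergence form, as in \eqref{eq:etaratio}:
\[
\frac{\eta_\omega'(r)}{r}=\int_0^1t^{d-1}F_\omega(rt)\,dt,\qquad F_\omega\ce(\omega+V_\omega)\eta_\omega+\phi_\omega .
\]
First, we need uniform boundedness of $\eta_\omega$ near $0$. This follows from convergence at a point $r_1>0$ together with the linear ODE with uniformly bounded coefficients on $[0,r_1]$, namely the regular-solution relation $\eta_\omega=\eta_\omega(0)v_\omega^++(\text{particular part})$. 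Then $F_\omega\to F_0$ uniformly on $[0,r_1]$. Hence $\eta_\omega'/r$ converges uniformly, and so does $\eta_\omega''=F_\omega-(d-1)\eta_\omega'/r$. This gives convergence in $C^2_{\mathrm{loc}}([0,\infty))$.
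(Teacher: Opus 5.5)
Your proof is correct, but it reaches the origin by a different route from the paper's. Step~1 is essentially the paper's argument: normalization at $R_0$, a uniform bound on $(\widetilde w_\omega^-)'(R_0)$, identification of every subsequential limit as $\widetilde w_0^-$, and $\widetilde W_\omega\to\widetilde W_0>0$ via the Wronskian at $R_0$. Your identification through $|w|\le C\phi_0=o(w_0^+)$ is equivalent to the paper's use of $w(r)\to0$.

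The genuine difference is how you reach $r=0$.

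\emph{The paper's route.} It applies dominated convergence at $r=0$ itself, to $\eta_\omega(0)=\widetilde W_\omega^{-1}\int_0^\infty\widetilde w_\omega^-\phi_\omega s^{d-1}\,ds$. This needs an $\omega$-uniform majorant for $\widetilde w_\omega^-$ near its singularity, which it extracts from \eqref{eq:wnear} applied to the normalized pair. It then propagates convergence outward by Gronwall on the Volterra form of the equation, together with \eqref{eq:etar}.

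\emph{Your route.} You apply dominated convergence at fixed $r>0$, so the singularity of $w^-$ at the origin never enters. You then pass to $r=0$ through the affine structure of regular solutions. The price is continuous dependence for the particular regular solution $P_\omega$ with $P_\omega(0)=P_\omega'(0)=0$. That is the same Volterra--Gronwall argument the paper uses, so neither route is substantially shorter; yours trades the uniform near-origin estimate for this decomposition.

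One sentence in your Step~3 needs sharpening. Uniform boundedness of $\eta_\omega$ on $[0,r_1]$ does not by itself give $F_\omega\to F_0$ uniformly up to $r=0$. Your decomposition actually gives more than boundedness. Choose $r_1$ with $v_0^+(r_1)\ne0$. Then
\[
\eta_\omega(0)=\frac{\eta_\omega(r_1)-P_\omega(r_1)}{v_\omega^+(r_1)}\to\eta_0(0).
\]
Hence $\eta_\omega=\eta_\omega(0)v_\omega^++P_\omega\to\eta_0(0)v_0^++P_0=\eta_0$ in $C^2([0,R])$ for every $R$. The last identity holds because $\eta_0$ is a regular solution by Proposition~\ref{prop:eta0}.

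So convergence at a single point $r_1>0$ already suffices, and the $F_\omega$ argument becomes redundant. Alternatively, you can keep your argument: boundedness of $\eta_\omega$ gives, via the divergence form, $|\eta_\omega'(r)|\lesssim r$ uniformly on $[0,r_1]$. This equicontinuity, combined with convergence on $(0,r_1]$, upgrades to uniform convergence on $[0,r_1]$.
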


\begin{proof}
Choose $R_0$ as in Lemma~\ref{lem:wbound}, large enough that
$w_0^-(R_0)>0$, and use the normalization in that lemma for
$0<\omega<1$. For the limiting solution, set
\[
  \widetilde w_0^-(r)\ce \frac{w_0^-(r)}{w_0^-(R_0)}.
\]

\medskip
\noindent\textbf{Step 1:} Local convergence of the fundamental solutions.
By \cite[Theorem~1.6]{FH21} and local elliptic regularity,
$\phi_\omega\to\phi_0$ in $C_{\mathrm{loc}}^2(\R^d)$ as
$\omega\downarrow0$. Thus
$\omega+V_\omega\to V_0$ locally uniformly, including at the origin.
The regular initial conditions $v_\omega^+(0)=1$ and
$(v_\omega^+)'(0)=0$, together with the integral form of the equation,
give
\[
  v_\omega^+\to v_0^+
  \quad\text{in }C^2([0,R])\quad\text{for every }R>0.
\]

For the solutions selected by decay at infinity, we first check the
initial derivatives at $R_0$. By \eqref{eq:w-unif} and the mean value
theorem, there exists $r_\omega\in(R_0,R_0+1)$ such that
$(\widetilde w_\omega^-)'(r_\omega)\in[-1,0]$. Integrating the
equation in divergence form yields
\begin{align*}
  R_0^{d-1}(\widetilde w_\omega^-)'(R_0)
  ={}&r_\omega^{d-1}(\widetilde w_\omega^-)'(r_\omega)\\
     &-\int_{R_0}^{r_\omega}
       s^{d-1}(\omega+V_\omega(s))\widetilde w_\omega^-(s)\,ds.
\end{align*}
The coefficients are uniformly bounded on $[R_0,R_0+1]$ and
$0<\widetilde w_\omega^-\le1$, so
$-C\le(\widetilde w_\omega^-)'(R_0)\le0$ uniformly in $\omega$.

Given any sequence $\omega_n\downarrow0$, pass to a subsequence
such that $(\widetilde w_{\omega_n}^-)'(R_0)\to\ell$ as $n\to\infty$.
Continuous dependence for the initial-value problem gives
$\widetilde w_{\omega_n}^-\to w$ in $C^2([R_0,R])$ for every
$R>R_0$, where
\[
  w''+\frac{d-1}{r}w'-V_0w=0,
  \quad w(R_0)=1,\quad w'(R_0)=\ell.
\]
Passing to the limit in \eqref{eq:wdec} shows that $w(r)\to0$ as
$r\to\infty$. By Proposition~\ref{prop:wasym}, $w_0^+$ does not tend
to zero, whereas $w_0^-$ does. Hence $w$ is a constant multiple of
$w_0^-$, and $w(R_0)=1$ implies $w=\widetilde w_0^-$. Every
subsequential limit of the initial derivatives is therefore
$(\widetilde w_0^-)'(R_0)$. It follows that
\[
  (\widetilde w_\omega^-)'(R_0)
  \to(\widetilde w_0^-)'(R_0)
  \quad\text{as }\omega\downarrow0.
\]
Applying continuous dependence both to the left and to the right of
$R_0$, we obtain, as $\omega\downarrow0$,
\begin{equation*}
  \widetilde w_\omega^-\to\widetilde w_0^-
  \quad\text{in }C^2([\delta,R]),
  \quad 0<\delta<R<\infty.
\end{equation*}
Set
\[
  \widetilde W_\omega\ce \frac{W_\omega}{w_\omega^-(R_0)},
  \qquad
  \widetilde W_0\ce \frac{W_0}{w_0^-(R_0)}.
\]
The Wronskian identity at $R_0$ then gives
\begin{equation}\label{eq:Wlim}
  \begin{split}
    \widetilde W_\omega
    &=R_0^{d-1}\bigl(
       v_\omega^+(R_0)(\widetilde w_\omega^-)'(R_0)
       -(v_\omega^+)'(R_0)\bigr) \xrightarrow[\omega\downarrow0]{} \widetilde W_0>0.
  \end{split}
\end{equation}
The representations \eqref{eq:etaom} and \eqref{eq:eta0} are unchanged
when $(w_\omega^-,W_\omega)$ is replaced by
$(\widetilde w_\omega^-,\widetilde W_\omega)$.

\medskip
\noindent\textbf{Step 2:} Convergence at the origin.
By Step~1, choose $\delta_0\in(0,1)$ and
$\omega_0\in(0,1)$ so that
\[
  \frac12\le v_\omega^+(r)\le2,
  \quad 0\le r\le \delta_0,\quad 0<\omega<\omega_0.
\]
By Step~1, the quantities $\widetilde w_\omega^-(\delta_0)$,
$\widetilde W_\omega$, and $\|\phi_\omega\|_{C([0,\delta_0])}$
are uniformly bounded for $0<\omega<\omega_0$.
Applying \eqref{eq:wnear} to the normalized
pair $(\widetilde w_\omega^-,\widetilde W_\omega)$ therefore gives,
for $0<r\le\delta_0$ and $0<\omega<\omega_0$,
\[
  |\widetilde w_\omega^-(r)\phi_\omega(r)r^{d-1}|
  \lesssim
  \begin{cases}
    r(1+\log r^{-1}), & d=2,\\
    r, & d\ge3.
  \end{cases}
\]
These bounds are integrable near the origin and independent of $\omega$.
At infinity, \eqref{eq:phiu}, \eqref{eq:phicrit}, and the sharp
asymptotics \eqref{eq:phidec} give
$\phi_\omega(r)\lesssim\phi_0(r)$ on $[R_0,\infty)$, uniformly
for $0<\omega<1$. Thus \eqref{eq:w-unif} implies
\[
  0<\widetilde w_\omega^-(r)\phi_\omega(r)r^{d-1}
  \lesssim\phi_\omega(r)^2r^{d-1}
  \lesssim\phi_0(r)^2r^{d-1},\qquad r\ge R_0.
\]
The last function is integrable by $\phi_0\in L^2(\R^d)$.
Together with the local convergence on $[\delta_0,R_0]$, these estimates provide an \(\omega\)-independent integrable majorant on \((0,\infty)\).

As shown in the proof of Proposition~\ref{prop:eta},
the term involving the integral over $(0,r)$ in \eqref{eq:etaom} tends to zero as $r\downarrow0$. Hence
\begin{equation*}
  \eta_\omega(0)
  =\frac1{\widetilde W_\omega}
   \int_0^\infty\widetilde w_\omega^-(s)\phi_\omega(s)s^{d-1}\,ds,
  \qquad 0<\omega<\omega_0.
\end{equation*}
The same formula holds for $\eta_0(0)$ with
$(\widetilde w_0^-,\widetilde W_0,\phi_0)$.
By dominated convergence and \eqref{eq:Wlim}, we conclude that
$\eta_\omega(0)\to\eta_0(0)$ as $\omega\downarrow0$.

\medskip
\noindent\textbf{Step 3:} Convergence on compact intervals.
We first prove that
\begin{align}
\label{Cloc}
  \eta_\omega\to\eta_0
  \quad\text{in }C([0,R])
  \quad\text{as }\omega\downarrow0
\end{align}
for every $R>0$. Fix $R>0$ and set
\[
  a_\omega\ce \omega+V_\omega,\qquad
  F_\omega\ce a_\omega\eta_\omega+\phi_\omega.
\]
The integral form of the equation for $\eta_\omega$ gives
\[
  \eta_\omega(r)
  =\eta_\omega(0)
  +\int_0^r t^{1-d}
    \int_0^t s^{d-1}F_\omega(s)\,ds\,dt,\qquad r>0,~\omega\ge0.
\]
Set $z_\omega\ce \eta_\omega-\eta_0$. One can write
\[
  F_\omega-F_0
  =a_\omega z_\omega
   +(a_\omega-a_0)\eta_0
   +\phi_\omega-\phi_0.
\]
To reduce the resulting double integral to a form suitable for Gronwall's lemma, we
interchange the order of integration over the region $0<s<t<r$:
\[
  \int_0^r t^{1-d}
    \int_0^t s^{d-1}h(s)\,ds\,dt
  =
  \int_0^r
    \left(
      s^{d-1}\int_s^r t^{1-d}\,dt
    \right)h(s)\,ds.
\]
For $0<s<r\le R$, the kernel satisfies
\[
  0\le
  s^{d-1}\int_s^r t^{1-d}\,dt
  =
  \int_s^r
    \left(\frac{s}{t}\right)^{d-1}\,dt
  \le r-s\le R.
\]
Since $a_\omega$ is uniformly bounded on $[0,R]$, we therefore obtain
\[
  |z_\omega(r)|
  \le
  C_R\varepsilon_\omega(R)
  +C_R\int_0^r |z_\omega(s)|\,ds,
  \qquad 0\le r\le R,
\]
where
\[
  \varepsilon_\omega(R)
  \ce
  |z_\omega(0)|
  +\|a_\omega-a_0\|_{C([0,R])}
  +\|\phi_\omega-\phi_0\|_{C([0,R])}.
\]
By Step~2 and $\phi_\omega\to\phi_0$ in $C_{\rm loc}(\R^d)$,
\[
  \varepsilon_\omega(R)\to0
  \quad\text{as }\omega\downarrow0.
\]
Then Gronwall's lemma yields
\[
  \|z_\omega\|_{C([0,R])}
  \le
  C_R\varepsilon_\omega(R)e^{C_RR}
  \to0,
\]
which proves \eqref{Cloc}.

Consequently, $F_\omega\to F_0$ uniformly on $[0,R]$ as
$\omega\downarrow0$. 
As in \eqref{eq:etaratio}, we have
\begin{equation}\label{eq:etar}
  \frac{\eta_\omega'(r)}r
  =\int_0^1 t^{d-1}F_\omega(rt)\,dt,
  \qquad r>0.
\end{equation}
This formula implies that
\[
  \|\eta_\omega'-\eta_0'\|_{C([0,R])}
  \le
  \frac{R}{d}
  \|F_\omega-F_0\|_{C([0,R])}
  \to0
  \quad\text{as }\omega\downarrow0.
\]
Moreover, by \eqref{eq:etar}, the quotient
$\eta_\omega'(r)/r$ extends continuously to $r=0$, and these continuous
extensions converge uniformly on $[0,R]$ as $\omega\downarrow0$.
Using
\[
  \eta_\omega''
  =F_\omega-(d-1)\frac{\eta_\omega'}r,
\]
we therefore obtain uniform convergence of the second derivatives on
$[0,R]$.
\end{proof}


\subsection{Weighted integrability and divergence of the mass slope}

We first relate the weighted $L^2$ condition to
the integrability of $\eta_0\phi_0$.

\begin{lemma}\label{lem:eta-pair}
Assume that $\phi_0\in L^2(\R^d)$. Then
\begin{align*}
  \phi_0\in\cF(H^1)
  &\quad\Longrightarrow\quad
  \eta_0\in\cF(H^{-1})\enspace\text{and}\enspace\eta_0\phi_0\in L^1(\R^d),
  \\
  \phi_0\notin\cF(H^1)
  &\quad\Longrightarrow\quad
  \int_{\R^d}\eta_0(x)\phi_0(x)\,dx=-\infty.
\end{align*}
\end{lemma}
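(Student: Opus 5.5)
Both implications reduce to radial integrals at infinity, using the sharp asymptotics of Proposition~\ref{prop:eta0}. Since $\eta_0\in C^2_{\rad}(\R^d)$ and $\phi_0$ is smooth and bounded, both $\braket{x}^{-1}\eta_0$ and $\eta_0\phi_0$ are integrable on the unit ball. Everything therefore rests on the behavior of $r^{d-1}\eta_0(r)^2r^{-2}$ and $r^{d-1}\eta_0(r)\phi_0(r)$ for $r\ge R$. I fix $R$ so large that $\eta_0<0$ on $[R,\infty)$; this is possible because both asymptotic statements in Proposition~\ref{prop:eta0} carry a negative leading sign. On $[R,\infty)$ the integrand $\eta_0\phi_0$ is then negative. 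Hence the full integral $\int\eta_0\phi_0\,dx$ is well defined in $[-\infty,\infty)$, and it equals $-\infty$ exactly when $\int_R^\infty|\eta_0|\phi_0r^{d-1}\,dr=\infty$.

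\emph{Case $(d,p)\ne(4,2)$.} Here $|\eta_0(r)|\sim r^2\phi_0(r)$, so
\[
  |\eta_0|\phi_0 r^{d-1}\sim r^{d+1}\phi_0^2,
  \qquad
  \braket{r}^{-2}\eta_0^2 r^{d-1}\sim r^{d+1}\phi_0^2 .
\]
Both integrability questions therefore coincide with $\int_R^\infty r^{d+1}\phi_0(r)^2\,dr<\infty$. By the proof of Proposition~\ref{prop:phi-int}, this is equivalent to $\braket{x}\phi_0\in L^2(\R^d)$. If $\phi_0\in\cF(H^1)$, we get $\braket{x}^{-1}\eta_0\in L^2$, i.e.\ $\eta_0\in\cF(H^{-1})$, and $\eta_0\phi_0\in L^1$. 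If $\phi_0\notin\cF(H^1)$, the negative tail integral diverges, while the part over $[0,R]$ is finite, so the total integral is $-\infty$.

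\emph{Case $(d,p)=(4,2)$.} Here $p=p^*(4)$ and $\eta_0\sim-1$. By Proposition~\ref{prop:phi-int}, $\phi_0\notin\cF(H^1)$, since $p_{\mathrm w}(4)=5/3<2$. By \eqref{eq:phidec}, $\phi_0(r)\sim r^{-2}(\log r)^{-1}$, so
\[
  |\eta_0|\phi_0r^{3}\sim r(\log r)^{-1},
\]
which is not integrable at infinity. The integral is therefore again $-\infty$, and only the second implication needs checking in this case.

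The main obstacle is conceptual rather than computational. The equivalence rests on the fact that the two-sided asymptotics $\sim$ give comparability with positive constants above and below on $[R,\infty)$, and on the sign of $\eta_0$ there, which rules out cancellation. Both are supplied by Proposition~\ref{prop:eta0}. The borderline case $p=p^*$ with $d\ge5$ needs no separate treatment, because $\eta_0\sim-r^2\phi_0$ still holds and the weighted criterion is already settled in Proposition~\ref{prop:phi-int}.
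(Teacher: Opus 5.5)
Your proposal is correct and follows essentially the same route as the paper. On a tail $[R,\infty)$, Proposition~\ref{prop:eta0} makes $-\eta_0\phi_0$ and $\braket{x}^{-2}\eta_0^2$ comparable to $r^2\phi_0^2$, which reduces both implications to the finiteness of $\int^\infty r^{d+1}\phi_0(r)^2\,dr$; the case $(d,p)=(4,2)$ is handled separately via $\eta_0\phi_0r^3\sim-r/\log r$. One small correction: the local integrability you need is on the ball of radius $R$, not the unit ball, and continuity of $\eta_0$ and $\phi_0$ gives this immediately.
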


\begin{proof}
We first suppose that $(d,p)\ne(4,2)$. Since $\eta_0$ and $\phi_0$
are continuous, $\eta_0\phi_0$ is integrable on every bounded subset
of $\R^d$, so it remains only to consider the behavior at infinity.
By Proposition~\ref{prop:eta0}, there exist $R>0$ and $c,C>0$ such that
\begin{equation}\label{eq:eta-comp}
  c r^2\phi_0(r)^2
  \le -\eta_0(r)\phi_0(r)
  \le C r^2\phi_0(r)^2,
  \qquad r\ge R.
\end{equation}
Hence, if $\phi_0\in\cF(H^1)$, then
\eqref{eq:eta-comp} gives $\eta_0\phi_0\in L^1(\R^d)$.
Moreover, the asymptotic relation
$\eta_0(r)\sim-r^2\phi_0(r)$ in Proposition~\ref{prop:eta0}
implies $\braket{x}^{-1}\eta_0\in L^2(\R^d)$, and thus
$\eta_0\in\cF(H^{-1})$.
If $\phi_0\notin\cF(H^1)$, then \eqref{eq:eta-comp} gives
\[
  \int_{\R^d}\eta_0(x)\phi_0(x)\,dx=-\infty.
\]

It remains to consider $(d,p)=(4,2)$. In this case,
\eqref{wl2} gives $\phi_0\notin\cF(H^1)$, while
Proposition~\ref{prop:eta0} and \eqref{eq:sharp} yield
\[
  \eta_0(r)\phi_0(r)r^3
  \sim-\frac{r}{\log r}
  \quad\text{as }r\to\infty.
\]
Therefore
\[
  \int_{\R^d}\eta_0(x)\phi_0(x)\,dx=-\infty,
\]
which completes the proof.
\end{proof}

We now use Lemma~\ref{lem:eta-pair} to show the divergence of $M'(\omega)$ as $\omega\downarrow0$ when the weighted $L^2$ condition fails.

\begin{theorem}
\label{thm:massdiv}
Assume that $\phi_0\in L^2(\R^d)$ and
$\phi_0\notin\cF(H^1)$.
Then
\[
  \lim_{\omega\downarrow0}M'(\omega)=-\infty.
\]
\end{theorem}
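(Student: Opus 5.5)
The natural route is a Fatou-type argument. Since $M'(\omega)=\tbra[\eta_\omega,\phi_\omega]$ for $\omega>0$ by \eqref{Mprime}, and Lemma~\ref{lem:eta-pair} gives $\int\eta_0\phi_0\,dx=-\infty$, it suffices to show
\[
  \limsup_{\omega\downarrow0}\tbra[\eta_\omega,\phi_\omega]\le\int_{\R^d}\eta_0\phi_0\,dx=-\infty .
\]
I would fix $R\ge R_0$, with $R_0$ as in Lemma~\ref{lem:wbound}, and split the pairing into the ball $\{|x|\le R\}$ and its exterior. On the ball, Theorem~\ref{thm:etalim} and $\phi_\omega\to\phi_0$ in $C_{\mathrm{loc}}$ give
\[
  \int_{|x|\le R}\eta_\omega\phi_\omega\,dx\to\int_{|x|\le R}\eta_0\phi_0\,dx .
\]
This local integral tends to $-\infty$ as $R\to\infty$. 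The whole proof therefore reduces to a one-sided uniform bound on the tail: for all small $\omega>0$ and all $R\ge R_*$,
\[
  \int_{|x|\ge R}\eta_\omega\phi_\omega\,dx\le C,
\]
with $C$ independent of $\omega$ and $R$. Given this bound, $\limsup_{\omega\downarrow0}M'(\omega)\le\int_{|x|\le R}\eta_0\phi_0\,dx+C$ for every $R$, and letting $R\to\infty$ finishes the proof.

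To obtain the tail bound, I would show that $\eta_\omega\le0$ on $[R_*,\infty)$ for all small $\omega$, which is stronger than needed. Use the representation \eqref{eq:etaom} with the normalized pair $(\widetilde w_\omega^-,\widetilde W_\omega)$. On $[R_*,\infty)$ the first term is $\widetilde W_\omega^{-1}v_\omega^+(r)\int_r^\infty\widetilde w_\omega^-\phi_\omega s^{d-1}\,ds$. Its integral is positive by Lemma~\ref{lem:wbound}, and $\widetilde W_\omega>0$, so this term is negative provided $v_\omega^+(r)<0$ for $r\ge R_*$. The second term is $\widetilde W_\omega^{-1}\widetilde w_\omega^-(r)\int_0^r v_\omega^+\phi_\omega s^{d-1}\,ds$, with $\widetilde w_\omega^->0$. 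It is negative provided the integral $\int_0^r v_\omega^+\phi_\omega s^{d-1}\,ds$ is negative for all $r\ge R_*$. Once $v_\omega^+<0$ beyond some point, this integral is decreasing there, so it suffices that it be negative at one point beyond that zero, uniformly in small $\omega$.

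The main obstacle is this uniformity in $\omega$: $v_\omega^+$ must have its (unique, by the shooting analysis) zero at a location bounded independently of $\omega$ and stay negative afterwards, and the integral must already be negative at a fixed radius. For the first part I would use $v_\omega^+\to v_0^+$ in $C^2([0,R])$ from Step~1 of Theorem~\ref{thm:etalim}, together with Lemma~\ref{lem:vw} for $\omega=0$. Since $v_0^+<0$ and $(v_0^+)'<0$ at some $R_1$, these signs hold at $R_1$ for small $\omega$. Beyond $R_1$, where $\omega+V_\omega>0$ uniformly (this was arranged in Lemma~\ref{lem:wbound}), the equation gives $(r^{d-1}(v_\omega^+)')'=r^{d-1}(\omega+V_\omega)v_\omega^+<0$ while $v_\omega^+<0$. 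Hence $v_\omega^+$ stays negative and decreasing on $[R_1,\infty)$. For the second part, Proposition~\ref{prop:eta0} shows that $\int_0^r v_0^+\phi_0 s^{d-1}\,ds\to-\infty$ as $r\to\infty$, since that integral was computed to be $\sim -r^{\lambda_+-\alpha+d}$ or $\sim -r^2\log r$. So it is negative at some $R_*\ge R_1$, and local uniform convergence transfers this sign to small $\omega$.

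With $\eta_\omega\le0$ on $[R_*,\infty)$ and $\phi_\omega>0$, the tail integral is nonpositive, so the bound holds with $C=0$ and the conclusion $\lim_{\omega\downarrow0}M'(\omega)=-\infty$ follows.
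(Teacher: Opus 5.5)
Your proof is correct. Its overall structure matches the paper's: split at $|x|=R$, pass to the limit on the ball, and use a sign for $\eta_\omega$ on the exterior. The difference is how you get that exterior sign.

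The paper chooses $R\ge R_0$ with $\eta_0(R)<0$ and transfers this to $\eta_\omega(R)<0$ by local convergence. It then applies the maximum principle on $\{|x|>R\}$, using $L_\omega\eta_\omega=-\phi_\omega<0$, $\omega+V_\omega>0$ and $\eta_\omega(r)\to0$. It concludes with Fatou's lemma on the exterior, which is equivalent to your step of discarding the nonpositive tail and letting $R\to\infty$.

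You instead read the sign off the variation-of-parameters formula, term by term. This uses:
\begin{enumerate}
\item $\widetilde W_\omega>0$, and $\widetilde w_\omega^->0$ on $[R_0,\infty)$ from Lemma~\ref{lem:wbound};
\item eventual negativity of $v_\omega^+$, uniform in small $\omega$;
\item the divergence $\int_0^r v_0^+\phi_0 s^{d-1}\,ds\to-\infty$ from the proof of Proposition~\ref{prop:eta0}.
\end{enumerate}
The paper's route is shorter and needs only one negative value of $\eta_0$ plus the equation. Yours draws on more of the ODE structure (the sign pattern of $v_\omega^+$), but it shows that each contribution to $\eta_\omega$ is strictly negative on the tail.

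One justification needs tightening. The case $p>p^*$ does occur under the hypotheses, namely $d\in\{5,6\}$ and $p^*<p<(d+2)/(d-2)$. There, Lemma~\ref{lem:vw} and Remark~\ref{rem:wprime} give only $v_0^+\to c_1<0$ and $(w_0^+)'=O(r^{-1-\varepsilon})$. So they do not provide a point $R_1\ge R_0$ with $(v_0^+)'(R_1)<0$. There are two ways to fix this.
\begin{enumerate}
\item Integrate $(r^{d-1}(v_0^+)')'=r^{d-1}V_0v_0^+$. The right-hand side behaves like a negative multiple of $r^{d-1-(d-2)(p-1)}$. Its exponent is $>-1$ because $(d-2)(p-1)<d$ in this range. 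Hence $r^{d-1}(v_0^+)'\to-\infty$.
\item More simply, drop the derivative condition. Fix any $R_1\ge R_0$ beyond the zero of $v_0^+$, so $v_0^+(R_1)<0$ and hence $v_\omega^+(R_1)<0$ for small $\omega$. Since $v_\omega^+=v_{y^*}$ has exactly one zero by Lemma~\ref{lem:vstar} and $v_\omega^+(0)=1$, this forces $v_\omega^+<0$ on $[R_1,\infty)$.
\end{enumerate}
With either fix, the rest of your argument goes through unchanged.
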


\begin{proof}
By Lemma~\ref{lem:eta-pair},
$\tbra[\eta_0,\phi_0]=-\infty$.
Moreover, Theorem~\ref{thm:etalim} gives
$\eta_\omega\to\eta_0$ in $C_{\mathrm{loc}}^2([0,\infty))$ as
$\omega\downarrow0$. Since also $\phi_\omega\to\phi_0$ locally
uniformly, for every $R>0$ we have
\[
  \int_{|x|\le R}\eta_\omega(x)\phi_\omega(x)\,dx
  \to
  \int_{|x|\le R}\eta_0(x)\phi_0(x)\,dx\in\R.
\]
To apply Fatou's lemma on the complement, we show that the integrand
has a fixed sign there for all sufficiently small $\omega>0$. Choose $R\ge R_0$ sufficiently large that
$\eta_0(R)<0$, where $R_0$ is as in Lemma~\ref{lem:wbound}.
By local convergence, $\eta_\omega(R)<0$ for all sufficiently small
$\omega>0$. On $\{x\in\R^d:|x|>R\}$, we have
\[
  L_\omega\eta_\omega=-\phi_\omega<0,
  \qquad \omega+V_\omega>0.
\]
Moreover, \eqref{eq:etaom} and the positive-frequency asymptotics
give $\eta_\omega(r)\to0$ as $r\to\infty$.
Together with $\eta_\omega(R)<0$, these conditions allow us to
apply the maximum principle on the exterior domain. We obtain
$\eta_\omega\le0$ there for all sufficiently small $\omega>0$.

Fatou's lemma, applied along an arbitrary sequence
$\omega_n\downarrow0$ to the nonnegative functions
$-\eta_{\omega_n}\phi_{\omega_n}$ on $\{|x|>R\}$, now gives
\[
  +\infty
  =\int_{|x|>R}(-\eta_0\phi_0)\,dx
  \le\liminf_{\omega\downarrow0}
       \int_{|x|>R}(-\eta_\omega\phi_\omega)\,dx.
\]
Finally, by \eqref{Mprime} and \eqref{eq:eta-def},
\[
  M'(\omega)
  =\int_{|x|\le R}\eta_\omega\phi_\omega\,dx
   +\int_{|x|>R}\eta_\omega\phi_\omega\,dx.
\]
As $\omega\downarrow0$, the first term has a finite limit, whereas
the second tends to $-\infty$. This completes the proof.
\end{proof}


\section{Construction of an unstable direction}
\label{sec:dir}

Throughout this section, we assume that $\phi_0\in L^2(\R^d)$ and use
the regular radial solution $\eta_0$ defined by \eqref{eq:eta0} in
Section~\ref{sec:4}. By Proposition~\ref{prop:eta0}, it satisfies
$L_0\eta_0=-\phi_0$ and
\[
  \eta_0(r)\sim-r^2\phi_0(r)
  \quad\text{as }r\to\infty
\]
unless $(d,p)=(4,2)$, in which case $\eta_0(r)\sim-1$.
The following result corresponds to the two alternatives in
Theorem~\ref{thm:ode}.

\begin{theorem}\label{thm:dir}
Assume that $\phi_0\in L^2(\R^d)$.
In each of the following cases, there
exist $R_0>0$ and a family
$\{\psi_R\}_{R\ge R_0}\subset H_{\rad}^1(\R^d)$ such that
\begin{equation}\label{eq:orth}
  \tbra[\psi_R,\phi_0]=0
\end{equation}
and the following assertions hold.
\begin{enumerate}
\item If $\phi_0\notin\cF(H^1)$, then
\begin{equation}\label{eq:lim-nw}
  \lim_{R\to\infty}\tbra[L_0\psi_R,\psi_R]
  =\tbra[L_0\phi_0,\phi_0]<0.
\end{equation}

\item If $\phi_0\in\cF(H^1)$ and
$\tbra[\eta_0,\phi_0]<0$, then
\begin{equation}\label{eq:lim-w}
  \lim_{R\to\infty}\tbra[L_0\psi_R,\psi_R]
  =\tbra[L_0\phi_0,\phi_0]
   +\frac{\norm[\phi_0]_{L^2}^4}{\tbra[\eta_0,\phi_0]}
  <0.
\end{equation}
\end{enumerate}
\end{theorem}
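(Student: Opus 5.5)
Take a smooth radial cutoff $\chi_R(x)=\chi(x/R)$, with $\chi=1$ on $B_1$ and $\chi=0$ outside $B_2$, and set $\eta_R\ce\chi_R\eta_0\in H^1_{\rad}(\R^d)$. Then look for the unstable direction in the span of $\phi_0$ and $\eta_R$, namely
\[
  \psi_R\ce\phi_0-\frac{\norm[\phi_0]_{L^2}^2}{\tbra[\eta_R,\phi_0]}\,\eta_R .
\]
This satisfies \eqref{eq:orth} as soon as $\tbra[\eta_R,\phi_0]\ne0$. We have $\phi_0\in H^1$ because $\phi_0\in L^2$ and $\phi_0\in\dot H^1$. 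Monotone convergence together with Proposition~\ref{prop:eta0} and Lemma~\ref{lem:eta-pair} gives $\tbra[\eta_R,\phi_0]\to\tbra[\eta_0,\phi_0]$. In case (i) the limit is $-\infty$; in case (ii) it is a finite negative number. In both cases the pairing is eventually negative, which fixes $R_0$. Expanding the quadratic form with $\beta_R\ce\norm[\phi_0]_{L^2}^2/\tbra[\eta_R,\phi_0]$ gives
\[
  \tbra[L_0\psi_R,\psi_R]=\tbra[L_0\phi_0,\phi_0]-2\beta_R\tbra[L_0\eta_R,\phi_0]+\beta_R^2\tbra[L_0\eta_R,\eta_R].
\]
Here $\tbra[L_0\phi_0,\phi_0]=(p-1)\|\phi_0\|_{p+1}^{p+1}-(q-1)\|\phi_0\|_{q+1}^{q+1}<0$, since $K_0(\phi_0)=0$ gives $\|\phi_0\|_{q+1}^{q+1}>\|\phi_0\|_{p+1}^{p+1}$ and $q>p$.

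The key step is computing the two localized terms using $L_0\eta_0=-\phi_0$. By self-adjointness, $\tbra[L_0\eta_R,\phi_0]=\tbra[\eta_R,L_0\phi_0]$. Since $L_0\phi_0=(p-1)\phi_0^p-(q-1)\phi_0^q$ decays faster than $\phi_0/r^2$, this converges to $\tbra[\eta_0,L_0\phi_0]$, which is finite because $|\eta_0|\lesssim r^2\phi_0$ and $\phi_0^{p+1}r^2$ is integrable by the decay rates \eqref{eq:phidec}. I expect $\tbra[\eta_0,L_0\phi_0]$ to equal $-\norm[\phi_0]_{L^2}^2$, via $\tbra[\eta_0,L_0\phi_0]=\tbra[L_0\eta_0,\phi_0]$. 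This integration by parts must be justified with boundary terms at radius $R$ of size $R^{d-1}(|\eta_0'|\phi_0+|\eta_0||\phi_0'|)$, and these are controlled by Lemmas~\ref{lem:dphi0} and~\ref{lem:deta0}. Next, write $L_0(\chi_R\eta_0)=-\chi_R\phi_0-2\nabla\chi_R\cdot\nabla\eta_0-(\Delta\chi_R)\eta_0$. Then
\[
  \tbra[L_0\eta_R,\eta_R]=-\tbra[\chi_R^2\eta_0,\phi_0]+\int|\nabla\chi_R|^2\eta_0^2\,dx .
\]
The commutator error is supported in $R\le r\le2R$ and has size $R^{d-2}\eta_0(R)^2\approx R^{d+2}\phi_0(R)^2$ (or $R^{d-2}$ when $(d,p)=(4,2)$).

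In case (ii), $\phi_0\in\cF(H^1)$ makes $r^{d+1}\phi_0^2$ integrable. With monotonicity-type regularity of the decay rates, this forces $R^{d+2}\phi_0(R)^2\to0$. Hence $\tbra[L_0\eta_R,\eta_R]\to-\tbra[\eta_0,\phi_0]$, and the expansion gives
\[
  \tbra[L_0\phi_0,\phi_0]+\frac{2\|\phi_0\|^4}{\tbra[\eta_0,\phi_0]}-\frac{\|\phi_0\|^4}{\tbra[\eta_0,\phi_0]},
\]
which is \eqref{eq:lim-w}. Negativity is immediate from the sign of each term.

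In case (i), $\beta_R\to0$. The middle term tends to $0$ because $\tbra[L_0\eta_R,\phi_0]$ stays bounded. The last term is bounded by $\beta_R^2(|\tbra[\chi_R^2\eta_0,\phi_0]|+CR^{d+2}\phi_0(R)^2)$. Its first part is $\approx\norm[\phi_0]^4/|\tbra[\eta_R,\phi_0]|\to0$. The main obstacle is showing $R^{d+2}\phi_0(R)^2=o(\tbra[\eta_R,\phi_0]^2)$. Since $|\tbra[\eta_R,\phi_0]|\gtrsim\int_1^R r^{d+1}\phi_0^2\,dr$, I would check this regime by regime using \eqref{eq:phidec}. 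When $p\ne p^*$ the integral is comparable to $R^{d+2-2\alpha}$ if $2\alpha<d+2$, so its square dominates $R^{d+2-2\alpha}$. The borderline cases ($2\alpha=d+2$, with logarithms, and $p=p^*$, $d=4,5,6$, including $(d,p)=(4,2)$ with error $R^{2}$ against a squared pairing of order $(R^2/\log R)^2$) are handled similarly with logarithmic factors. This yields \eqref{eq:lim-nw}.
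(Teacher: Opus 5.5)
Your proposal is correct and follows essentially the same route as the paper: the same $\psi_R=\phi_0+\beta_R\varphi_R\eta_0$ fixed by orthogonality, the same three-term expansion, the limit $-\|\phi_0\|_{L^2}^2$ for the mixed term, and the same regime-by-regime comparison of $\beta_R^2$ against the cutoff error in case (i). Your departures are only technical: you move $L_0$ onto $\phi_0$ in the mixed term, and you use the localization identity $\langle L_0(\chi_R\eta_0),\chi_R\eta_0\rangle=-\langle\chi_R^2\eta_0,\phi_0\rangle+\int|\nabla\chi_R|^2\eta_0^2\,dx$, which spares you the $\eta_0'$ bounds in the quadratic term. Two small slips do not affect the argument: $p=p^*$ with $d=6$ actually falls under case (ii), and for $(d,p)=(4,2)$ you need $|\eta_0|\lesssim 1$ rather than $|\eta_0|\lesssim r^2\phi_0$.
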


We now construct the family $\{\psi_R\}$ in Theorem~\ref{thm:dir}.
Choose a radial function $\varphi\in C_c^\infty(\R^d)$ such that
$0\le\varphi\le1$ and
\[
  \varphi(x)
  =\begin{cases}
    1, & |x|\le1,\\
    0, & |x|\ge2.
  \end{cases}
\]
For $R>0$, set $\varphi_R(x)\ce \varphi(x/R)$. Whenever
$\tbra[\varphi_R\eta_0,\phi_0]\ne0$, define
\begin{equation}\label{eq:psiR}
  \beta_R
  \ce -\frac{\norm[\phi_0]_{L^2}^2}
          {\tbra[\varphi_R\eta_0,\phi_0]},
  \qquad
  \psi_R\ce \phi_0+\beta_R\varphi_R\eta_0.
\end{equation}
Then $\psi_R\in H_{\rad}^1(\R^d)$ and
\eqref{eq:orth} holds for every $R$ for which the denominator
in \eqref{eq:psiR} is nonzero.

We begin with identities that will be used in both cases. Expanding the
quadratic form gives
\begin{equation}\label{eq:qform}
  \tbra[L_0\psi_R,\psi_R]
  =\tbra[L_0\phi_0,\phi_0]
   +2\beta_R\tbra[L_0(\varphi_R\eta_0),\phi_0]
   +\beta_R^2\tbra[L_0(\varphi_R\eta_0),\varphi_R\eta_0].
\end{equation}
Since $L_0\eta_0=-\phi_0$, Leibniz's rule yields
\begin{equation*}
  L_0(\varphi_R\eta_0)
  =-(\Delta\varphi_R)\eta_0
   -2\nabla\varphi_R\cdot\nabla\eta_0
   -\varphi_R\phi_0.
\end{equation*}
Consequently,
{\small
\begin{align}
  \tbra[L_0(\varphi_R\eta_0),\phi_0]
  &={-}\tbra[(\Delta\varphi_R)\eta_0,\phi_0]
    -2\tbra[\nabla\varphi_R\cdot\nabla\eta_0,\phi_0]
    -\tbra[\varphi_R\phi_0,\phi_0],
  \label{eq:mixed}
  \\
  \tbra[L_0(\varphi_R\eta_0),\varphi_R\eta_0]
  &={-}\tbra[(\Delta\varphi_R)\eta_0,\varphi_R\eta_0]
    -2\tbra[\nabla\varphi_R\cdot\nabla\eta_0,\varphi_R\eta_0]
 -\tbra[\varphi_R\eta_0,\varphi_R\phi_0].
  \label{eq:quad}
\end{align}
}%

We next record the cutoff estimates needed in the proof.

\begin{lemma}\label{lem:cut-nc}
Assume that $p\ne p^*$, and let $\alpha$ be the decay exponent defined in
\eqref{eq:alpha}. Then
\begin{align*}
  &\abs[{\tbra[(\Delta\varphi_R)\eta_0,\phi_0]}]
   +\abs[{\tbra[\nabla\varphi_R\cdot\nabla\eta_0,\phi_0]}]
   =O(R^{-2\alpha+d}),
  \\
  &\abs[{\tbra[(\Delta\varphi_R)\eta_0,\varphi_R\eta_0]}]
   +\abs[{\tbra[\nabla\varphi_R\cdot\nabla\eta_0,
                   \varphi_R\eta_0]}]
   =O(R^{-2\alpha+d+2})
\end{align*}
as $R\to\infty$.
\end{lemma}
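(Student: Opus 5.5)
The plan is to exploit the fact that every term in the statement is supported on the annulus $A_R\ce\{R\le|x|\le2R\}$. Indeed, $\Delta\varphi_R$ and $\nabla\varphi_R$ vanish outside $A_R$, and the scaling $\varphi_R(x)=\varphi(x/R)$ gives
\[
  |\nabla\varphi_R|\lesssim R^{-1},
  \qquad
  |\Delta\varphi_R|\lesssim R^{-2},
  \qquad
  0\le\varphi_R\le1 .
\]
Since $p\ne p^*$ and $\phi_0\in L^2(\R^d)$, we have $(d,p)\ne(4,2)$. Proposition~\ref{prop:eta0} then gives $\eta_0(r)\sim-r^2\phi_0(r)$, so by \eqref{eq:phidec},
\[
  |\eta_0(r)|\lesssim r^{2-\alpha}
  \qquad\text{for } r\ge R_0 .
\]
Lemma~\ref{lem:deta0} gives $|\eta_0'(r)|\lesssim r^{1-\alpha}$, and \eqref{eq:phidec} gives $\phi_0(r)\lesssim r^{-\alpha}$. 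These three pointwise bounds are all the input the proof needs.

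The proof then proceeds by direct estimation on $A_R$, whose volume is of order $R^d$, with every radial factor evaluated at $r\simeq R$. For the first line:
\[
  |\tbra[(\Delta\varphi_R)\eta_0,\phi_0]|
  \lesssim R^{-2}\cdot R^{2-\alpha}\cdot R^{-\alpha}\cdot R^d
  = R^{-2\alpha+d},
\]
\[
  |\tbra[\nabla\varphi_R\cdot\nabla\eta_0,\phi_0]|
  \lesssim R^{-1}\cdot R^{1-\alpha}\cdot R^{-\alpha}\cdot R^d
  = R^{-2\alpha+d}.
\]
Here $\nabla\varphi_R\cdot\nabla\eta_0=\varphi_R'(r)\eta_0'(r)$ because both functions are radial. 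For the second line, the factor $\phi_0$ is replaced by $\varphi_R\eta_0$, which is bounded by $|\eta_0|\lesssim R^{2-\alpha}$ on $A_R$. This costs an extra factor $R^{2}$ relative to the first line, giving $O(R^{-2\alpha+d+2})$ for both terms.

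There is essentially no obstacle. The only points to check are that $R$ is large enough for the asymptotic bounds to apply on all of $A_R$ (that is, $R\ge R_0$), and that the excluded case $(d,p)=(4,2)$ really cannot occur. The latter holds because $p=2=p^*(4)$ there, which is ruled out by the hypothesis $p\ne p^*$.
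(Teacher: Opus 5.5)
Your proposal is correct and follows essentially the same route as the paper: you restrict to the annulus $\{R\le|x|\le2R\}$, use $|\nabla\varphi_R|\lesssim R^{-1}$ and $|\Delta\varphi_R|\lesssim R^{-2}$, and insert the pointwise bounds $\phi_0=O(r^{-\alpha})$, $\eta_0=O(r^{2-\alpha})$, $\eta_0'=O(r^{1-\alpha})$ from \eqref{eq:phidec}, Proposition~\ref{prop:eta0}, and Lemma~\ref{lem:deta0}. Your side remarks are also accurate: $(d,p)=(4,2)$ is already excluded by $p\ne p^*$, and the standing assumption $\phi_0\in L^2(\R^d)$ is what makes Proposition~\ref{prop:eta0} available.
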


\begin{proof}
The supports of $\nabla\varphi_R$ and $\Delta\varphi_R$ are contained in
$\{R\le|x|\le2R\}$, and
\[
  \abs[\nabla\varphi_R]\lesssim R^{-1},
  \qquad
  \abs[\Delta\varphi_R]\lesssim R^{-2}.
\]
Moreover, for every $\gamma\in\R$,
\[
  \int_{R\le|x|\le2R}|x|^\gamma\,dx\lesssim R^{\gamma+d}.
\]
By \eqref{eq:phidec}, Proposition~\ref{prop:eta0}, and
Lemma~\ref{lem:deta0}, we have, as \(r\to\infty\),
\[
  \phi_0(r)=O(r^{-\alpha}),
  \quad
  \eta_0(r)=O(r^{-\alpha+2}),
  \quad
  \eta_0'(r)=O(r^{-\alpha+1}).
\]
Hence, as \(|x|\to\infty\),
\[
  \eta_0\phi_0=O(|x|^{-2\alpha+2}),
  \qquad
  |\nabla\eta_0|\phi_0=O(|x|^{-2\alpha+1}),
\]
and therefore
\begin{align*}
  \abs[{\tbra[(\Delta\varphi_R)\eta_0,\phi_0]}]
  &\lesssim R^{-2}
    \int_{R\le|x|\le2R}|x|^{-2\alpha+2}\,dx
   \lesssim R^{-2\alpha+d},
  \\
  \abs[{\tbra[\nabla\varphi_R\cdot\nabla\eta_0,\phi_0]}]
  &\lesssim R^{-1}
    \int_{R\le|x|\le2R}|x|^{-2\alpha+1}\,dx
   \lesssim R^{-2\alpha+d}.
\end{align*}
This proves the first estimate. Similarly,
\[
  \eta_0^2=O(|x|^{-2\alpha+4}),
  \qquad
  |\nabla\eta_0|\,|\eta_0|=O(|x|^{-2\alpha+3}),
\]
which gives
\begin{align*}
  \abs[{\tbra[(\Delta\varphi_R)\eta_0,\varphi_R\eta_0]}]
  &\lesssim R^{-2}
    \int_{R\le|x|\le2R}|x|^{-2\alpha+4}\,dx
   \lesssim R^{-2\alpha+d+2},
  \\
  \abs[{\tbra[\nabla\varphi_R\cdot\nabla\eta_0,
                  \varphi_R\eta_0]}]
  &\lesssim R^{-1}
    \int_{R\le|x|\le2R}|x|^{-2\alpha+3}\,dx
   \lesssim R^{-2\alpha+d+2}.
\end{align*}
This proves the second estimate.
\end{proof}

\begin{lemma}\label{lem:cut-c}
Assume that $p=p^*$. Then $d\ge4$, and
\begin{align*}
  &\abs[{\tbra[(\Delta\varphi_R)\eta_0,\phi_0]}]
   +\abs[{\tbra[\nabla\varphi_R\cdot\nabla\eta_0,\phi_0]}]
   =
   \begin{cases}
     O((\log R)^{-1}), & d=4,\\[2pt]
     O\bigl(R^{-d+4}(\log R)^{-(d-2)}\bigr), & d\ge5,
   \end{cases}
   \\
  &\abs[{\tbra[(\Delta\varphi_R)\eta_0,\varphi_R\eta_0]}]
   +\abs[{\tbra[\nabla\varphi_R\cdot\nabla\eta_0,
                   \varphi_R\eta_0]}]
  =
   \begin{cases}
     O(R^2), & d=4,\\[2pt]
     O\bigl(R^{-d+6}(\log R)^{-(d-2)}\bigr), & d\ge5
   \end{cases}
\end{align*}
as $R\to\infty$.
\end{lemma}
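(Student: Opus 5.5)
The proof follows the template of Lemma~\ref{lem:cut-nc}: replace the power-law bounds by the critical ones and integrate over the annulus $\{R\le|x|\le2R\}$. First note that $p^*=d/(d-2)$ must give $\phi_0\in L^2(\R^d)$. By \eqref{l2}, this excludes $d=2,3$, since there $p^*\ge p_{\mathrm c}(d)$ (respectively $p^*=\infty$). Hence $d\ge4$.

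On the annulus we use the following bounds.
\begin{itemize}
\item From \eqref{eq:phidec}, $\phi_0=O(r^{-(d-2)}(\log r)^{-(d-2)/2})$.
\item From Proposition~\ref{prop:eta0}, $\eta_0=O(r^{4-d}(\log r)^{-(d-2)/2})$ for $d\ge5$. For $d=4$ (where $p^*=2$, the exceptional case) we have $\eta_0=O(1)$.
\item From Lemma~\ref{lem:deta0}, $\eta_0'=O(r^{3-d}(\log r)^{-(d-2)/2})$ in all cases $d\ge4$.
\item As before, $|\nabla\varphi_R|\lesssim R^{-1}$ and $|\Delta\varphi_R|\lesssim R^{-2}$, both supported in the annulus.
\item Since $\log r\simeq\log R$ uniformly for $R\le r\le2R$, the logarithmic factors can be pulled out of the integrals.
\item The annulus has volume $\simeq R^d$.
\end{itemize}

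\emph{Terms paired with $\phi_0$.} For $d\ge5$, both $\Delta\varphi_R\,\eta_0\phi_0$ and $\nabla\varphi_R\cdot\nabla\eta_0\,\phi_0$ are $O(R^{-2}\cdot R^{6-2d}(\log R)^{-(d-2)})$ pointwise, since $R^{-1}\cdot R^{3-d}\cdot R^{2-d}=R^{4-2d}$ as well. Multiplying by the volume $R^d$ gives $O(R^{4-d}(\log R)^{-(d-2)})$.

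For $d=4$, the Laplacian term is bounded by $R^{-2}\cdot1\cdot R^{-2}(\log R)^{-1}\cdot R^4=O((\log R)^{-1})$. The gradient term is bounded by $R^{-1}\cdot R^{-1}(\log R)^{-1}\cdot R^{-2}(\log R)^{-1}\cdot R^4=O((\log R)^{-2})$, which is smaller.

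\emph{Terms paired with $\varphi_R\eta_0$.} For $d\ge5$, the pointwise bound is $R^{-2}\cdot R^{8-2d}(\log R)^{-(d-2)}$ (the gradient term gives the same power, $R^{-1}\cdot R^{3-d}\cdot R^{4-d}$). Multiplying by $R^d$ gives $O(R^{6-d}(\log R)^{-(d-2)})$.

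For $d=4$, the Laplacian term gives $R^{-2}\cdot1\cdot R^4=R^2$. The gradient term gives $R^{-1}\cdot R^{-1}(\log R)^{-1}\cdot1\cdot R^4=O(R^2(\log R)^{-1})$. Both are $O(R^2)$, as claimed.

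The only delicate point is $d=4$. There we must use the boundedness $\eta_0\sim-1$ from Proposition~\ref{prop:eta0} instead of $r^2\phi_0$, which would cost an extra $\log R$ factor. We must also use that the derivative bound of Lemma~\ref{lem:deta0} explicitly covers $(d,p)=(4,2)$. With these two inputs, the rest is bookkeeping.
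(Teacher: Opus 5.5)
Your proposal is correct and is essentially the paper's proof. It uses the same pointwise bounds from \eqref{eq:phidec}, Proposition~\ref{prop:eta0} and Lemma~\ref{lem:deta0}, the same cutoff bounds $|\nabla\varphi_R|\lesssim R^{-1}$, $|\Delta\varphi_R|\lesssim R^{-2}$ and integration over the annulus with $\log|x|\simeq\log R$, and the same separate treatment of $(d,p)=(4,2)$ via $\eta_0=O(1)$; your derivation of $d\ge4$ from \eqref{l2} makes explicit what the paper leaves implicit. One cosmetic correction to your closing remark: in $d=4$ one has $r^2\phi_0\sim(\log r)^{-1}$, which is \emph{smaller} than $\eta_0\sim-1$, so using it would not ``cost'' a logarithm but would give a bound that is too optimistic and unjustified.
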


\begin{proof}
On the annulus $\{R\le|x|\le2R\}$, we have, for every
$\gamma\in\R$ and $\delta\ge0$,
\[
  \int_{R\le|x|\le2R}
  |x|^\gamma(\log|x|)^{-\delta}\,dx
  \lesssim R^{\gamma+d}(\log R)^{-\delta}.
\]
Suppose first that $d\ge5$. By \eqref{eq:phidec},
Proposition~\ref{prop:eta0}, and Lemma~\ref{lem:deta0},
\begin{align*}
  \eta_0\phi_0
  &=O\bigl(|x|^{-2(d-2)+2}(\log|x|)^{-(d-2)}\bigr),
  \\
  |\nabla\eta_0|\phi_0
  &=O\bigl(|x|^{-2(d-2)+1}(\log|x|)^{-(d-2)}\bigr)
\end{align*}
as $|x|\to\infty$. Together with
$|\Delta\varphi_R|\lesssim R^{-2}$ and
$|\nabla\varphi_R|\lesssim R^{-1}$, these bounds give
\begin{align*}
  \abs[{\tbra[(\Delta\varphi_R)\eta_0,\phi_0]}]
  &\lesssim R^{-d+4}(\log R)^{-(d-2)},
  \\
  \abs[{\tbra[\nabla\varphi_R\cdot\nabla\eta_0,\phi_0]}]
  &\lesssim R^{-d+4}(\log R)^{-(d-2)}.
\end{align*}
This proves the first estimate for $d\ge5$. Likewise,
\begin{align*}
  \eta_0^2
  &=O\bigl(|x|^{-2(d-2)+4}(\log|x|)^{-(d-2)}\bigr),
  \\
  |\nabla\eta_0|\,|\eta_0|
  &=O\bigl(|x|^{-2(d-2)+3}(\log|x|)^{-(d-2)}\bigr)
\end{align*}
as $|x|\to\infty$, which gives the second estimate for $d\ge5$.

When $(d,p)=(4,2)$, Proposition~\ref{prop:eta0},
\eqref{eq:sharp}, and Lemma~\ref{lem:deta0} instead give
\[
  \eta_0(r)=O(1),\quad
  \phi_0(r)=O(r^{-2}(\log r)^{-1}),\quad
  \eta_0'(r)=O(r^{-1}(\log r)^{-1})
\]
as $r\to\infty$. Hence
\begin{align*}
  \abs[{\tbra[(\Delta\varphi_R)\eta_0,\phi_0]}]
  &\lesssim R^{-2}
       \int_{R\le|x|\le2R}|x|^{-2}(\log|x|)^{-1}\,dx
       \lesssim(\log R)^{-1},\\
  \abs[{\tbra[\nabla\varphi_R\cdot\nabla\eta_0,\phi_0]}]
  &\lesssim R^{-1}
       \int_{R\le|x|\le2R}|x|^{-3}(\log|x|)^{-2}\,dx
       \lesssim(\log R)^{-2}.
\end{align*}
Similarly,
\begin{align*}
  \abs[{\tbra[(\Delta\varphi_R)\eta_0,\varphi_R\eta_0]}]
  &\lesssim R^{-2}\int_{R\le|x|\le2R}1\,dx
       \lesssim R^2,\\
  \abs[{\tbra[\nabla\varphi_R\cdot\nabla\eta_0,
                  \varphi_R\eta_0]}]
  &\lesssim R^{-1}
       \int_{R\le|x|\le2R}|x|^{-1}(\log|x|)^{-1}\,dx
       \lesssim R^2(\log R)^{-1}.
\end{align*}
These are the remaining bounds.
\end{proof}

We are now in a position to prove Theorem~\ref{thm:dir}.

\begin{proof}[Proof of Theorem~\ref{thm:dir}]
The equation for $\phi_0$ gives
\[
  \tbra[L_0\phi_0,\phi_0]
  =-(q-1)\norm[\nabla\phi_0]_{L^2}^2
   -(q-p)\norm[\phi_0]_{L^{p+1}}^{p+1}<0.
\]
By Lemma~\ref{lem:eta-pair}, dominated convergence in case~\textup{(ii)}
and the eventual negativity of $\eta_0\phi_0$ in case~\textup{(i)} give,
as $R\to\infty$,
\[
  \tbra[\varphi_R\eta_0,\phi_0]
  \to
  \begin{cases}
    -\infty, & \text{in case~\textup{(i)}},\\[2pt]
    \tbra[\eta_0,\phi_0]<0, & \text{in case~\textup{(ii)}}.
  \end{cases}
\]
Thus $\psi_R$ is well defined for all sufficiently
large $R$ and satisfies \eqref{eq:orth}.

By \eqref{eq:phidec}, the assumption $\phi_0\in L^2(\R^d)$ implies
$2\alpha>d$ when $p\ne p^*$ and $d\ge4$ when $p=p^*$.
Hence Lemmas~\ref{lem:cut-nc} and~\ref{lem:cut-c}, together with
dominated convergence in \eqref{eq:mixed}, yield
\begin{equation}\label{eq:mixed-lim}
  \tbra[L_0(\varphi_R\eta_0),\phi_0]
  \to-\norm[\phi_0]_{L^2}^2
  \quad\text{as }R\to\infty.
\end{equation}

\medskip
\noindent\textbf{Case~\textup{(i)}:}~$\phi_0\notin\cF(H^1)$.
The preceding limit of the denominator gives $\beta_R\to0$, so
\eqref{eq:mixed-lim} implies
\[
  \beta_R\tbra[L_0(\varphi_R\eta_0),\phi_0]\to0.
\]
It remains to show that
$\beta_R^2\tbra[L_0(\varphi_R\eta_0),\varphi_R\eta_0]\to0$ as $R\to\infty$.

If $p\ne p^*$, then $2\alpha\le d+2$ by
\eqref{eq:phidec}. For $R>2$, set
\[
  h(R)
  \ce \begin{cases}
    (\log R)^{-1}, & 2\alpha=d+2,\\
    R^{2\alpha-d-2}, & 2\alpha<d+2.
  \end{cases}
\]
Then $h(R)\to0$. By \eqref{eq:phidec} and
Proposition~\ref{prop:eta0},
$\eta_0(x)\phi_0(x)\sim-|x|^{-2\alpha+2}$ as $|x|\to\infty$.
Integrating this asymptotic relation and using the properties of
$\varphi_R$, we obtain
\[
  -\tbra[\varphi_R\eta_0,\phi_0]\gtrsim h(R)^{-1}
\]
for all sufficiently large $R$, and hence
\begin{equation}\label{eq:beta-nc}
  |\beta_R|\lesssim h(R).
\end{equation}
The same asymptotics give
\begin{equation}\label{eq:pair-nc}
  \abs[{\tbra[\varphi_R\eta_0,\varphi_R\phi_0]}]
  \lesssim h(R)^{-1}.
\end{equation}
By \eqref{eq:quad}, Lemma~\ref{lem:cut-nc},
\eqref{eq:beta-nc}, and \eqref{eq:pair-nc},
\begin{align*}
  \abs[{\beta_R^2
    \tbra[L_0(\varphi_R\eta_0),\varphi_R\eta_0]}]
  &\lesssim h(R)^2
    \bigl(R^{-2\alpha+d+2}+h(R)^{-1}\bigr)
  \\
  &\lesssim h(R)\to0.
\end{align*}

If $p=p^*$, \eqref{l2} and \eqref{wl2} leave only $d=4$ and $d=5$.

\smallskip
\noindent\textit{If $(d,p)=(4,2)$,} integrating the asymptotics in
the proof of Lemma~\ref{lem:eta-pair} gives
\[
  -\tbra[\varphi_R\eta_0,\phi_0]
     \gtrsim\frac{R^2}{\log R},
  \qquad
  \abs[{\tbra[\varphi_R\eta_0,\varphi_R\phi_0]}]
     \lesssim\frac{R^2}{\log R}
\]
for all sufficiently large $R$. Here we use
$\int_{R_0}^R r(\log r)^{-1}\,dr\sim R^2/(2\log R)$ for fixed
$R_0>2$; the integral over $\{|x|\le R_0\}$ is bounded
independently of $R$. Thus $|\beta_R|\lesssim R^{-2}\log R$, and
\eqref{eq:quad} and Lemma~\ref{lem:cut-c} yield
\[
  \abs[{\beta_R^2
    \tbra[L_0(\varphi_R\eta_0),\varphi_R\eta_0]}]
  \lesssim \frac{(\log R)^2}{R^4}
       \left(R^2+\frac{R^2}{\log R}\right)\to0.
\]

\smallskip
\noindent\textit{If $p=p^*$ and $d=5$,} then \eqref{eq:phidec}
and Proposition~\ref{prop:eta0} give
\[
  \eta_0(x)\phi_0(x)
  \sim-|x|^{-4}(\log|x|)^{-3}
  \quad\text{as }|x|\to\infty.
\]
Since $\int_{R_0}^R(\log r)^{-3}\,dr\sim R(\log R)^{-3}$ for fixed
$R_0>2$, the same argument gives
\begin{equation}\label{eq:pair-c}
  -\tbra[\varphi_R\eta_0,\phi_0]
    \gtrsim R(\log R)^{-3},
  \qquad
  \abs[{\tbra[\varphi_R\eta_0,\varphi_R\phi_0]}]
    \lesssim R(\log R)^{-3}.
\end{equation}
Consequently,
\begin{equation}\label{eq:beta-c}
  |\beta_R|\lesssim R^{-1}(\log R)^3.
\end{equation}
Combining \eqref{eq:quad}, Lemma~\ref{lem:cut-c},
\eqref{eq:pair-c}, and \eqref{eq:beta-c}, we obtain
\[
  \abs[{\beta_R^2
    \tbra[L_0(\varphi_R\eta_0),\varphi_R\eta_0]}]
  \lesssim R^{-2}(\log R)^6\,R(\log R)^{-3}
  =R^{-1}(\log R)^3\to0.
\]
Thus, in every case, both correction terms in \eqref{eq:qform}
tend to zero, proving \eqref{eq:lim-nw}.

\medskip
\noindent\textbf{Case~\textup{(ii)}:}~$\phi_0\in\cF(H^1)$ and
$\tbra[\eta_0,\phi_0]<0$.
By \eqref{eq:phidec} and \eqref{wl2}, we have $2\alpha>d+2$ when
$p\ne p^*$ and $d\ge6$ when $p=p^*$. Thus the cutoff error terms
in \eqref{eq:quad} tend to zero by Lemmas~\ref{lem:cut-nc}
and~\ref{lem:cut-c}. Lemma~\ref{lem:eta-pair} and dominated
convergence therefore give, as $R\to\infty$,
\begin{align*}
  \tbra[L_0(\varphi_R\eta_0),\varphi_R\eta_0]
  &\to-\tbra[\eta_0,\phi_0],
 \qquad
  \beta_R
  \to-\frac{\norm[\phi_0]_{L^2}^2}
             {\tbra[\eta_0,\phi_0]}.
\end{align*}
Together with \eqref{eq:mixed-lim} and \eqref{eq:qform},
these limits prove \eqref{eq:lim-w}.
\end{proof}



\section{Proof of instability}\label{sec:inst}

We complete the proof of Theorem~\ref{thm:ode} by applying the following instability result for stationary solutions.

\begin{proposition}[{\cite[Proposition~5.3]{FH25}}]\label{prop:inst}
Assume that $\phi_0\in L^2(\R^d)$. If there exists a real-valued
$\psi\in H^1_{\rad}(\R^d)$ such that
\[
  \rbra[\psi,\phi_0]_{L^2}=0,
  \qquad
  \tbra[L_0\psi,\psi]<0,
\]
then $\phi_0$ is radially unstable, and hence orbitally unstable in
$H^1(\R^d)$.
\end{proposition}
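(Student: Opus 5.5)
The plan is a Lyapunov functional argument in the spirit of \cite{FH25}, built on the Nehari characterization in Proposition~\ref{prop:nehari} with $\omega=0$. By rescaling with the positive constant $\norm[\psi]_{L^2}$ we may assume $\norm[\psi]_{L^2}=1$. The natural ansatz for the initial data is
\[
  u_0=\phi_0+\lambda\psi+\mu\phi_0,
\]
with $\lambda$ small. The coefficient $\mu=\mu(\lambda)=O(\lambda^2)$ is chosen so that $M(u_0)=M(\phi_0)$; this is possible because $\rbra[\psi,\phi_0]_{L^2}=0$. A Taylor expansion then gives
\[
  E(u_0)=E(\phi_0)+\tfrac{\lambda^2}{2}\tbra[L_0\psi,\psi]+o(\lambda^2)<E(\phi_0).
\]
Here the first-order terms vanish because $E'(\phi_0)=0$, and the $\mu$-correction contributes only $O(\lambda^3)$ at second order. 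Since $\phi_0\in L^2$, $u_0\in H^1_{\rad}$ and $u_0\to\phi_0$ in $H^1$ as $\lambda\to0$. Because $\psi$ is radial, we can work in the radial class. This eliminates translations, and modulation only needs to handle the phase.

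Next, argue by contradiction. Suppose the solution $u(t)$ stays in a small tubular neighborhood $\{e^{i\theta}(\phi_0+v)\}$ for all time. Using the implicit function theorem, decompose
\[
  u(t)=e^{i\theta(t)}\bigl(\phi_0+a(t)\psi+w(t)\bigr).
\]
To control the drift along $\psi$, use the functional
\[
  A(t)=\rbra[u(t),\,i e^{i\theta(t)}\zeta]_{L^2},
\]
where $\zeta$ is a radial $H^1$ localization generating the unstable direction (for instance $\zeta$ solving, or approximating, $L_0$ paired against $\psi$). $A$ is bounded on the tube because $\phi_0\in L^2$ and $\zeta\in L^2$, and $A'(t)$ computed from \eqref{eq} equals $\tbra[S_0'(u),\zeta]$ up to modulation terms. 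The key inequality is
\[
  A'(t)\gtrsim E(\phi_0)-E(u_0)>0\quad\text{uniformly in }t.
\]
This will contradict the boundedness of $A$.

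The main obstacle is proving this lower bound without $H^1$ coercivity of $L_0$. In place of coercivity, use the variational characterization. For any $v$ near $\phi_0$ with $M(v)=M(\phi_0)$, $E(v)<E(\phi_0)$, and $v\ne0$, Proposition~\ref{prop:nehari} forces the Nehari functional $K_0(v)$ to be nonzero with a definite sign, in fact $K_0(v)<0$ along the unstable side. To see this, compare $S_0(v)=E(v)$ with $S_0(\phi_0)$ along the curve $\mu\mapsto\mu v$ to the Nehari manifold. This gives a quantitative estimate
\[
  |K_0(v)|\ge c\bigl(E(\phi_0)-E(v)\bigr).
\]
Conservation of $E$ and $M$ keeps $u(t)$ in this region. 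The remaining step is to identify $A'$ with (a multiple of) $-K_0$ plus errors that are quadratic in the modulation. Since radiation at infinity cannot be controlled because of the missing gap, I would instead follow \cite[Proposition~5.3]{FH25} directly. That proposition is quoted and dimension-independent in its proof, so it may simply be invoked, with the above sketch as its justification.
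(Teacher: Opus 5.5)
Your sketch has a genuine gap at the decisive step, the lower bound $\frac{d}{dt}A\gtrsim E(\phi_0)-E(u_0)$. The route you propose cannot supply it.

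Your estimate $E(\phi_0)-E(v)\lesssim|K_0(v)|$ along the ray $\mu\mapsto\mu v$ is correct, but $K_0$ is the wrong quantity to control. The ray is not a Hamiltonian flow: one has $\frac{d}{dt}F(u)=\langle E'(u),iF'(u)\rangle$, and no $F$ has $F'(u)=\mp iu$, because $(iu,u)_{L^2}=0$ would force $F$ to be constant. So for your $A(u)=(e^{i\theta}u,i\zeta)_{L^2}$, $\frac{d}{dt}A$ can match a multiple of $K_0(u)$ only up to a remainder.

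At first order the two quantities are $-\langle L_0h,\zeta\rangle$ and $\langle L_0\phi_0,h\rangle$, with $h$ the real part of the perturbation. For the arbitrary $\psi$ of the statement these are unrelated. Even in the special case $\zeta=\phi_0+\beta\eta_0$ (when this lies in $L^2$), they agree only modulo $\langle h,\phi_0\rangle$, which is quadratic by mass conservation, and the remainder is $O(\|h\|_{H^1}^2)$. Along the trajectory, $\|h(t)\|_{H^1}$ is bounded only by the tube radius $\varepsilon$, whereas the energy gap is of order $\lambda^2$ with $\lambda\to0$. A bound $\|h(t)\|_{H^1}\lesssim\lambda$ uniform in $t$ is exactly the coercivity/stability information that is unavailable. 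You also never actually specify $\zeta$.

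The missing idea, which is how the paper's outline proceeds, has three parts.
\begin{enumerate}
\item Use $\psi$ itself in the functional, $A(v)=(e^{i\theta(v)}v,i\psi)_{L^2}$. Reach the Nehari manifold along the Hamiltonian vector field $a(v)=-iA'(v)$ of $A$, not along $\mu v$. Then $\frac{d}{dt}A(u)=-\mathcal P(u)$ with $\mathcal P(v)=\langle E'(v),a(v)\rangle$ holds exactly, and the orthogonality $(\psi,\phi_0)_{L^2}=0$ is what gives $a(\phi_0)=\psi$.
\item Second variation on the Nehari manifold (using $\langle L_0\phi_0,\phi_0\rangle<0$) gives $E''(\phi_0)\ge0$ on $\{v:\langle K_0'(\phi_0),v\rangle=0\}$. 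Hence $\langle L_0\psi,\psi\rangle<0$ forces $\langle K_0'(\phi_0),\psi\rangle\ne0$. The implicit function theorem then gives $\Lambda(v)$ with $|\Lambda(v)|<\lambda_0$ and $K_0(v+\Lambda(v)a(v))=0$.
\item By continuity and phase invariance, $\lambda\mapsto E(v+\lambda a(v))$ is concave for $|\lambda|<\lambda_0$ on the tube. So $E(v+\lambda a(v))\le E(v)+\lambda\mathcal P(v)$.
\end{enumerate}
The Nehari characterization then gives $E(\phi_0)\le E(v)+\lambda_0|\mathcal P(v)|$ throughout the tube, with no remainder. Take $u_0=\phi_0+\lambda\psi$; no mass correction is needed, since $S_0=E$. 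Energy conservation keeps $|\mathcal P(u(t))|$ bounded below with a fixed sign, so $A$ becomes unbounded, a contradiction.

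Finally, if you fall back on invoking \cite[Proposition~5.3]{FH25}, the point to check is not the dimension but the exponent range. That result is stated for $q<1+4/d$. As the paper remarks, one must verify that its proof runs for all energy-subcritical $q$, with global existence inside the tube coming from the blow-up alternative.
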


Although the cited proposition is stated for $q<1+4/d$, its proof
applies throughout the energy-subcritical range $q<1+4/(d-2)_+$, provided
$\phi_0\in L^2(\R^d)$. 

\begin{proof}[Proof of Proposition~\ref{prop:inst} (outline)]
For the reader's convenience, we provide an outline of the proof here.
Since $S_0=E$ and $\phi_0\in H^1(\R^d)$, Proposition~\ref{prop:nehari}
gives
\begin{equation}\label{eq:nehari0}
  E(\phi_0)=\inf\{E(v):v\in H^1_{\rad}(\R^d)\setminus\{0\},\ K_0(v)=0\}.
\end{equation}
Since $\tbra[K_0'(\phi_0),\phi_0]=\tbra[L_0\phi_0,\phi_0]<0$,
taking second variations along the Nehari manifold gives, for
$v\in H^1_{\rad}(\R^d)$,
\begin{equation}\label{eq:ntan}
  \tbra[K_0'(\phi_0),v]=0
  \quad\Longrightarrow\quad
  \tbra[E''(\phi_0)v,v]\ge0.
\end{equation}
Since $\psi$ is real-valued, we have
$E''(\phi_0)\psi=L_0\psi$. Hence $\tbra[L_0\psi,\psi]<0$, together with the variational characterization above, implies $\tbra[K_0'(\phi_0),\psi]\ne0$.

For small $\varepsilon>0$, set
\[
  \mathcal V_\varepsilon
  \ce\{v\in H^1_{\rad}(\R^d):
       \inf_{\theta\in\R}\norm[v-e^{i\theta}\phi_0]_{H^1}<\varepsilon\}.
\]
As in \cite[Lemma~5.6]{FH25}, choose a $C^1$ phase map
$\theta\colon\mathcal V_\varepsilon\to\R/(2\pi\Z)$ with
$\theta(\phi_0)=0$ and
$\rbra[e^{i\theta(v)}v,i\phi_0]_{L^2}=0$. Define
\[
  A(v)\ce\rbra[e^{i\theta(v)}v,i\psi]_{L^2},
  \quad a(v)\ce-iA'(v),
  \quad \mathcal P(v)\ce\tbra[E'(v),a(v)].
\]
The Lyapunov functional $A$ is bounded and phase invariant on
$\mathcal V_\varepsilon$, and $A'(v)\in H^1_{\rad}(\R^d)$.
Moreover, $a(\phi_0)=\psi$ and, along solutions of \eqref{eq},
\begin{equation}\label{eq:Adot}
  \frac{d}{dt}A(u(t))=-\mathcal P(u(t))
\end{equation}
as long as $u(t)\in\mathcal V_\varepsilon$.

For $v\in\mathcal V_\varepsilon$, consider the function
$\lambda\mapsto E(v+\lambda a(v))$.
Since $a(\phi_0)=\psi$, its second derivative at $\lambda=0$ and
$v=\phi_0$ is
\begin{align*}
\tbra[E''(\phi_0)\psi,\psi]=\tbra[L_0\psi,\psi]<0.
\end{align*}
By continuity, this negativity persists for $|\lambda|$ small and
$v$ sufficiently close to $\phi_0$. By phase invariance, the same
holds throughout $\mathcal V_\varepsilon$ for sufficiently small
$\varepsilon>0$. Therefore, Taylor's formula at $\lambda=0$ yields
\begin{align}
\label{taylor-E}
  E(v+\lambda a(v))
  \le E(v)+\lambda\mathcal P(v)
  \qquad
  (v\in\mathcal V_\varepsilon,\ |\lambda|<\lambda_0)
\end{align}
for some $\lambda_0,\varepsilon>0$.
Since $\tbra[K_0'(\phi_0),\psi]\ne0$, the implicit function theorem,
together with phase invariance, gives, for sufficiently small
$\lambda_0,\varepsilon>0$, a function
$\Lambda\colon\mathcal V_\varepsilon\to(-\lambda_0,\lambda_0)$ such that
\[
  v+\Lambda(v)a(v)\ne0,
  \qquad
  K_0(v+\Lambda(v)a(v))=0.
\]
Hence, the variational characterization \eqref{eq:nehari0} and \eqref{taylor-E} imply
\begin{equation}\label{eq:egap}
  E(\phi_0)
  \le E(v+\Lambda(v)a(v))
  \le E(v)+\lambda_0|\mathcal P(v)|.
\end{equation}

Finally, $u_{0,\lambda}=\phi_0+\lambda\psi$ satisfies
$E(u_{0,\lambda})<E(\phi_0)$ for small $\lambda\ne0$.
Suppose that the corresponding solution remains in
$\mathcal V_\varepsilon$ throughout its forward lifespan.
Its $H^1$ norm is then bounded, so the blow-up alternative gives
existence for all $t\ge0$. Energy conservation and \eqref{eq:egap} yield
\[
  |\mathcal P(u(t))|
  \ge\frac{E(\phi_0)-E(u_{0,\lambda})}{\lambda_0}>0
  \qquad(t\ge0).
\]
By continuity, $\mathcal P(u(t))$ has a fixed sign. Hence
\eqref{eq:Adot} implies $|A(u(t))|\to\infty$ as $t\to\infty$,
contradicting the boundedness of $A$. Since
$u_{0,\lambda}\to\phi_0$ in $H^1(\R^d)$ as $\lambda\to0$, this proves
radial instability. Orbital instability follows from
\cite[Lemma~5.2]{FH25}.
\end{proof}

\begin{remark}
\label{rem:6.2}
At zero frequency, the essential spectrum of the linearized operator
touches zero, so the coercivity in $H^1$ required in the general theory
of \cite{GSS87} is not available. The variational characterization
on the Nehari manifold replaces this coercivity argument:
\eqref{eq:egap} bounds $|\mathcal P(u(t))|$ away from zero in terms of
the conserved energy gap. Similar variational arguments were used in \cite{O14,NOW17} to study the instability of a two-parameter family of traveling waves for the one-dimensional derivative nonlinear Schr\"odinger equation.
\end{remark}

We can now complete the proof of Theorem~\ref{thm:ode}.

\begin{proof}[Proof of Theorem~\ref{thm:ode}]
The fundamental systems in Propositions~\ref{prop:vom0}
and~\ref{prop:wasym} yield the representation \eqref{eq:eta0}.
Proposition~\ref{prop:eta0} gives the existence, uniqueness, and stated
asymptotic behavior of $\eta_0$, while Theorem~\ref{thm:etalim} shows that $\eta_\omega$ converges to $\eta_0$ locally in $C^2$ as $\omega\downarrow0$.
By Lemma~\ref{lem:eta-pair}, $\tbra[\eta_0,\phi_0]=-\infty$ when $\phi_0\notin\cF(H^1)$, whereas $\eta_0\in\cF(H^{-1})$ and the pairing
$\tbra[\eta_0,\phi_0]$ is absolutely convergent when $\phi_0\in\cF(H^1)$.

In case~\textup{(i)}, and in case~\textup{(ii)} when
$\tbra[\eta_0,\phi_0]<0$, Theorem~\ref{thm:dir} supplies, for all
sufficiently large $R$, a real-valued $\psi_R\in H^1_{\rad}(\R^d)$
satisfying
\[
  \rbra[\psi_R,\phi_0]_{L^2}=0,
  \qquad
  \tbra[L_0\psi_R,\psi_R]<0.
\]
Proposition~\ref{prop:inst}, applied with $\psi=\psi_R$, proves the
claimed instability and completes the proof.
\end{proof}


\section{Limit of the resolvent}\label{sec:res}

The proof of Theorem~\ref{thm:res}\,\textup{(ii)} is based on a resolvent
representation of $M'(\omega)$.
For $\omega>0$, Theorem~\ref{thm:shoot} ensures that
$0\in\rho((L_\omega)_{\rad})$, which gives
\begin{equation}\label{eq:Mres}
  M'(\omega)
  =\rbra[\pt_\omega\phi_\omega,\phi_\omega]_{L^2}
  =\rbra[(L_\omega)_{\rad}^{-1}(-\phi_\omega),\phi_\omega]_{L^2},
  \qquad \omega>0.
\end{equation}
To study the limit as $\omega\downarrow0$, we factorize the radial
linearized operator and apply the Hardy--Littlewood--Sobolev inequality.
Compactness, together with the fact that zero is neither an eigenvalue
nor a resonance, then allows us to pass to the limit in the factorized
inverse.
Under \eqref{eq:high-d}, this identifies the limit of $M'(\omega)$
with $\tbra[\eta_0,\phi_0]$.

\subsection{Preliminaries}

For $s\in(0,d)$, let
\[
  I_s(f)
  \ce c_{d,s}|x|^{-d+s}*f
  =c_{d,s}\int_{\R^d}\frac{f(x-y)}{|y|^{d-s}}\,dy
  =(-\Delta)^{-s/2}f
\]
be the Riesz potential. We also write
\[
  2^*\ce\frac{2d}{d-2}.
\]
We recall the following standard estimates for Riesz potentials.
\begin{lemma}[Hardy--Littlewood--Sobolev inequality]
Let $(s,a,b)\in(0,d)\times(1,\infty)^2$ satisfy
$1/a-1/b=s/d$. Then
\begin{equation*}
  \norm[(-\Delta)^{-s/2}f]_{L^b}
  \lesssim \norm[f]_{L^a}.
\end{equation*}
In particular, if $d\ge3$, then
\begin{align*}
  \norm[(-\Delta)^{-1/2}f]_{L^{2^*}}
  &\lesssim \norm[f]_{L^2},
  \\
  \norm[(-\Delta)^{-1/2}f]_{L^2}
  &\lesssim \norm[f]_{L^{2d/(d+2)}}.
\end{align*}
\end{lemma}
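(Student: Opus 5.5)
The plan is the classical pointwise argument of Hedberg: control $I_s f$ by the Hardy--Littlewood maximal function $Mf$, then use the $L^a$-boundedness of $M$ for $a>1$. By density it suffices to take $f\in C_c^\infty(\R^d)$, and since $|I_sf|\le I_s|f|$ we may assume $f\ge0$. The identification $I_s=(-\Delta)^{-s/2}$ on such $f$ is taken as the definition with the normalizing constant $c_{d,s}$. It comes from the distributional Fourier transform of the locally integrable, homogeneous kernel $|x|^{-d+s}$, which is a constant multiple of $|\xi|^{-s}$. The bound then extends to all of $L^a$ by continuity.

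Fix $x$ and a radius $\rho>0$, and split $I_sf(x)$ into the regions $|y|<\rho$ and $|y|\ge\rho$. For the near part, decompose $\{|y|<\rho\}$ into dyadic shells $2^{-k-1}\rho\le|y|<2^{-k}\rho$. On the $k$-th shell the kernel is at most $(2^{-k-1}\rho)^{-d+s}$, and the average of $f$ over $B(x,2^{-k}\rho)$ is at most $Mf(x)$. Summing the geometric series, which converges because $s>0$, gives a near contribution $\lesssim\rho^s\,Mf(x)$. For the far part, apply Hölder's inequality:
\[
  \int_{|y|\ge\rho}\frac{f(x-y)}{|y|^{d-s}}\,dy
  \le\norm[f]_{L^a}\Bigl(\int_{|y|\ge\rho}|y|^{-(d-s)a'}\,dy\Bigr)^{1/a'}
  \lesssim\rho^{s-d/a}\norm[f]_{L^a}.
\]
This is the one place where the exponent relation matters. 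The integral is finite iff $(d-s)a'>d$, i.e.\ $s<d/a$, which is exactly $1/b=1/a-s/d>0$.

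Next optimize in $\rho$ by choosing $\rho^{d/a}=\norm[f]_{L^a}/Mf(x)$, so that the two contributions balance. This yields
\[
  |I_sf(x)|\lesssim Mf(x)^{1-sa/d}\norm[f]_{L^a}^{sa/d}.
\]
Now raise both sides to the power $b$. Since $b(1-sa/d)=a$ by the relation $1/a-1/b=s/d$, the maximal inequality $\norm[Mf]_{L^a}\lesssim\norm[f]_{L^a}$ (valid because $a>1$) gives
\[
  \norm[I_sf]_{L^b}^b\lesssim\norm[f]_{L^a}^{\,b\,sa/d+a},
\]
and the exponent on the right equals $b$, which proves the inequality. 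The step needing the most care is the far-part Hölder estimate together with the exponent bookkeeping; the rest is routine. An alternative route, if preferred, is the weak-type $(a,b)$ bound combined with Marcinkiewicz interpolation.

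The two particular cases follow by checking the exponents with $s=1$ and $d\ge3$. For $(a,b)=(2,2^*)$ we have $\tfrac12-\tfrac{d-2}{2d}=\tfrac1d$, and $2^*<\infty$ because $d\ge3$. For $(a,b)=(\tfrac{2d}{d+2},2)$ we have $\tfrac{d+2}{2d}-\tfrac12=\tfrac1d$, and $a>1$ because $d>2$. In both cases $s=1<d$, so the hypotheses of the general statement hold.
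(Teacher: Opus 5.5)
Your proof is correct. The dyadic bound $\lesssim\rho^s Mf(x)$ for the near part checks out. So does the H\"older bound $\lesssim\rho^{s-d/a}\|f\|_{L^a}$ for the far part, which is finite precisely because $1/b>0$. The choice of $\rho$ and the identity $b(1-sa/d)=a$ are right, and the two special cases with $s=1$, $d\ge3$ are verified correctly.

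The paper gives no proof of this lemma at all; it simply recalls it as a standard estimate for Riesz potentials. Your Hedberg argument therefore supplies a self-contained proof of something the paper takes as known. Compared with the textbook route you mention (a weak-type $(a,b)$ bound plus Marcinkiewicz interpolation), the pointwise inequality $|I_sf|\lesssim (Mf)^{1-sa/d}\|f\|_{L^a}^{sa/d}$ gives the strong-type bound in one step, using only the maximal theorem for $a>1$.

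One cosmetic point: after replacing $f$ by $|f|$ you are no longer in $C_c^\infty$. This is harmless, because the pointwise argument works verbatim for any nonnegative $f\in L^a$. Indeed, $Mf\in L^a$ is finite a.e., and $Mf>0$ everywhere unless $f=0$. So the density reduction is not actually needed, and the convolution integral defining $I_sf$ converges absolutely a.e.
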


We write the linearized operator
\[
  (L_\omega)_{\rad}=-\Delta+\omega+V_\omega
\]
on radial functions and define
\begin{equation*}
  B_\omega
  \ce(-\Delta+\omega)^{-1/2}
    V_\omega
    (-\Delta+\omega)^{-1/2}
  \colon L^2_{\rad}(\R^d)\to L^2_{\rad}(\R^d).
\end{equation*}
Then
\[
  (L_\omega)_{\rad}
  =(-\Delta+\omega)^{1/2}
    (I+B_\omega)
    (-\Delta+\omega)^{1/2}.
\]
If $V\in L^{d/2}(\R^d)$, then H\"older's inequality gives
\begin{equation*}
  \norm[Vf]_{L^{2d/(d+2)}}
  \le \norm[V]_{L^{d/2}}\norm[f]_{L^{2^*}}.
\end{equation*}
Thus multiplication by $V$ defines a bounded operator from
$L^{2^*}(\R^d)$ to $L^{2d/(d+2)}(\R^d)$, with operator norm bounded by
$\norm[V]_{L^{d/2}}$.

\begin{lemma}\label{lem:Kcomp}
Assume that $d\ge3$ and $V_0\in L^{d/2}(\R^d)$. Then, for every
$\omega\ge0$, the operator
$B_\omega\colon L^2_{\rad}(\R^d)\to L^2_{\rad}(\R^d)$ is compact and
self-adjoint.
\end{lemma}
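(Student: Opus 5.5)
We must prove Lemma~\ref{lem:Kcomp}: for $d\ge3$, $V_0\in L^{d/2}$, each $B_\omega$ is compact and self-adjoint on $L^2_{\rad}$.

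The plan is to factor $B_\omega$ through the Hardy--Littlewood--Sobolev estimates and approximate $V_\omega$ by nice potentials.

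\textbf{Step 1: $V_\omega\in L^{d/2}$ uniformly.} First I will check that $V_\omega\in L^{d/2}(\R^d)$ for every $\omega\ge0$. For $\omega>0$ this is clear, since $\phi_\omega$ is bounded and decays exponentially by \eqref{eq:phi-exp}. For $\omega=0$ it is the hypothesis.

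\textbf{Step 2: boundedness.} Write $T_\omega\ce(-\Delta+\omega)^{-1/2}$. Since $\omega\ge0$, the multiplier $(|\xi|^2+\omega)^{-1/2}(|\xi|^2)^{1/2}$ is bounded by $1$. Hence $\|T_\omega f\|_{L^{2^*}}\lesssim\|f\|_{L^2}$ uniformly in $\omega$. This follows by writing $T_\omega=(-\Delta)^{-1/2}\circ m(D)$, with $m(D)$ a contraction on $L^2$, and applying HLS.

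By duality (or the second HLS estimate applied in the same way), $T_\omega$ maps $L^{2d/(d+2)}$ boundedly into $L^2$. Combining these with the Hölder bound $\|Vf\|_{L^{2d/(d+2)}}\le\|V\|_{L^{d/2}}\|f\|_{L^{2^*}}$ gives
\[
\|T_\omega VT_\omega\|_{L^2\to L^2}\lesssim\|V\|_{L^{d/2}}.
\]
This bound is linear in $V$. The operators $T_\omega$ and multiplication by a radial function preserve radiality, so $B_\omega$ maps $L^2_{\rad}$ into itself.

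\textbf{Step 3: self-adjointness.} $T_\omega$ is self-adjoint and $V_\omega$ is real. So $\tbra[B_\omega f,g]=\tbra[V_\omega T_\omega f,T_\omega g]=\tbra[f,B_\omega g]$, and $B_\omega$ is bounded and symmetric, hence self-adjoint.

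\textbf{Step 4: compactness.} Choose $V_n\in C_c^\infty(\R^d)$ with $V_n\to V_\omega$ in $L^{d/2}$; this uses $d/2<\infty$. By the linear bound of Step 2, $T_\omega V_nT_\omega\to B_\omega$ in operator norm. Since norm limits of compact operators are compact, it suffices to show each $T_\omega V_nT_\omega$ is compact.

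For this, let $\chi\in C_c^\infty$ equal $1$ on $\operatorname{supp}V_n$, so that $V_nT_\omega=V_n\chi T_\omega$. Split $\chi T_\omega=\chi\,\mathbf 1_{|D|\le N}T_\omega+\chi\,\mathbf 1_{|D|>N}T_\omega$.

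The first piece is Hilbert--Schmidt on $L^2$, because its kernel $\chi(x)\check g(x-y)$ with $g=\mathbf 1_{|\xi|\le N}(|\xi|^2+\omega)^{-1/2}\in L^2$ (as $d\ge3$) lies in $L^2(\R^{2d})$.

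The second piece has $L^2\to L^{2^*}$ norm $\lesssim$ the $L^2$ norm of the high-frequency cutoff, but I will instead bound its $L^2\to L^2$ norm by $N^{-1}$. I will use the factorization $V_n\chi T_\omega=V_n\cdot(\chi T_\omega)$ with $V_n$ bounded, so that $V_n\chi\,\mathbf 1_{|D|>N}T_\omega\to0$ in $L^2\to L^2$ norm. Consequently $V_n T_\omega$ is compact from $L^2$ to $L^2$.

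To finish, write $T_\omega V_nT_\omega=(T_\omega|V_n|^{1/2})(\operatorname{sgn}V_n|V_n|^{1/2}T_\omega)$. The right factor is compact $L^2\to L^2$ by the argument above applied to $|V_n|^{1/2}$. The left factor is its adjoint up to the bounded sign multiplication, hence bounded. The product is therefore compact.

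\textbf{Main obstacle.} The main obstacle is the case $\omega=0$, where $T_0=(-\Delta)^{-1/2}$ is not bounded on $L^2$. The low-frequency Hilbert--Schmidt argument handles this: $|\xi|^{-1}\mathbf 1_{|\xi|\le N}\in L^2(\R^d)$ precisely because $d\ge3$. This is exactly where the dimension restriction enters.
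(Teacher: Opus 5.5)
Your proof is correct, but it reaches compactness by a different route than the paper. The paper shows that $V_0(-\Delta)^{-1/2}\colon L^2_{\rad}\to L^{2d/(d+2)}$ is compact by a weak-to-strong argument. If $f_n\rightharpoonup f$ in $L^2$, then $(-\Delta)^{-1/2}f_n\rightharpoonup(-\Delta)^{-1/2}f$ in $\dot H^1$, hence strongly in $L^r_{\mathrm{loc}}$ by Rellich. Splitting $V_0=(V_0-Z)+Z$ with $Z\in C_c^\infty$, H\"older and HLS control the $V_0-Z$ part, and composing with the bounded map $(-\Delta)^{-1/2}\colon L^{2d/(d+2)}\to L^2$ finishes.

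You instead approximate at the operator level. The linear bound $\norm[T_\omega VT_\omega]_{\mathcal L(L^2)}\lesssim\norm[V]_{L^{d/2}}$ realizes $B_\omega$ as a norm limit of $T_\omega V_nT_\omega$. You then prove by hand that $\chi T_\omega$ is compact on $L^2$: Hilbert--Schmidt at low frequency, norm $O(N^{-1})$ at high frequency. The symmetric factorization through $\abs[V_n]^{1/2}$ avoids composing with $T_0$, which is unbounded on $L^2$.

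The paper's version is shorter because it leans on Rellich. Yours is self-contained and treats every $\omega\ge0$ at once, rather than declaring the case $\omega>0$ analogous. Its uniform-in-$\omega$ operator bound is also exactly the estimate reused in Lemma~\ref{lem:Kconv}.

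Two small remarks.
\begin{enumerate}
\item The restriction $d\ge3$ enters your argument not only in the low-frequency Hilbert--Schmidt step but already in Step~2, since the HLS exponents $2^*$ and $2d/(d+2)$ require it. Your closing sentence therefore overstates.
\item Your approximants $V_n$ need not be radial, so the compactness you obtain is on $L^2(\R^d)$. This passes to $L^2_{\rad}$ because $B_\omega$ leaves that closed subspace invariant; alternatively, take radial approximants.
\end{enumerate}
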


\begin{proof}
We present the proof for $\omega=0$; the case $\omega>0$ is analogous,
since the exponential decay of $\phi_\omega$ gives
$V_\omega\in L^{d/2}(\R^d)$.
The boundedness of $B_0$ follows because it is the composition of
\begin{align*}
  (-\Delta)^{-1/2}
  &\in\mathcal{L}\bigl(
    L^2_{\rad}(\R^d),L^{2^*}_{\rad}(\R^d)
  \bigr),
  \\
  V_0
  &\in\mathcal{L}\bigl(
    L^{2^*}_{\rad}(\R^d),L^{2d/(d+2)}_{\rad}(\R^d)
  \bigr),
  \\
  (-\Delta)^{-1/2}
  &\in\mathcal{L}\bigl(
    L^{2d/(d+2)}_{\rad}(\R^d),L^2_{\rad}(\R^d)
  \bigr).
\end{align*}

It remains to prove compactness. It suffices to show that
\[
  V_0(-\Delta)^{-1/2}
  \colon L^2_{\rad}(\R^d)\to L^{2d/(d+2)}_{\rad}(\R^d)
\]
is compact. Let $f_n\rightharpoonup f$ weakly in $L^2_{\rad}(\R^d)$. Then
$(-\Delta)^{-1/2}f_n\rightharpoonup(-\Delta)^{-1/2}f$ weakly in
$\dot H^1_{\rad}(\R^d)$. By Rellich's theorem,
\[
  (-\Delta)^{-1/2}f_n
  \to(-\Delta)^{-1/2}f
  \quad\text{in }L^r_{\mathrm{loc}}(\R^d)
\]
for every $r\in[1,2^*)$.

Let $\varepsilon>0$. Choose $Z\in C_c^\infty(\R^d)$ such that
$\norm[V_0-Z]_{L^{d/2}}<\varepsilon$. Then
\begin{align*}
  &\norm[
    V_0(-\Delta)^{-1/2}(f_n-f)
  ]_{L^{2d/(d+2)}}
  \\
  &\quad\le
  \norm[
    (V_0-Z)(-\Delta)^{-1/2}(f_n-f)
  ]_{L^{2d/(d+2)}}
  \\
  &\quad\quad+
  \norm[
    Z(-\Delta)^{-1/2}(f_n-f)
  ]_{L^{2d/(d+2)}}
  \\
  &\quad\le
  \norm[V_0-Z]_{L^{d/2}}
  \norm[(-\Delta)^{-1/2}(f_n-f)]_{L^{2^*}}
  \\
  &\quad\quad+
  \norm[Z]_{L^\infty}
  \norm[
    (-\Delta)^{-1/2}(f_n-f)
  ]_{L^{2d/(d+2)}(\operatorname{supp}Z)}
  \\
  &\quad\lesssim
  \norm[V_0-Z]_{L^{d/2}}
  +\norm[
    (-\Delta)^{-1/2}(f_n-f)
  ]_{L^{2d/(d+2)}(\operatorname{supp}Z)}
  \lesssim\varepsilon
\end{align*}
for all sufficiently large $n$. Hence $B_0$ is compact. Its
self-adjointness follows from the self-adjointness of multiplication by
$V_0$ and of $(-\Delta)^{-1/2}$.
\end{proof}

\subsection{\texorpdfstring{Convergence as $\omega\downarrow0$}{Convergence as omega tends to zero}}

We now use the compactness established above to study the
operator norm convergence of $B_\omega$ and $(I+B_\omega)^{-1}$
as $\omega\downarrow0$.

We rewrite $B_\omega$ as
\[
  B_\omega
  =J_\omega(-\Delta)^{-1/2}
    V_\omega
    (-\Delta)^{-1/2}J_\omega,
\]
where
\[
  J_\omega
  \ce\left(\frac{-\Delta}{-\Delta+\omega}\right)^{1/2}.
\]
The Fourier multiplier representation
\[
  J_\omega
  =\mathcal{F}^{-1}
    \left(\frac{|\xi|^2}{\omega+|\xi|^2}\right)^{1/2}
    \mathcal{F}
\]
shows that $\norm[J_\omega]_{\mathcal{L}(L^2_{\rad})}\le1$ and that
$J_\omega\to I$ strongly on $L^2_{\rad}(\R^d)$ as $\omega\downarrow0$.
Moreover, $J_\omega$ is self-adjoint.


\begin{lemma}\label{lem:Kconv}
Assume that $d\ge3$, $V_0\in L^{d/2}(\R^d)$, and
\[
  V_\omega\to V_0
  \quad\text{in }L^{d/2}(\R^d)
  \quad\text{as }\omega\downarrow0.
\]
Then
\[
  B_\omega\to B_0
  \quad\text{in }\mathcal{L}(L^2_{\rad}(\R^d))
  \quad\text{as }\omega\downarrow0.
\]
\end{lemma}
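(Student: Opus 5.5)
We split the difference into two pieces and show that each tends to zero in operator norm. Set $T_\omega\ce(-\Delta)^{-1/2}V_\omega(-\Delta)^{-1/2}$, so that $B_\omega=J_\omega T_\omega J_\omega$ and $B_0=T_0$ (recall $J_0=I$). Then
\[
  B_\omega-B_0
  =J_\omega(T_\omega-T_0)J_\omega
   +\bigl(J_\omega T_0J_\omega-T_0\bigr).
\]

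\textbf{First term.} By the boundedness chain in the proof of Lemma~\ref{lem:Kcomp}, with $V_\omega-V_0$ in place of $V_0$, the Hardy--Littlewood--Sobolev inequality and H\"older's inequality give
\[
  \norm[T_\omega-T_0]_{\mathcal L(L^2_\rad)}
  \lesssim\norm[V_\omega-V_0]_{L^{d/2}}.
\]
Since $\norm[J_\omega]\le1$, the first term tends to zero by hypothesis.

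\textbf{Second term.} We write
\[
  J_\omega T_0J_\omega-T_0=(J_\omega-I)T_0J_\omega+T_0(J_\omega-I).
\]
We then use the standard fact that a uniformly bounded family converging strongly to zero converges to zero uniformly on compact sets. By Lemma~\ref{lem:Kcomp}, $T_0=B_0$ is compact, so $T_0$ maps the unit ball into a relatively compact set. Since $J_\omega-I\to0$ strongly with $\norm[J_\omega-I]\le2$, we obtain
\[
  \norm[(J_\omega-I)T_0]_{\mathcal L(L^2_\rad)}\to0.
\]
Together with $\norm[J_\omega]\le1$, this handles the first piece.

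For the second piece, we use self-adjointness: $J_\omega$ and $T_0$ are self-adjoint, so
\[
  \norm[T_0(J_\omega-I)]=\norm[(J_\omega-I)T_0]\to0.
\]

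\textbf{Main obstacle.} The only delicate point is the upgrade from strong to norm convergence. This step depends on the compactness of $T_0$ from Lemma~\ref{lem:Kcomp}, together with the adjoint trick for the right-hand factor. The argument needs $J_\omega$ to act on both sides, and $J_\omega$ is not compact, so the adjoint trick is essential there. The upgrade itself follows by a finite $\varepsilon$-net argument on the closure of $T_0(\text{unit ball})$.

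Combining the estimates for the two terms gives $B_\omega\to B_0$ in $\mathcal L(L^2_\rad(\R^d))$.
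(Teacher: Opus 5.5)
Your proof is correct and follows the paper's argument. The paper also telescopes around $J_\omega B_0 J_\omega$ and bounds the potential term by $\|V_\omega-V_0\|_{L^{d/2}}$ via Hardy--Littlewood--Sobolev. It then controls the remainder, written as $J_\omega B_0(J_\omega-I)+(J_\omega-I)B_0$ instead of your $(J_\omega-I)T_0J_\omega+T_0(J_\omega-I)$, by compactness of $B_0$, strong convergence $J_\omega\to I$, and the same adjoint identity; your explicit $\varepsilon$-net justification of the strong-to-norm upgrade is a step the paper takes as standard.
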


\begin{proof}
By the triangle inequality, we obtain
\begin{align*}
\norm[B_\omega-B_0]_{\mathcal{L}(L^2_{\rad})}
  &\le
  \norm[B_\omega-J_\omega B_0J_\omega]_{\mathcal{L}(L^2_{\rad})}
  \\
  &\quad+
  \norm[J_\omega B_0J_\omega-J_\omega B_0]_{\mathcal{L}(L^2_{\rad})}
  +\norm[J_\omega B_0-B_0]_{\mathcal{L}(L^2_{\rad})}
  \\
  &=\norm[
    J_\omega(-\Delta)^{-1/2}
    (V_\omega-V_0)
    (-\Delta)^{-1/2}J_\omega
  ]_{\mathcal{L}(L^2_{\rad})}
  \\
  &\quad+
  \norm[J_\omega B_0(J_\omega-I)]_{\mathcal{L}(L^2_{\rad})}
  +\norm[(J_\omega-I)B_0]_{\mathcal{L}(L^2_{\rad})}
  \\
  &\lesssim
  \norm[V_\omega-V_0]_{L^{d/2}}
  +2\norm[(J_\omega-I)B_0]_{\mathcal{L}(L^2_{\rad})}.
\end{align*}
Here we used
\[
  (B_0(J_\omega-I))^*=(J_\omega-I)B_0.
\]
Since $B_0$ is compact by Lemma~\ref{lem:Kcomp} and
$J_\omega\to I$ strongly on $L^2_{\rad}(\R^d)$, we have
\[
  \norm[(J_\omega-I)B_0]_{\mathcal{L}(L^2_{\rad})}\to0
  \quad\text{as }\omega\downarrow0.
\]
Together with
$V_\omega\to V_0$ in $L^{d/2}(\R^d)$, this proves the result.
\end{proof}

\begin{definition}[Zero resonance]\label{def:res}
Let $d\ge3$. We say that $0$ is a resonance of $(L_0)_{\rad}$ if there
exists a nonzero function
$\psi\in\dot H^1_{\rad}(\R^d)\setminus L^2(\R^d)$ such that
\[
  (L_0)_{\rad}\psi=0
  \quad\text{in }\dot H^{-1}_{\rad}(\R^d).
\]
\end{definition}

The following proposition is a consequence of
Theorem~\ref{thm:shoot} and its proof.

\begin{proposition}[Absence of a zero eigenvalue and a zero resonance]
\label{prop:noz}
Assume that $d\ge3$. Then $(L_0)_{\rad}$ has neither a zero eigenvalue nor a
zero resonance.
\end{proposition}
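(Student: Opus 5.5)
To prove Proposition~\ref{prop:noz}, we reduce both statements to the radial ODE and use the global behavior of the regular homogeneous solution. Suppose $\psi\in\dot H^1_{\rad}(\R^d)$ satisfies $(L_0)_{\rad}\psi=0$ in $\dot H^{-1}_{\rad}(\R^d)$. This covers both a zero eigenvalue (with $\psi\in L^2$) and a zero resonance (with $\psi\notin L^2$), so it suffices to show $\psi=0$.

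The first step is regularity. Since $V_0$ is bounded and continuous, local elliptic regularity gives $\psi\in C^2(\R^d)$, and its radial profile solves \eqref{eq:pdeom} with $\omega=0$ on $(0,\infty)$, with $\psi'(0)=0$. Because $\{v_0^+,v_0^-\}$ is a fundamental system and $v_0^-$ is singular at the origin by Proposition~\ref{prop:vom0}, regularity forces $\psi=c\,v_0^+$ for some $c\in\R$.

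The second step uses the behavior at infinity. By Lemma~\ref{lem:vw}, which relies on the shooting analysis of Theorem~\ref{thm:shoot} and Proposition~\ref{prop:vstar}, we have $v_0^+\sim-w_0^+$ as $r\to\infty$. Proposition~\ref{prop:wasym} then shows that $w_0^+$ tends either to $1$ (if $p>p^*$), to $+\infty$ like $(\log r)^{d/2}$ (if $p=p^*$), or behaves like $r^{\lambda_+}$ with $\lambda_+>0$ (if $p<p^*$, since $c_p>0$ there). In every case $|v_0^+|$ is bounded below by a positive constant for large $r$.

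The third step is to show that $c\,v_0^+\in\dot H^1(\R^d)$ forces $c=0$. Since $d\ge3$, any $f\in\dot H^1$ lies in $L^{2^*}$ by the Sobolev inequality. A radial function bounded below by a positive constant outside a ball is not in $L^{2^*}(\R^d)$. Hence $c=0$ and $\psi=0$, which excludes both a zero eigenvalue and a zero resonance. This is exactly why the hypothesis $d\ge3$ is needed; in $d=2$, $\dot H^1$ does not control decay in this way.

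The main point requiring care is the first step. We must verify that a distributional $\dot H^1$ solution is the classical regular ODE solution, and in particular that it is not a multiple of the singular $v_0^-$. This follows because $v_0^-\sim r^{2-d}$ near $0$ makes $\nabla v_0^-\notin L^2_{\mathrm{loc}}$, so $v_0^-$ is not even in $H^1_{\mathrm{loc}}$, and a regular solution must therefore be a multiple of $v_0^+$. Everything else reduces directly to Lemma~\ref{lem:vw} and Proposition~\ref{prop:wasym}.
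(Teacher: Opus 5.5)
Your proof is correct and follows essentially the same route as the paper. Elliptic regularity reduces an $\dot H^1_{\rad}$ solution to a multiple of the regular solution $v_0^+=v_{y^*}$. Its nonzero non-decaying component at infinity (Proposition~\ref{prop:vstar}, which you reach through Lemma~\ref{lem:vw}) is incompatible with the Sobolev embedding $\dot H^1\hookrightarrow L^{2^*}$. The only difference is cosmetic: the paper handles the zero eigenvalue separately by quoting the radial nondegeneracy in Theorem~\ref{thm:shoot}, whereas you treat eigenfunctions and resonances together as $\dot H^1_{\rad}$ solutions, which is exactly how the proposition is used in Lemma~\ref{lem:Kinv}.
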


\begin{proof}
The absence of a zero eigenvalue is precisely the radial nondegeneracy
conclusion of Theorem~\ref{thm:shoot}. To exclude a zero
resonance, let $\psi\in\dot H^1_{\rad}(\R^d)$ satisfy
$(L_0)_{\rad}\psi=0$. Elliptic regularity shows that $\psi$ is regular at
the origin, and hence it is a multiple of the regular solution $v_{y^*}$.
By Proposition~\ref{prop:vstar}, this solution has a nonzero
non-decaying component at infinity. If $p<p^*$, then
$v_{y^*}(r)\sim c_+r^{\lambda_+}$ with $\lambda_+>0$; if $p>p^*$, then
$\lambda_+=0$; and if $p=p^*$, then
$v_{y^*}(r)\sim c_+(\log r)^{d/2}$. Thus
$v_{y^*}\notin L^{2^*}(\R^d)$ in every case. Since $d\ge3$, the Sobolev
embedding $\dot H^1(\R^d)\hookrightarrow L^{2^*}(\R^d)$ rules out a zero resonance.
\end{proof}

\begin{lemma}\label{lem:Kinv}
Assume that $d\ge3$, $V_0\in L^{d/2}(\R^d)$, and
\[
  V_\omega\to V_0
  \quad\text{in }L^{d/2}(\R^d)
  \quad\text{as }\omega\downarrow0.
\]
Then, for all sufficiently small $\omega\ge0$, the operator $I+B_\omega$
is invertible on $L^2_{\rad}(\R^d)$, and
\[
  (I+B_\omega)^{-1}
  \to(I+B_0)^{-1}
  \quad\text{in }\mathcal{L}(L^2_{\rad}(\R^d))
  \quad\text{as }\omega\downarrow0.
\]
\end{lemma}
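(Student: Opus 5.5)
The plan is to first show that $I+B_0$ is invertible, and then transfer invertibility to small $\omega$ by a Neumann-series perturbation, using the operator-norm convergence $B_\omega\to B_0$ from Lemma~\ref{lem:Kconv}.

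\emph{Invertibility of $I+B_0$.} By Lemma~\ref{lem:Kcomp}, $B_0$ is compact and self-adjoint on $L^2_{\rad}(\R^d)$. The Fredholm alternative therefore reduces invertibility of $I+B_0$ to triviality of its kernel. Suppose $f\in L^2_{\rad}(\R^d)$ satisfies $f+B_0f=0$, and set
\[
  \psi\ce(-\Delta)^{-1/2}f .
\]
By the Hardy--Littlewood--Sobolev inequality, $\psi\in L^{2^*}_{\rad}(\R^d)$. Moreover $\psi\in\dot H^1_{\rad}(\R^d)$, since $\|\nabla\psi\|_{L^2}=\|f\|_{L^2}$. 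Applying $(-\Delta)^{1/2}$ to the identity $f=-(-\Delta)^{-1/2}V_0\psi$ gives
\[
  (-\Delta)^{1/2}f=-V_0\psi\in L^{2d/(d+2)}\subset\dot H^{-1},
\]
and hence
\[
  -\Delta\psi+V_0\psi=0\quad\text{in }\dot H^{-1}_{\rad}(\R^d).
\]
Thus $\psi$ is either a zero eigenfunction (if $\psi\in L^2$) or a zero resonance (if $\psi\notin L^2$) of $(L_0)_{\rad}$. Proposition~\ref{prop:noz} excludes both, so $\psi=0$ and therefore $f=0$.

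The one point needing care is the identification $(-\Delta)^{1/2}(-\Delta)^{-1/2}g=g$ for $g=V_0\psi\in L^{2d/(d+2)}$ as elements of $\dot H^{-1}$. I would justify it by duality. For $\chi\in C_c^\infty$, write
\[
  \tbra[(-\Delta)^{-1/2}g,(-\Delta)^{1/2}\chi]=\tbra[g,\chi],
\]
which holds via Fourier transform for Schwartz $g$. It then extends to $g\in L^{2d/(d+2)}$ by density, since both sides are continuous in $g$ by HLS and by Sobolev, respectively.

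\emph{Perturbation.} With $(I+B_0)^{-1}$ bounded, write
\[
  I+B_\omega=(I+B_0)\bigl(I+(I+B_0)^{-1}(B_\omega-B_0)\bigr).
\]
By Lemma~\ref{lem:Kconv}, $\|(I+B_0)^{-1}(B_\omega-B_0)\|<1/2$ for all sufficiently small $\omega$, so a Neumann series shows that $I+B_\omega$ is invertible, with $\|(I+B_\omega)^{-1}\|\le2\|(I+B_0)^{-1}\|$. The resolvent identity
\[
  (I+B_\omega)^{-1}-(I+B_0)^{-1}=(I+B_\omega)^{-1}(B_0-B_\omega)(I+B_0)^{-1}
\]
then yields operator-norm convergence.

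The main obstacle is the kernel step: it is the only place where the spectral input (no zero eigenvalue or resonance, Proposition~\ref{prop:noz}) enters, and it requires the functional-analytic identification of $\ker(I+B_0)$ with $\dot H^1$ solutions of $L_0\psi=0$.
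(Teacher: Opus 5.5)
Your proposal is correct and follows essentially the same route as the paper: compactness of $B_0$ reduces invertibility of $I+B_0$ to triviality of its kernel; a kernel element $f$ yields $\psi=(-\Delta)^{-1/2}f\in\dot H^1_{\rad}(\R^d)$ solving $(L_0)_{\rad}\psi=0$, which Proposition~\ref{prop:noz} rules out; then a Neumann series and the second resolvent identity give invertibility and operator-norm convergence for small $\omega$. Your duality argument identifying $(-\Delta)^{1/2}(-\Delta)^{-1/2}g=g$ in $\dot H^{-1}$ just makes explicit a step that the paper leaves implicit in the factorization $(L_0)_{\rad}=(-\Delta)^{1/2}(I+B_0)(-\Delta)^{1/2}$.
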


\begin{proof}
We first prove that $I+B_0$ is invertible, or equivalently that
$-1\in\rho(B_0)$. Suppose instead that $-1\in\sigma(B_0)$. Since $B_0$ is
compact and $-1\ne0$, we have $-1\in\sigma_p(B_0)$. Hence there exists
$h\in L^2_{\rad}(\R^d)\setminus\{0\}$ such that $(I+B_0)h=0$. Set
\[
  f\ce(-\Delta)^{-1/2}h\in\dot H^1_{\rad}(\R^d),
  \qquad
  h=(-\Delta)^{1/2}f.
\]
Then
\[
  (L_0)_{\rad}f
  =(-\Delta)^{1/2}(I+B_0)(-\Delta)^{1/2}f
  =(-\Delta)^{1/2}(I+B_0)h
  =0.
\]
If $f\in L^2(\R^d)$, then $f$ is a zero eigenfunction;
otherwise, $0$ is a resonance.
Both possibilities contradict Proposition~\ref{prop:noz}.

We next prove invertibility for all sufficiently small $\omega>0$. Set
\[
  C_0\ce\norm[(I+B_0)^{-1}]_{\mathcal{L}(L^2_{\rad})}>0.
\]
By Lemma~\ref{lem:Kconv}, there exists $\delta>0$ such that
\[
  \norm[B_\omega-B_0]_{\mathcal{L}(L^2_{\rad})}
  <\frac{1}{2C_0},
  \qquad 0<\omega<\delta.
\]
Consequently,
\[
  \norm[
    (I+B_0)^{-1}(B_\omega-B_0)
  ]_{\mathcal{L}(L^2_{\rad})}
  \le\frac12.
\]
The Neumann series therefore shows that
$I+(I+B_0)^{-1}(B_\omega-B_0)$ is invertible and
\[
  \norm[
    \bigl(I+(I+B_0)^{-1}(B_\omega-B_0)\bigr)^{-1}
  ]_{\mathcal{L}(L^2_{\rad})}
  \le2.
\]
Since
\[
  I+B_\omega
  =(I+B_0)
    \bigl(I+(I+B_0)^{-1}(B_\omega-B_0)\bigr),
\]
it follows that $I+B_\omega$ is invertible and
\[
  \norm[(I+B_\omega)^{-1}]_{\mathcal{L}(L^2_{\rad})}
  \le2C_0.
\]
Finally, the second resolvent identity gives
\begin{align*}
  &\norm[
    (I+B_\omega)^{-1}-(I+B_0)^{-1}
  ]_{\mathcal{L}(L^2_{\rad})}
  \\
  &\quad=
  \norm[
    (I+B_\omega)^{-1}
    (B_0-B_\omega)
    (I+B_0)^{-1}
  ]_{\mathcal{L}(L^2_{\rad})}
  \\
  &\quad\le
  2C_0^2\norm[B_0-B_\omega]_{\mathcal{L}(L^2_{\rad})}
  \xrightarrow[\omega\downarrow0]{}0.
\end{align*}
This proves the assertion.
\end{proof}
%

By Lemma~\ref{lem:Kinv}, $I+B_0$ is invertible on
$L^2_{\rad}(\R^d)$. Together with the factorization
\[
  (L_0)_{\rad}
  =(-\Delta)^{1/2}(I+B_0)(-\Delta)^{1/2}
  \colon\dot H^1_{\rad}(\R^d)\to\dot H^{-1}_{\rad}(\R^d),
\]
this shows that $(L_0)_{\rad}$ is invertible, with inverse
\[
  (L_0)_{\rad}^{-1}
  =(-\Delta)^{-1/2}(I+B_0)^{-1}(-\Delta)^{-1/2}
  \colon\dot H^{-1}_{\rad}(\R^d)\to\dot H^1_{\rad}(\R^d).
\]

We now use these results to identify the zero-frequency limit
of $M'(\omega)$.
\begin{proof}[Proof of Theorem~\ref{thm:res}]
Part~\textup{(i)} follows from Theorem~\ref{thm:massdiv}
and Lemma~\ref{lem:eta-pair}.
We now prove part~\textup{(ii)}.
The local convergence of $\phi_\omega$, together with
\eqref{eq:phiu}, \eqref{eq:phicrit}, and \eqref{eq:high-d}, gives,
by dominated convergence,
\[
  \phi_\omega\to\phi_0\quad\text{in }L^{2d/(d+2)}(\R^d),
  \quad
  V_\omega\to V_0\quad\text{in }L^{d/2}(\R^d)
  \quad\text{as }\omega\downarrow0.
\]
First, we claim that
\begin{equation}\label{eq:hconv}
  (-\Delta+\omega)^{-1/2}\phi_\omega
  \to
  (-\Delta)^{-1/2}\phi_0
  \quad\text{in }L^2_{\rad}(\R^d).
\end{equation}
Indeed,
\begin{align*}
  &\norm[
    (-\Delta+\omega)^{-1/2}\phi_\omega
    -(-\Delta)^{-1/2}\phi_0
  ]_{L^2}
  \\
  &\quad=
  \norm[
    J_\omega(-\Delta)^{-1/2}\phi_\omega
    -(-\Delta)^{-1/2}\phi_0
  ]_{L^2}
  \\
  &\quad\le
  \norm[
    (-\Delta)^{-1/2}(\phi_\omega-\phi_0)
  ]_{L^2}
  +\norm[
    (J_\omega-I)(-\Delta)^{-1/2}\phi_0
  ]_{L^2}
  \\
  &\quad\lesssim
  \norm[\phi_\omega-\phi_0]_{L^{2d/(d+2)}}
  +\norm[
    (J_\omega-I)(-\Delta)^{-1/2}\phi_0
  ]_{L^2}
  \xrightarrow[\omega\downarrow0]{}0.
\end{align*}

We next justify the identity
\begin{equation}\label{eq:L0inv}
  (L_0)_{\rad}^{-1}(-\phi_0)=\eta_0.
\end{equation}
We first verify $\eta_0\in\dot H^1_{\rad}(\R^d)$. Since
$\eta_0$ is smooth near the origin and tends to zero at infinity,
it suffices to prove that $\nabla\eta_0$ is square integrable at infinity.
If $p\ne p^*$, Lemma~\ref{lem:deta0} gives
\[
  |\eta_0'(r)|\lesssim r^{-\alpha+1}
  \quad\text{as }r\to\infty.
\]
Moreover, $\braket{x}\phi_0\in L^2(\R^d)$ and
$\phi_0(r)\sim r^{-\alpha}$ imply $2\alpha>d+2$. Hence
\[
  \int_2^\infty |\eta_0'(r)|^2r^{d-1}\,dr
  \lesssim
  \int_2^\infty r^{d+1-2\alpha}\,dr
  <\infty.
\]
If $p=p^*$, Lemma~\ref{lem:deta0} yields
\[
  |\eta_0'(r)|
  \lesssim
  r^{-d+3}(\log r)^{-(d-2)/2}
  \quad\text{as }r\to\infty,
\]
and therefore
\[
  \int_2^\infty |\eta_0'(r)|^2r^{d-1}\,dr
  \lesssim
  \int_2^\infty
  r^{-d+5}(\log r)^{-(d-2)}\,dr
  <\infty,
\]
where the last integral is finite for $d\ge6$, including the borderline
case $d=6$ owing to the logarithmic factor. Thus
$\eta_0\in\dot H^1_{\rad}(\R^d)$.

The asymptotics of $\phi_0$, together with \eqref{eq:high-d}, give
\[
  \phi_0\in L^{2d/(d+2)}_{\rad}(\R^d)
  \subset\dot H^{-1}_{\rad}(\R^d).
\]
Since $V_0\in L^{d/2}(\R^d)$, the Sobolev embedding and H\"older's
inequality also yield
\[
  V_0\eta_0\in L^{2d/(d+2)}_{\rad}(\R^d)
  \subset\dot H^{-1}_{\rad}(\R^d).
\]
By the isomorphism
\[
  -\Delta\colon
  \dot H^1_{\rad}(\R^d)
  \to
  \dot H^{-1}_{\rad}(\R^d),
\]
there exists a unique $\zeta\in\dot H^1_{\rad}(\R^d)$ satisfying
\[
  -\Delta\zeta=-\phi_0-V_0\eta_0
  \quad\text{in }\dot H^{-1}_{\rad}(\R^d).
\]
On the other hand, the distributional identity
$L_0\eta_0=-\phi_0$ shows that $\eta_0$ satisfies the same equation.
The uniqueness of $\zeta$ therefore gives $\eta_0=\zeta$. Consequently,
$L_0\eta_0=-\phi_0$ holds in $\dot H^{-1}_{\rad}(\R^d)$, and the
invertibility of $(L_0)_{\rad}$ proves \eqref{eq:L0inv}.

By Lemma~\ref{lem:Kinv} and
\eqref{eq:hconv},
\[
  (I+B_\omega)^{-1}(-\Delta+\omega)^{-1/2}\phi_\omega
  \xrightarrow[\omega\downarrow0]{}
  (I+B_0)^{-1}(-\Delta)^{-1/2}\phi_0
  \quad\text{in }L^2_{\rad}(\R^d).
\]
Using \eqref{eq:Mres} and the factorization of $(L_\omega)_{\rad}$,
we conclude that
\begin{align*}
  M'(\omega)
  &=-\tbra[
    (I+B_\omega)^{-1}(-\Delta+\omega)^{-1/2}\phi_\omega,
    (-\Delta+\omega)^{-1/2}\phi_\omega
  ]
  \\
  &\xrightarrow[\omega\downarrow0]{}
  -\tbra[
    (I+B_0)^{-1}(-\Delta)^{-1/2}\phi_0,
    (-\Delta)^{-1/2}\phi_0
  ]
  \\
  &=\tbra[(L_0)_{\rad}^{-1}(-\phi_0),\phi_0]
  =\tbra[\eta_0,\phi_0].
\end{align*}
This proves \eqref{eq:limM}.
\end{proof}


\section{\texorpdfstring{An $L^2$-scaling criterion}{An L2-scaling criterion}}
\label{sec:scale}

In this section, we prove Theorem~\ref{thm:scale}. 
We first establish strict positivity of the linearized quadratic form
on the orthogonal complement of the negative eigenspace of
$(L_0)_{\rad}$ in a weighted energy space. 
We then apply this positivity to a suitable
linear combination of $\eta_0$ and the scaling derivative of $\phi_0$.

\subsection{Positivity in a weighted energy space}

All function spaces in this section are over $\R$. Define
\[
  \scX
  \ce\bigl\{
    v\in\dot H^1_{\rad}(\R^d;\R):
    \phi_0^{(p-1)/2}v\in L^2(\R^d)
  \bigr\},
\]
with inner product
\[
  (v,w)_\scX
  \ce\int_{\R^d}
    \bigl(\nabla v\cdot\nabla w+\phi_0^{p-1}vw\bigr)\,dx.
\]
Since $\phi_0$ is bounded and $q>p$, we have
$|V_0|\lesssim\phi_0^{p-1}$. Hence, for $v,w\in \scX$, the quadratic-form
pairing
\[
  \tbra[(L_0)_{\rad}v,w]
  \ce\int_{\R^d}
    \bigl(\nabla v\cdot\nabla w+V_0vw\bigr)\,dx
\]
is well defined and continuous on $\scX\times \scX$. Note that the space $H^1_{\rad}(\R^d)$ is dense in $\scX$. 

The nonnegativity on the codimension-one Nehari tangent space in
\eqref{eq:ntan}, together with $\tbra[L_0\phi_0,\phi_0]<0$,
shows that $(L_0)_{\rad}$ has a unique negative eigenvalue,
which is simple.
Let $e_0<0$ denote this eigenvalue, and let
$\chi\in H^2_{\rad}(\R^d;\R)$ be a corresponding eigenfunction.
Since $\chi$ decays exponentially at infinity, the pairing $\tbra[v,\chi]$ is well
defined for every $v\in \scX$.

\begin{lemma}
\label{lem:pos}
If $v\in \scX\setminus\{0\}$ satisfies $\tbra[v,\chi]=0$, then
\[
  \tbra[(L_0)_{\rad}v,v]>0.
\]
\end{lemma}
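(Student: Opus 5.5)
We first establish nonnegativity, $\tbra[(L_0)_{\rad}v,v]\ge0$ for all $v\in\scX$ with $\tbra[v,\chi]=0$, and then upgrade it to strict positivity.

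\textbf{Step 1: nonnegativity on $H^1_{\rad}$.} Since $(L_0)_{\rad}$ has exactly one negative eigenvalue $e_0$, which is simple, the min-max principle applies to the self-adjoint operator $(L_0)_{\rad}$ on $L^2_{\rad}$, with form domain $H^1_{\rad}$. It gives $\tbra[(L_0)_{\rad}v,v]\ge0$ for every $v\in H^1_{\rad}$ with $\rbra[v,\chi]_{L^2}=0$.

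\textbf{Step 2: extension to $\scX$ by density.} Given $v\in\scX$ with $\tbra[v,\chi]=0$, take $v_n\in H^1_{\rad}$ with $v_n\to v$ in $\scX$. Then $\tbra[v_n,\chi]\to0$, because $|\tbra[w,\chi]|\le\|\phi_0^{(p-1)/2}w\|_{L^2}\|\phi_0^{-(p-1)/2}\chi\|_{L^2}$. The second factor is finite since $\chi$ decays exponentially while $\phi_0$ decays only algebraically. Set $\tilde v_n\ce v_n-\tbra[v_n,\chi]\chi/\|\chi\|_{L^2}^2$. Then $\tilde v_n$ is orthogonal to $\chi$ and $\tilde v_n\to v$ in $\scX$. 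Continuity of the form on $\scX\times\scX$ then gives $\tbra[(L_0)_{\rad}v,v]\ge0$.

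\textbf{Step 3: strictness.} Suppose $\tbra[(L_0)_{\rad}v,v]=0$ for some $v\ne0$ with $\tbra[v,\chi]=0$. Then $v$ minimizes the nonnegative quadratic form on the closed subspace $\chi^\perp\cap\scX$. Varying $v+tw$ over $w\in\chi^\perp\cap\scX$ yields $\tbra[(L_0)_{\rad}v,w]=0$ for all such $w$. Testing with $w=\chi$ itself shows $\tbra[(L_0)_{\rad}v,\chi]=e_0\tbra[v,\chi]=0$. It follows that $L_0v=0$ in the distributional sense, as radial distributions. By elliptic regularity $v$ is regular at the origin, so $v=cv_0^+$ (the regular solution of Proposition~\ref{prop:vom0}).

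It remains to show $v_0^+\notin\scX$; then $c=0$, a contradiction. By Lemma~\ref{lem:vw}, $v_0^+\sim-w_0^+$, and the cases are as follows.
\begin{itemize}
\item If $p<p^*$, then $w_0^+\sim r^{\lambda_+}$ with $\lambda_+>0$, so $\nabla v_0^+\notin L^2$: the integrand is $r^{2\lambda_+-2+d-1}$ with $2\lambda_++d-2>0$.
\item If $p>p^*$, then $v_0^+\to c_1\ne0$, so $\phi_0^{p-1}(v_0^+)^2r^{d-1}\sim r^{-(d-2)(p-1)+d-1}$. This fails to be integrable exactly when $(d-2)(p-1)\le d$, i.e.\ $p\le p^*+\dots$; this is not automatic. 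In this regime I would instead use that $\dot H^1_{\rad}$ functions have $|v(r)|\lesssim r^{-(d-2)/2}\|\nabla v\|_{L^2}$ for $d\ge3$, and that $\dot H^1$ functions tend to $0$, which contradicts $v_0^+\to c_1\ne0$. For $d=2$ one has $p^*=\infty$, so this case does not arise.
\item If $p=p^*$, then $v_0^+\sim(\log r)^{d/2}\to\infty$, which again contradicts the decay of $\dot H^1_{\rad}$ functions.
\end{itemize}
In the case $d=2$ with $p<p^*$, the gradient argument above already applies.

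\textbf{Main obstacle.} The delicate point is Step 3. One must justify that the Euler--Lagrange relation holds against all test functions: the orthogonality constraint is handled via $\chi$, and the test functions must be taken in $C_c^\infty$, which lies in $\scX$. One must also exclude the resonance-type solution $v_0^+$ from $\scX$ uniformly in all parameter regimes, including $d=2$, where no Sobolev embedding into $L^{2^*}$ is available and the weighted $L^2$ term or gradient growth must be used instead.
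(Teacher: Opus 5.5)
Your proof is correct and follows essentially the same route as the paper. Both arguments get nonnegativity on $H^1_{\rad}$ from the spectral information, extend it to $\scX$ by density after projecting off $\chi$, and obtain strictness from the first-variation (equivalently, Cauchy--Schwarz) argument together with $\tbra[(L_0)_{\rad}v,\chi]=e_0\tbra[v,\chi]=0$, which reduces to $v=cv_0^+$ with $v_0^+\notin\scX$. The differences are minor: you exclude $v_0^-$ by elliptic regularity rather than by membership in $\scX$, and you spell out the case analysis showing $v_0^+\notin\scX$ (using the decay of $\dot H^1_{\rad}$ functions when $p\ge p^*$), which the paper leaves implicit in its citation of Proposition~\ref{prop:vstar}.
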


\begin{proof}
We first prove the nonnegativity. By the spectral properties of
$(L_0)_{\rad}$,
\[
  \tbra[(L_0)_{\rad}w,w]\ge0
\]
for every $w\in H^1_{\rad}(\R^d)$ satisfying $\tbra[w,\chi]=0$.
By the density of $H^1_{\rad}(\R^d)$ in $\scX$, there exists a sequence
$\{v_n\}\subset H^1_{\rad}(\R^d)$ such that $v_n\to v$ in $\scX$. Set
\[
  w_n
  \ce v_n-\frac{\tbra[v_n,\chi]}{\norm[\chi]_{L^2}^2}\chi
  \in H^1_{\rad}(\R^d).
\]
Then $\tbra[w_n,\chi]=0$ and
$\tbra[(L_0)_{\rad}w_n,w_n]\ge0$. Moreover,
$\tbra[v_n,\chi]\to\tbra[v,\chi]=0$, and hence $w_n\to v$ in $\scX$.
By the continuity of the quadratic form on $\scX$, we obtain
\[
  \tbra[(L_0)_{\rad}v,v]
  =\lim_{n\to\infty}\tbra[(L_0)_{\rad}w_n,w_n]
  \ge0.
\]

It remains to prove strict positivity. Suppose, to the contrary, that
\[
  \tbra[(L_0)_{\rad}v,v]=0.
\]
Set
\[
  \scX_\chi
  \ce\bigl\{w\in \scX:\tbra[w,\chi]=0\bigr\}.
\]
Since the quadratic form of $(L_0)_{\rad}$ is nonnegative on $\scX_\chi$, the
Cauchy--Schwarz inequality for nonnegative quadratic forms gives
\[
  \bigl|\tbra[(L_0)_{\rad}v,w]\bigr|^2
  \le
  \tbra[(L_0)_{\rad}v,v]
  \tbra[(L_0)_{\rad}w,w]
  =0
\]
for every $w\in \scX_\chi$. In addition,
\[
  \tbra[(L_0)_{\rad}v,\chi]
  =\tbra[v,(L_0)_{\rad}\chi]
  =e_0\tbra[v,\chi]
  =0.
\]
Since $\scX=\scX_\chi\oplus\operatorname{span}\{\chi\}$, it follows that
$(L_0)_{\rad}v=0$ in the weak sense.

By Proposition~\ref{prop:vom0}, the radial profile of $v$ is a linear
combination of the fundamental solutions $v_0^+$ and $v_0^-$. The singular
behavior of $v_0^-$ at the origin rules out its contribution because
$v\in \scX$. The regular solution $v_0^+$ coincides, up to normalization, with
the shooting derivative $v_{y^*}$, and
Proposition~\ref{prop:vstar} shows that $v_0^+\notin \scX$.
Consequently, $v=0$, which contradicts the assumption $v\ne0$.
\end{proof}

\subsection{Proof of the scaling criterion}

We now use the positivity established above to prove Theorem~\ref{thm:scale}.

\begin{proof}[Proof of Theorem~\ref{thm:scale}]
Set
\[
  \psi
  \ce\left.\frac{d}{d\lambda}\phi_0^\lambda\right|_{\lambda=1}
  =\left(x\cdot\nabla+\frac d2\right)\phi_0.
\]
The decay estimates in Section~\ref{sec:ellip} imply that $\psi\in \scX\setminus\{0\}$ and
$\psi(r)=O(\phi_0(r))$ as $r\to\infty$. Since $E'(\phi_0)=0$, we have
\[
  \left.\frac{d^2}{d\lambda^2}E(\phi_0^\lambda)\right|_{\lambda=1}
  =\tbra[(L_0)_{\rad}\psi,\psi].
\]
We also have $\eta_0\in \scX$. Indeed, Lemma~\ref{lem:deta0} and
$\braket{x}\phi_0\in L^2(\R^d)$ give
$\eta_0\in\dot H^1_{\rad}(\R^d)$, while
$\eta_0(r)\sim-r^2\phi_0(r)$ as $r\to\infty$. Moreover,
$\braket{x}^2\phi_0^{p-1}$ is bounded, and therefore the assumption
$\braket{x}\phi_0\in L^2(\R^d)$ yields
\[
  \phi_0^{p-1}\eta_0^2
  \lesssim
  \braket{x}^2\phi_0^2
  \quad\text{for large }|x|.
\]
Thus $\phi_0^{(p-1)/2}\eta_0\in L^2(\R^d)$.

The hypothesis and Lemma~\ref{lem:pos} imply that
$\tbra[\psi,\chi]\ne0$. Define
\[
  \beta
  \ce-\frac{\tbra[\eta_0,\chi]}{\tbra[\psi,\chi]}.
\]
Then
\[
  \tbra[\eta_0+\beta\psi,\chi]=0.
\]
Since $\psi(r)=O(\phi_0(r))$ whereas
$\eta_0(r)\sim-r^2\phi_0(r)$, the functions $\psi$ and $\eta_0$ are
linearly independent. Hence $\eta_0+\beta\psi\ne0$, and
Lemma~\ref{lem:pos} gives
\begin{align*}
  0
  &<\tbra[
    (L_0)_{\rad}(\eta_0+\beta\psi),
    \eta_0+\beta\psi
  ]
  \\
  &=\tbra[(L_0)_{\rad}\eta_0,\eta_0]
    +2\beta\tbra[(L_0)_{\rad}\eta_0,\psi]
    +\beta^2\tbra[(L_0)_{\rad}\psi,\psi]
  \\
  &=-\tbra[\phi_0,\eta_0]
    -2\beta\tbra[\phi_0,\psi]
    +\beta^2
      \left.\frac{d^2}{d\lambda^2}E(\phi_0^\lambda)\right|_{\lambda=1}.
\end{align*}
The $L^2$-invariance of the scaling gives
\[
  \tbra[\phi_0,\psi]
  =\left.
    \frac12\frac{d}{d\lambda}
    \norm[\phi_0^\lambda]_{L^2}^2
  \right|_{\lambda=1}
  =0.
\]
Consequently,
\[
  \tbra[\phi_0,\eta_0]
  <\beta^2
    \left.\frac{d^2}{d\lambda^2}E(\phi_0^\lambda)\right|_{\lambda=1}
  \le0.
\]
This completes the proof.
\end{proof}
\begin{remark}
\label{rem:scaling}
The argument above also applies to the positive-frequency
standing waves of \eqref{eq}. In terms of the action $S_\omega$, it yields
\[
  \left.\frac{d^2}{d\lambda^2}
  S_\omega(\phi_\omega^\lambda)\right|_{\lambda=1}
  \le0
  \quad\Longrightarrow\quad
  M'(\omega)=\tbra[\eta_\omega,\phi_\omega]<0,
  \qquad \omega>0.
\]
A corresponding result for nonlinear Schr\"odinger equations with
an attractive inverse-power potential was established in
\cite[Proposition~5.5]{F21}.

By contrast, for the mass-critical pure-power NLS, one has
\[
  \left.\frac{d^2}{d\lambda^2}
  S_\omega(\phi_\omega^\lambda)\right|_{\lambda=1}
  =M'(\omega)=0,
  \qquad \omega>0.
\]
Thus the nonpositive scaling condition does not imply
$M'(\omega)<0$ in general. A common feature of \eqref{eq} and the inverse-power-potential model is the breaking of scaling symmetry. This contrast highlights the role of broken scaling symmetry in
obtaining the strict negativity of $M'(\omega)$ even when the
second derivative along the scaling direction vanishes.
\end{remark}


\appendix
\section{The shooting method}\label{app:shoot}

We prove uniqueness and radial nondegeneracy of the positive radial solution
of \eqref{eq:spomega} under the assumptions
\[
  d\ge2,\quad \omega\ge0,\quad
  1<p<q<\frac{d+2}{(d-2)_+}.
\]
The proof uses McLeod's shooting approach \cite{M93} and follows the main argument
of \cite{F13}; see also \cite{T06, KOPV17, LN20}. To the best of the authors'
knowledge, the literature does not contain a unified proof covering both
uniqueness and nondegeneracy in the present range, including $\omega=0$.

For trajectories $u_y$ with $y\in \mathcal Y_0$, where $\mathcal Y_0$ is defined below,
we use the asymptotic results of
Section~\ref{sec:ellip}. Their proofs require only positivity, monotonicity,
and decay at infinity and therefore apply without change. The same applies
to Lemma~\ref{lem:dphi0}; we also use the derivative estimate in
Remark~\ref{rem:wprime}. When $\omega=0$ and $p\ne p^*$, $\alpha$ and
$\lambda_\pm$ are as in \eqref{eq:alpha} and \eqref{eq:lambdapm}.

For $s\in\R$, set
\[
  g(s)\ce -\omega s-|s|^{p-1}s+|s|^{q-1}s,
  \qquad
  G(s)\ce \int_0^s g(t)\,dt.
\]
On $(0,\infty)$, $g(s)=-\omega s-s^p+s^q$. By Lemma~\ref{lem:F}, it has
a unique positive zero $\beta$, with
\[
  g<0\quad\text{on }(0,\beta),
  \quad
  g>0\quad\text{on }(\beta,\infty),
  \quad
  g'(\beta)>0.
\]
For $y>0$, let $u_y$ solve
\begin{equation*}
\begin{dcases}
  u_y''+\dfrac{d-1}{r}u_y'+g(u_y)=0, \quad r>0,\\
  u_y(0)=y,\quad u_y'(0)=0.
\end{dcases}
\end{equation*}
Define
\begin{align*}
  \mathcal Y_+&\ce \bigl\{y>0:\inf_{r>0}u_y(r)>0\bigr\},\\
  \mathcal Y_0&\ce \bigl\{y>0:\inf_{r>0}u_y(r)=0\bigr\},\\
  \mathcal Y_-&\ce \bigl\{y>0:\inf_{r>0}u_y(r)<0\bigr\}.
\end{align*}
For $y\in \mathcal Y_0\cup \mathcal Y_-$, let $R_y$ be the first zero of $u_y$, with
$R_y=\infty$ for $y\in \mathcal Y_0$. Since the ground-state existence theory
gives $\mathcal Y_0\ne\varnothing$, we may set
\begin{equation}\label{eq:ystar}
  y^*\ce \inf(\mathcal Y_0\cup \mathcal Y_-).
\end{equation}

\begin{theorem}[Uniqueness and radial nondegeneracy]
\label{thm:shoot}
There is exactly one shooting parameter producing a positive solution that
converges to zero at infinity. More precisely,
\[
  \mathcal Y_+=(0,y^*),
  \quad
  \mathcal Y_0=\{y^*\},
  \quad
  \mathcal Y_-=(y^*,\infty).
\]
Consequently, the boundary-value problem
\[
\begin{dcases}
  u''+\dfrac{d-1}{r}u'+g(u)=0\quad\text{for}~ r>0,\\
  u'(0)=0,\quad u(r)\to0 \quad\text{as }r\to\infty,\\
  u(r)>0\quad \text{for}~r\ge0 
\end{dcases}
\]
has exactly one solution. In particular, the positive radial ground state
$\phi_\omega$ is unique and
\[
  \ker(L_\omega)_{\rad}=\{0\}.
\]
\end{theorem}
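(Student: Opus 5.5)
The plan follows McLeod's shooting argument for $u_y$, with the variational derivative $v_y\ce\pt_y u_y$. It solves $v''+\frac{d-1}{r}v'+g'(u_y)v=0$, $v(0)=1$, $v'(0)=0$, and this is precisely the homogeneous equation \eqref{eq:pdeom} linearized about $u_y$.

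\textbf{Step 1: the three sets.} First I record that $\mathcal Y_+$ and $\mathcal Y_-$ are open, by continuous dependence on $y$. For $\mathcal Y_+$ I use the Pohozaev/energy identity
\[
  \tfrac12 u'^2+G(u) \quad\text{nonincreasing},
\]
together with the sign structure of $g$. This shows that a trajectory staying positive either converges to $\beta$ or oscillates around it without touching zero; hence it stays bounded away from $0$ robustly. Next I show that small $y$ lie in $\mathcal Y_+$: for $y<\beta$ near $0$ we have $G(y)<0$, and the energy prevents reaching $0$. So $\mathcal Y_+\supset(0,y^*)$ by the definition \eqref{eq:ystar}, and $y^*\in\mathcal Y_0$ since both other sets are open.

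\textbf{Step 2: the key lemma.} The key lemma is that for every $y\in\mathcal Y_0\cup\mathcal Y_-$ the function $v_y$ has exactly one zero on $(0,R_y)$ and $v_y<0$ near $R_y$. For $y\in\mathcal Y_0$ I want more: $v_y$ does not decay at infinity. It is a nonzero multiple of $w^+$ with negative coefficient, in the form used as Proposition~\ref{prop:vstar} (for $\omega=0$ via the asymptotics of Proposition~\ref{prop:wasym}).

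To prove it, I first get at least one zero by Sturm comparison with $u_y'$ and $ru_y'$, which satisfy nearby linearized equations. Then I rule out a second zero with McLeod's test functions
\[
  \zeta_\mu\ce ru_y'+\mu u_y .
\]
These satisfy
\[
  L\zeta_\mu=-\bigl[2g(u)+\mu(g(u)-g'(u)u)\bigr],
\]
and $\mu$ is tuned so that the right-hand side changes sign exactly once. This is the $I$-function one-sign-change property (Lemma~\ref{lem:Isign}), which uses the sign pattern of the double-power nonlinearity.

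Wronskian identities between $v_y$ and $\zeta_\mu$ on the intervals between consecutive zeros then yield a contradiction if $v_y$ has a second zero. On $(R,\infty)$, in the $\mathcal Y_0$ case, the boundary terms are controlled by the sharp decay of $u_y$ and $u_y'$ from Proposition~\ref{prop:V} and Lemma~\ref{lem:dphi0}. Here the logarithmic regime $p=p^*$ at $\omega=0$ is delicate.

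\textbf{Step 3: monotonicity of the first zero.} Next I show that $y\mapsto R_y$ is strictly decreasing on $\mathcal Y_-$. Differentiating $u_y(R_y)=0$ gives $R_y'=-v_y(R_y)/u_y'(R_y)$, which is negative since $v_y(R_y)<0$ and $u_y'(R_y)<0$. Near $y^*$ the same sign information shows that every $y$ slightly above a point of $\mathcal Y_0$ lies in $\mathcal Y_-$, and every $y$ slightly below lies in $\mathcal Y_+$ (the trajectory is pushed up, since $v_y<0$ at large $r$). Together with openness and connectedness of $(0,\infty)$, this forces
\[
  \mathcal Y_0=\{y^*\},\qquad \mathcal Y_-=(y^*,\infty).
\]
Uniqueness of the boundary-value problem is then immediate, since any such solution is $u_y$ with $y\in\mathcal Y_0$.

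\textbf{Step 4: nondegeneracy.} Any $w\in\ker(L_\omega)_{\rad}$ is regular at $0$, hence a multiple of $v_{y^*}$. By Step 2, $v_{y^*}$ has a nonzero growing, or non-decaying, component at infinity. This is incompatible with $w\in X_\omega$: for $\omega>0$, exponential growth excludes it; for $\omega=0$, the non-membership of $w^+$ in $\dot H^1\cap L^{p+1}$ excludes it.

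\textbf{Main obstacle.} The hardest step is Step 2. It requires choosing $\mu$ and proving the single sign change of the $I$-function uniformly in $\omega\ge0$, and then controlling the Wronskian boundary terms at infinity for $\omega=0$ using the precise asymptotics. My first attempt will follow Kwong/McLeod's choice of $\mu$ as the value equalizing the zero of the $I$-function with the zero of $v_y$.
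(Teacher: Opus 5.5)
\textbf{There is a genuine gap in your Step~2.} The Wronskian with a single test function $f=u_y+\frac{\lambda-1}{2}ru_y'$ (your $\zeta_\mu$ up to a factor) cannot by itself exclude a second zero of $v_y$. Suppose $v_y$ had zeros $r_1<r_2$, and tune $\lambda$ so that $I_\lambda(u_y(r_1))=0$. Then $v_yI_\lambda(u_y)<0$ on $(0,r_2)\setminus\{r_1\}$, so $r^{d-1}(v_yf'-fv_y')$ decreases strictly from $0$. Evaluating at $r_1$ and $r_2$, where $v_y'(r_1)<0<v_y'(r_2)$, gives only $f(r_1)<0<f(r_2)$. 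That is, $f$ changes sign twice, which is no contradiction unless you know that $-ru_y'/u_y$ crosses each level only once, and your plan does not supply that. Tuning at $r_2$ instead produces integrands of opposite signs on the two nodal intervals.

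The single-sign-change Wronskian argument works only once the zero structure of $v_y$ on the whole interval is already known. That is exactly how the paper uses it:
\begin{enumerate}
\item at least one zero for $y\in\mathcal Y_-$, with $\lambda$ tuned at $y=u_y(0)$;
\item $v_y(R_y)<0$, given exactly one zero;
\item $c_+<0$, given exactly one zero.
\end{enumerate}
Two smaller points. Tuning at a zero $r_*$ of $v$ first requires $u(r_*)>\beta$, because Lemma~\ref{lem:Isign} only applies for $\tau>\beta$; the paper gets this from $(r^{2d-2}u'v')'=-r^{2d-2}(g(u)v)'$, which your plan omits. Also, comparison with $u_y'$ yields a zero only for $y\in\mathcal Y_0$, not for crossing solutions.

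\textbf{The missing idea is continuation in $y$, with an anchor.} Zeros of $v_y$ in $(0,R_y)$ are simple, so their number can change only through $R_y$, and $v_y(R_y)<0$ excludes that. The paper anchors at $y^*$ without any Wronskian argument. It uses $v_{y^*}=\lim_{y\uparrow y^*}(u_{y^*}-u_y)/(y^*-y)$, together with the fact that $h_y=u_{y^*}-u_y$ has exactly one zero for every $y\in[\beta,y^*)$. That fact is itself proved by continuation from $y=\beta$, where $u_\beta\equiv\beta$ and $u_{y^*}$ is strictly decreasing. Zeros cannot escape to infinity, because $u_y$ ($y\in\mathcal Y_+$) stays uniformly away from $0$ while $u_{y^*}\to0$. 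From this anchor, Lemma~\ref{lem:right}, a tail comparison in the region where $g'(u)<0$, gives the one-zero property just above $y^*$. Proposition~\ref{prop:Sneg} then propagates it to all of $(y^*,\infty)$.

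For the same reason, do not aim at statements about every point of $\mathcal Y_0$, as in your Step~3 claim that parameters slightly below any such point lie in $\mathcal Y_+$. The anchoring works only at the first transition $y^*$. The conclusion $\mathcal Y_0=\{y^*\}$ then follows from $(y^*,\infty)\subset\mathcal Y_-$, Lemma~\ref{lem:basic}\,(v), and the monotonicity of $R_y$.
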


\begin{remark}
The shooting analysis in \cite{M93} for $\Delta u+f(u)=0$
is carried out under the assumption $f'(0)<0$.
In our setting, this corresponds to $g'(0)<0$, or equivalently
$\omega>0$.
That condition yields exponential decay and simplifies the boundary
and oscillation arguments.
At zero frequency, additional algebraic or logarithmic
decay estimates must instead be supplied explicitly, and this is
precisely where extra care is needed in our argument.
\end{remark}

The remainder of this appendix is devoted to the proof of
Theorem~\ref{thm:shoot}.

\subsection{Basic shooting properties}

We first collect the basic properties of the shooting trajectories and of the
sets $\mathcal Y_+$, $\mathcal Y_0$, and $\mathcal Y_-$. These properties locate the first transition
parameter $y^*$ and show that it produces a positive solution decaying at
infinity.

We introduce the local energy associated with the equation for $u_y$:
\begin{equation*}
  H_y(r)\ce \frac12 u_y'(r)^2+G(u_y(r)),\qquad r\ge0.
\end{equation*}
Differentiating and using the equation, we obtain
\begin{equation*}
  H_y'(r)=-\frac{d-1}{r}u_y'(r)^2\le0,
  \qquad r>0.
\end{equation*}
This monotonicity is the key ingredient in the proof of the basic
properties of $\mathcal Y_+$, $\mathcal Y_0$, and $\mathcal Y_-$
stated in Lemma~\ref{lem:basic} below. Since $H_y(0)=G(y)$, it also
gives
\begin{equation*}
  G(u_y(r))\le H_y(r)\le G(y),
  \qquad r\ge0.
\end{equation*}

\begin{lemma}\label{lem:basic}
The following statements hold.
\begin{enumerate}
\item The sets $\mathcal Y_+$, $\mathcal Y_0$, and $\mathcal Y_-$ form a disjoint partition of
  $(0,\infty)$, and
  \begin{align*}
    \mathcal Y_0
      &=\{y>0:u_y(r)>0\text{ for all }r>0
          \text{ and }u_y(r)\to0\text{ as }r\to\infty\},\\
    \mathcal Y_-
      &=\{y>0:R_y<\infty\}.
  \end{align*}
\item $(0,\beta]\subset \mathcal Y_+$.
\item If $y\in \mathcal Y_0\cup \mathcal Y_-$, then $y>\beta$ and
  \[
    u_y'(r)<0\qquad (0<r\le R_y).
  \]
\item The set $\mathcal Y_-$ is open, and $y\mapsto R_y$ is continuous on
  $\mathcal Y_-$.
\item If $y_n\in \mathcal Y_-$ and $y_n\to y\in \mathcal Y_0$, then
  $R_{y_n}\to\infty$.
\item For every $y\in \mathcal Y_+$, there are $\delta,m>0$ such that
  \[
    \inf_{r\ge0}u_z(r)\ge m
    \quad\text{whenever }z>0\text{ and }|z-y|<\delta.
  \]
  In particular, $\mathcal Y_+$ is open.
\end{enumerate}
\end{lemma}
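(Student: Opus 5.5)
The plan is to prove the six items in order, using the local energy $H_y$, its monotonicity, and continuous dependence of $u_y$ on $y$ on compact $r$-intervals.

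\emph{Item (i).} Disjointness and covering are immediate from the trichotomy on $\inf_{r>0}u_y$. If $y\in\mathcal Y_-$, then $u_y$ takes a negative value, so by the intermediate value theorem $R_y<\infty$. Conversely, $R_y<\infty$ means $u_y(R_y)=0$. Then $u_y'(R_y)\ne0$, since otherwise uniqueness for the ODE (with $g(0)=0$) would force $u_y\equiv0$. So $u_y$ becomes negative just after $R_y$.

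For $y\in\mathcal Y_0$, the solution stays positive with infimum $0$. I must show $u_y(r)\to0$. The argument:
\begin{itemize}
\item $H_y$ is nonincreasing and bounded below, because $G$ is bounded below on bounded sets and $u_y$ is bounded by the energy bound $G(u_y)\le G(y)$ together with superlinear growth of $G$. Hence $H_y$ has a limit.
\item $u_y$ is eventually monotone. At a local minimum $u_y''\ge0$ and $u_y'=0$ force $g(u_y)\le0$, i.e. $u_y\le\beta$. At a local maximum, $g(u_y)\ge0$, i.e. $u_y\ge\beta$. So infinitely many oscillations would produce local minima with values bounded below, using the energy control and the fact that $H_y$ decreases. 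This contradicts $\inf=0$ unless the minima tend to $0$. Since the minima never reach $0$ at a critical point, a standard argument gives eventual monotonicity.
\item Consequently $u_y\to\ell$, and $\ell=\inf u_y=0$.
\end{itemize}

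\emph{Item (ii).} For $y\le\beta$ we have $G(y)<0$ (or $y=\beta$ is a constant solution). Since $G<0$ on $(0,\beta]$ and $G(0)=0$, the bound $G(u_y(r))\le G(y)<0$ keeps $u_y$ away from $0$. If $G(y)\le G(s)$ only for $s$ in a range bounded below, we get $\inf u_y>0$; for $y=\beta$ the solution is constant.

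\emph{Item (iii).} Near $r=0$ we have $u_y''(0)=-g(y)/d$. If $y<\beta$ this is positive, which contradicts (ii). So $y>\beta$ and $u_y'<0$ initially. Suppose $u_y'$ had a first zero $r_1\le R_y$ with $u_y(r_1)>0$. There $u_y''(r_1)\ge0$, hence $u_y(r_1)\le\beta$, hence $G(u_y(r_1))<0$. After $r_1$, $H_y<0$ for all later $r$, which prevents reaching $u=0$ (where $H\ge0$) and also prevents decay to $0$ (where $H\to0$ or above). This contradicts $y\in\mathcal Y_0\cup\mathcal Y_-$.

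\emph{Items (iv) and (v).} For (iv), the crossing at $R_y$ is transversal ($u_y'(R_y)<0$), so the implicit function theorem together with continuous dependence shows that nearby $z$ cross near $R_y$. This gives openness and continuity of $y\mapsto R_y$. For (v), on every compact interval $[0,R]$ we have $u_{y_n}\to u_y>0$ uniformly, so eventually $R_{y_n}>R$.

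\emph{Item (vi).} This is the main obstacle, because trajectories of $\mathcal Y_+$ may oscillate forever. For $y\in\mathcal Y_+$, I first show $u_y\to\beta$ or $H_y$ eventually becomes negative with margin. Since $\inf u_y>0$ and $H_y$ has a limit, the damping $\int (d-1)r^{-1}u_y'^2<\infty$ forces convergence to an equilibrium, namely $\beta$. Hence there is $r_0$ with $H_y(r_0)<0$ (near $\beta$, $G(\beta)<0$).

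By continuous dependence, $H_z(r_0)<\tfrac12 H_y(r_0)$ for $|z-y|<\delta$, and $u_z>m_1>0$ on $[0,r_0]$ for such $z$. For $r\ge r_0$, monotonicity gives $G(u_z(r))\le H_z(r)<\tfrac12 H_y(r_0)<0$. This keeps $u_z$ above the positive root of $G=\tfrac12 H_y(r_0)$ near $0$, which yields a uniform $m$.
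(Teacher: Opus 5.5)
Your proposal is correct and takes the same route as the paper. The paper gives no proof of its own, deferring to \cite[Lemma~7.1]{LN20} and naming the monotonicity of $H_y$ as the key ingredient; that monotonicity, together with continuous dependence on $y$, is exactly what you use.

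Two justifications should be tightened, although the claims themselves are true. In (i), oscillation is impossible because the local minimum values $m_k<\beta$ are nondecreasing: $G(m_k)=H_y(r_k)$ is nonincreasing and $G$ decreases on $(0,\beta)$. In (vi), $\int^\infty r^{-1}(u_y')^2\,dr<\infty$ does not by itself force $u_y\to\beta$. You only need some $r_0$ with $H_y(r_0)<0$, and this follows directly in each case:
\begin{enumerate}
\item if $y\le\beta$, from $H_y(0)=G(y)<0$;
\item if $y>\beta$ and $u_y$ has a critical point, from $H_y(r_1)=G(u_y(r_1))<0$ at the first one, $r_1$, where $0<u_y(r_1)<\beta$;
\item if $y>\beta$ and $u_y$ is strictly decreasing, from $u_y\to\beta$ and $u_y'\to0$, which give $H_y\to G(\beta)<0$.
\end{enumerate}
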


\begin{proof}
See \cite[Lemma~7.1]{LN20}.
\end{proof}

\begin{proposition}\label{prop:first}
One has
\[
  y^*>\beta,
  \quad
  (0,y^*)\subset \mathcal Y_+,
  \quad
  y^*\in \mathcal Y_0.
\]
Thus $u_{y^*}$ is positive and tends to zero at infinity.
\end{proposition}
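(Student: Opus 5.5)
The plan is to use Lemma~\ref{lem:basic} together with the definition \eqref{eq:ystar} of $y^*$ as an infimum.

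\emph{Step 1: $y^*>\beta$ and $(0,y^*)\subset\mathcal Y_+$.} By Lemma~\ref{lem:basic}\textup{(iii)}, every element of $\mathcal Y_0\cup\mathcal Y_-$ exceeds $\beta$, so $y^*\ge\beta$. Since $\beta\in\mathcal Y_+$ by \textup{(ii)} and $\mathcal Y_+$ is open by \textup{(vi)}, a neighbourhood $(\beta-\delta,\beta+\delta)$ lies in $\mathcal Y_+$. This forces $y^*\ge\beta+\delta>\beta$. For $0<y<y^*$, the definition of infimum gives $y\notin\mathcal Y_0\cup\mathcal Y_-$, and the partition in \textup{(i)} then yields $y\in\mathcal Y_+$.

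\emph{Step 2: $y^*\notin\mathcal Y_+$.} Suppose $y^*\in\mathcal Y_+$. Then \textup{(vi)} gives $\delta>0$ with $(y^*-\delta,y^*+\delta)\subset\mathcal Y_+$. Combined with Step~1, this gives $(0,y^*+\delta)\subset\mathcal Y_+$, so $\inf(\mathcal Y_0\cup\mathcal Y_-)\ge y^*+\delta$, which is a contradiction.

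\emph{Step 3: $y^*\notin\mathcal Y_-$.} Suppose $y^*\in\mathcal Y_-$. Since $\mathcal Y_-$ is open by \textup{(iv)}, some $y<y^*$ would lie in $\mathcal Y_-$, contradicting Step~1. Hence $y^*\in\mathcal Y_0$ by \textup{(i)}. The characterization of $\mathcal Y_0$ in \textup{(i)} then shows that $u_{y^*}>0$ and $u_{y^*}(r)\to0$. The infimum is positive and finite, because $\mathcal Y_0\ne\varnothing$ and $y^*\ge\beta>0$.

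No step is a genuine obstacle; all the analytic content is contained in Lemma~\ref{lem:basic}. The only care needed is to use the openness of both $\mathcal Y_+$ and $\mathcal Y_-$, and to rule out the degenerate possibility $y^*=\beta$ via openness of $\mathcal Y_+$ at $\beta$.
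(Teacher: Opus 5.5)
Your proposal is correct and follows essentially the same argument as the paper: you get $y^*>\beta$ from (ii) and the openness of $\mathcal Y_+$, deduce $(0,y^*)\subset\mathcal Y_+$ from the partition, and exclude $y^*\in\mathcal Y_+\cup\mathcal Y_-$ by openness of both sets. Your appeal to \textup{(iii)} for $y^*\ge\beta$ is harmless but redundant, since \textup{(ii)} already gives $(0,\beta]\subset\mathcal Y_+$.
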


\begin{proof}
Lemma~\ref{lem:basic}\,\textup{(ii)}, together with the openness of
$\mathcal Y_+$ in part~\textup{(vi)}, gives $(0,\beta]\subset \mathcal Y_+$ and $y^*>\beta$,
and its partition statement gives $(0,y^*)\subset \mathcal Y_+$. Openness of
$\mathcal Y_+$ and $\mathcal Y_-$, together with \eqref{eq:ystar}, excludes
$y^*\in \mathcal Y_+\cup \mathcal Y_-$; hence $y^*\in \mathcal Y_0$.
\end{proof}

\subsection{\texorpdfstring{The $I$-function for the double-power nonlinearity}{The I-function for the double-power nonlinearity}}

We introduce the $I$-function
\begin{equation*}
  I_\lambda(s)\ce s g'(s)-\lambda g(s)
  =(\lambda-1)\omega s+(\lambda-p)s^p+(q-\lambda)s^q,
  \qquad \lambda\in\R.
\end{equation*}
This function arises naturally from the Wronskian identities in
Lemma~\ref{lem:W}. The one-sign-change property of $I_\lambda$, whose proof
relies on the structure of the nonlinearity, is a key ingredient in the
shooting argument.

For the proof of the next lemma, we also set
\[
  F(s)\ce \frac{s g'(s)}{g(s)}
  \qquad (s>0,\ g(s)\ne0).
\]

\begin{lemma}\label{lem:F}
There exists a unique $\beta>0$ such that $g(\beta)=0$, and
\[
  g(s)<0\quad(0<s<\beta),
  \quad
  g(s)>0\quad(s>\beta),
  \quad
  g'(\beta)>0.
\]
Moreover, $F(s)<q$ on $(0,\beta)$, while $F$ is strictly decreasing
on $(\beta,\infty)$ and maps this interval onto $(q,\infty)$.
\end{lemma}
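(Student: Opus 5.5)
Write $g(s)=s\,h(s)$ on $(0,\infty)$ with $h(s)\ce-\omega-s^{p-1}+s^{q-1}$. This reduces the first assertion to a one-variable monotonicity argument. For the second, we rewrite $F-q$ as a quotient in which the numerator is the positive part of $I_q$ and the denominator is increasing.

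\emph{Zero of $g$.} Since
\[
h'(s)=s^{p-2}\bigl((q-1)s^{q-p}-(p-1)\bigr),
\]
$h$ is strictly decreasing on $(0,s_0)$ and strictly increasing on $(s_0,\infty)$, where $s_0^{q-p}=(p-1)/(q-1)$. We have $h(0^+)=-\omega\le0$, so $h<0$ on $(0,s_0]$, and $h(s)\to\infty$ as $s\to\infty$. Hence $h$ has exactly one zero $\beta>s_0$. Moreover $h<0$ on $(0,\beta)$ and $h>0$ on $(\beta,\infty)$, which gives the sign pattern of $g$. Since $\beta>s_0$, we have $h'(\beta)>0$, and therefore $g'(\beta)=h(\beta)+\beta h'(\beta)=\beta h'(\beta)>0$.

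\emph{The comparison of $F$ with $q$.} We use the identity $sg'(s)-qg(s)=I_q(s)$, where
\[
I_q(s)=(q-1)\omega s+(q-p)s^p>0\qquad(s>0).
\]
\begin{enumerate}
\item On $(0,\beta)$ we have $g<0$, and $sg'>qg$ because $I_q>0$. Dividing by $g$ reverses the inequality and gives $F<q$.
\item On $(\beta,\infty)$ we have $g>0$, and the same division gives $F>q$.
\end{enumerate}

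\emph{Monotonicity and range on $(\beta,\infty)$.} Write $F-q=I_q/g$ and divide numerator and denominator by $s^p$:
\[
F(s)-q=\frac{A(s)}{B(s)},\qquad A(s)\ce(q-1)\omega s^{1-p}+(q-p),\qquad B(s)\ce-\omega s^{1-p}-1+s^{q-p}.
\]
The numerator $A$ is positive and nonincreasing (it is constant when $\omega=0$). The denominator $B=s^{-p}g$ is positive on $(\beta,\infty)$, and it is strictly increasing there because each of its terms is nondecreasing and $s^{q-p}$ is strictly increasing. Hence $A/B$ is strictly decreasing on $(\beta,\infty)$.

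For the endpoints:
\begin{enumerate}
\item As $s\downarrow\beta$, we have $B\to0^+$ while $A$ stays bounded below by $q-p>0$, so $F\to+\infty$.
\item As $s\to\infty$, we have $A\to q-p$ and $B\to\infty$, so $F\to q$.
\end{enumerate}
Since $F$ is continuous and strictly decreasing on $(\beta,\infty)$, it maps this interval onto $(q,\infty)$.

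The only delicate point is the case $\omega=0$. There $h(0^+)=0$, but $h$ is still strictly negative near $0$ and hence on $(0,\beta)$, and $A$ is merely constant rather than strictly decreasing. The strictness of the monotonicity of $F$ then comes entirely from $B$, so the argument goes through unchanged.
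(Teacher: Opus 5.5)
Your proof is correct and follows essentially the same route as the paper: both write $F-q=(sg'-qg)/g=I_q/g$ as a positive nonincreasing numerator over a positive strictly increasing denominator. The paper differs only in normalizing once by $s^{q}$, setting $\Psi(s)=s^{-q}g(s)=1-s^{p-q}-\omega s^{1-q}$, which is strictly increasing on all of $(0,\infty)$. This yields the zero $\beta$, the signs of $g$, and $g'(\beta)=\beta^q\Psi'(\beta)>0$ without locating the minimum of $g/s$ as you do, and gives $F-q=s\Psi'/\Psi$ with $s\Psi'$ and $1/\Psi$ both positive and strictly decreasing on $(\beta,\infty)$.
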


\begin{proof}
Let
\[
  \Psi(s)\ce s^{-q}g(s)=1-s^{p-q}-\omega s^{1-q}.
\]
Since
\[
  s\Psi'(s)
  =(q-p)s^{p-q}+(q-1)\omega s^{1-q}>0,
\]
$\Psi$ increases strictly from $-\infty$ to $1$. It therefore has a
unique zero $\beta$, which gives the stated signs of $g=s^q\Psi$, and
$g'(\beta)=\beta^q\Psi'(\beta)>0$.

For $s\ne\beta$,
\[
  F(s)-q
  =\frac{s g'(s)-qg(s)}{g(s)}
  =\frac{s\Psi'(s)}{\Psi(s)}.
\]
This is negative on $(0,\beta)$. On $(\beta,\infty)$, both
$1/\Psi(s)$ and $s\Psi'(s)$ are positive and strictly decreasing, so
$F-q$ is strictly decreasing. Finally,
\[
  \lim_{s\downarrow\beta}F(s)=\infty,
  \qquad
  \lim_{s\to\infty}F(s)=q.
  \qedhere
\]
\end{proof}

\begin{lemma}\label{lem:Isign}
For every $\tau>\beta$, there is a unique $\lambda>q$ such that
$I_\lambda(\tau)=0$. For this $\lambda$,
\begin{equation}
\label{eq:Isign}
  I_\lambda(s)>0\quad(0<s<\tau),
  \quad
  I_\lambda(s)<0\quad(s>\tau).
\end{equation}
\end{lemma}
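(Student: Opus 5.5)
The plan is to reduce everything to the function $F$ from Lemma~\ref{lem:F}, through the factorization
\[
  I_\lambda(s)=s g'(s)-\lambda g(s)=g(s)\bigl(F(s)-\lambda\bigr),
  \qquad s>0,\ s\ne\beta .
\]
Each sign statement then becomes a comparison of $F(s)$ with $\lambda$, together with the known sign of $g$ on $(0,\beta)$ and on $(\beta,\infty)$.

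\emph{Existence and uniqueness of $\lambda$.} Fix $\tau>\beta$. Since $g(\tau)>0$, the condition $I_\lambda(\tau)=0$ is equivalent to $F(\tau)=\lambda$. So the unique root is $\lambda=F(\tau)$. Lemma~\ref{lem:F} says that $F$ maps $(\beta,\infty)$ onto $(q,\infty)$, which gives $\lambda>q$. (In fact $\lambda=F(\tau)$ is the unique real $\lambda$ with $I_\lambda(\tau)=0$, since $I_\lambda(\tau)$ is affine in $\lambda$ with slope $-g(\tau)\ne0$.)

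\emph{Sign on $(\beta,\infty)$.} Here $g>0$, and $F$ is strictly decreasing by Lemma~\ref{lem:F}. Hence $F(s)>F(\tau)=\lambda$ for $\beta<s<\tau$, and $F(s)<\lambda$ for $s>\tau$. This gives $I_\lambda>0$ on $(\beta,\tau)$ and $I_\lambda<0$ on $(\tau,\infty)$.

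\emph{Sign on $(0,\beta]$.} At $s=\beta$ the factorization is unavailable, but directly $I_\lambda(\beta)=\beta g'(\beta)>0$ by Lemma~\ref{lem:F}. On $(0,\beta)$ we have $g<0$ and $F(s)<q<\lambda$. Thus $F(s)-\lambda<0$, and the product $g(s)(F(s)-\lambda)$ is positive. Combining the three ranges of $s$ yields \eqref{eq:Isign}.

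There is no real obstacle here; the substantive work is already in Lemma~\ref{lem:F}, namely the monotonicity of $F$ on $(\beta,\infty)$ and the bound $F<q$ below $\beta$. The only point needing care is to treat $s=\beta$ separately, since $F$ is undefined there.
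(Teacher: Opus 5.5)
Your proof is correct and follows the paper's argument essentially step for step. You take $\lambda=F(\tau)>q$ from Lemma~\ref{lem:F}, use the factorization $I_\lambda=g(F-\lambda)$ away from $\beta$, and treat $s=\beta$ separately via $I_\lambda(\beta)=\beta g'(\beta)>0$. Your remark that $\lambda=F(\tau)$ is unique among all real $\lambda$, because $I_\lambda(\tau)$ is affine in $\lambda$ with slope $-g(\tau)\neq0$, is a small, correct sharpening of the stated uniqueness.
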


\begin{proof}
Lemma~\ref{lem:F} gives the unique choice $\lambda=F(\tau)>q$, and
$I_\lambda=g(F-\lambda)$ away from $\beta$. For $0<s<\beta$,
$g(s)<0$ and $F(s)<q<\lambda$, whereas
$I_\lambda(\beta)=\beta g'(\beta)>0$. On $(\beta,\infty)$, the strict
decrease of $F$ gives the asserted sign change at $s=\tau$.
\end{proof}
\begin{remark}
\label{multi-pw}
The same argument applies to
\[
  g(s)=-\omega s+\sum_{k=1}^N a_ks^{p_k},
  \qquad N\ge2,
\]
provided
\[
  \omega\ge0,
  \quad
  1<p_1<\cdots<p_N,
  \quad
  a_k<0~(1\le k<N),
  \quad
  a_N>0.
\]
Indeed, for $\Psi(s)=s^{-p_N}g(s)$,
\[
  s\Psi'(s)
  =\omega(p_N-1)s^{1-p_N}
   +\sum_{k=1}^{N-1}a_k(p_k-p_N)s^{p_k-p_N}>0,
\]
and both $1/\Psi$ and $s\Psi'$ are positive and strictly decreasing
after the unique positive zero of $g$. Thus Lemmas~\ref{lem:F} and~\ref{lem:Isign} remain valid with $q$ replaced by $p_N$. 
\end{remark}

For $y>0$, define
\[
  \mathcal L_y f
  \ce f''+\frac{d-1}{r}f'+g'(u_y)f.
\]
The shooting derivative $v_y\ce \pt_yu_y$ is the regular solution of
\begin{equation*}
\begin{dcases}
  \mathcal L_y v_y=0, \quad r>0,\\
  v_y(0)=1,\enspace v_y'(0)=0.
\end{dcases}
\end{equation*}

A direct calculation gives the following result.
\begin{lemma}[Wronskian identities]\label{lem:W}
For every $C^2$ function $f$ on $(0,\infty)$,
\begin{equation}\label{eq:W}
  \bigl(r^{d-1}(v_yf'-fv_y')\bigr)'
  =r^{d-1}v_y\mathcal L_y f.
\end{equation}
Moreover,
\begin{align}
  \mathcal L_y u_y&=u_yg'(u_y)-g(u_y)=I_1(u_y),
    \notag\\
  \mathcal L_y u_y'&=\frac{d-1}{r^2}u_y',
    \label{eq:Luprime}\\
  \mathcal L_y(ru_y')&=-2g(u_y),
    \notag\\
  \mathcal L_y(u_y+cru_y')&=I_{1+2c}(u_y)
  \qquad(c\in\R).
    \label{eq:Lf}
\end{align}
\end{lemma}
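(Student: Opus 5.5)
The statement to prove is Lemma~\ref{lem:W}. The plan is a direct computation; the only input is the ODE $u_y''+\frac{d-1}{r}u_y'+g(u_y)=0$ together with its derivative in $r$.

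\emph{Wronskian identity \eqref{eq:W}.} Expand the left-hand side:
\[
  \bigl(r^{d-1}(v_yf'-fv_y')\bigr)'
  =r^{d-1}\Bigl(v_yf''-fv_y''+\frac{d-1}{r}(v_yf'-fv_y')\Bigr),
\]
since the terms $v_y'f'-f'v_y'$ cancel. Add and subtract $g'(u_y)v_yf$ inside the bracket. The result is
\[
  r^{d-1}\bigl(v_y\mathcal L_yf-f\mathcal L_yv_y\bigr).
\]
Because $\mathcal L_yv_y=0$, this gives \eqref{eq:W}.

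\emph{Identities for $\mathcal L_y$.}
\begin{enumerate}
\item For $\mathcal L_yu_y$, use $u_y''+\frac{d-1}{r}u_y'=-g(u_y)$. This gives $\mathcal L_yu_y=-g(u_y)+g'(u_y)u_y=I_1(u_y)$.
\item For \eqref{eq:Luprime}, differentiate the ODE in $r$:
\[
  u_y'''+\frac{d-1}{r}u_y''-\frac{d-1}{r^2}u_y'+g'(u_y)u_y'=0.
\]
Hence $\mathcal L_yu_y'=\frac{d-1}{r^2}u_y'$.
\item For $\mathcal L_y(ru_y')$, compute $(ru_y')'=u_y'+ru_y''$ and $(ru_y')''=2u_y''+ru_y'''$. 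Then
\[
  \mathcal L_y(ru_y')=2u_y''+ru_y'''+\frac{d-1}{r}u_y'+(d-1)u_y''+g'(u_y)ru_y'.
\]
Multiplying the differentiated ODE by $r$ gives
\[
  r\Bigl(u_y'''+\frac{d-1}{r}u_y''+g'(u_y)u_y'\Bigr)=\frac{d-1}{r}u_y'.
\]
Substituting, $\mathcal L_y(ru_y')=2u_y''+\frac{2(d-1)}{r}u_y'=-2g(u_y)$.
\item For \eqref{eq:Lf}, use linearity of $\mathcal L_y$:
\[
  \mathcal L_y(u_y+cru_y')=u_yg'(u_y)-g(u_y)-2cg(u_y)=u_yg'(u_y)-(1+2c)g(u_y)=I_{1+2c}(u_y).
\]
\end{enumerate}

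\emph{Regularity.} All manipulations are valid for $r>0$: $u_y$ is $C^3$ there because $g\in C^1$ and the coefficients of the ODE are smooth away from the origin. There is no real obstacle. The only step needing care is the bookkeeping of the $\frac{d-1}{r^2}$ term in the third derivative, which is exactly what the $\mathcal L_y(ru_y')$ computation cancels.
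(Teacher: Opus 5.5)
Your proposal is correct and follows the paper's approach: the paper just says the lemma follows by direct calculation, and your computation is exactly that. You expand the Wronskian derivative and use $\mathcal L_y v_y=0$, differentiate the profile ODE once in $r$ to get the $u_y'$ and $ru_y'$ identities, and combine by linearity, all valid on $(0,\infty)$ since $g\in C^1$ makes $u_y\in C^3$ there.
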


\subsection{Analysis at the first transition parameter}

We next analyze the shooting derivative at the first transition parameter
$y^*$. We first determine its zero structure and then show that its
non-decaying component at infinity has a strictly negative coefficient. 
These properties will be used to extend the shooting argument to $\mathcal Y_-$.

\begin{lemma}\label{lem:vzero}
For every $y\in \mathcal Y_0$, the function $v_y$ has at least one zero in
$(0,\infty)$.
\end{lemma}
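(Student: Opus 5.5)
We argue by contradiction, following McLeod's approach and using the identities of Lemma~\ref{lem:W}. Fix $y\in\mathcal Y_0$ and suppose that $v_y>0$ on $[0,\infty)$; recall $v_y(0)=1$. By Lemma~\ref{lem:basic}\,\textup{(iii)}, $u_y'<0$ on $(0,\infty)$. We take $f=u_y'$ in \eqref{eq:W}. By \eqref{eq:Luprime}, $\mathcal L_yu_y'=(d-1)r^{-2}u_y'<0$, so the function
\[
  \mathcal W(r)\ce r^{d-1}\bigl(v_y u_y''-u_y'v_y'\bigr)
\]
satisfies $\mathcal W'(r)=(d-1)r^{d-3}v_yu_y'<0$ for $r>0$. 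Since $u_y$ and $v_y$ are $C^2$ up to $r=0$ and $d\ge2$, we have $\mathcal W(r)\to0$ as $r\downarrow0$. Hence $\mathcal W$ is strictly decreasing from $0$, and
\[
  \mathcal W(r)\le\mathcal W(1)\ecolon -c_0<0,\qquad r\ge1.
\]
The contradiction will come from proving $\mathcal W(r)\to0$ as $r\to\infty$.

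First we bound $v_y$ at infinity. Since $\mathcal W=r^{d-1}v_y^2\,(u_y'/v_y)'$, the quotient $u_y'/v_y$ is negative and strictly decreasing. Therefore $|u_y'|/v_y\ge|u_y'(1)|/v_y(1)>0$ for $r\ge1$, that is,
\[
  0<v_y\lesssim|u_y'|,\qquad r\ge1.
\]
Now $v_y$ solves the homogeneous equation \eqref{eq:pdeom}, with $V$ built from $u_y$. As noted at the start of the appendix, the results of Sections~\ref{sec:ellip} and~\ref{sec:3} apply to $u_y$, so we have the fundamental system $\{w^+,w^-\}$ of Proposition~\ref{prop:wasym} and the derivative bound of Lemma~\ref{lem:dphi0}. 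The latter gives, for $\omega=0$, $|u_y'|=O(r^{-\alpha-1})$, with the logarithmic analogue when $p=p^*$. For $\omega>0$, the bound is exponential decay like $u_y$ itself. In every case this is $o(w^+)$, because $w^+$ grows or tends to a positive constant (respectively grows exponentially). Hence $v_y=c\,w^-$ for some $c>0$, and Remark~\ref{rem:wprime} controls $v_y'$ as well.

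It remains to show that $\mathcal W(r)\to0$ as $r\to\infty$. From the equation, $u_y''=-(d-1)r^{-1}u_y'-g(u_y)$, which gives $u_y''=O(r^{-\alpha-2})$ when $\omega=0$ and $p\ne p^*$. With $w^-\sim r^{\lambda_-}$, $(w^-)'=O(r^{\lambda_--1})$ and $\lambda_-\le-(d-2)$, both terms of $\mathcal W$ are $O(r^{d-3+\lambda_--\alpha})=O(r^{-1-\alpha})\to0$. The case $p=p^*$ is the same computation with logarithmic factors. For $\omega>0$, $u_y,u_y',u_y''$ and $w^-,(w^-)'$ all carry the factor $e^{-\sqrt\omega r}$, so $r^{d-1}\cdot r^{-(d-1)}e^{-2\sqrt\omega r}\to0$. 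Thus $\mathcal W(r)\to0$, contradicting $\mathcal W\le-c_0$, and so $v_y$ must vanish somewhere in $(0,\infty)$.

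The main obstacle is the identification $v_y=c\,w^-$ at $\omega=0$, since it needs the decay and derivative asymptotics for a general $y\in\mathcal Y_0$ rather than for the already known ground state. In particular, the case $p=p^*$ requires checking that $|u_y'|$ is genuinely $o(w^+)=o((\log r)^{d/2})$, which is immediate, and that the logarithmic powers in $\mathcal W$ do not spoil the decay. Since $r^{d-1}\cdot r^{-(d-1)}\cdot r^{-(d-2)}\to0$ with only logarithmic corrections, this should be harmless.
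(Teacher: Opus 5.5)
Your proposal is correct and follows essentially the same argument as the paper. You take $f=u_y'$ in the Wronskian identity, so that $r^{d-1}(v_yu_y''-u_y'v_y')$ decreases strictly from $0$; the monotonicity of $u_y'/v_y$ then gives $0<v_y\lesssim|u_y'|$, which identifies $v_y$ as a multiple of $w^-$, and the decay asymptotics force that quantity to tend to $0$ at infinity, a contradiction. Your bound $O(r^{-1-\alpha})$ is slightly cruder than the paper's $O(r^{1-d})$, but it suffices.
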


\begin{proof}
Assume that $v_y>0$ on $(0,\infty)$, and write $u=u_y$ and $v=v_y$.
By \eqref{eq:W} with $f=u'$, Lemma~\ref{lem:basic}\,\textup{(iii)}, and \eqref{eq:Luprime},
\[
  \bigl(r^{d-1}(vu''-u'v')\bigr)'
  =(d-1)r^{d-3}vu'<0.
\]
Since $u'(0)=v'(0)=0$ and $u''$, $v'$ are bounded near the origin,
\[
  \lim_{r\downarrow0}r^{d-1}(vu''-u'v')(r)=0.
\]
Therefore
\begin{equation}\label{eq:ratio}
\begin{aligned}
  r^{d-1}v(r)^2\left(\frac{u'}{v}\right)'(r)
  &=r^{d-1}(vu''-u'v')(r)\\
  &=(d-1)\int_0^r s^{d-3}v(s)u'(s)\,ds<0.
\end{aligned}
\end{equation}
Thus $u'/v$ is strictly decreasing, and for $r\ge1$,
\[
  0<v(r)\le C(-u'(r)).
\]
Hence $v(r)\to0$ as $r\to\infty$, and $v$ is a multiple of the decaying
fundamental solution at infinity. Moreover, \eqref{eq:ratio} gives
\begin{equation}\label{eq:Wneg}
  r^{d-1}(vu''-u'v')(r)
  \le(d-1)\int_0^1s^{d-3}v(s)u'(s)\,ds<0,
  \qquad r\ge1.
\end{equation}

The left-hand side nevertheless tends to zero as $r\to\infty$. This follows
immediately from the exponential asymptotics when $\omega>0$. Now suppose that $\omega=0$. If $p\ne p^*$,
Section~\ref{sec:ellip}, Lemma~\ref{lem:dphi0},
Remark~\ref{rem:wprime}, and the equation for $u$ give, as
$r\to\infty$,
\begin{alignat*}{3}
  u&=O(r^{-\alpha}),\qquad
  &u'&=O(r^{-\alpha-1}),\qquad
  &u''&=O(r^{-\alpha-2}),\\
  v&=O(r^{\lambda_-}),\qquad
  &v'&=O(r^{\lambda_--1}).
\end{alignat*}
Consequently,
\begin{equation}\label{eq:Wdec-nc}
  r^{d-1}(vu''-u'v')
  =O(r^{d-3+\lambda_--\alpha})=O(r^{1-d})=o(1),
\end{equation}
because \eqref{eq:alpha} and \eqref{eq:lambdapm} imply
$\alpha\ge d-2$ and $\lambda_-\le-(d-2)$. If $p=p^*$, the corresponding
critical estimates give, as $r\to\infty$,
\begin{alignat*}{2}
  u'&=O\bigl(r^{-(d-1)}(\log r)^{-(d-2)/2}\bigr),\qquad
  &u''&=O\bigl(r^{-d}(\log r)^{-(d-2)/2}\bigr),\\
  v&=O\bigl(r^{-(d-2)}(\log r)^{-d/2}\bigr),\qquad
  &v'&=O\bigl(r^{-(d-1)}(\log r)^{-d/2}\bigr).
\end{alignat*}
Hence
\begin{equation}\label{eq:Wdec-c}
  r^{d-1}(vu''-u'v')
  =O\bigl(r^{1-d}(\log r)^{-(d-1)}\bigr)=o(1).
\end{equation}
This contradicts \eqref{eq:Wneg}.
\end{proof}
To prove that $v_{y^*}$ has exactly one zero,
we first study the difference $u_{y^*}-u_y$
for $y<y^*$.
\begin{lemma}\label{lem:hzero}
For every $y\in[\beta,y^*)$, the function
\[
  h_y\ce u_{y^*}-u_y
\]
has exactly one zero in $(0,\infty)$.
\end{lemma}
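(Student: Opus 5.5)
For $y\in[\beta,y^*)$ we have $y\in\mathcal Y_+$ by Proposition~\ref{prop:first}. Hence $u_y>0$ on $[0,\infty)$, $\inf u_y>0$, while $u_{y^*}\to0$. The plan is to show first that $h_y$ has at least one zero, and then that it has at most one, the latter by an $I$-function argument in the spirit of McLeod's method.

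\emph{Existence of a zero.} At $r=0$ we have $h_y(0)=y^*-y>0$. As $r\to\infty$, $u_{y^*}(r)\to0$ while $u_y(r)\ge m>0$, so $h_y(r)<0$ for $r$ large. By continuity $h_y$ vanishes somewhere in $(0,\infty)$.

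\emph{Uniqueness of the zero.} Suppose $h_y$ has at least two zeros. Let $r_1<r_2$ be its first two zeros, so $h_y>0$ on $[0,r_1)$ and $h_y<0$ on $(r_1,r_2)$. The case $y=\beta$ is trivial: then $u_\beta\equiv\beta$, and $u_{y^*}$ is strictly decreasing by Lemma~\ref{lem:basic}\,(iii), so it crosses $\beta$ exactly once. We may therefore take $y>\beta$.

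The function $h=h_y$ satisfies
\[
  h''+\frac{d-1}{r}h'+\kappa(r)h=0,
  \qquad
  \kappa=\frac{g(u_{y^*})-g(u_y)}{u_{y^*}-u_y}.
\]
We compare $h$ with $f_c=u_{y^*}+c\,ru_{y^*}'$, for which $\mathcal L_{y^*}f_c=I_{1+2c}(u_{y^*})$ by \eqref{eq:Lf}. Set $\tau=u_{y^*}(r_1)$, choose $\lambda>q$ with $I_\lambda(\tau)=0$ as in Lemma~\ref{lem:Isign}, and put $c=(\lambda-1)/2$.

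The intended argument follows McLeod and the treatment in \cite{F13,LN20}. One applies Wronskian/Sturm comparisons between $h$ (or the scaled difference for $y$ close to $y^*$) and $f_c$ on $(0,r_1)$ and $(r_1,r_2)$. The sign pattern \eqref{eq:Isign}: $I_\lambda(u_{y^*})<0$ for $r<r_1$ (there $u_{y^*}>\tau$) and $I_\lambda(u_{y^*})>0$ for $r>r_1$. Combined with monotonicity of $u_{y^*}$, this pattern would give a sign contradiction in the Wronskian identity \eqref{eq:W} integrated between consecutive zeros.

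An alternative is a continuity argument. The number of zeros of $h_y$ is locally constant in $y$ as long as all zeros are simple and none escapes to infinity. As $y\uparrow y^*$, $h_y/(y^*-y)\to v_{y^*}$. So one would relate the zero count of $h_y$ to that of $v_{y^*}$, which has at least one zero by Lemma~\ref{lem:vzero}. One would prove the count equals one for $y$ near $\beta$ (where $u_y$ is close to the constant $\beta$) and then propagate it across $[\beta,y^*)$. Propagation requires showing that $h_y$ cannot acquire a double zero: at a double zero, $u_y$ and $u_{y^*}$ would coincide to first order, forcing $u_y\equiv u_{y^*}$ by ODE uniqueness. It also requires that no new zero enters from infinity, which holds because $h_y<0$ near infinity uniformly on compact subsets of $[\beta,y^*)$ by Lemma~\ref{lem:basic}\,(vi).

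\emph{Main obstacle.} The hardest step is controlling behaviour at infinity when $\omega=0$, where decay is only algebraic. In the continuity approach, zeros cannot enter from infinity because $h_y\le u_{y^*}-m<0$ for $r$ large, uniformly in $y$ on compacts. So the real issue is the endpoint near $\beta$, where $u_y$ stays near $\beta$ and $g'(\beta)>0$ makes $u_y$ oscillate around $\beta$. To settle that endpoint I would first try comparing with the linearization at the constant solution.
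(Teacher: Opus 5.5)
Your continuity-in-$y$ route is exactly the paper's proof, and it already closes with ingredients you wrote down. The obstacle you flag at the end is not real. You need no analysis for $y$ near $\beta$ and no linearization about the constant solution. Start the continuation at $y=\beta$ itself: there $h_\beta=u_{y^*}-\beta$ has exactly one zero, as you observed, and that zero is simple because $h_\beta'=u_{y^*}'<0$ on $(0,\infty)$ by Lemma~\ref{lem:basic}\,\textup{(iii)}.

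Now fix any $Y\in[\beta,y^*)$. All zeros of $h_Y$ are simple by ODE uniqueness. Lemma~\ref{lem:basic}\,\textup{(vi)} gives $m,\delta>0$ with $u_y\ge m$ for $|y-Y|<\delta$. Choose $R$ with $u_{y^*}<m/2$ on $[R,\infty)$; then $h_y<-m/2$ there. Continuous dependence in $C^1([0,R])$ shows that $h_y$ and $h_Y$ have the same number of zeros for $y$ close to $Y$. So the zero count is locally constant on the connected interval $[\beta,y^*)$, hence equal to one; the paper phrases this as a supremum argument on $\mathcal A_+$. Any oscillation of $u_y$ about $\beta$ is irrelevant: on $[0,R]$, $h_y$ is a $C^1$-small perturbation of $h_\beta$, and beyond $R$ the uniform tail bound takes over.

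Two other parts of your proposal should be dropped. First, the Sturm/Wronskian comparison of $h_y$ with $f_c$ does not work as sketched. The function $h_y$ solves the equation with potential $\kappa$, not $\mathcal L_{y^*}h=0$, so \eqref{eq:W} is not available. Computing directly,
\[
  \bigl(r^{d-1}(h f_c'-f_c h')\bigr)'
  =r^{d-1}\bigl(h\,I_\lambda(u_{y^*})+(\kappa-g'(u_{y^*}))f_c h\bigr),
\]
and the second term has no definite sign. In the paper the $I$-function is paired only with genuine solutions $v_y$ of $\mathcal L_yv=0$ (Proposition~\ref{prop:vstar}, Lemmas~\ref{lem:Szero} and~\ref{lem:end}).

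Second, relating the zero count of $h_y$ to that of $v_{y^*}$ runs backwards. Lemma~\ref{lem:vzero} gives only at least one zero of $v_{y^*}$, and the tail bound from Lemma~\ref{lem:basic}\,\textup{(vi)} is not uniform as $y\uparrow y^*$. In fact the paper uses the present lemma to prove that $v_{y^*}$ has exactly one zero (Lemma~\ref{lem:vstar}).
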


\begin{proof}
For $y\in[\beta,y^*)$, Proposition~\ref{prop:first} gives
$y\in\mathcal Y_+$. Thus $u_y$ is bounded away from zero, whereas
$u_{y^*}(r)\to0$ as $r\to\infty$. Hence $h_y(r)<0$ for sufficiently
large $r$. Since $h_y(0)=y^*-y>0$, $h_y$ has at least one zero.
Every zero is simple, since a double zero would imply
$u_y\equiv u_{y^*}$ by uniqueness for the ODE.

Set
\[
  \mathcal A_+\ce
  \{y\in[\beta,y^*):
    h_y\text{ has exactly one zero in }(0,\infty)\}.
\]
Since $u_\beta\equiv\beta$, the strict monotonicity of $u_{y^*}$
gives $\beta\in\mathcal A_+$.
For $Y\in\mathcal A_+$, Lemma~\ref{lem:basic}\,\textup{(vi)} gives
$\eps,\delta>0$, with $Y+\delta<y^*$, such that
\[
  u_y(r)\ge\eps
  \qquad (y\in[Y,Y+\delta],\ r\ge0).
\]
Choose $R$ with $u_{y^*}(r)<\eps/2$ for $r\ge R$. Then
$h_y<-\eps/2$ on this tail, while continuous dependence in
$C^1([0,R])$ and the simplicity of the zero of $h_Y$ ensure that
$h_y$ has exactly one zero on $[0,R]$, after possibly decreasing
$\delta$. Thus $[Y,Y+\delta]\subset\mathcal A_+$.

Set
\[
  \widebar{Y}\ce
  \sup\{Y\in[\beta,y^*):[\beta,Y]\subset\mathcal A_+\}.
\]
If $\widebar{Y}<y^*$, then $h_{\widebar{Y}}$ cannot have two zeros:
by continuous dependence and their simplicity, two such zeros
would persist for parameters just below $\widebar{Y}$, contradicting
$[\beta,\widebar{Y})\subset\mathcal A_+$. Hence
$h_{\widebar{Y}}$ has exactly one zero, so
$\widebar{Y}\in\mathcal A_+$. The continuation property above
then contradicts the definition of $\widebar{Y}$.
Therefore $\widebar{Y}=y^*$.
\end{proof}

\begin{lemma}\label{lem:vstar}
The function $v_{y^*}$ has exactly one zero in $(0,\infty)$.
\end{lemma}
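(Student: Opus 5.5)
Lemma~\ref{lem:vzero} already gives at least one zero of $v_{y^*}$ in $(0,\infty)$, so the task is to exclude a second one. The plan is to pass to the limit in Lemma~\ref{lem:hzero} and combine this with a Wronskian/Sturm comparison against the functions $u+cru'$ from \eqref{eq:Lf}, whose signs are controlled by Lemma~\ref{lem:Isign}. Write $u=u_{y^*}$ and $v=v_{y^*}$.

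\emph{Step 1 (limit).} For $y\uparrow y^*$ we have
\[
  h_y/(y^*-y)\to v
\]
in $C^1_{\mathrm{loc}}([0,\infty))$, by differentiability of $y\mapsto u_y$. Each $h_y$ has exactly one zero, and every zero of $v$ is simple: a double zero would make $v\equiv0$. Hence, on any compact interval $[0,R]$, simple zeros of $v$ would produce at least as many zeros of $h_y$ for $y$ close to $y^*$. If $v$ had two zeros $r_1<r_2$, then $h_y$ would have two zeros in $[0,r_2+1]$, contradicting Lemma~\ref{lem:hzero}.

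So the conclusion follows once we know that the zeros of $v$ are isolated and simple, and that there are at least two of them only if they sit in a compact set. The latter is automatic for any two given zeros. The only subtle point is to make the convergence argument precise: near a simple zero $r_j$, $v$ changes sign, so $h_y$ changes sign nearby for $y$ close to $y^*$, since $h_y/(y^*-y)$ is uniformly close to $v$ on $[r_j-\delta,r_j+\delta]$.

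\emph{Step 2 (main obstacle).} If the argument above works, the lemma is essentially a corollary. The main point to check is that the convergence $h_y/(y^*-y)\to v$ is uniform on compact sets. This follows from smooth dependence of ODE solutions on initial data: $u_y$ is $C^1$ in $y$ uniformly on compact $r$-intervals, including near $r=0$ via the integral equation.

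\emph{Step 3 (fallback).} If Step~1 turned out insufficient, for instance because Lemma~\ref{lem:hzero} were needed only for $y$ near $y^*$ in a form that does not directly give convergence, I would fall back on the classical McLeod argument. Suppose $v$ has zeros $r_1<r_2$, and choose $c$ so that $f=u+cru'$ vanishes at a suitable point; this amounts to matching $I_{1+2c}$ via Lemma~\ref{lem:Isign} at $\tau=u(r_*)$. Then integrate \eqref{eq:W} over the nodal interval $(r_1,r_2)$ to obtain a sign contradiction, using the one-sign change of $I_\lambda(u)$ together with the monotonicity of $u$ from Lemma~\ref{lem:basic}\,(iii).

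The primary route, however, is the limiting argument from Lemma~\ref{lem:hzero}.
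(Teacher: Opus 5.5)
Your primary route in Step~1 is correct and is essentially the paper's proof: both combine simplicity of the zeros of $v_{y^*}$, locally uniform convergence of $(u_{y^*}-u_y)/(y^*-y)$ to $v_{y^*}$ as $y\uparrow y^*$, and Lemma~\ref{lem:hzero}. The paper phrases the contradiction through $v_{y^*}(r_1)<0<v_{y^*}(r_2)$ together with $h_y(0)>0$, rather than through sign changes around each zero; the fallback in Step~3 is unnecessary, and as written it is too vague to verify.
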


\begin{proof}
Lemma~\ref{lem:vzero} gives at least one zero, and every zero is simple. If
there were two, we could choose $0<r_1<r_2$ such that
\[
  v_{y^*}(r_1)<0<v_{y^*}(r_2).
\]
For every compact $J\subset[0,\infty)$, differentiable dependence on the
initial value gives
\[
  \frac{u_{y^*}-u_y}{y^*-y}
  \to v_{y^*}
  \quad\text{in }C(J)
  \quad\text{as}~y\uparrow y^*.
\]
Hence $h_y(r_1)<0<h_y(r_2)$ for $y<y^*$ sufficiently close to $y^*$.
Together with $h_y(0)>0$, this gives at least two zeros of $h_y$, contrary
to Lemma~\ref{lem:hzero}.
\end{proof}

The next proposition is a key ingredient in the proofs of uniqueness and
radial nondegeneracy, as well as in the ODE construction of $\eta_0$. Its proof uses the
one-sign-change property of $I_\lambda$ from Lemma~\ref{lem:Isign}.

\begin{proposition}
\label{prop:vstar}
Let $r_*>0$ be the unique zero of $v_{y^*}$, and write
\begin{equation}\label{eq:vdec}
  v_{y^*}=c_+w^++c_-w^-
\end{equation}
in the fundamental system at infinity for $\mathcal L_{y^*}v=0$, where
$w^+$ is non-decaying and $w^-$ is decaying and positive for large $r$.
Then $c_+<0$, and
\begin{equation}\label{eq:vasymp}
  v_{y^*}(r)\sim
  \begin{cases}
    c_+r^{-(d-1)/2}e^{\sqrt\omega r},
      &\omega>0,\\[2pt]
    c_+r^{\lambda_+},
      &\omega=0,\ p\ne p^*,\\[2pt]
    c_+(\log r)^{d/2},
      &\omega=0,\ p=p^*
  \end{cases}
  \quad\text{as }r\to\infty.
\end{equation}
In particular, $v_{y^*}$ is eventually negative and does not belong to
$L^2((1,\infty),r^{d-1}\,dr)$.
\end{proposition}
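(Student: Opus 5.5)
The plan is McLeod's Wronskian argument, run with a test function built from the one-sign-change property of the $I$-function. Write $u=u_{y^*}$, $v=v_{y^*}$ and let $r_*$ be the unique zero of $v$ from Lemma~\ref{lem:vstar}. Since $v(0)=1$ and all zeros are simple, we have $v>0$ on $[0,r_*)$ and $v<0$ on $(r_*,\infty)$. In the decomposition \eqref{eq:vdec}, the ratio $w^-/w^+\to0$ by Proposition~\ref{prop:wasym}. Hence the eventual sign of $v$ is the sign of $c_+$ whenever $c_+\ne0$, which already forces $c_+\le0$. Everything therefore reduces to excluding $c_+=0$, i.e.\ to showing that $v$ is not a multiple of the decaying solution $w^-$. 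Once $c_+<0$ is known, \eqref{eq:vasymp} is immediate from \eqref{eq:wasym} because the $w^+$ term dominates. The non-$L^2$ statement also follows: for $\omega>0$ the factor $e^{\sqrt\omega r}$ grows, and for $\omega=0$ we have $\lambda_+\ge0>-d/2$, or a growing power of $\log r$.

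To exclude $c_+=0$, set $\tau\ce u(r_*)$. Suppose first that $\tau>\beta$. Lemma~\ref{lem:Isign} gives $\lambda>q$ with $I_\lambda(u(r))<0$ for $r<r_*$ and $I_\lambda(u(r))>0$ for $r>r_*$; here we use that $u$ is strictly decreasing by Lemma~\ref{lem:basic}\,(iii). Take
\[
  f\ce u+\tfrac{\lambda-1}{2}\,ru' ,
\]
so that $\mathcal L_{y^*}f=I_\lambda(u)$ by \eqref{eq:Lf}. Then $v\,\mathcal L_{y^*}f<0$ on $(0,\infty)\setminus\{r_*\}$. Integrate \eqref{eq:W} from $0$ to $R$; the boundary term at the origin vanishes because $u'(0)=v'(0)=0$ and $f,f',v,v'$ are bounded there. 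This yields
\[
  R^{d-1}(vf'-fv')(R)=\int_0^R s^{d-1}v\,I_\lambda(u)\,ds\le -c<0,\qquad R\ge R_1 .
\]

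Now assume $c_+=0$, so that $v=c_-w^-$. The left-hand side then tends to zero, which is the desired contradiction. For $\omega>0$ this follows from exponential decay. For $\omega=0$ we repeat the bookkeeping of Lemma~\ref{lem:vzero}. Lemma~\ref{lem:dphi0} and the equation give $f=O(u)$ and $f'=O(r^{-\alpha-1})$ when $p\ne p^*$, with the logarithmic analogues when $p=p^*$. Combined with $v=O(r^{\lambda_-})$ and $v'=O(r^{\lambda_--1})$ from Remark~\ref{rem:wprime}, or their critical versions, this gives $R^{d-1}(vf'-fv')=O(R^{d-2+\lambda_--\alpha})$. Since $\lambda_-\le-(d-2)$ and $\alpha>0$, this is $o(1)$, and the same holds with logarithmic factors when $p=p^*$.

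The step I expect to be the main obstacle is the case $\tau=u(r_*)\le\beta$, where Lemma~\ref{lem:Isign} does not apply. My first attempt would be to rule this case out. One can check that $I_\lambda(s)$ is positive on $(0,\beta]$ for every $\lambda>q$. If $\tau\le\beta$, take $\lambda>q$ large and use the same $f$ together with the decay of $v$ under the hypothesis $c_+=0$. I would then try to derive a contradiction by tracking the sign of $v\,I_\lambda(u)$ separately on $(0,r_*)$ and on $(r_*,\infty)$, and by comparing, as in \eqref{eq:ratio}, with the identity for $f=u'$, whose source term $(d-1)r^{-2}u'$ has a fixed sign. If that fails, the fallback is to invoke the corresponding step of \cite{F13,LN20}, which establishes $u(r_*)>\beta$ via the monotonicity of the local energy $H_y$ and the fact that $v$ changes sign exactly once.
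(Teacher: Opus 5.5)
Your reduction to excluding $c_+=0$, the test function $f=u+\frac{\lambda-1}{2}ru'$ with $\lambda>q$ from Lemma~\ref{lem:Isign}, and the decay bookkeeping showing $r^{d-1}(vf'-fv')\to0$ when $c_+=0$ all match the second half of the paper's proof. The proof is nevertheless incomplete. That argument needs $u(r_*)>\beta$, and you leave this step open.

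Your first attempt cannot work. For every $\lambda>q$ one has $I_\lambda>0$ on $(0,\beta]$, and hence on a neighbourhood of $u(r_*)\le\beta$. So $v\,I_\lambda(u)$ is positive just to the left of $r_*$ and negative on $(r_*,\infty)$. The Wronskian $r^{d-1}(vf'-fv')$ is then not monotone, and its vanishing at $0$ and at $\infty$ yields no contradiction. Your fallback is a citation rather than an argument. Note also that in the paper this step depends on the decay of $v$, so it has to be carried out under the hypothesis $c_+=0$ and cannot simply be quoted as a standalone fact about $u_{y^*}$.

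The missing ingredient is the product identity
\[
  \bigl(r^{2d-2}u'v'\bigr)'=-r^{2d-2}\bigl(g(u)v\bigr)'.
\]
It comes from differentiating $H_y'=-\frac{d-1}{r}(u_y')^2$ with respect to $y$ and multiplying by $r^{2d-2}$, so your instinct about $H_y$ points the right way. Integrate it over $(r_*,R)$ and use $v(r_*)=0$:
\[
  R^{2d-2}u'(R)v'(R)-r_*^{2d-2}u'(r_*)v'(r_*)
  =-R^{2d-2}g(u(R))v(R)+(2d-2)\int_{r_*}^R r^{2d-3}g(u)v\,dr .
\]
In this identity:
\begin{itemize}
\item $u'(r_*)v'(r_*)>0$, since both factors are negative.
\item Under $c_+=0$, $R^{2d-2}g(u(R))v(R)\to0$.
\item $R^{2d-2}u'(R)v'(R)\to0$ if $\omega>0$ or $p\le p^*$.
\item When $\omega=0$ and $p>p^*$, decay only makes this last term bounded. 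You need one more observation: on a tail, $(r^{d-1}v')'=r^{d-1}V_0v<0$. Together with $v<0$ and $v\to0$, this forces $v'>0$ eventually, so $\limsup_{R\to\infty}R^{2d-2}u'(R)v'(R)\le0$.
\end{itemize}
Thus the left side has a strictly negative upper limit. On the other hand, $u(r_*)\le\beta$ would give $g(u)v>0$ on $(r_*,\infty)$, making the right side eventually positive. This contradiction gives $u(r_*)>\beta$, and only then does your $I_\lambda$ argument close the proof.
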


\begin{proof}
Write $u=u_{y^*}$ and $v=v_{y^*}$. By uniqueness of the zero,
\begin{equation}\label{eq:vsign}
  v>0\quad\text{on }(0,r_*),
  \quad
  v<0\quad\text{on }(r_*,\infty),
  \quad
  v'(r_*)<0.
\end{equation}
Assume that $c_+=0$ in \eqref{eq:vdec}. Then $v$ is a negative multiple
of $w^-$ for large $r$, so $v(r)\to0$ as $r\to\infty$.

We first prove
\begin{equation}\label{eq:ubeta}
  u(r_*)>\beta.
\end{equation}
Using the equations for $u$ and $v$
\[
  \bigl(r^{d-1}u'\bigr)'=-r^{d-1}g(u),
  \qquad
  \bigl(r^{d-1}v'\bigr)'=-r^{d-1}g'(u)v,
\]
we obtain directly
\begin{equation}\label{eq:prod}
  \bigl(r^{2d-2}u'v'\bigr)'=-r^{2d-2}(g(u)v)'.
\end{equation}
Thus, for $R>r_*$,
\begin{align}
& R^{2d-2}u'(R)v'(R)
  -r_*^{2d-2}u'(r_*)v'(r_*)
  \notag\\
&\quad=-\bigl[r^{2d-2}g(u(r))v(r)\bigr]_{r_*}^{R}
  +(2d-2)\int_{r_*}^R r^{2d-3}g(u(r))v(r)\,dr.
  \label{eq:pint}
\end{align}
For $\omega>0$, all boundary terms at infinity vanish exponentially. Now
suppose that $\omega=0$. If $p\ne p^*$, then, as $r\to\infty$,
\[
  u=O(r^{-\alpha}),
  \quad
  v=O(r^{\lambda_-}),
  \quad
  g(u)=O(u^p).
\]
Therefore, we obtain
\[
  R^{2d-2}g(u(R))v(R)
  =O\bigl(R^{2d-2-p\alpha+\lambda_-}\bigr)=o(1)
  \qquad(R\to\infty).
\]
If $p=p^*$, then, as $r\to\infty$,
\[
  g(u)=O\bigl(r^{-d}(\log r)^{-d/2}\bigr),
  \qquad
  v=O\bigl(r^{-(d-2)}(\log r)^{-d/2}\bigr).
\]
Hence
\begin{equation*}
  R^{2d-2}g(u(R))v(R)=O((\log R)^{-d})=o(1)
  \qquad(R\to\infty).
\end{equation*}

For the other boundary term, the derivative estimates yield
\[
  R^{2d-2}u'(R)v'(R)
  =\begin{cases}
    O\bigl(R^{2d-4+\lambda_--\alpha}\bigr)=o(1),
      &p<p^*,\\[2pt]
    O\bigl((\log R)^{-(d-1)}\bigr)=o(1),
      &p=p^*
  \end{cases}
  \quad\text{as }R\to\infty.
\]
When $p>p^*$, decay alone gives only boundedness. On a sufficiently large
tail,
\[
  \bigl(r^{d-1}v'(r)\bigr)'
  =r^{d-1}\bigl(pu(r)^{p-1}-qu(r)^{q-1}\bigr)v(r)<0.
\]
If $v'(R_0)\le0$ at some large $R_0$, this monotonicity would imply
$v'(r)<0$ for $r>R_0$, contradicting $v(r)\to0$ as $r\to\infty$
with $v<0$. Hence
$v'(r)>0$ eventually, and since $u'<0$,
\[
  \limsup_{R\to\infty}R^{2d-2}u'(R)v'(R)\le0.
\]
Because $u'(r_*)v'(r_*)>0$, the upper limit of the left-hand side of
\eqref{eq:pint} is strictly negative. If $u(r_*)\le\beta$, then
$g(u)<0$ and $v<0$ on $(r_*,\infty)$, making the limiting integral on the
right strictly positive. This contradiction proves \eqref{eq:ubeta}.

Choose the unique $\lambda>q$ such that $I_\lambda(u(r_*))=0$, and set
\[
  f=u+\frac{\lambda-1}{2}ru'.
\]
By \eqref{eq:Lf}, $\mathcal L_{y^*}f=I_\lambda(u)$. Since $u$ decreases
strictly, Lemma~\ref{lem:Isign} and \eqref{eq:vsign} imply
\[
  v(r)I_\lambda(u(r))<0,
  \qquad r\ne r_*.
\]
Hence \eqref{eq:W} shows that $r^{d-1}(vf'-fv')$ is strictly decreasing.
Its limit as $r\downarrow0$ is zero because
$u'(0)=v'(0)=f'(0)=0$.

Under the assumption $c_+=0$, the decay estimates give
\[
  r^{d-1}vu'\to0,
  \quad
  r^{d-1}uv'\to0,
  \quad
  r^d(vu''-u'v')\to0
  \quad\text{as }r\to\infty.
\]
For the last limit, \eqref{eq:Wdec-nc} gives
$O(r^{2-d})=o(1)$ when $p\ne p^*$ and $d\ge3$; when $d=2$,
necessarily $p<p^*$, and the sharper exponent gives
$O(r^{\lambda_--\alpha})=o(1)$. In the critical case,
\eqref{eq:Wdec-c} gives
$O(r^{2-d}(\log r)^{-(d-1)})=o(1)$. Since
\[
  vf'-fv'
  =vu'-uv'+\frac{\lambda-1}{2}
    \bigl(vu'+r(vu''-u'v')\bigr),
\]
we obtain $r^{d-1}(vf'-fv')\to0$ as $r\to\infty$, contradicting its
strict decrease from zero. Thus $c_+\ne0$.

Finally, $v<0$ on $(r_*,\infty)$, while $w^+>0$ and dominates $w^-$ at
infinity; hence $c_+<0$. Formula~\eqref{eq:vasymp} follows from
Proposition~\ref{prop:wasym}, and its leading terms are not square
integrable with respect to $r^{d-1}\,dr$.
\end{proof}

\subsection{\texorpdfstring{Analysis of $\mathcal Y_-$ and completion of the proof}{Analysis of Y-minus and completion of the proof}}

We now complete the shooting argument.
The key is to show that $v_y$ has exactly one zero in $(0,R_y)$ for every $y\in\mathcal Y_-$.

\begin{lemma}\label{lem:Szero}
If $y\in \mathcal Y_-$, then $v_y$ has at least one zero in $(0,R_y)$.
\end{lemma}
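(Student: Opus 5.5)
We argue by contradiction, following the proof of Lemma~\ref{lem:vzero} with $u_y'$ as the comparison function, but now on the finite interval $(0,R_y]$. Throughout we write $u=u_y$ and $v=v_y$, and we suppose that $v>0$ on $(0,R_y)$.

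\emph{Wronskian monotonicity.} By Lemma~\ref{lem:basic}\,\textup{(iii)}, $u'<0$ on $(0,R_y]$. Applying \eqref{eq:W} with $f=u'$ and using \eqref{eq:Luprime}, we obtain
\[
  \bigl(r^{d-1}(vu''-u'v')\bigr)'=(d-1)r^{d-3}vu'<0
  \qquad\text{on }(0,R_y).
\]
Since $u'(0)=v'(0)=0$ and $u''$, $v'$ are bounded near the origin, the boundary value at $r=0$ vanishes. Integrating as in \eqref{eq:ratio}, we get
\[
  r^{d-1}(vu''-u'v')(r)<0
  \qquad\text{for }0<r\le R_y.
\]
Evaluating at $r=R_y$ gives
\begin{equation*}
  v(R_y)u''(R_y)-u'(R_y)v'(R_y)<0.
\end{equation*}

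\emph{Contradiction at $r=R_y$.} Since $u(R_y)=0$ and $g(0)=0$, the equation for $u$ gives
\[
  u''(R_y)=-\frac{d-1}{R_y}u'(R_y)>0.
\]
By continuity, $v(R_y)\ge0$. There are two cases.
\begin{itemize}
  \item If $v(R_y)>0$: consider the ratio $u'/v$ on $(0,R_y]$. By the identity above it is strictly decreasing and negative. I would try to derive the contradiction through the boundary identity
  \[
    R_y^{d-1}\bigl(v(R_y)u''(R_y)-u'(R_y)v'(R_y)\bigr)=(d-1)\int_0^{R_y}s^{d-3}vu'\,ds.
  \]
  Substituting the formula for $u''(R_y)$, the left-hand side becomes $-R_y^{d-1}u'(R_y)\bigl((d-1)v(R_y)/R_y+v'(R_y)\bigr)$. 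This term has no evident sign, so a different comparison function is needed.
  \item If $v(R_y)=0$: then $v$ already has a zero at the endpoint, and the contradiction must again come from the comparison argument.
\end{itemize}

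\emph{Second attempt: Sturm comparison with $u$.} A cleaner choice is $f=u$. Then $\mathcal L_y u=I_1(u)=ug'(u)-g(u)$. Since $g(s)=-\omega s-s^p+s^q$, we have
\[
  I_1(s)=(p-1)\bigl(-s^p\bigr)\cdot(-1)\cdot(-1)+(q-1)s^q=-(p-1)s^p+(q-1)s^q,
\]
that is, $I_1(s)=(q-1)s^q-(p-1)s^p$. This is negative for small $s$ and positive for large $s$, so again there is no single sign.

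\emph{Third attempt: the family $f_c=u+c\,ru'$.} I therefore turn to \eqref{eq:Lf}, using $f_c=u+c\,ru'$ with $c$ chosen so that $f_c$ has a fixed sign pattern. The function $f_c$ satisfies $f_c(0)=y>0$ and $f_c(R_y)=cR_yu'(R_y)$, which is negative when $c>0$. Combined with the Sturm argument through \eqref{eq:W}, namely $r^{d-1}(vf'-fv')$ evaluated between $0$ and $R_y$, together with the sign of $I_{1+2c}(u)$, this should give the contradiction. The one-sign-change property of Lemma~\ref{lem:Isign} makes $I_\lambda(u)$ change sign exactly once along the trajectory, since $u$ is decreasing.

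The main obstacle is choosing $\lambda$, equivalently $c$, so that the signs close up under the assumption $v>0$. My first choice would be $\lambda=1$, i.e.\ $f=u$, combined with positivity of $v$ and the endpoint boundary terms.
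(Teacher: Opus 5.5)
Your proposal has a genuine gap. None of the three attempts produces a contradiction, and the choice you finally settle on, $\lambda=1$ (i.e.\ $f=u$), is one you have just shown to fail: $I_1(u)=(q-1)u^q-(p-1)u^p$ changes sign along the trajectory. You did find the right family $f=u+\frac{\lambda-1}{2}ru'$ and the right identity \eqref{eq:W}. What is missing is where to place the zero of $I_\lambda$. Since $y\in\mathcal Y_-$ forces $y>\beta$ by Lemma~\ref{lem:basic}\,\textup{(iii)}, Lemma~\ref{lem:Isign} with $\tau=y$ gives $\lambda>q$ with $I_\lambda(y)=0$. Because $u$ is strictly decreasing, $0<u(r)<y$ on $(0,R_y)$, so \eqref{eq:Lf} gives $\mathcal L_yf=I_\lambda(u)>0$ on the \emph{whole} interval. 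Your remark that $I_\lambda(u)$ changes sign exactly once along the trajectory describes exactly what must be avoided here. Putting the zero of $I_\lambda$ at the initial value $u(0)=y$ removes the interior sign change entirely.

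The second missing point is where to evaluate the Wronskian. Evaluating $r^{d-1}(vf'-fv')$ at $R_y$, as you propose, leaves the term $-R_y^{d-1}f(R_y)v'(R_y)$, whose sign is unknown; this is the same obstruction you met with $f=u'$. Instead, use $f(0)=y>0$ and $f(R_y)=\frac{\lambda-1}{2}R_yu'(R_y)<0$ to take the first zero $\sigma\in(0,R_y)$ of $f$. There $f'(\sigma)\le0$ and, by the contradiction hypothesis, $v(\sigma)>0$. Since $u'(0)=v'(0)=f'(0)=0$, \eqref{eq:W} gives
\[
  \sigma^{d-1}v(\sigma)f'(\sigma)=\int_0^\sigma r^{d-1}v(r)\,I_\lambda(u(r))\,dr>0,
\]
which contradicts $v(\sigma)f'(\sigma)\le0$. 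With these two choices the argument is complete and coincides with the paper's proof.
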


\begin{proof}
Suppose that $v_y>0$ on $(0,R_y)$, and write $u=u_y$, $v=v_y$, and
$R=R_y$. Since $y>\beta$, Lemma~\ref{lem:Isign} gives a unique
$\lambda>q$ such that $I_\lambda(y)=0$. For
\[
  f=u+\frac{\lambda-1}{2}ru',
\]
we have $\mathcal L_yf=I_\lambda(u)>0$ on $(0,R)$ by \eqref{eq:Lf}, \eqref{eq:Isign}, and the monotonicity of $u$.
Moreover, $f(0)=y>0$ and
$f(R)=\frac{\lambda-1}{2}Ru'(R)<0$. If $\sigma$ is the first zero of
$f$, then $f'(\sigma)\le0$. On the other hand, \eqref{eq:W} and
$u'(0)=v'(0)=f'(0)=0$ give
\[
  \sigma^{d-1}(vf'-fv')(\sigma)
  =\int_0^\sigma r^{d-1}v(r)\mathcal L_yf(r)\,dr>0,
\]
whereas the left-hand side equals
$\sigma^{d-1}v(\sigma)f'(\sigma)\le0$, a contradiction.
\end{proof}
We next establish a sign condition at $R_y$ needed for the continuation argument.
\begin{lemma}\label{lem:end}
Let $y\in \mathcal Y_-$, and assume that $v_y$ has exactly one zero in $(0,R_y)$.
Then $v_y(R_y)<0$.
\end{lemma}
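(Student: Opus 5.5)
We argue by contradiction, modeled on the proof of Lemma~\ref{lem:Szero} and Proposition~\ref{prop:vstar}. Write $u=u_y$, $v=v_y$, $R=R_y$, and let $r_1\in(0,R)$ be the unique zero of $v$ in $(0,R)$. Then $v>0$ on $(0,r_1)$ and $v<0$ on $(r_1,R)$. Since zeros of $v$ in the interior are simple, the only alternative to the claim is $v(R)\ge0$. Together with $v<0$ on $(r_1,R)$, this means that either $v(R)=0$, or $v$ has a second zero; in either case $v\le0$ on $[r_1,R]$ with $v(R)\ge0$. So it suffices to exclude $v(R)=0$ and $v(R)>0$ simultaneously by deriving a contradiction from $v(R)\ge 0$.

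\emph{Step 1: $u(r_1)>\beta$.} Integrate the identity \eqref{eq:prod} over $[r_1,R]$:
\[
R^{2d-2}u'(R)v'(R)-r_1^{2d-2}u'(r_1)v'(r_1)=-R^{2d-2}g(0)v(R)+(2d-2)\int_{r_1}^R r^{2d-3}g(u)v\,dr .
\]
Since $g(0)=0$ the boundary term drops. We have $u'<0$ on $(0,R]$ by Lemma~\ref{lem:basic}\,(iii), and $v'(r_1)<0$. If $v(R)\ge0$ with $v<0$ just before $R$, then $v'(R)\ge0$, so the left-hand side is $\le -r_1^{2d-2}u'(r_1)v'(r_1)<0$. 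If $u(r_1)\le\beta$, then $g(u)<0$ and $v<0$ on $(r_1,R)$, so the right-hand side is positive. This contradiction gives $u(r_1)>\beta$.

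\emph{Step 2: the Pohozaev-type multiplier.} Choose $\lambda>q$ with $I_\lambda(u(r_1))=0$ via Lemma~\ref{lem:Isign}, and set $f=u+\frac{\lambda-1}{2}ru'$. Then $\mathcal L_yf=I_\lambda(u)$, and since $u$ is decreasing, Lemma~\ref{lem:Isign} gives $vI_\lambda(u)<0$ on $(0,R)\setminus\{r_1\}$. Using \eqref{eq:W} from $0$, where the bracket vanishes, we get
\[
R^{d-1}(vf'-fv')(R)<0.
\]
At $R$ we have $u(R)=0$ and $u''(R)=-\frac{d-1}{R}u'(R)$ (because $g(0)=0$), hence
\[
f(R)=\tfrac{\lambda-1}{2}Ru'(R)<0,
\qquad
f'(R)=u'(R)\Bigl(1+\tfrac{\lambda-1}{2}(2-d)\Bigr).
\]
Thus
\[
(vf'-fv')(R)=v(R)f'(R)-\tfrac{\lambda-1}{2}Ru'(R)v'(R).
\]
The second term is $\ge0$ because $u'(R)<0$ and $v'(R)\ge0$. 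The first term is the sign-problematic one, since $f'(R)$ may have either sign depending on $d$ and $\lambda$.

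\emph{Main obstacle.} If $v(R)=0$, the contradiction is immediate. If $v(R)>0$, I would handle the term $v(R)f'(R)$ by combining Step 2 with the Wronskian of $v$ against $u$ itself: \eqref{eq:W} with $f=u$ gives an identity $R^{d-1}(vu'-uv')(R)=R^{d-1}v(R)u'(R)$ equal to $\int r^{d-1}vI_1(u)$, and this allows one to replace $v(R)u'(R)$. Alternatively, I would apply the argument on the interval $[0,\rho]$, where $\rho\in(r_1,R]$ is the first point where $v$ returns to zero; at $\rho$ we have $v(\rho)=0$ and $v'(\rho)\ge0$. Then
\[
\rho^{d-1}(vf'-fv')(\rho)=-\rho^{d-1}f(\rho)v'(\rho),
\]
which must be negative, and so it requires $f(\rho)>0$. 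This reduces the claim to excluding a zero of $f$ before $\rho$, which I would do as in Lemma~\ref{lem:Szero}: look at the first zero $\sigma$ of $f$ and compare signs of $v$ and $f'$ at $\sigma$. This final sign bookkeeping is where I expect the real difficulty to lie.
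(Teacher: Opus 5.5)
Your Steps 1 and 2 are exactly the paper's proof. Step 1 integrates \eqref{eq:prod} over $[r_1,R]$ to force $u(r_1)>\beta$. Step 2 uses the multiplier $f=u+\frac{\lambda-1}{2}ru'$ with $I_\lambda(u(r_1))=0$, which makes $r^{d-1}(vf'-fv')$ strictly decreasing from $0$. If $v(R)=0$, this is incompatible with the value $-R^{d-1}f(R)v'(R)=-\frac{\lambda-1}{2}R^{d}u'(R)v'(R)\ge0$ at $R$.

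The ``main obstacle'' you describe does not exist. By the one-zero hypothesis, $v<0$ on $(r_1,R)$, so continuity gives $v(R)\le0$. A value $v(R)>0$, or a return to zero at some $\rho<R$, would give a second zero in $(0,R)$ and is excluded outright. Hence the only alternative to the claim is $v(R)=0$, which you have already handled.

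The muddled case split in your first paragraph and the entire last paragraph, including the unfinished sign bookkeeping at $\rho$ and the $f'(R)$ computation, should simply be deleted.
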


\begin{proof}
Write $u=u_y$, $v=v_y$, $R=R_y$, and let $r_y$ be the unique zero of
$v$ in $(0,R)$. Then $v>0$ on $(0,r_y)$ and $v<0$ on $(r_y,R)$. Suppose that
$v(R)=0$; then $v'(r_y)<0<v'(R)$.

Integrating \eqref{eq:prod} over $(r_y,R)$ gives
\begin{align*}
& R^{2d-2}u'(R)v'(R)
  -r_y^{2d-2}u'(r_y)v'(r_y)\\
&\quad=(2d-2)\int_{r_y}^R r^{2d-3}g(u(r))v(r)\,dr,
\end{align*}
since $g(u)v$ vanishes at both endpoints. If $u(r_y)\le\beta$, then
$g(u)<0$ and $v<0$ on $(r_y,R)$, so the right-hand side is positive,
whereas the left-hand side is negative. Thus $u(r_y)>\beta$.

Choose $\lambda>q$ with $I_\lambda(u(r_y))=0$ and set
$f=u+\frac{\lambda-1}{2}ru'$. As in Proposition~\ref{prop:vstar},
$vI_\lambda(u)<0$ away from $r_y$, so \eqref{eq:W} implies that
$r^{d-1}(vf'-fv')$ decreases strictly from zero. Its value at $R$ must be
negative, but $u(R)=v(R)=0$ gives
\[
  -R^{d-1}f(R)v'(R)
  =-\frac{\lambda-1}{2}R^d u'(R)v'(R)>0,
\]
a contradiction.
\end{proof}
We show that the one-zero property holds
for $y>y^*$ sufficiently close to $y^*$.
\begin{lemma}
\label{lem:right}
There exists $\varepsilon>0$ such that every
$y\in(y^*,y^*+\varepsilon)$ belongs to $\mathcal Y_-$ and $v_y$ has exactly one zero
in $(0,R_y)$.
\end{lemma}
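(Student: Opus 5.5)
The plan is a perturbation argument around $y^*$. It uses continuous dependence on compact intervals for $r$ away from infinity, and a separate control of the tail where $u_y$ is small.

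First, $y\in\mathcal Y_-$ for $y>y^*$ close to $y^*$. By the definition \eqref{eq:ystar}, $y^*=\inf(\mathcal Y_0\cup\mathcal Y_-)$, and Theorem~\ref{thm:shoot} is what we are proving, so this cannot be taken from it. I would argue instead via $v_{y^*}$. By Proposition~\ref{prop:vstar}, $v_{y^*}(r)\to-\infty$ when $\omega>0$, and it is eventually negative with a nonzero non-decaying component when $\omega=0$. Fix $R_1>r_*$ with $v_{y^*}(R_1)<0$ and $u_{y^*}(R_1)$ small. Differentiability in $y$ gives
\[
  u_y(R_1)=u_{y^*}(R_1)+(y-y^*)v_{y^*}(R_1)+o(y-y^*),
\]
so $u_y(R_1)<u_{y^*}(R_1)$ for $y>y^*$ close to $y^*$. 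Pushing $R_1\to\infty$ does not by itself yield a zero of $u_y$. So I would use the phase-plane energy $H_y$ instead. Choose $R_1$ so that $H_{y^*}(R_1)$ is small and $u_{y^*}(R_1)<\beta$. Suppose $y\in\mathcal Y_+\cup\mathcal Y_0$ with $y>y^*$. Then the difference $u_y-u_{y^*}$ must be considered; I would use a Wronskian/Sturm comparison between $v_y$ and $h=u_y-u_{y^*}$, which solve nearly the same linearized equation, together with Lemma~\ref{lem:basic}(vi) applied at $y^*$. Openness fails there since $y^*\notin\mathcal Y_+$. The main step is to exclude $y\in\mathcal Y_+\cup\mathcal Y_0$ for $y$ slightly above $y^*$. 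I would argue by contradiction with a sequence $y_n\downarrow y^*$ in $\mathcal Y_+\cup\mathcal Y_0$. Lemma~\ref{lem:hzero} shows that $h=u_{y_n}-u_{y^*}$ changes sign once, near $r_*$, and then stays negative on a long interval because $v_{y^*}<0$ there. Then $u_{y_n}<u_{y^*}\to0$ on that interval, and the energy $H_{y_n}$, nonincreasing and nearly zero there, forces $u_{y_n}$ to cross zero. This contradicts membership in $\mathcal Y_+\cup\mathcal Y_0$ unless $u_{y_n}$ converges to the positive equilibrium-free tail; positivity of $G$ below $0$ and $G<0$ on $(0,\beta)$ rules that out.

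Second, $v_y$ has exactly one zero in $(0,R_y)$. Lemma~\ref{lem:Szero} gives at least one. For the upper bound, fix a large $R_2$. On $[0,R_2]$, $v_y\to v_{y^*}$ in $C^1$, and $v_{y^*}$ has a single simple zero $r_*$ and is strictly negative on $(r_*,R_2]$. Hence $v_y$ has exactly one zero on $[0,R_2]$ and $v_y<0$ near $R_2$. By Lemma~\ref{lem:basic}(v), $R_y\to\infty$, so $R_y>R_2$.

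On the tail $[R_2,R_y]$, $u_y$ is small, so $g'(u_y)\approx-\omega-pu_y^{p-1}<0$. When $\omega=0$, $V\approx pu^{p-1}$ can be negative for $u<0$ only after $R_y$, which is irrelevant here. Then $(r^{d-1}v_y')'=-r^{d-1}g'(u_y)v_y<0$ while $v_y<0$. I would also arrange $v_y'(R_2)<0$ by choosing $R_2$ where $v_{y^*}'<0$; this is possible since $v_{y^*}\sim c_+w^+$ with $c_+<0$ and $w^+$ nondecreasing (Remark~\ref{rem:wprime}), and in the case $\lambda_+=0$ I would use the sign of $(w^+)'$ carefully. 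Then $r^{d-1}v_y'$ stays negative and $v_y$ stays negative on $[R_2,R_y]$, which gives no further zeros. The delicate point is making the smallness of $u_y$ on $[R_2,R_y]$ uniform. This follows from $H_y$ being monotone and $H_y(R_2)$ close to $H_{y^*}(R_2)\approx0$, together with $u_y$ decreasing (Lemma~\ref{lem:basic}(iii)). I expect the first step to be the main obstacle.
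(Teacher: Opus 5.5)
Your second step is essentially sound: once $y\in\mathcal Y_-$ is known, Lemma~\ref{lem:basic}\,\textup{(iii)} already gives $u_y\le u_y(R_2)$ on $[R_2,R_y)$, so no energy estimate is needed there. The first step, which you correctly flag as the crux, does not go through as written.

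First, Lemma~\ref{lem:hzero} concerns $y\in[\beta,y^*)$, not $y>y^*$, so it says nothing about $u_{y_n}-u_{y^*}$ for $y_n\downarrow y^*$. Second, and more seriously, the energy argument cannot work. Since $g$ is odd, $G(s)=-\frac{\omega}{2}s^2-\frac{|s|^{p+1}}{p+1}+\frac{|s|^{q+1}}{q+1}$ is even and negative for small $s\ne0$, so there is no ``positivity of $G$ below $0$''. For $d\ge2$, a nonincreasing $H_y$ that is small at $R_1$ is compatible with $u_y$ turning around before reaching zero: the dissipation $\frac{d-1}{r}(u_y')^2$ can push $H_y$ below zero. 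This is exactly what happens for $y$ slightly \emph{below} $y^*$. Those trajectories also shadow $u_{y^*}$ on $[0,R_1]$ with $H_y(R_1)\approx H_{y^*}(R_1)$, yet they lie in $\mathcal Y_+$. So closeness of energies cannot settle the question. What must be shown is that $u_y<u_{y^*}$ persists all the way to infinity with a quantitative margin.

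The missing idea is to apply to the difference quotient the same tail comparison you use for $v_y$ in your second step. Set $\zeta_y=(u_y-u_{y^*})/(y-y^*)$. It solves $\zeta_y''+\frac{d-1}{r}\zeta_y'+a_y\zeta_y=0$ with $a_y=\int_0^1g'(u_{y^*}+\theta(u_y-u_{y^*}))\,d\theta$, and $\zeta_y\to v_{y^*}$ in $C^1([0,R])$. Fix $\delta$ with $g'<0$ on $(0,\delta]$, and choose $R>r_*$ with $u_{y^*}(R)<\delta$, $v_{y^*}(R)<0$ and a suitable derivative condition at $R$; these transfer to $\zeta_y$. Suppose $u_y>0$ everywhere. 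On the maximal interval $[R,R_1)$ where $\zeta_y<0$, one has $0<u_y<u_{y^*}<\delta$, hence $a_y<0$ and $(r^{d-1}\zeta_y')'<0$. This traps $\zeta_y$ below a negative constant and forces $R_1=\infty$. Then $u_y=u_{y^*}+(y-y^*)\zeta_y$ becomes negative because $u_{y^*}\to0$, a contradiction.

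The derivative condition $v_{y^*}'(R)<0$ is not always available, contrary to your appeal to $w^+$ being nondecreasing. When $\omega=0$ and $p>p^*$, Remark~\ref{rem:wprime} gives only $(w^+)'=O(r^{-1-\varepsilon})$, with no sign. Moreover, the decreasing function $r^{d-1}v_{y^*}'$ has a finite limit whenever $(d-2)(p-1)>d$, and the available asymptotics do not determine its sign. The remedy is to use $v_{y^*}\to c_+<0$ and $rv_{y^*}'(r)\to0$ to require instead $v_{y^*}(R)+\frac{R}{d-2}(v_{y^*}'(R))_+<0$. Two integrations of $(r^{d-1}\zeta')'<0$ then give $\zeta(r)\le\zeta(R)+\frac{R}{d-2}(\zeta'(R))_+<0$. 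This covers both $\zeta_y$ here and the tail of $v_y$ in your second step.
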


\begin{proof}
Write $u_*=u_{y^*}$ and $v_*=v_{y^*}$, and choose $\delta\in(0,\beta)$ such
that
\begin{equation}\label{eq:gneg}
  g'(s)<0,\qquad 0<s\le \delta.
\end{equation}
By Proposition~\ref{prop:vstar}, $v_*<0$ on a sufficiently large tail.
Once we also have $u_*<\delta$,
\[
  \bigl(r^{d-1}v_*'(r)\bigr)'
  =-r^{d-1}g'(u_*(r))v_*(r)<0.
\]
If $\omega>0$, or if $\omega=0$ and $p\le p^*$, then
\eqref{eq:vasymp} gives $v_*(r)\to-\infty$ as $r\to\infty$.
Hence $v_*'<0$ eventually:
otherwise $v_*'\ge0$ on the whole tail, while once $v_*'<0$ at one point,
the preceding monotonicity preserves this sign.

Suppose instead that $\omega=0$ and $p>p^*$; then $d\ge3$ and
$v_*(r)\to c_+<0$ as $r\to\infty$. With
\[
  \eps\ce (d-2)(p-1)-2>0,
\]
the decay $u_*(r)=O(r^{-(d-2)})$ yields, as $r\to\infty$,
\[
  -g'(u_*(r))=O(r^{-2-\eps}),
  \qquad
  \bigl(r^{d-1}v_*'(r)\bigr)'=O(r^{d-3-\eps}).
\]
After integration,
$r^{d-1}v_*'(r)=o(r^{d-2})$, and therefore $rv_*'(r)\to0$ as
$r\to\infty$.

We may thus choose $R$ beyond the zero of $v_*$ so that $u_*(R)<\delta$,
$v_*(R)<0$, and
\begin{equation}\label{eq:tail}
\begin{cases}
  v_*'(R)<0,
    & \omega>0\ \text{or}\ (\omega=0,\ p\le p^*),\\[2pt]
  v_*(R)+\dfrac{R}{d-2}\bigl(v_*'(R)\bigr)_+<0,
    & \omega=0\ \text{and}\ p>p^*.
\end{cases}
\end{equation}
The second inequality follows from $v_*(r)\to c_+<0$ and
$rv_*'(r)\to0$ as $r\to\infty$.

For $y>y^*$, define
\[
  \zeta_y(r)\ce \frac{u_y(r)-u_*(r)}{y-y^*}.
\]
Then
\begin{equation*}
  \zeta_y''+\frac{d-1}{r}\zeta_y'+a_y(r)\zeta_y=0,
  \qquad
  a_y(r)\ce \int_0^1
    g'\bigl(u_*(r)+\theta(u_y(r)-u_*(r))\bigr)\,d\theta.
\end{equation*}
As $y\downarrow y^*$, $\zeta_y\to v_*$ in $C^1([0,R])$. Hence, for $y$
sufficiently close to $y^*$, $\zeta_y(R)<0$ and the corresponding strict
inequality in \eqref{eq:tail} holds with $\zeta_y$ in place of $v_*$.

We claim that such $y$ lies in $\mathcal Y_-$. Otherwise $u_y>0$ everywhere. On
the maximal interval $[R,R_1)$ on which $\zeta_y<0$,
\[
  0<u_y<u_*\le u_*(R)<\delta,
\]
so $a_y<0$ by \eqref{eq:gneg}, and
\begin{equation}
\label{eq:weiprim}
  \bigl(r^{d-1}\zeta_y'(r)\bigr)'
  =-r^{d-1}a_y(r)\zeta_y(r)<0.
\end{equation}
In the first case of \eqref{eq:tail}, this gives $\zeta_y'(R)<0$. Then it follows from \eqref{eq:weiprim} that
$\zeta_y'(r)<0$ on $[R,R_1)$, and hence
\[
  \zeta_y(r)\le \zeta_y(R)<0
  \qquad (R\le r<R_1).
\]
In the second case, $d\ge3$ and two integrations
give
\[
  \zeta_y(r)
  \le \zeta_y(R)+R^{d-1}\bigl(\zeta_y'(R)\bigr)_+
       \int_R^r s^{1-d}\,ds
  \le \zeta_y(R)+\frac{R}{d-2}\bigl(\zeta_y'(R)\bigr)_+<0.
\]
Thus $R_1=\infty$ and $\zeta_y$ stays bounded above by a negative constant.
Since $u_*(r)\to0$ as $r\to\infty$,
$u_y=u_*+(y-y^*)\zeta_y$ is eventually negative, a contradiction. Hence
$y\in \mathcal Y_-$.

Finally, $v_y\to v_*$ in $C^1([0,R])$ as $y\downarrow y^*$, so $v_y$
has exactly one zero on
$[0,R]$ and $v_y(R)<0$. Repeating the comparison above on $[R,R_y]$
gives $u_y<u_*<\delta$ on $[R,R_y)$, hence $g'(u_y)<0$ there. On every interval where $v_y<0$,
\[
  \bigl(r^{d-1}v_y'(r)\bigr)'
  =-r^{d-1}g'(u_y(r))v_y(r)<0.
\]
The appropriate inequality in \eqref{eq:tail} persists with $v_y$ in place
of $v_*$, so the same tail argument prevents a return to zero before
$R_y$.
\end{proof}

\begin{proposition}\label{prop:Sneg}
One has $(y^*,\infty)\subset \mathcal Y_-$. Moreover, for every $y>y^*$,
$v_y$ has exactly one zero in $(0,R_y)$, and $y\mapsto R_y$ is strictly
decreasing.
\end{proposition}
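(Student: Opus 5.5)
The plan is a continuation argument in $y$. Let $\mathcal B$ be the set of $y>y^*$ such that $y\in\mathcal Y_-$ and $v_y$ has exactly one zero in $(0,R_y)$. I will show that $\mathcal B$ is nonempty, open and relatively closed in $(y^*,\infty)$, so that $\mathcal B=(y^*,\infty)$. Lemma~\ref{lem:right} gives $(y^*,y^*+\varepsilon)\subset\mathcal B$.

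\emph{Openness.} Let $Y\in\mathcal B$. By Lemma~\ref{lem:basic}\,(iv), $\mathcal Y_-$ is open and $y\mapsto R_y$ is continuous. The zero $r_Y$ of $v_Y$ is simple, and Lemma~\ref{lem:end} gives $v_Y(R_Y)<0$. Continuous dependence in $C^1$ on compact intervals then shows that, for $y$ near $Y$, $v_y$ has exactly one zero on $[0,R_y]$. In particular, $v_y$ cannot acquire a new zero near the endpoint, because $v_y(R_y)<0$ persists.

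\emph{Closedness.} Let $y_n\in\mathcal B$ with $y_n\to Y>y^*$. Suppose first that $Y\in\mathcal Y_-$. Lemma~\ref{lem:Szero} gives at least one zero of $v_Y$ in $(0,R_Y)$, and all zeros are simple. If there were two or more, they would persist for $y_n$ near $Y$, because $R_{y_n}\to R_Y$ and $v_Y$ does not vanish at $R_Y$. I will check this last point as follows: zeros of $v_Y$ strictly inside $(0,R_Y)$ persist, while a zero at $R_Y$ itself is excluded by the limit of $v_{y_n}(R_{y_n})<0$ together with Lemma~\ref{lem:end}, which I will redo using the one-zero count. Hence $Y\in\mathcal B$.

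The remaining step, which I expect to be the main obstacle, is to exclude $Y\in\mathcal Y_0\cup\mathcal Y_+$. Here $\mathcal Y_+$ is open, and so is its complement-side set $\mathcal Y_-$, by Lemma~\ref{lem:basic}. So if $Y\notin\mathcal Y_-$ then $Y\in\mathcal Y_0$, and by Lemma~\ref{lem:basic}\,(v) we get $R_{y_n}\to\infty$. I plan to rule this out by showing $\mathcal Y_0=\{y^*\}$ directly. Consider the function $y\mapsto R_y$ on the connected component of $\mathcal Y_-$ containing $(y^*,y^*+\varepsilon)$. Differentiating $u_y(R_y)=0$ gives
\[
  \frac{dR_y}{dy}=-\frac{v_y(R_y)}{u_y'(R_y)}.
\]
On $\mathcal B$ we have $v_y(R_y)<0$ by Lemma~\ref{lem:end} and $u_y'(R_y)<0$ by Lemma~\ref{lem:basic}\,(iii). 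Hence $R_y$ is strictly decreasing on that component. This is incompatible with $R_{y_n}\to\infty$ as $y_n\uparrow Y$, since $R_{y_n}\le R_{y_1}<\infty$ along an increasing sequence. For a decreasing sequence $y_n\downarrow Y>y^*$ with $Y\in\mathcal Y_0$, I use the same picture near $Y$. By Lemma~\ref{lem:vzero}, Lemma~\ref{lem:vstar} and Proposition~\ref{prop:vstar}, which apply to any element of $\mathcal Y_0$ (the arguments used only $y\in\mathcal Y_0$ together with the one-zero property from the analogue of Lemma~\ref{lem:hzero}), I can rerun Lemma~\ref{lem:right} at $Y$. This shows that $(Y-\varepsilon',Y)$ lies in $\mathcal Y_+$, which contradicts $(y^*,Y)\subset\mathcal Y_-$.

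If adapting Lemma~\ref{lem:hzero} to $Y$ proves awkward, I will instead use the monotonicity of $R_y$ together with the fact that $\mathcal Y_-\ni y\mapsto R_y$ is decreasing and positive. Then $R_{y_n}$ is bounded by $R_{y^*+\varepsilon/2}$ for all $y_n\ge y^*+\varepsilon/2$, which contradicts Lemma~\ref{lem:basic}\,(v). This already closes the argument for both directions of approach, since all $y_n$ lie to the right of $y^*+\varepsilon/2$. The conclusion is $(y^*,\infty)=\mathcal B\subset\mathcal Y_-$, with $R_y$ strictly decreasing by the derivative formula above.
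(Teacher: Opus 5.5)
\textbf{Verdict.} There is a genuine gap: you do not succeed in excluding limit points $Y\in\mathcal Y_0$, so relative closedness of $\mathcal B$ is not established. Everything else in your proposal is sound. The openness step (two-sided) works, as does the closure step at limit points $Y\in\mathcal Y_-$ via Lemma~\ref{lem:Szero} and persistence of simple zeros. The exclusion of $Y\in\mathcal Y_+$ and the formula for $dR_y/dy$ are also correct.

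\textbf{Where the $\mathcal Y_0$ step fails.}
\begin{enumerate}
\item You assert that $R_y$ is strictly decreasing on the connected component of $\mathcal Y_-$ containing $(y^*,y^*+\varepsilon)$. The sign of $dR_y/dy$ is known only on $\mathcal B$, and you never show that this component lies in $\mathcal B$.
\item For an arbitrary increasing sequence $y_n\in\mathcal B$, the bound $R_{y_n}\le R_{y_1}$ needs $[y_1,Y)\subset\mathcal B$, which you have not shown.
\item Your fallback, that $\mathcal Y_-\ni y\mapsto R_y$ is decreasing, so $R_{y_n}\le R_{y^*+\varepsilon/2}$, assumes part of what is to be proved. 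It is circular.
\item The decreasing-sequence case fails for three reasons:
\begin{itemize}
\item[--] The analogue of Lemma~\ref{lem:hzero} at $Y>y^*$ is unavailable. Its proof uses $[\beta,Y)\subset\mathcal Y_+$, both to get $h_y<0$ at infinity and to start the continuation at $\beta$. This fails because $(y^*,y^*+\varepsilon)\subset\mathcal Y_-$.
\item[--] Rerunning Lemma~\ref{lem:right} at $Y$ would only say that parameters just to the \emph{right} of $Y$ lie in $\mathcal Y_-$. It does not give $(Y-\varepsilon',Y)\subset\mathcal Y_+$.
\item[--] The inclusion $(y^*,Y)\subset\mathcal Y_-$ you want to contradict is not known for a sequence decreasing to $Y$.
\end{itemize}
\end{enumerate}
More fundamentally, closedness at a point of $\mathcal Y_0$ approached from the right cannot come from monotonicity of $R_y$. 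At the left endpoint of a component of $\mathcal B$, $R_y\to\infty$ as $y$ decreases to that endpoint is perfectly compatible with $R_y$ decreasing.

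\textbf{The fix.} Drop general closedness and argue only at the right endpoint of the first interval, approached from the left; this is exactly the paper's argument.
\begin{itemize}
\item[--] Let $(y^*,\bar y)$ be the component of $\mathcal Y_-$ containing $(y^*,y^*+\varepsilon)$.
\item[--] Your openness step and your closure step at points of $\mathcal Y_-$ (which needs only approach from the left) make $\mathcal B\cap(y^*,\bar y)$ nonempty, open and closed in $(y^*,\bar y)$. Hence $(y^*,\bar y)\subset\mathcal B$. This is the paper's step $y_1=\bar y$.
\item[--] Now the derivative formula gives strict decrease of $R_y$ on all of $(y^*,\bar y)$.
\item[--] Suppose $\bar y<\infty$. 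Openness of $\mathcal Y_-$ and $\mathcal Y_+$ forces $\bar y\in\mathcal Y_0$. Then Lemma~\ref{lem:basic}\,(v) gives $R_y\to\infty$ as $y$ increases to $\bar y$. This contradicts $R_y\le R_{y_0}$ for $y\in[y_0,\bar y)$.
\item[--] Hence $\bar y=\infty$.
\end{itemize}
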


\begin{proof}
Set
\[
  \mathcal A_-\ce
  \{y\in\mathcal Y_-:
    v_y\text{ has exactly one zero in }(0,R_y)\}.
\]
Lemma~\ref{lem:right} shows that $\mathcal A_-$ contains an interval
to the right of $y^*$. We use two continuation facts.
If $y\in\mathcal A_-$, then Lemma~\ref{lem:end} gives
$v_y(R_y)<0$. Together with continuous dependence, the simplicity
of the zero of $v_y$, and Lemma~\ref{lem:basic}\,\textup{(iv)}, this yields
$[y,y+\varepsilon_y)\subset\mathcal A_-$ for some $\varepsilon_y>0$.
Conversely, if $y\in\mathcal Y_-$ and
$(y-\varepsilon,y)\subset\mathcal A_-$, then $y\in\mathcal A_-$.
Indeed, Lemma~\ref{lem:Szero} gives at least one zero of $v_y$
in $(0,R_y)$, while two simple zeros would persist for parameters
just below $y$ by continuous dependence, a contradiction.

Define
\begin{align*}
  \bar y&\ce
  \sup\{Y>y^*:(y^*,Y)\subset\mathcal Y_-\}\in(y^*,\infty],\\
  y_1&\ce
  \sup\{Y>y^*:(y^*,Y)\subset\mathcal A_-\}.
\end{align*}
If $y_1<\bar y$, the closure property from the left gives
$y_1\in\mathcal A_-$, and the continuation property to the right
extends $\mathcal A_-$ beyond $y_1$, a contradiction.
Thus $y_1=\bar y$ and $(y^*,\bar y)\subset\mathcal A_-$.
Differentiating $u_y(R_y)=0$ gives
\begin{equation}\label{eq:Rprime}
  R_y'=-\frac{v_y(R_y)}{u_y'(R_y)}<0,
\end{equation}
since $v_y(R_y)<0$ and $u_y'(R_y)<0$.

If $\bar y<\infty$, openness of $\mathcal Y_-$ and the definition
of $\bar y$ exclude $\bar y\in\mathcal Y_-$, while openness of
$\mathcal Y_+$ and $(y^*,\bar y)\subset\mathcal Y_-$ exclude
$\bar y\in\mathcal Y_+$. Hence $\bar y\in\mathcal Y_0$.
Lemma~\ref{lem:basic}\,\textup{(v)} then gives $R_y\to\infty$ as
$y\uparrow\bar y$, contradicting the decrease in \eqref{eq:Rprime}.
Therefore $\bar y=\infty$.
\end{proof}

We now combine the preceding shooting results to establish
uniqueness and radial nondegeneracy of the ground states.

\begin{proof}[Proof of Theorem~\ref{thm:shoot}]
Propositions~\ref{prop:first} and~\ref{prop:Sneg}, together with the
partition in Lemma~\ref{lem:basic}, give
\[
  \mathcal Y_+=(0,y^*),
  \quad
  \mathcal Y_0=\{y^*\},
  \quad
  \mathcal Y_-=(y^*,\infty).
\]
Thus every positive radial solution tending to zero is $u_{y^*}$, and
$\phi_\omega=u_{y^*}$.

Let $\zeta\in\ker(L_\omega)_{\rad}$. Its radial profile is regular at the
origin and solves
\[
  \zeta''+\frac{d-1}{r}\zeta'
  -\bigl(\omega+V_\omega(r)\bigr)\zeta=0,
  \qquad
  \zeta'(0)=0.
\]
The regular solution space is one-dimensional: if $\zeta(0)=0$, uniqueness
gives $\zeta\equiv0$, and otherwise $\zeta=\zeta(0)v_{y^*}$. The latter is
impossible by Proposition~\ref{prop:vstar}. If $\omega>0$, then
$v_{y^*}\notin L^2(\R^d)$, whereas $\zeta\in H^1(\R^d)$.
If $\omega=0$, the non-decaying asymptotics in \eqref{eq:vasymp}
imply $v_{y^*}\notin L^{p+1}(\R^d)$, whereas
$\zeta\in X_0=\dot H^1(\R^d)\cap L^{p+1}(\R^d)$.
Thus $\ker(L_\omega)_{\rad}=\{0\}$ in the function spaces of
Proposition~\ref{prop:nondeg}.
\end{proof}


\begingroup
\small
\section*{Acknowledgements}

Part of this work was carried out while M.H. was at the
University of Pisa in 2023--2024. He gratefully acknowledges
the hospitality received there.
Following the completion of our earlier one-dimensional paper \cite{FH25}
(\href{https://arxiv.org/abs/2304.14337}{arXiv:2304.14337}),
we studied the shooting analysis in previous works, developed the instability theory presented in this paper, and carefully checked our calculations.

M.H. also thanks Dmitry Pelinovsky for taking the time to
explain the work in \cite{HHP24} in detail.

N.F. was supported by JSPS KAKENHI Grant Numbers JP20K14349, JP24H00024, and JP26K00612. M.H. was supported by the Italian MIUR PRIN project 2020XB3EFL and by JSPS KAKENHI Grant Numbers JP24H00024 and JP25K17277.

\par
\endgroup


\end{document}